\pgfplotsset{compat=1.10}
\newcommand{\swap}[1]{\overset{\text{\tiny$\leftrightarrow$}}{ #1}}
\g@addto@macro \normalsize {%
 \setlength\abovedisplayskip{7pt}%
 \setlength\belowdisplayskip{6pt}%
}
\newtheorem{thm}[equation]{Theorem}
\newtheorem{lem}[equation]{Lemma}
\newtheorem{cor}[equation]{Corollary}
\newtheorem{prop}[equation]{Proposition}
\theoremstyle{remark}
\newtheorem{rem}[equation]{Remark}
\newtheorem{claim}[equation]{Claim}
\newtheorem{example}[equation]{Example}
\newtheorem{defn}[equation]{Definition}
\numberwithin{equation}{section}
\newcommand{\gb}{\beta}
\newcommand{\ga}{\alpha}
\newcommand{\gl}{\nu}
\newcommand{\gL}{\Lambda}
\newcommand{\gD}{\Delta}
\newcommand{\eps}{\varepsilon}
\newcommand{\fa}{{\mathfrak a}}             
\newcommand{\fb}{{\mathfrak b}}
\newcommand{\fg}{{\mathfrak g}}
\newcommand{\fh}{{\mathfrak h}}
\newcommand{\fk}{{\mathfrak k}}
\newcommand{\fl}{{\mathfrak l}}
\newcommand{\fm}{{\mathfrak m}}
\newcommand{\fn}{{\mathfrak n}}
\newcommand{\fp}{{\mathfrak p}}
\newcommand{\fy}{{\mathfrak y}}
\newcommand{\f}{\mathfrak}
\newcommand{\R}{\mathbb{R}}          
\newcommand{\C}{\mathbb{C}}          
\newcommand{\Z}{\mathbb{Z}}
\newcommand{\ad}{\mathrm{ad}}
\newcommand{\Ad}{\mathrm{Ad}}
\newcommand{\Cal}{\mathcal}
\newcommand{\Hom}{\operatorname{Hom}}
\newcommand{\Ind}{\mathrm{Ind}}
\newcommand{\IP}[2]{\langle#1 , #2\rangle}     %inner product.
\newcommand{\spn}{\mathrm{span}}
\newcommand{\Tr}{\text{Tr}}
\newcommand{\Sol}{\mathrm{Sol}}
\newcommand{\Pol}{\mathrm{Pol}}
\newcommand{\To}{\longrightarrow}
\newcommand{\Diff}{\mathrm{Diff}}
\newcommand{\Irr}{\mathrm{Irr}}
\newcommand{\fin}{\mathrm{fin}}
\newcommand{\D}{\Cal{D}}
\newcommand{\Symb}{\mathrm{Symb}}
\newcommand{\dpi}{d\pi}
\newcommand{\id}{\mathrm{id}}
\newcommand{\pp}{\textnormal{\mbox{\smaller($+$,$+$)}}}
\newcommand{\pmi}{\textnormal{\mbox{\smaller($+$,$-$)}}}
\newcommand{\mip}{\textnormal{\mbox{\smaller($-$,$+$)}}}
\newcommand{\mm}{\textnormal{\mbox{\smaller($-$,$-$)}}}
\newcommand{\ssv}{{\scriptscriptstyle \vee}}
\newcommand{\Ker}{\mathrm{Ker}}
\newcommand{\const}{\mathrm{const}}
\newcommand{\Trun}{\mathrm{Trun}}
\newcommand{\s}{\mathbf{s}}
\newcommand{\abs}[1]{\left\vert#1\right\vert}
\newcommand{\Mp}{M^{\fg}_{\fp}}
\newcommand{\Mpp}{M^{\fg'}_{\fp'}}
\newcommand{\EuD}{\EuScript{D}}
\newcommand{\Rest}{\mathrm{Rest}}
\newcommand{\w}{\widetilde}
\newcommand{\bD}{\mathbb{D}}
\newcommand{\acts}{ \, {\raisebox{1pt} {$\scriptstyle \bullet$} } \,}
\newcommand{\hdL}{\widehat{dL}}
\newcommand{\dpil}{d\pi_{\lambda^*}}
\newcommand{\hdpil}{\widehat{d\pi_{\lambda^*}}}
\newcommand{\wSol}{\widetilde{\Sol}}
\newcommand{\Cay}{\mathrm{Cay}}
\providecommand*{\donothing}[1]{}
\begin{document}

\baselineskip=16pt
\tabulinesep=1.2mm

%%%%%%%%%%%%%%%%%%%%%%%%%%%%%%%%%%%%%%%%%%%%%%%

\title[]{The truncated symbol of a differential symmetry breaking operator}

\author{Toshihisa Kubo}
\author{V{\' i}ctor P{\'e}rez-Vald{\'e}s}

\address{Faculty of Economics, 
Ryukoku University,
67 Tsukamoto-cho, Fukakusa, Fushimi-ku, Kyoto 612-8577, Japan}
\email{toskubo@econ.ryukoku.ac.jp}

\address{JSPS International Research Fellow, Ryukoku University,
67 Tsukamoto-cho, Fukakusa, Fushimi-ku, Kyoto 612-8577, Japan}
\email{perez-valdes@mail.ryukoku.ac.jp}
\subjclass[2020]{%2020
22E46, %Semisimple Lie groups and their representations
17B10} %Representations of Lie algebras and Lie superalgebras, algebraic theory (weights)
%05B20, %Combinatorial aspects of matrices (incidence, Hadamard, etc.)
%33C05, %Classical hypergeometric functions, 2F1
%33C45, %Orthogonal polynomials and functions of hypergeometric type 
%33E30, %Other functions coming from differential, difference and integral equations

\keywords{differential symmetry breaking operator,
differential intertwining operator,
generalized Verma module,
truncated symbol map,
F-method,
factorization identity, 
Cayley continuant,
binary Krawtchouk polynomial,
Jacobi polynomial}

\date{\today}

\maketitle

%%%%%%%%%%%%%%%%%%%%%%%%%%%%%%%%%%%%%%%%%%%%%%%

\begin{abstract} 
In this paper, we introduce the \emph{truncated symbol} 
$\Symb_0(\mathbb{D})$ of 
a differential symmetry breaking operator $\mathbb{D}$
between parabolically induced representations.
This generalizes the symbol map $\Symb$,
which is defined for the case of abelian nilpotent radicals, to the non-abelian setting.
The inverse $\Symb_0^{-1}$ of the truncated symbol map $\Symb_0$
enables one to apply a recipe of the F-method
for any nilpotent radical.

As an application, we classify and construct
differential intertwining operators $\D$ on the full flag variety 
$SL(3,\R)/B$
and homomorphisms $\varphi$ between Verma modules.
It turned out that, surprisingly, Cayley continuants $\Cay_m(x;y)$
appeared in the coefficients of one of the five families of operators 
that we constructed. At the end, 
the factorization identities of the differential operators $\D$ 
and homomorphisms $\varphi$ are also classified.
Binary Krawtchouk polynomials $K_m(x;y)$ play a key role
in the proof.
\end{abstract}

%%%%%%%%%%%%%%%%%%%%%%%%%%%%%%%%%%%%%%%%%%
\setcounter{tocdepth}{1}
\tableofcontents

%%%%%%%%%%%%%%%%%%%%%%%%%%%%%%%%%%%%%%%%%%
 %%%%%%%%%%%%%%%%%%%%%%%%%%%%%%%%%%%%%%%%%%%%%%%
\section{Introduction}\label{sec:intro}

Differential symmetry breaking operators (DSBOs) $\mathbb{D}$ are differential
$G'$-intertwining operators from a representation of $G$ to that of $G'$
for a pair of Lie groups $G \supseteq G'$.
The classification and construction of DSBOs $\mathbb{D}$ are in general 
very difficult problems. In case that 
$G \supset G'$ are a pair of reductive Lie groups and 
that the representations of $G$ and $G'$ are 
both parabolically induced representations, 
one can apply a stunning machinery called the \emph{F-method} to solve the problems.
Loosely speaking, 
via a certain Fourier transform $F_c$,
the F-method converts
the problem of the classification and construction of DSBOs $\mathbb{D}$ 
into that of solving a system of partial differential equations over
polynomials $\psi(\zeta) \in \Pol(\fn_+)$ 
on the nilpotent radical $\fn_+$ of a parabolic subalgebra 
$\fp=\fl \oplus \fn_+$ of the complexified Lie algebra $\fg$ of $G$.
Since its invention by T.\ Kobayashi around 2010,
the F-method has been intensively used to study DSBOs $\mathbb{D}$
(cf.\ \cite{FJS20,  Kobayashi13, Kobayashi14, KKP16, KP1, KP2,
 KOSS15, KrSom17, Kubo24+, PV23, PV25a+, PV25b+, Somberg25}).
 
One of the key ideas
of the F-method is the symbol map $\Symb$,
whose inverse $\Symb^{-1}$
effectively transforms polynomial solutions $\psi(\zeta)$ to
the system of differential equations into DSBOs $\mathbb{D}$.
Nevertheless, it is defined only in the case that the nilpotent radical 
$\fn_+$ is abelian (\cite[Sect.\ 4.2]{KP1}). 
In this paper, we introduce 
a variant of the symbol map, which we call 
the \emph{truncated symbol map} $\Symb_0$, to overcome this issue.
As an application to the truncated symbol map $\Symb_0$, 
we take $G=G'=SL(3,\R)$ and  consider 
DSBOs $\mathbb{D}$ between parabolically induced
representations for $(SL(3,\R), B)$, 
where $B$ is a Borel subgroup of $SL(3,\R)$.
(For the maximal parabolic case, see \cite{Kubo24+, KuOr25a, KuOr25+}.)

In the following, we shall discuss  the F-method and truncated 
symbol map $\Symb_0$, 
the DSBOs $\mathbb{D}$ for $(SL(3,\R), B)$, and 
a relationship between our results on DSBOs $\mathbb{D}$ and 
the ones on 
the homomorphisms $\varphi$ between Verma modules in the literature in order.

We remark that, to distinguish the symmetry breaking setting 
$G \supseteq G'$ from the classical one $G=G'$, 
a DSBO $\mathbb{D}$ in the latter case will be referred to as 
a differential intertwining operator (DIO) $\D$ in the rest of the paper.

%%%%%%%%%%%%%%%%%%%%%%%%%%%%%%%%%%%%%%%%%%%%%%%
\subsection{F-method and truncated symbol map $\Symb_0$}
\label{sec:IntroFmethod}
 
In principle, the F-method can be applied  no matter what 
the nilpotent radical $\fn_+$ is abelian or not; 
nevertheless, there is a certain difficulty for the non-abelian case.
To compute DSBOs $\mathbb{D}$, one needs to evaluate
the inverse image of $F_c^{-1}(\psi(\zeta))$ of the Fourier transform $F_c$.
The problem is that the inverse image $F_c^{-1}(\psi(\zeta))$ is not apparent
unless the nilpotent radical $\fn_+$ is abelian. 
Then, in this paper, 
we shall first resolve this issue.
This is done in Section \ref{sec:Fc}.
A key idea is to observe a well-known 
linear isomorphism between the space $\Diff_G(G)$
of left-invariant differential operators $D$ on a Lie group $G$ and 
the symmetric algebra $S(\fg)$ of the complexified Lie algebra $\fg$ of $G$.
The inverse $F_c^{-1}(\psi(\zeta))$ is then determined by
relating the infinitesimal left translation $dL$
with the right translation $dR$ via the Dirac delta function $\delta_0$.

The elucidation of the inverse image $F_c^{-1}(\psi(\zeta))$ 
leads us to the notion of the aforementioned truncated symbol
$\Symb_0(\mathbb{D})$ of $\mathbb{D}$.
This is the symbol 
of the constant coefficient part $\mathbb{D}_0$ of $\mathbb{D}$ in the usual sense.
For instance, we shall show in Example \ref{example:symb3} that
the differential operator
\begin{align*}
\mathbb{D}
=\frac{\partial^3}{\partial x_1^2 \partial x_2}
+\frac{\partial^2}{\partial x_ 1\partial x_3}
&+\frac{1}{2}x_2\frac{\partial^3}{\partial x_3^3}
+x_2\frac{\partial^3}{\partial x_1\partial x_2 \partial x_3}\\[3pt]
&+\frac{1}{4}x_2^2\frac{\partial^3}{\partial x_2 \partial x_3^2}
-\frac{1}{2}x_1\frac{\partial^3}{\partial x_1^2 \partial x_3}
-\frac{1}{2}x_1x_2\frac{\partial^3}{\partial x_1 \partial x_3^2}
-\frac{1}{8}x_1x_2^2\frac{\partial^3}{\partial x_3^3}
\end{align*}
is a DSBO (or DIO) between parabolically induced representations
for $(SL(3,\R), B)$.
The truncated symbol $\Symb_0(\mathbb{D})$ is then given by
\begin{equation*}
\Symb_0({\mathbb{D}})=\zeta_1^2\zeta_2+\zeta_1\zeta_3.
\end{equation*}
By the intertwining property,
the DSBO $\mathbb{D}$ is determined at the identity $e$,
which in principle allows one to recover $\mathbb{D}$ from $\Symb_0(\mathbb{D})$.
By definition, if the nilpotent radical $\fn_+$ is abelian,
then $\Symb_0(\mathbb{D}) = \Symb(\mathbb{D})$
as $\mathbb{D} = \mathbb{D}_0$ in the case (cf.\ \cite[Sect.\ 2.8]{KuOr25a}).
The truncated symbol map $\Symb_0$ 
in particular allows one to carry out a recipe of the F-method
for any nilpotent radical
as in the abelian setting \cite[Sect.\ 4.4]{KP1}.
We shall discuss the details in Section \ref{sec:symbF}.

Remark that K{\v r}i{\v z}ka--Somberg
also computed $F_c^{-1}(\psi(\zeta))$ in their formulation
in \cite{KrSom17}. Their proof is, however, different from ours.
For instance, their proof requires some brute-force computations,
whereas ours does not. We would also like to note that the truncated symbol 
$\Symb_0{(\mathbb{D}})$ is not discussed in their paper.

%%%%%%%%%%%%%%%%%%%%%%%%%%%%%%%%%%%%%%%%%%%%%%%
\subsection{Differential intertwining operators $\D$ for $(SL(3,\R), B)$}
\label{sec:IntroDIO}

As an application to the truncated symbol map $\Symb_0$, 
we provide the classification of DIOs $\D$ 
for the pair $(SL(3,\R), B)$ with their explicit formulas.
This is accomplished 
in Theorem \ref{thm:DIO1} for
the classification of the parameters 
$(\eps, \delta; \lambda, \nu) \in (\Z/2\Z)^4 \times \C^4$,
for which the space $\Diff_G(I(\lambda)^{(\eps)}, I(\nu)^{(\delta)})$
of DIOs $\D$ is non-zero.
The classification of DIOs $\D$ with explicit formulas 
is obtained in Theorem \ref{thm:DIO2}.

Our results show that  the DIOs $\D$ other than the identity map $\id$ 
are classified as the following 
five families of operators:
\begin{equation*}
\D_1^k, \quad
\D_2^k, \quad
\D_{+}^{(k,\ell)}, \quad
\D_{-}^{(k,\ell)}, \quad
\D_c^{(s;k)},
\end{equation*}
where $k,\ell \in 1+\Z_{\geq 0}$ and $s \in \C$.
By a celebrated theorem of BGG--Verma, it is well-known that 
every DIO $\D$ is associated with a Weyl group element. 
If $\ga, \beta, \gamma=\ga+\beta$ denote the positive roots of $\f{sl}(3,\C)$,
then the corresponding Weyl group elements are given as follows:
\begin{equation}\label{eqn:corresp}
\D_j^k\;  (j=1,2) \, \leftrightarrow s_\ga, s_\beta,\qquad 
\D_\pm^{(k,\ell)} \leftrightarrow s_\ga s_\beta, s_\beta s_\ga, \qquad 
\D_c^{(s;k)} \leftrightarrow s_\gamma,
\end{equation}
where $s_\ga, s_\gb, s_\gamma$ are 
the root reflections with respect to $\ga, \gb, \gamma$, respectively.

In relation to their formulas,
the first two operators
$\D_j^k$ for $j=1,2$
are given by powers of root vectors for negative simple roots. 
The next two $\D_{\pm}^{(k,\ell)}$ are characterized as the ones
having products of two binomial coefficients in their coefficients.
The last operator $\D_c^{(s;k)}$, which can be thought of as
the  most ``exotic'' operator,  has
so-called Cayley continuants $\{\Cay_m(x;y)\}_{m=0}^\infty$,
a series of tridiagonal determinants studied by Cayley in 1858 
(cf.\  \cite{Cayley58} and \cite[Intro.]{MT05}),  in its coefficients.
Remark that the ``Cayley operator'' $\D_c^{(s;k)}$ is defined 
for  a continuous parameter $s \in \C$.
Although there are a number of studies for DIOs $\D$ 
in the form of homomorphisms between Verma modules,
none of them seems to identify 
the classical tridiagonal determinants (see Section \ref{sec:IntroHom}).
We shall discuss in detail 
the explicit formulas of $\D$ in Section  \ref{sec:construction}
and
the correspondence \eqref{eqn:corresp} in Section  \ref{sec:InfChar}.
Further, it is also shown in Section \ref{sec:Uniform}
that the coefficients of $\D_{\pm}^{(k,\ell)}$ and $\D_c^{(s;k)}$ are
given in terms of the values of Jacobi polynomials $P^{(\ga, \beta)}_m(z)$ 
at $z=0$. 
(Here, $\ga, \beta$ stand for complex parameters as opposed to 
\eqref{eqn:corresp}.)
The uniform expressions play a role to study factorization identities
of $\D$ in degenerated cases (see 
Remark \ref{rem:pJ} and Cases (5) and (6) of 
Theorems \ref{thm:factor1} and \ref{thm:factor2}).

Remark that
our families of operators include some differential operators
constructed by Barchini--Kable--Zierau \cite{BKZ08}, Kable \cite{Kable12c},
and the first author \cite{Kubo11}. For instance, Kable
constructed a second order differential operator $\square_s^{(n)}$ 
with complex parameter $s\in \C$ for $(\f{sl}(n,\C), \fp_{\text{Heis}})$ 
in \cite{Kable12c}, where $\fp_{\text{Heis}}=\fl \oplus \fn_+$
is the parabolic subalgebra of $\f{sl}(n,\C)$ with Heisenberg nilpotent radical $\fn_+$.
The operator $\square_s^{(3)}$ with $n=3$ 
is exactly the Cayley operator $\D_c^{(s;1)}$ with $k=1$.
The representations realized on the kernel 
$\Ker(\square_s^{(3)})$ of the differential operator 
$\square_s^{(3)}=\D_c^{(s;1)}$ 
are recently studied from various points of views.
On this matter, see, for instance, 
\cite{Dahl19, Frahm24, Kable12c, KuOr19, KuOr25b, Tamori21}.
The basic solutions of $\square_s^{(3)}$ are investigated in \cite{Kable12b}.

The DIOs $\D$ associated with $(\f{sl}(n,\C), \fp_{\text{Heis}})$ are also studied by 
K{\v r}i{\v z}ka--Somberg  in connection with a branching law
of generalized Verma modules.
In \cite{KrSom17}, they made some attempts to construct 
DIOs $\D$ for $(\f{sl}(n,\C), \fp_{\text{Heis}})$
with $n > 4$ 
and DSBOs $\mathbb{D}$ for 
$(\f{sl}(n,\C), \f{sl}(n-r,\C);\fp_{\text{Heis}}, \fp'_{\text{Heis}})$
with $n-r>4$,
where $\fp'_{\text{Heis}}$ is the parabolic subalgebra
of $\f{sl}(n-r,\C)$  with Heisenberg nilpotent radical.
Their expressions are, nevertheless, not fully explicit.
Concrete expressions are provided for some degree-two cases
(\cite[Examples 3.6]{KrSom17}).

Recently, Ditlevsen--Labriet \cite{DL25+} studies DSBOs $\mathbb{D}$
for $(G, G') = (GL(3,\R), GL(2,\R))$ with $(B, B')$ their Borel subgroups,
respectively, from a distribution point of view.
They considered several
embeddings $\iota\colon G'/B' \hookrightarrow G/B$
and constructed DSBOs $\mathbb{D}$ with respect to each $\iota$.
A study of DSBOs $\mathbb{D}$ for the pair in light 
of the F-method will be reported on elsewhere.
%in \cite{DPV25+}.

%%%%%%%%%%%%%%%%%%%%%%%%%%%%%%%%%%%%%%%%%%%%%%%
\subsection{Homomorphisms $\varphi$ between Verma modules and 
factorization identities}
\label{sec:IntroHom}

It is well-known that the category of DIOs $\D$ between parabolically induced
representations for a pair $(G,P)$
is equivalent to that of $(\fg, P)$-homomorphisms $\varphi$ 
between the corresponding generalized Verma modules for 
$(\fg, \fp)$
(see, for instance, \cite{CS90, KP1, KR00, Kostant75}).
Here $\fg$ and $\fp$ denote the complexified Lie algebras of $G$ and $P$,
respectively.
Thus, we classify and construct homomorphisms $\varphi$ 
between Verma modules corresponding to DIOs $\D$  as well. 
These are achieved in Theorems \ref{thm:Hom1} and \ref{thm:Hom2}.

Some homomorphisms $\varphi \colon M(\mu_3) \to M(\mu_1)$ 
may admit an identity 
$\varphi= \varphi_1 \circ \varphi_2$
with $\varphi_1\colon M(\mu_2) \to M(\mu_1)$ and 
$\varphi_2 \colon M(\mu_3) \to M(\mu_2)$.
Such an identity is called a \emph{factorization identity}.
We shall show the factorization identities of our homomorphisms $\varphi$
in Section \ref{sec:factorization}.
It is noteworthy that 
a binomial Krawtchouk polynomial $K_m(x;y)$,
which is an incarnation of a Cayley continuant $\Cay_m(x;y)$,
appears in the proof.

As Verma modules are fundamental objects in the representation theory of 
finite-dimensional complex simple Lie algebras,
there are already a number of works for the construction
of the homomorphisms $\varphi$ between Verma modules,
or more precisely, 
the explicit formulas of  singular vectors.
Those include the works of
Malikov--Fe{\v i}gin--Fuks \cite{MFF86},
de Graaf \cite{deGraaf05},
Xu \cite{Xu05, Xu12}, 
and
Xiao \cite{Xiao15b, Xiao15a, Xiao18}, among others.
For instance, 
Malikov--Fe{\v i}gin--Fuks \cite{MFF86} studied 
explicit formulas of singular vectors of Verma modules over Kac--Moody algebras. 
They, in particular, gave singular vectors for $\f{sl}(n,\C)$.
Remark that the Cayley continuants $\Cay_m(x;y)$ do not appear 
in any of the aforementioned works. A key difference between ours and theirs 
is symmetrization; they fix a certain order of root vectors to obtain
singular vectors, while we take symmetrization of them.
The little trick of symmetrization reveals the tridiagonal determinants 
hidden in the coefficients.

%%%%%%%%%%%%%%%%%%%%%%%%%%%%%%%%%%%%%%%%%%%%%%%
\subsection{Organization of the paper}

The paper consists of eight sections including the introduction.
In the first two sections, Sections \ref{sec:Fmethod} and \ref{sec:symbF},
we discuss about the F-method in a full generality, namely, 
in a symmetry breaking setting $G \supseteq G'$.
More precisely, in Section \ref{sec:Fmethod}, 
we overview some necessary facts about the F-method.
Then, in Section \ref{sec:symbF}, 
we determine the inverse 
$F_c^{-1}\colon \Pol(\fn_+)\otimes V^\vee 
\stackrel{\sim}{\to}
M_\fp^\fg(V^\vee)$
of the algebraic Fourier transform $F_c$ of the 
generalized Verma module $M_\fp^\fg(V^\vee)$. 
We achieve it in Theorem \ref{thm:hdL} and Corollary \ref{cor:Fc}.
In the course of the elucidation of $F_c^{-1}$,
we develop some variants of symbol maps on the space $\Diff_N(N)$
of left-invariant differential operators $D$ on a simply-connected,
connected, nilpotent Lie group $N$.
The arguments culminate with the truncated symbol map 
$\Symb_0 \colon
\operatorname{Diff}_{G'}(\mathcal V, \mathcal W)
\stackrel{\sim}{\to}
\mathrm{Sol}(\mathfrak n_+;V,W)$
of the space 
$\operatorname{Diff}_{G'}(\mathcal V, \mathcal W)$ of DSBOs
 in Theorem \ref{thm:symb}.
At the end of the section, we give a recipe of the F-method by using
the inverse map $\Symb_0^{-1}$, 
which can be applied even for non-abelian nilpotent radicals.

The aim of Section \ref{sec:SL3} is to state our main results on 
the classification and construction of DIOs $\D$
and $(\fg, B)$-homomorphisms $\varphi$ between Verma modules
for $(SL(3,\R),B)$.
The main results are stated in Theorems \ref{thm:DIO1} and \ref{thm:Hom1}
for classification of the parameters and Theorems \ref{thm:DIO2} and \ref{thm:Hom2}
for explicit formulas.
We also summarize 
the homomorphisms $\varphi$ in terms of infinitesimal characters
in Theorem \ref{thm:Hom3}.

Sections \ref{sec:proof1} and \ref{sec:proof2} are devoted to the proof 
of the theorems in Section \ref{sec:SL3}. Our proof is based on 
the recipe of the F-method.
We proceed with the first half of the steps in Section \ref{sec:proof1} and 
the remaining steps are carried out in Section \ref{sec:proof2}. 
We need to solve a system of PDEs at the outset.
Via the so-called T-saturation, it is reduced to a system of 
ODEs, which successfully uncovers a striking connection between
DIOs $\D$ and Cayley continuants $\Cay_m(x;y)$.

In Section \ref{sec:Uniform}, we discuss about some uniform expressions
of certain polynomials $p^{(k,\ell)}_{\pm}(t)$ and $p^{(s;k)}_c(t)$, which 
in principle provide DIOs $\D$ and homomorphisms $\varphi$ in concern.
We shall show in Theorem \ref{thm:unif}  
that these polynomials can be expressed in terms of 
Jacobi polynomials $P^{(\ga,\beta)}_{m}(z)$ with certain parameters $(\ga,\beta)$
at $z=0$. At the end of the section, 
binary Krawtchouk polynomials $K_m(x;y)$ are also discussed.

The factorization identities of DIOs $\D$ and homomorphisms $\varphi$ are 
the main topics of Section \ref{sec:factorization}.
In this section we first recall well-known results of Verma and BGG on the 
classification of homomorphisms between Verma modules. 
The factorization identities are then stated in 
Theorems \ref{thm:factor1} and \ref{thm:factor2} 
for $(\fg, B)$-homomorphisms $\varphi$ and DIOs $\D$, respectively.
The binary Krawtchouk polynomials $K_m(x;y)$
plays a pivotal role in the proof.

%%%%%%%%%%%%%%%%%%%%%%%%%%%%%%%%%%%%%%%%%%%%
\section{Preliminaries: the F-method}
\label{sec:Fmethod}

The aim of this section is to give a quick review on the F-method,
which plays a central role in this paper. In the next section 
we shall discuss the truncated symbol $\Symb_0(\mathbb{D})$ 
of a DSBO $\mathbb{D}$. 
In this section we mainly take the expositions from \cite{Kubo24+, KuOr25a}.

%%%%%%%%%%%%%%%%%%%%%%%%%%%%%%%%%%%%%%%%%%
\subsection{Notation}\label{sec:prelim}

Let $G$ be a real reductive Lie group and $P=MAN_+ $ a Langlands decomposition of a parabolic subgroup $P$ of $G$. We denote by $\fg(\R)$ and 
$\fp(\R) = \fm(\R) \oplus \fa(\R) \oplus \fn_+(\R)$ the Lie algebras of $G$ and 
$P=MAN_+$, respectively.

 For a real Lie algebra $\f{y}(\R)$, we write $\f{y}$
and $\Cal{U}(\fy)$ for its complexification and the universal enveloping algebra of 
$\fy$, respectively. 
For instance, $\fg, \fp, \fm, \fa$, and $\fn_+$ are the complexifications of $\fg(\R), \fp(\R), \fm(\R), \fa(\R)$, and $\fn_+(\R)$, 
respectively.

For $\lambda \in \fa^* \simeq \Hom_\R(\fa(\R),\C)$,
we denote by $\C_\lambda$ 
the one-dimensional representation of $A$ defined by 
$a\mapsto a^\lambda:=e^{\lambda(\log a)}$. 
For a finite-dimensional irreducible 
representation $(\xi, V)$ of $M$ and $\lambda \in \fa^*$,
we denote by $\xi_\lambda$ the outer tensor product 
representation $\xi \boxtimes \C_\lambda$
on $V$,
namely, 
$\xi_\lambda \colon
ma \mapsto a^\lambda\xi(m)$. By letting $N_+$ act trivially, we regard 
$\xi_\lambda$ as a representation of $P$. Let $\Cal{V}:=G \times_P V \to G/P$
be the $G$-equivariant vector bundle over the real flag variety $G/P$
 associated with the 
representation $(\xi_\lambda, V)$ of $P$. We identify the Fr{\' e}chet space 
$C^\infty(G/P, \Cal{V})$ of smooth sections with 
\begin{equation*}
C^\infty(G, V)^P:=\{f \in C^\infty(G,V) : 
f(gp) = \xi_\lambda^{-1}(p)f(g)
\;\;
\text{for any $p \in P$}\},
\end{equation*}
the space of $P$-invariant smooth functions on $G$.
Then, via the left regular representation $L$ of $G$ on $C^\infty(G)$,
we realize the parabolically induced representation 
$\pi_{(\xi, \lambda)} = \Ind_{P}^G(\xi_\lambda)$ on $C^\infty(G/P, \Cal{V})$.
We denote by $R$ the right regular representation of $G$ on $C^\infty(G)$.

Let $G'$ be a reductive subgroup of $G$ and $P'=M'A'N_+'$ 
a parabolic subgroup of $G'$
with $P' \subset P$
so that there is a natural morphism $p\colon G'/P' \to G/P$
from the real flag variety $G'/P'$ to $G/P$.
We further assume that $M'A' \subset MA$, and $N_+' \subset N_+$.

As for $G/P$, for a finite-dimensional irreducible
representation $(\varpi_\nu, W)$ of $M'A'$, we define
an induced representation $\pi'_{(\varpi, \nu)}=\Ind_{P'}^{G'}(\varpi_\nu)$ on the 
space $C^\infty(G'/P', \Cal{W})$ of smooth sections for the
$G'$-equivariant vector bundle $\Cal{W}:=G'\times_{P'}W \to G'/P'$.

Via the morphism $p\colon G'/P' \to G/P$, one can define 
differential operators $\bD\colon C^\infty(G/P, \Cal{V}) \to C^\infty(G'/P', \Cal{W})$
with respect to $p$
although $G/P$ and $G'/P'$ are different manifolds 
(see, for instance, \cite[Def.\ 2.1]{KP1}).
As $C^\infty(G/P, \Cal{V})$ is a $G$-representation and $G' \subset G$,
the space $C^\infty(G/P, \Cal{V})$ is also a $G'$-representation.
We then write $\Diff_{G'}(\Cal{V},\Cal{W})$ for the space of 
\emph{differential symmetry breaking operators} 
(differential $G'$-intertwining operators)
$\bD \colon C^\infty(G/P, \Cal{V}) \to C^\infty(G'/P', \Cal{W})$.
When $G=G'$ and $P=P'$, we also simply refer to 
them as \emph{differential intertwining operators}
(or intertwining differential operators).

Let 
$\mathfrak{g}(\R)=\mathfrak{n}_-(\R) \oplus \mathfrak{m}(\R) 
\oplus \mathfrak{a}(\R) \oplus \mathfrak{n}_+(\R)$ 
be the 
Gelfand--Naimark decomposition of $\mathfrak{g}(\R)$
with $\fp(\R) = \mathfrak{m}(\R) \oplus \mathfrak{a}(\R) \oplus \mathfrak{n}_+(\R)$,
and write $N_- := \exp(\fn_-(\R))$. We identify $N_-$ with the 
open Bruhat cell $N_-P$ of $G/P$ via the embedding 
$\iota\colon N_- \hookrightarrow G/P$, $\bar{n} \mapsto \bar{n}P$.
Via the restriction of the vector bundle $\Cal{V} \to G/P$ to the open Bruhat cell
$N_-\stackrel{\iota}{\hookrightarrow} G/P$,
we regard $C^\infty(G/P,\mathcal{V})$ as a subspace of 
$C^\infty(N_-) \otimes V$.

Likewise,
let 
$\mathfrak{g}'(\R)=\mathfrak{n}_-'(\R) \oplus \mathfrak{m}'(\R) 
\oplus \mathfrak{a}'(\R) \oplus \mathfrak{n}_+'(\R)$ 
be the
Gelfand--Naimark decomposition of $\mathfrak{g}'(\R)$ with
$\fp(\R)'= \mathfrak{m}'(\R) 
\oplus \mathfrak{a}'(\R) \oplus \mathfrak{n}_+'(\R)$,
and write $N_-' := \exp(\fn_-'(\R))$.
As for $C^\infty(G/P,\Cal{V})$, we regard $C^\infty(G'/P',\Cal{W})$
as a subspace of $C^\infty(N_-')\otimes W$.

We often view differential symmetry breaking operators
$\bD \colon
C^\infty(G/P,\mathcal{V})
\to C^\infty(G'/P',\mathcal{W})$
as differential operators 
$\w{\bD} \colon C^\infty(N_-) \otimes V
\to C^\infty(N_-') \otimes W$ 
%(in the sense of \cite[Def.\ 2.1]{KP1})
such that
the restriction $\w{\bD}\vert_{C^\infty(G/P,\mathcal{V})}$
to $C^\infty(G/P,\mathcal{V})$ is a map
$\w{\bD}\vert_{C^\infty(G/P,\mathcal{V})}\colon
C^\infty(G/P,\mathcal{V})
\to C^\infty(G'/P',\mathcal{W})$ (see \eqref{eqn:21} below).
\begin{equation}\label{eqn:21}
\begin{aligned}
\xymatrix{
C^\infty(N_-) \otimes V 
\ar[r]^{\w{\bD} } 
& C^\infty(N_-') \otimes W\\ 
C^\infty(G/P,\mathcal{V}) 
\;\; \ar[r]_{\stackrel{\phantom{a}}{\hspace{20pt}\bD=\w{\bD} \vert_{\small{C^\infty(G/P,\mathcal{V})}}} }
 \ar@{^{(}->}[u]^{\iota^*}
& \;\; C^\infty(G'/P',\mathcal{W}) \ar@{^{(}->}[u]_{\iota^*}
}
\end{aligned}
\end{equation}

\noindent
In particular, we often regard $\Diff_{G'}(\Cal{V},\Cal{W})$ as 
\begin{equation}\label{eqn:DN}
\Diff_{G'}(\Cal{V},\Cal{W}) \subset 
\Diff_\C(C^\infty(N_-)\otimes V, C^\infty(N_-')\otimes W).
\end{equation}

%%%%%%%%%%%%%%%%%%%%%%%%%%%%%%%%%%%%%%%%%%
\subsection{Duality theorem}\label{sec:duality}

For a finite-dimensional irreducible representation $(\xi_\lambda,V)$ of $MA$,
we write $V^\vee = \Hom_\C(V,\C)$ and $((\xi_\lambda)^\vee, V^\vee)$ for
the contragredient representation of $(\xi_\lambda,V)$. By letting $\fn_+$ act 
on $V^\vee$ trivially, we regard the infinitesimal representation 
$d\xi^\vee \boxtimes \C_{-\lambda}$ of $(\xi_\lambda)^\vee$ as a $\fp$-module. For a finite-dimensional irreducible representation 
$(\varpi_\nu, W)$ of $M'A'$, a $\fp'$-module 
$d\varpi^\vee \boxtimes \C_{-\nu}$ is defined similarly.
We write
\begin{equation*}
\Mp(V^\vee) := \Cal{U}(\fg)\otimes_{\Cal{U}(\fp)}V^\vee
\quad
\text{and}
\quad
\Mpp(W^\vee) := \Cal{U}(\fg')\otimes_{\Cal{U}(\fp')}W^\vee
\end{equation*}
for generalized Verma modules for $(\fg, \fp)$ and $(\fg',\fp')$ induced from 
$d\xi^\vee \boxtimes \C_{-\lambda}$ and $d\varpi^\vee \boxtimes \C_{-\nu}$,
respectively.
Via the diagonal action of $P$ on $\Mp(V^\vee)$, we regard
$M_\fp^\fg(V^\vee)$ as a $(\fg, P)$-module. Likewise, we regard 
$\Mpp(W^\vee)$ as a $(\fg', P')$-module.

The following theorem is often referred to as the \emph{duality theorem}. 
For the proof, see \cite{KP1}.

\begin{thm}[Duality theorem]\label{thm:duality}
There is a natural linear isomorphism
\begin{equation}\label{eqn:duality1}
\EuD
\colon
\operatorname{Hom}_{P'}(W^\vee,\Mp(V^\vee))
\stackrel{\sim}{\To} 
\operatorname{Diff}_{G'}(\mathcal V, \mathcal W).
\end{equation}
Equivalently,
\begin{equation}\label{eqn:duality2}
\EuD
\colon
\operatorname{Hom}_{\fg', P'}(\Mpp(W^\vee),\Mp(V^\vee))
\stackrel{\sim}{\To} 
\operatorname{Diff}_{G'}(\mathcal V, \mathcal W).
\end{equation}
\end{thm}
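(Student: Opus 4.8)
The plan is to produce the map $\EuD$ explicitly and then verify bijectivity, working with the ``$N_-$-picture'' of induced representations set up in Section~\ref{sec:prelim}. First I would recall that, by restriction to the open Bruhat cell $N_- \hookrightarrow G/P$, a section in $C^\infty(G/P,\Cal{V})$ is determined by its Taylor expansion at the base point $eP$, and dually that $\Mp(V^\vee) = \Cal{U}(\fg) \otimes_{\Cal{U}(\fp)} V^\vee \cong \Cal{U}(\fn_-) \otimes V^\vee$ (PBW) is the algebraic model of distributions supported at $eP$. The pairing between $\Cal{U}(\fn_-)\otimes V^\vee$ and the jets of $V$-valued functions on $N_-$ at $e$ is the standard nondegenerate one: $u \otimes v^\vee$ acts on $f$ by $v^\vee\big((R(u)f)(e)\big)$, where $R$ is the infinitesimal right regular action. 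So to a $P'$-homomorphism $\phi\colon W^\vee \to \Mp(V^\vee)$ I associate the operator $\EuD(\phi)$ sending $f \in C^\infty(G/P,\Cal{V})$ to the $W$-valued function on $G'$ whose value at $g' \in G'$ is the functional $w^\vee \mapsto \langle \phi(w^\vee),\, \text{(jet at $e$ of } L(g')^{-1} f)\rangle$; one then checks this lands in $C^\infty(G'/P',\Cal{W})$ and is a differential operator along $p\colon G'/P' \to G/P$.

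The key steps, in order, are: (1) write down $\EuD(\phi)$ as above and check it is well-defined, i.e.\ the $P'$-equivariance of $\phi$ forces the output to satisfy the $\varpi_\nu$-covariance condition defining $C^\infty(G'/P',\Cal{W})$ (this is a direct unwinding: the left-translation behaviour of $f$ under $P$, combined with $\phi$ intertwining $d\xi^\vee\boxtimes\C_{-\lambda}$ and $d\varpi^\vee\boxtimes\C_{-\nu}$, yields exactly the transformation law on the $N_-'$-cell); (2) check $\EuD(\phi)$ is a differential operator with respect to $p$ and is $G'$-intertwining --- intertwining is automatic because $L(g')$ commutes with the right action used to define the pairing, and the differential-operator property follows because $\phi(w^\vee)$ is a \emph{finite} sum of PBW monomials, so only finitely many derivatives at $e$ enter; (3) linearity and injectivity of $\EuD$: if $\EuD(\phi)=0$ then, by the nondegeneracy of the jet pairing and the fact that an arbitrary jet at $e$ is realized by some $f \in C^\infty(G/P,\Cal{V})$, we get $\phi(w^\vee)=0$ for all $w^\vee$; (4) surjectivity: given a differential symmetry breaking operator $\bD$, its ``symbol at $e$'' $f \mapsto$ (jet of $\bD f$ at $e \in N_-'$) defines, via the jet pairing, a linear map $W^\vee \to \Mp(V^\vee)$, and the $P'$-intertwining property of $\bD$ translates back into $P'$-equivariance of this map, giving the inverse of $\EuD$. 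Finally, for \eqref{eqn:duality2} I would invoke the Frobenius-type adjunction $\Hom_{P'}(W^\vee, M) \cong \Hom_{\fg',P'}(\Cal{U}(\fg')\otimes_{\Cal{U}(\fp')} W^\vee, M)$ valid for any $(\fg',P')$-module $M$, applied to $M = \Mp(V^\vee)$ (which is a $(\fg,P)$-module, hence a $(\fg',P')$-module by restriction).

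The main obstacle I expect is step~(1)–(2): pinning down the precise dictionary between (a) the covariance/transformation laws that cut out $C^\infty(G'/P',\Cal{W})$ inside $C^\infty(N_-')\otimes W$, and (b) the $\fp'$-module structure on $d\varpi^\vee \boxtimes \C_{-\nu}$ used to define $\Mpp(W^\vee)$ --- in particular getting all the contragredient/dual twists and the $\rho$-shift conventions consistent, and verifying that the ``only finitely many PBW monomials'' observation genuinely yields a differential (not merely local) operator \emph{along $p$} in the sense of \cite[Def.~2.1]{KP1}. Once the pairing $\langle\,\cdot\,,\,\cdot\,\rangle$ between $\Cal{U}(\fn_-)\otimes V^\vee$ and jets is set up with matching conventions, steps (3) and (4) are formal consequences of its nondegeneracy, and the passage to \eqref{eqn:duality2} is a standard tensor-hom adjunction; I would not belabor those. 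I note this is a known result and refer to \cite{KP1} for the conventions and the full verification.
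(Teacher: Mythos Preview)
Your plan is a correct outline of the standard argument, and you yourself flag that this is a known result proved in \cite{KP1}. Note that the paper does not supply its own proof of this theorem: it simply states the result, records the explicit formula \eqref{eqn:HD} for $\EuD(\Phi)$, and refers to \cite{KP1} for the verification. So there is nothing to compare your approach against here beyond observing that your sketch --- pairing $\Cal{U}(\fn_-)\otimes V^\vee$ with jets at $e$ via the right regular action, checking $P'$-covariance and locality, then inverting by reading off the symbol at $e$ --- is exactly the mechanism underlying \eqref{eqn:HD} and the Kobayashi--Pevzner proof.
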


Observe that 
a $P'$-equivariant map 
\begin{equation*}
\Phi \in \Hom_{P'}(W^\vee, \Mp(V^\vee)) 
\simeq (\Mp(V^\vee) \otimes W)^{P'} \subset 
\Cal{U}(\fn_-)\otimes V^\vee \otimes W
\end{equation*}
is of the form
\begin{equation*}
\Phi = \sum_{i,j}u_{i,j}\otimes v^\vee_i \otimes w_j
\end{equation*}
for some $u_{i,j} \in \Cal{U}(\fn_-)$, $v_i^\vee \in V^\vee$, and $w_j \in W$.
Then, under the identification \eqref{eqn:DN},
the differential operator 
\begin{equation*}
\EuD(\Phi)\colon C^\infty(N_-)\otimes V\to C^\infty(N_-')\otimes W
\end{equation*}
is given by 
\begin{equation}\label{eqn:HD}
\EuD(\Phi) = \sum_{i,j}dR(u_{i,j})\vert_{N_-'} \otimes v^\vee_i \otimes w_j,
\end{equation}
where $dR$ is the infinitesimal right translation.
%(See \cite[Thm.\ 2.9]{KP1} for a basis-free expression of $\EuD(\Phi)$.)

%%%%%%%%%%%%%%%%%%%%%%%%%%%%%%%%%%%%%%%%%%
\subsection{Algebraic Fourier transform $\widehat{\;\;\cdot\;\;}$ of Weyl algebras}
\label{sec:Weyl}

Let $U$ be a complex finite-dimensional vector space with $\dim _\C U=n$.
Fix a basis $u_1,\ldots, u_n$ of $U$ and let 
$(z_1, \ldots, z_n)$ denote the coordinates of $U$ with respect to the basis.
Then the algebra
\begin{equation*}
\C[U;z, \tfrac{\partial}{\partial z}]:=
\C[z_1, \ldots, z_n, 
\tfrac{\partial}{\partial z_1}, \ldots, \tfrac{\partial}{\partial z_n}]
\end{equation*}
with relations $z_iz_j = z_jz_i$, 
$\frac{\partial}{\partial z_i}\frac{\partial}{\partial z_j} 
=\frac{\partial}{\partial z_j}\frac{\partial}{\partial z_i}$,
and $\frac{\partial}{\partial z_j} z_i=\delta_{i,j} + z_i \frac{\partial}{\partial z_j}$
is called the Weyl algebra of $U$, where $\delta_{i,j}$ is the Kronecker delta.
Similarly,
let $(\zeta_1,\ldots, \zeta_n)$ denote the coordinates of 
the dual space $U^\vee$ of $U$ with respect to the dual basis of 
$u_1,\ldots, u_n$. We write 
$\C[U^\vee;\zeta, \tfrac{\partial}{\partial \zeta}]$
for the Weyl algebra of $U^\vee$.
Then the map determined by
\begin{equation}\label{eqn:Weyl}
\widehat{\frac{\partial}{\partial z_i}}:= -\zeta_i, \quad
\widehat{z_i}:=\frac{\partial}{\partial \zeta_i}
\end{equation}
gives a Weyl algebra isomorphism 
\begin{equation}\label{eqn:Weyl2}
\widehat{\;\;\cdot\;\;}\;\colon\C[U;z, \tfrac{\partial}{\partial z}] 
\stackrel{\sim}{\To} 
\C[U^\vee;\zeta, \tfrac{\partial}{\partial \zeta}], 
\quad T \mapsto \widehat{T}.
\end{equation}
\vskip 0.1in
\noindent
The map \eqref{eqn:Weyl2} is called
the \emph{algebraic Fourier transform of Weyl algebras}
(\cite[Def.\ 3.1]{KP1}).

%%%%%%%%%%%%%%%%%%%%%%%%%%%%%%%%%%%%%%%%%%
\subsection{Fourier transformed representation $\widehat{d\pi_{(\xi,\lambda)^*}}$}
\label{sec:dpi2}

For a representation $\eta$ of $G$, we denote by $d\eta$ 
the infinitesimal representation of $\fg(\R)$. 
For instance,  $dL$ denotes the infinitesimal representation of
the left regular representation of $G$ on $C^\infty(G)$.
As usual, we naturally extend representations of $\fg(\R)$ 
to ones for the universal enveloping algebra $\Cal{U}(\fg)$ of its complexification $\fg$.
The same convention is applied for closed subgroups of $G$.

For $g \in N_-MAN_+$, we write
\begin{equation*}
g = p_-(g)p_0(g)p_+(g),
\end{equation*}
where $p_\pm(g) \in N_{\pm}$ and $p_0(g) \in MA$. 
Similarly, for $Z \in \fg = \fn_- \oplus \fl\oplus \fn_+$ with $\fl= \fm \oplus \fa$,
we write
\begin{equation*}
Z=Z_{\fn_-} + Z_{\fl} + Z_{\fn_+},
\end{equation*}
where $Z_{\fn_{\pm}} \in \fn_{\pm}$ and $Z_\fl \in \fl$.

For $2\rho\equiv 2\rho(\fn_+)= \mathrm{Trace}(\ad\vert_{\fn_+})\in \mathfrak{a}^*$,
we denote by  $\C_{2\rho}$ the one-dimensional representation of $P$
defined by
$p \mapsto \chi_{2\rho}(p)=
\abs{\mathrm{det}(\mathrm{Ad}(p)\colon 
\mathfrak{n}_+ \to \mathfrak{n}_+)}$.
For the contragredient representation
$((\xi_\lambda)^\vee, V^\vee)$ of $(\xi_\lambda,V)$,
we put
$\xi^*_\lambda := \xi^\vee \boxtimes \C_{2\rho-\lambda}$.
As for $\xi_\lambda$, we regard $\xi^*_\lambda$ as a representation of $P$.
Define the induced representation
$\pi_{(\xi, \lambda)^*} = \mathrm{Ind}_P^G(\xi^*_\lambda)$
on the space $C^\infty(G/P,\mathcal{V}^*)$ of smooth sections 
for the vector bundle $\mathcal{V}^*=G\times_P (V^\vee\otimes \C_{2\rho})$
associated with $\xi^*_\lambda$, which is isomorphic to the tensor bundle
of the dual vector bundle $\mathcal{V}^\vee = G\times_PV^\vee$ and the bundle
of densities over $G/P$.
Then the integration on $G/P$ gives a 
$G$-invariant non-degenerate bilinear form
$
\mathrm{Ind}^G_P(\xi_\lambda) 
\times \mathrm{Ind}^G_P(\xi^*_\lambda) \to \C
$
for $\mathrm{Ind}^G_P(\xi_\lambda)$ and 
$\mathrm{Ind}^G_P(\xi^*_\lambda)$.

As for $C^\infty(G/P,\mathcal{V})$,
the space $C^\infty(G/P,\mathcal{V}^*)$ can be regarded as 
a subspace of $C^\infty(N_-) \otimes V^\vee$.
Then the infinitesimal representation $d\pi_{(\xi,\lambda)^*}(Z)$
on $C^\infty(N_-)\otimes V^\vee$
for $Z \in \fg$ is given by
\begin{equation}\label{eqn:dpi3}
d\pi_{(\xi,\lambda)^*}(Z)f(\bar{n})
=d\xi_\lambda^*((\Ad(\bar{n}^{-1})Z)_\fl)f(\bar{n})
-\left(dR((\Ad(\cdot^{-1})Z)_{\fn_-})f\right)(\bar{n}).
\end{equation}
(For the details, see, for instance, \cite[Sect.\ 2]{KuOr25a}.)

Via the exponential map $\exp\colon \fn_-(\R) \stackrel{\sim}{\to} N_-$, one can regard
$\dpi_{(\xi,\lambda)^*}(Z)$ as a representation 
on $C^\infty(\mathfrak{n}_-(\R)) \otimes V^\vee$.
It then follows from \eqref{eqn:dpi3} that 
$\dpi_{(\xi,\lambda)^*}$ gives a Lie algebra homomorphism
\begin{equation*}
d\pi_{(\xi,\lambda)^*}\colon \mathfrak{g} \To 
\C[\fn_-;z, \tfrac{\partial}{\partial z}]  \otimes \mathrm{End}(V^\vee),
\end{equation*}
where $(z_1, \ldots, z_n)$ are coordinates of $\fn_-$ with $n=\dim_{\C} \fn_-$.

Now we fix a non-degenerate $\Ad$-invariant 
symmetric bilinear form $\kappa$ on $\fg$.
Via $\kappa$, we identify $\fn_+$  with the dual space $\fn_-^\vee$ of 
$\fn_-$. Then
the algebraic Fourier transform $\widehat{\;\;\cdot\;\;}$
of Weyl algebras \eqref{eqn:Weyl2} gives a Weyl algebra isomorphism
\begin{equation*}
\widehat{\;\;\cdot\;\;}\;\colon
\C[\fn_-;z, \tfrac{\partial}{\partial z}] 
\stackrel{\sim}{\To} 
\C[\fn_+;\zeta, \tfrac{\partial}{\partial \zeta}].
\end{equation*}
In particular, it gives a Lie algebra homomorphism
\begin{equation}\label{eqn:hdpi}
\widehat{d\pi_{(\xi,\lambda)^*}}\colon \mathfrak{g} \To 
\C[\fn_+;\zeta, \tfrac{\partial}{\partial \zeta}]\otimes \mathrm{End}(V^\vee).
\end{equation}

Let 
$\Cal{D}'_{[o]}(G/P,\Cal{V}^*)$ 
denote the space of 
$\mathcal{V}^*$-valued
distributions on $G/P$ supported at the identity coset $[o]=eP$ of $G/P$.
Likewise, we write $\Cal{D}'_{0}(\fn_-(\R),V^\vee)$ for the space 
of $V^\vee$-valued distributions on $\fn_-(\R)$ supported at $0$.
Then the following chain of $(\fg,P)$-isomorphisms holds:
\begin{alignat}{4}\label{eqn:VP}
M_\fp(V^\vee)
&\stackrel{\sim}{\To} \Cal{D}'_{[o]}(G/P,\Cal{V}^*) 
&&\stackrel{\sim}{\To} \Cal{D}'_{0}(\fn_-(\R),V^\vee)
&&\stackrel{\sim}{\To} \Pol(\fn_+)\otimes V^\vee\\
\rotatebox{90}{$\in$}\hspace{20pt}
&\hspace{50pt}\rotatebox{90}{$\in$}
&&\hspace{50pt}\rotatebox{90}{$\in$}
&&\hspace{55pt}\rotatebox{90}{$\in$}& \nonumber\\[-2pt]
u\otimes v^\vee \hspace{8pt}
&\mapsto dL(u)(\delta_{[o]} \otimes v^\vee) 
&&\mapsto d\pi_{(\xi,\lambda)^*}(u)(\delta_0\otimes v^\vee) 
&&\mapsto \widehat{d\pi_{(\xi,\lambda)^*}}(u)(1\otimes v^\vee), \nonumber
\end{alignat}
where $\delta_{[o]}$ and $\delta_0$ denote the Dirac delta functions supported at
$[o]$ and $0$, respectively  (cf.\ \cite{KP1, KuOr25a}).

Let $F_c$ denote the composition of the three $(\mathfrak{g}, P)$-module isomorphisms. We call the resulting isomorphism
\begin{equation}\label{eqn:Fc}
F_c\colon M_\fp(V^\vee) 
\stackrel{\sim}{\To}
\Pol(\fn_+) \otimes V^\vee,
\quad u\otimes v^\vee \longmapsto 
\widehat{d\pi_{(\xi,\lambda)^*}}(u)(1\otimes v^\vee)\\
\end{equation}
the \emph{algebraic Fourier transform of the generalized Verma module  $M_\fp(V^\vee)$} (\cite[Sect.\ 3.4]{KP1}).
%\end{defn}

%%%%%%%%%%%%%%%%%%%%%%%%%%%%%%%%%%%%%%%%%%
\subsection{The F-method}
\label{sec:Fmethod2}
Observe that
the algebraic Fourier transform
$F_c$  in \eqref{eqn:Fc} 
gives an $M'A'$-isomorphism
\begin{equation}\label{eqn:VP2}
\Mp(V^\vee)^{\fn_+'}
\stackrel{\sim}{\To}
(\Pol(\fn_+) 
\otimes V^\vee)^{\widehat{d\pi_{(\xi,\lambda)^*}}(\fn_+')},
\end{equation}
which induces a linear isomorphism
\begin{equation}\label{eqn:VP31}
\operatorname{Hom}_{M'A'}(W^\vee,
\Mp(V^\vee)^{\fn_+'})
\stackrel{\sim}{\To}
\operatorname{Hom}_{M'A'}
\left(W^\vee,
(\mathrm{Pol}(\mathfrak{n}_+) 
\otimes V^\vee)^{\widehat{d\pi_{(\xi,\lambda)^*}}(\fn_+')}\right).
\end{equation}
Here $M'A'$ acts on $\Pol(\fn_+)$ via the action
\begin{equation}\label{eqn:sharp}
\Ad_{\#}(l) \colon p(X) \mapsto p(\Ad(l^{-1})X)
\;\; \text{for $l \in M'A'$}.
\end{equation}

Now we set
\begin{equation}\label{eqn:Sol2a}
\mathrm{Sol}(\mathfrak n_+;V,W):=
\operatorname{Hom}_{M'A'}(W^\vee,
(\mathrm{Pol}(\mathfrak{n}_+) 
\otimes V^\vee)^{\widehat{d\pi_{(\xi,\lambda)^*}}(\fn_+')}).
\end{equation}
Via the identification
$\textrm{Hom}_{M'A'}(W^\vee, \Pol(\fn_+)\otimes V^\vee)
\simeq
\big((\Pol(\fn_+)\otimes V^\vee) \otimes W\big)^{M'A'}$,
we have
\begin{align}\label{eqn:Sol}
&\Sol(\fn_+;V,W)\nonumber \\[3pt]
&=\{ \psi \in \Hom_{M'A'}(W^\vee, \Pol(\fn_+)\otimes V^\vee): 
\text{
$\psi$ satisfies the system \eqref{eqn:Fsys} of PDEs below.}\}
\end{align}
\begin{equation}\label{eqn:Fsys}
(\widehat{d\pi_{(\xi,\lambda)^*}}(C) \otimes \id_W)\psi=0
\,\,
\text{for all $C \in \fn_+'$}.
\end{equation}
We refer to the system \eqref{eqn:Fsys} of PDEs 
as the \emph{F-system} (\cite[Fact 3.3 (3)]{KKP16}).
Since
\begin{equation*}%\label{eqn:20221030}
\operatorname{Hom}_{P'}(W^\vee,\Mp(V^\vee))
=
\operatorname{Hom}_{M'A'}(W^\vee,\Mp(V^\vee)^{\fn_+'}),
\end{equation*}
the isomorphism \eqref{eqn:VP31} together with \eqref{eqn:Sol2a} shows the following.

\begin{thm}
[F-method, {\cite[Thm.\ 4.1]{KP1}}]
\label{thm:Fmethod}
There exists a linear isomorphism
\begin{equation*}
F_c \otimes \mathrm{id}_{W}\colon 
\operatorname{Hom}_{P'}(W^\vee,
\Mp(V^\vee))
\stackrel{\sim}{\To}
\mathrm{Sol}(\mathfrak n_+;V,W).
\end{equation*}
Equivalently, we have
\begin{equation*}
F_c \otimes \mathrm{id}_{W}\colon 
\mathrm{Hom}_{\mathfrak g',P'}(\Mpp(W^\vee),\Mp(V^\vee))\stackrel{\sim}{\To}
\mathrm{Sol}(\mathfrak n_+;V,W).
\end{equation*}
\end{thm}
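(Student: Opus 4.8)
The plan is to obtain $F_c \otimes \id_W$ by assembling maps already constructed above, the only real work being to thread together the correct chain of $\operatorname{Hom}$-space identifications. First I would note that $F_c$ of \eqref{eqn:Fc}, being the composite of the three arrows in \eqref{eqn:VP}, is an isomorphism of $(\fg,P)$-modules: here $\fg$ acts on $\Pol(\fn_+)\otimes V^\vee$ through $\widehat{d\pi_{(\xi,\lambda)^*}}$ of \eqref{eqn:hdpi}, and $P$ (hence its subgroup $M'A' \subset MA \subset P$) acts compatibly, the action of $M'A'$ on $\Pol(\fn_+)$ being the $\Ad_\#$-action \eqref{eqn:sharp}. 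Since $\fn_+' \subset \fp$, the map $F_c$ intertwines the two $\fn_+'$-actions, and because $M'A'$ normalizes $\fn_+'$, passing to $\fn_+'$-invariants is $M'A'$-equivariant on both sides; hence $F_c$ restricts to the $M'A'$-isomorphism \eqref{eqn:VP2}.

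Next I would apply the functor $\operatorname{Hom}_{M'A'}(W^\vee,-)$ to \eqref{eqn:VP2} to get the linear isomorphism \eqref{eqn:VP31}. Its target is, by the definition \eqref{eqn:Sol2a}, exactly $\Sol(\fn_+;V,W)$; its source is $\operatorname{Hom}_{M'A'}(W^\vee,\Mp(V^\vee)^{\fn_+'})$, which equals $\operatorname{Hom}_{P'}(W^\vee,\Mp(V^\vee))$ as recorded just before the statement (an $M'A'$-map into $\Mp(V^\vee)$ is automatically $P'$-equivariant precisely when its image is $\fn_+'$-fixed, since $N_+'$ is connected and acts trivially on $W^\vee$). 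Composing these identifications with \eqref{eqn:VP31} yields the first asserted isomorphism.

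For the equivalent second formulation, I would invoke the universal property of the induced module $\Mpp(W^\vee) = \Cal{U}(\fg')\otimes_{\Cal{U}(\fp')}W^\vee$: restriction of a $(\fg',P')$-morphism to the generating subspace $1\otimes W^\vee$ gives a linear isomorphism $\operatorname{Hom}_{\fg',P'}(\Mpp(W^\vee),\Mp(V^\vee)) \stackrel{\sim}{\To} \operatorname{Hom}_{P'}(W^\vee,\Mp(V^\vee))$, which is the same identification underlying the equivalence of \eqref{eqn:duality1} and \eqref{eqn:duality2} in Theorem \ref{thm:duality}. Pre-composing the first isomorphism with it gives the second.

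All ingredients being in place, there is no substantive obstacle; the only point demanding attention is the $M'A'$-equivariance in passing from \eqref{eqn:Fc} to \eqref{eqn:VP2}, that is, that the $\Ad_\#$-action \eqref{eqn:sharp} on $\Pol(\fn_+)$ corresponds under $F_c$ to the diagonal $P$-action on $\Mp(V^\vee)$ restricted to $M'A'$ --- which, however, is already subsumed in the assertion that $F_c$ is a $(\fg,P)$-morphism, so in the end the proof amounts to bookkeeping the several $\operatorname{Hom}$-functor identifications in the right order.
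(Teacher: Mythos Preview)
Your proposal is correct and follows essentially the same approach as the paper: the argument there is precisely to pass from the $(\fg,P)$-isomorphism $F_c$ to the $M'A'$-isomorphism \eqref{eqn:VP2} on $\fn_+'$-invariants, apply $\operatorname{Hom}_{M'A'}(W^\vee,-)$ to obtain \eqref{eqn:VP31}, and then identify source and target via the displayed equality $\operatorname{Hom}_{P'}(W^\vee,\Mp(V^\vee))=\operatorname{Hom}_{M'A'}(W^\vee,\Mp(V^\vee)^{\fn_+'})$ and the definition \eqref{eqn:Sol2a}. The second formulation is likewise obtained by the Frobenius-type identification underlying Theorem~\ref{thm:duality}, just as you outline.
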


The following diagram summarizes the duality theorem and F-method.
\begin{equation}\label{eqn:isom}
\begin{tikzcd}[row sep=1.3cm, column sep=1cm]
& 
\mathrm{Sol}(\mathfrak n_+;V,W)
\arrow[dr, dashed, "\sim"' sloped] \\
\operatorname{Hom}_{\mathfrak{g}',P'}(\Mpp(W^\vee),\Mp(V^\vee)) 
  \arrow[rr, "\sim", "\EuScript{D}"']
  \ar[ur, "\sim"' sloped, "F_c\otimes\mathrm{id}_W"] 
 &
 & 
 \operatorname{Diff}_{G'}(\mathcal V, \mathcal W).
%}
\end{tikzcd}
\end{equation}

\vskip 0.1in
The next lemma will be useful for later arguments.

\begin{lem}\label{lem:dpiL}
We have
\begin{equation}\label{eqn:dpiL}
d\pi_{(\xi,\lambda)^*}\vert_{\Cal{U}(\fn_-)} =
dL \vert_{\Cal{U}(\fn_-)}\otimes \id_{V^\vee}.
\end{equation}
\end{lem}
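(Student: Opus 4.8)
The plan is to read the result off directly from the explicit formula \eqref{eqn:dpi3}, specialised to $Z = X \in \fn_-$. The first step is to observe that the adjoint action of $N_-$ preserves $\fn_-$: since $\fn_-$ is a Lie subalgebra and $N_- = \exp(\fn_-(\R))$, for $\bar n \in N_-$ we have $\Ad(\bar n^{-1})X = e^{-\ad(\log\bar n)}X \in \fn_-$. Consequently, in the Gelfand--Naimark decomposition $\fg = \fn_- \oplus \fl \oplus \fn_+$ the component $(\Ad(\bar n^{-1})X)_{\fl}$ vanishes while $(\Ad(\bar n^{-1})X)_{\fn_-} = \Ad(\bar n^{-1})X$. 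In particular the first summand of \eqref{eqn:dpi3} — the only one in which $d\xi_\lambda^*$ acts on $V^\vee$ — disappears, leaving
\[ d\pi_{(\xi,\lambda)^*}(X)f(\bar n) = -\bigl(dR(\Ad(\cdot^{-1})X)f\bigr)(\bar n). \]

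The second step is to collapse this $dR$-term to a left translation. Unravelling the notation, $\bigl(dR(\Ad(\cdot^{-1})X)f\bigr)(\bar n) = \frac{d}{dt}\big|_{t=0} f\bigl(\bar n\exp(t\,\Ad(\bar n^{-1})X)\bigr)$, and the elementary identity $\bar n\exp(t\,\Ad(\bar n^{-1})X) = \exp(tX)\,\bar n$ turns this into $\frac{d}{dt}\big|_{t=0} f(\exp(tX)\bar n) = -(dL(X)f)(\bar n)$. Hence $d\pi_{(\xi,\lambda)^*}(X) = dL(X)\otimes \id_{V^\vee}$ for every $X \in \fn_-$; note that the computation only uses $\exp(tX)\bar n \in N_-$, so it is consistent with viewing both operators on $C^\infty(N_-)\otimes V^\vee$ as in \eqref{eqn:DN}.

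Finally, since $d\pi_{(\xi,\lambda)^*}$ and $dL\otimes\id_{V^\vee}$ are both algebra homomorphisms out of $\Cal{U}(\fg)$, agreement on the generating subspace $\fn_-$ propagates to the subalgebra $\Cal{U}(\fn_-)$ it generates, which is exactly \eqref{eqn:dpiL}. I do not anticipate a genuine obstacle here; the only delicate point is to make precise the $\bar n$-dependent argument of $dR$ in \eqref{eqn:dpi3} and to track the two sign flips (one from the $dL$-versus-$dR$ convention, one already present in \eqref{eqn:dpi3}) so that they cancel correctly.
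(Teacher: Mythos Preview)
Your proof is correct and follows essentially the same approach as the paper: both specialise \eqref{eqn:dpi3} to $X\in\fn_-$, use that $\Ad(N_-)$ preserves $\fn_-$ to kill the $\fl$-component, and then reduce the remaining $dR$-term to $dL$ via the identity $\bar n\exp(t\Ad(\bar n^{-1})X)=\exp(tX)\bar n$. Your added remark that agreement on $\fn_-$ propagates to $\Cal{U}(\fn_-)$ because both sides are algebra homomorphisms makes explicit a step the paper leaves implicit.
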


\begin{proof}
The identity readily follows from the derivation of the infinitesimal action $d\pi_{(\xi,\lambda)^*}$. 
Or, one can also show \eqref{eqn:dpiL} from the formula 
\eqref{eqn:dpi3} directly. Indeed, for $Z \in \fg$, the action 
$d\pi_{(\xi,\lambda)^*}(Z)$ on $C^\infty(N_-)\otimes V^\vee$ is given by
\begin{equation}\label{eqn:dpi2}
d\pi_{(\xi,\lambda)^*}(Z)f(\bar{n})
=d\xi_\lambda^*((\Ad(\bar{n}^{-1})Z)_\fl)f(\bar{n})
-\left(dR((\Ad(\cdot^{-1})Z)_{\fn_-})f\right)(\bar{n}).
\end{equation}
For $X\in \fn_-$, we have
$(\Ad(\bar{n})X)_\fl = 0$
and $(\Ad(\bar{n})X)_{\fn_-} = \Ad(\bar{n})X$.
Therefore, for $Z=X \in \fn_-$, the formula \eqref{eqn:dpi2} reads
\begin{equation}\label{eqn:dpiL2}
d\pi_{(\xi,\lambda)^*}(X)f(\bar{n})
=-\left(dR(\Ad(\cdot^{-1})X)f\right)(\bar{n}).
\end{equation}
On the other hand, the infinitesimal left translation $dL(X)$ is given by
\begin{align}\label{eqn:LR}
dL(X)f(\bar{n}) 
&=\frac{d}{dt}\bigg\vert_{t=0} f(\exp(-tX)\bar{n}) \nonumber \\[3pt]
&=\frac{d}{dt}\bigg\vert_{t=0} f(\bar{n}\exp(-\Ad(\bar{n}^{-1})X))  \nonumber \\[3pt]
&=-dR(\Ad(\cdot^{-1})X)f(\bar{n}).
\end{align}
Now \eqref{eqn:dpiL2} and \eqref{eqn:LR} conclude the proposed identity.
\end{proof}

\begin{rem}\label{rem:Fc}
It follows from Lemma \ref{lem:dpiL} that,
for $u\otimes v^\vee \in \Cal{U}(\fn_-)\otimes V^\vee 
\simeq \Mp(V^\vee)$, the diagram \eqref{eqn:VP} is reduced to the following.
\begin{alignat}{4}\label{eqn:VP1}
\Cal{U}(\fn_-)\otimes V^\vee
&\stackrel{\sim}{\To}  \Cal{D}'_{[o]}(G/P,\Cal{V}^*) 
&&\stackrel{\sim}{\To}  \Cal{D}'_{0}(\fn_-(\R),V^\vee)
&&\stackrel{\sim}{\To}  \Pol(\fn_+)\otimes V^\vee\\
\rotatebox{90}{$\in$}\hspace{20pt}
&\hspace{50pt}\rotatebox{90}{$\in$}
&&\hspace{50pt}\rotatebox{90}{$\in$}
&&\hspace{55pt}\rotatebox{90}{$\in$}& \nonumber\\[-2pt]
u\otimes v^\vee \hspace{8pt}
&\mapsto dL(u)(\delta_{[o]} \otimes v^\vee) 
&&\longmapsto\hspace{5pt} dL(u)(\delta_0 \otimes v^\vee) 
&&\longmapsto \hspace{5pt} \widehat{dL}(u)(1\otimes v^\vee) \nonumber
\end{alignat}

\end{rem}

%%%%%%%%%%%%%%%%%%%%%%%%%%%%%%%%%%%%%%%%%%%%%%%
\subsection{Notation on the action of the Weyl algebra}
\label{sec:Weyl2}

In the rest of the paper,
we denote by $\acts$ the action of the elements of
the Weyl algebra
$
\C[\zeta, \tfrac{\partial}{\partial \zeta}]
=\C[\zeta_1, \ldots, \zeta_n, 
\tfrac{\partial}{\partial \zeta_1}, \ldots, \tfrac{\partial}{\partial \zeta_n}]$
on $\C[\zeta]$, unless it is clear from the context.
For instance, the notation $\tfrac{\partial}{\partial  \zeta_j}\zeta_i$ stands for
the product of the differential operator $\tfrac{\partial}{\partial \zeta_j}$ and 
the multiplication operator by $\zeta_i$
as a Weyl algebra element, while $\tfrac{\partial}{\partial \zeta_j}\acts \zeta_i$ means
the action of $\tfrac{\partial}{\partial \zeta_j}$ on the monomial
$\zeta_i$. For instance, we have
\begin{alignat*}{4}
&\frac{\partial}{\partial \zeta_j} 1 
&&= \frac{\partial}{\partial \zeta_j},
\qquad
&&\frac{\partial}{\partial \zeta_j}\zeta_i 
&&= \zeta_i \frac{\partial}{\partial \zeta_j} + \delta_{i,j},\\[5pt]
&\frac{\partial}{\partial \zeta_j}\acts 1 
&&= 0,
\qquad
&&\frac{\partial}{\partial \zeta_j}\acts \zeta_i  &&= \delta_{i,j}.
\end{alignat*}

Remark that
the polynomial $(D_1D_2) \acts \psi(\zeta)$ 
for $D_1, D_2 \in \C[\zeta, \tfrac{\partial}{\partial \zeta}]$ and 
$\psi(\zeta) \in \C[\zeta]$
is computed as
\begin{equation}\label{eqn:acts}
(D_1D_2) \acts \psi(\zeta) = D_1 \acts (D_2\acts \psi(\zeta)).
\end{equation}
For instance, we have 
\begin{equation*}
(\frac{\partial}{\partial \zeta_j}\zeta_i) \acts \psi(\zeta)
= \frac{\partial}{\partial \zeta_j}\acts (\zeta_i\acts \psi(\zeta))
=\frac{\partial}{\partial \zeta_j} \acts \zeta_i\psi(\zeta)
=\delta_{i,j}\psi(\zeta) + \zeta_i \frac{\partial}{\partial \zeta_j} \acts \psi(\zeta).
\end{equation*}
In particular,
\begin{equation*}
(\frac{\partial}{\partial \zeta_j}\zeta_i) \acts 1
= \frac{\partial}{\partial \zeta_j}\acts (\zeta_i\acts 1)
=\frac{\partial}{\partial \zeta_j} \acts \zeta_i
=\delta_{i,j}.
\end{equation*}

Recall from Remark \ref{rem:Fc} that,
for a finite-dimensional irreducible representation $V$ of $MA$, the algebraic Fourier
transform $F_c$ gives an $MA$-isomorphism
\begin{equation}\label{eqn:FcL0}
F_c\colon \Cal{U}(\fn_-) \otimes V^\vee \stackrel{\sim}{\To} \Pol(\fn_+)\otimes V^\vee,
\; 
u\otimes v^\vee
\longmapsto
\widehat{dL}(u)(1\otimes v^\vee),
\end{equation}
where $\widehat{dL}(u)(1\otimes v^\vee)$ is given by
\begin{align*}
\widehat{dL}(u)(1\otimes v^\vee)=(\widehat{dL}(u) \acts 1)\otimes v^\vee.
\end{align*}

For the sake of simplicity, we write
\begin{equation*}
\hdL(u)_1:= \hdL(u)\acts 1
\end{equation*}
so that
\begin{equation}\label{eqn:FcL}
F_c(\cdot) = \hdL(\cdot)_1 \otimes \id_{V^\vee}
\end{equation}
on $\Cal{U}(\fn_-) \otimes V^\vee$.

Equation \eqref{eqn:FcL} will play a role to determine the inverse
$F_c^{-1}$ in the next section.

%%%%%%%%%%%%%%%%%%%%%%%%%%%%%%%%%%%%%%%%%%
\section{The inverse of the algebraic Fourier transform $F_c$
and the truncated symbol map $\Symb_0$}\label{sec:symbF}

There are two objectives in this section. 
The first is to determine the inverse 
\begin{equation*}
F_c^{-1}\colon 
\Pol(\fn_+) \otimes V^\vee
\stackrel{\sim}{\To}
M_\fp(V^\vee)
\end{equation*}
of the algebraic Fourier transform $F_c$. This is done in 
Theorem \ref{thm:hdL} and Corollary \ref{cor:Fc}.
The other is to define a linear map
\begin{equation}\label{eqn:symb0}
\Symb_0\colon
\operatorname{Diff}_{G'}(\mathcal V, \mathcal W)
\stackrel{\sim}{\To}
\mathrm{Sol}(\mathfrak n_+;V,W)
\end{equation}
in such a way that the diagram \eqref{eqn:isom} commutes.
The linear map $\Symb_0$ is obtained in Theorem \ref{thm:symb} and Corollary \ref{cor:symb}.

%%%%%%%%%%%%%%%%%%%%%%%%%%%%%%%%%%%%%%%%%%
\subsection{Preliminaries}\label{sec:prelim2}
We start with some preliminaries.
Let $G$ be a  Lie group with Lie algebra $\fg(\R)$.
We denote by $\fg(\R)^\vee$ the dual space of $\fg(\R)$. 
Fix an ordered basis $\Cal{B}:=\{X_1,\ldots, X_n\}$ of $\fg(\R)$
with $n=\dim_{\R}(G)$ and write 
$\Cal{B}^\vee:=\{X_1^\vee,\ldots, X_n^\vee\}$ for the dual basis of $\fg(\R)^\vee$.
Let $(x_1, \ldots, x_n)$ denote the coordinates on $\fg(\R)$ with respect to
the ordered basis $\Cal{B}$. By slight abuse of notation, we also denote by 
$(x_1, \ldots, x_n)$ the coordinates on the 
complexification $\fg:=\fg(\R)\otimes_\R\C$ of $\fg(\R)$.
Likewise,
let $(\zeta_1,\ldots, \zeta_n)$
denote the coordinates on the dual space $\fg^\vee$ of $\fg$
with respect to $\Cal{B}^\vee$.

Let $S(\fg)$ denote the symmetric algebra of $\fg$ and 
write $\Pol(\fg)$ for  the ring of polynomial functions on $\fg$, namely,
$\Pol(\fg)=S(\fg^\vee)$.
We put 
\begin{align*}
\Diff_G(G)&:=
\text{the space of the left-invariant differential operators on $G$},\\[3pt]
\Diff^{\const}(\fg)&:= 
\text{the space of holomorphic differential operators on $\fg$
with constant coefficients}.
\end{align*}
We write  $\partial_j:=\frac{\partial}{\partial x_j} \in \Diff^{\const}(\fg)$ 
for $j=1,\ldots, n$.

\vskip 0.1in

The following well-known fact plays a key role.

\begin{thm}[{cf.\ \cite[Ch.\ II, Thm.\ 4.3]{Hel00}}]\label{thm:Hel00}
There exists a unique linear isomorphism
\begin{equation}\label{eqn:symbG}
 \Symb_G\colon  \Diff_G(G)  \stackrel{\sim}{\To} S(\fg)
\end{equation}
such that, for $P(X_1,\ldots, X_n) \in S(\fg)$, we have
\begin{equation*}
(\Symb_G^{-1}(P)f)(g) = P(\partial_1,\ldots, \partial_n)f(g\exp(x_1X_1+\cdots + x_nX_n))\big\vert_{x=0},
\end{equation*}
where $f(g) \in C^\infty(G)$ and $x:=(x_1,\ldots, x_n)$.
\end{thm}

For $Y_1, Y_2, \ldots, Y_k \in \fg$, 
we denote by $\s(Y_1Y_2\cdots Y_k) \in \Cal{U}(\fg)$
the symmetrization of the product $Y_1Y_2\cdots Y_k \in \Cal{U}(\fg)$,
namely,
\begin{equation}\label{eqn:symm}
\s(Y_1Y_2\cdots Y_k):=
\frac{1}{k!}\sum_{\tau \in \frak{S}_k}
Y_{\tau(1)} \cdots Y_{\tau(k)},
\end{equation}
where $\frak{S}_k$ denotes the symmetric group on $k$ letters.
It follows from Theorem \ref{thm:Hel00} that if $Y_1, \ldots, Y_k \in \fg$,
then 
\begin{equation}\label{eqn:symbG0}
\Symb_G^{-1}(Y_1\cdots Y_k) 
= dR\big(\s(Y_1\cdots Y_k)\big)
\end{equation}
(cf.\  \cite[pp.\ 280--282]{Hel00}).
Here, the product $Y_1\cdots Y_k$ on the left-hand side  
of \eqref{eqn:symbG0} is in $S(\fg)$ and on the right-hand side is in 
$\Cal{U}(\fg)$.

As $S(\fg) = \Pol(\fg^\vee)$,
one can understand \eqref{eqn:symbG} as
\begin{equation*}
\Symb_G\colon \Diff_G(G)\stackrel{\sim}{\To}\Pol(\fg^\vee).
\end{equation*}
Under the identification of the basis elements $X_j$ in $\Cal{B}$ for $\fg$
with the coordinate functions $\zeta_j$ on $\fg^\vee$,
the identity \eqref{eqn:symbG0} implies
\begin{equation}\label{eqn:symG}
\Symb_G^{-1}(\zeta_1^{r_1}\cdots \zeta_n^{r_n}) 
= dR\big(\s(X_1^{r_1}\cdots X_n^{r_n})\big)
\end{equation}
for $r_j \in \Z_{\geq 0}$ for $j=1,\ldots, n$.
\vskip 0.1in

Now recall that the symbol map 
\begin{equation*}
\Symb\colon \Diff^{\const}(\fg) \stackrel{\sim}{\To} \Pol(\fg^\vee)
\end{equation*}
is an algebra isomorphism such that 
\begin{equation}\label{eqn:symb1013}
De^{\IP{x}{\zeta}} = \Symb(D)e^{\IP{x}{\zeta}},
\end{equation}
where $\IP{\cdot}{\cdot}$ is the natural pairing of $\fg$ and $\fg^\vee$.
The symbol $\Symb(P(\partial_1,\ldots, \partial_n))$
of $P(\partial_1,\ldots, \partial_n)$ is given by
\begin{equation*}
\Symb(P(\partial_1,\ldots, \partial_n)) = P(\zeta_1,\ldots, \zeta_n).
\end{equation*}
Then we define a linear isomorphism
\begin{equation*}
\phi\colon \Diff_G(G)\stackrel{\sim}{\to}\Diff^{\const}(\fg)
\end{equation*}
by
\begin{equation*}
 \phi:=\Symb^{-1}\circ \Symb_G.
\end{equation*}
By definition, the following diagram commutes.
\begin{equation}\label{eqn:diagram1}
\begin{tikzcd}[row sep=1cm, column sep=1cm]
\mathrm{Pol}(\fg^\vee)
& 
\arrow[dl, pos=0.85, phantom, "\circlearrowleft"]
\\
\Diff_G(G) 
\arrow[u, "\Symb_G",  "\sim"' sloped]
\arrow[r, "\phi"', "\sim" sloped]
& \Diff^{\const}(\fg)
\arrow[lu, "\Symb"',  "\sim"' sloped]
\end{tikzcd}
\end{equation}
The differential operator 
$\phi(D) \in \Diff^{\const}(\fg)$ 
for $D \in \Diff_G(G)$ with $\Symb_G(D)=P(\zeta_1,\ldots, \zeta_n)$
is given by
\begin{equation*}
\phi(D) = P(\partial_1,\ldots, \partial_n).
\end{equation*}

%%%%%%%%%%%%%%%%%%%%%%%%%%%%%%%%%%%%%%%%%%
\subsection{The truncated symbol $\Symb_0$ of the Weyl algebra $\D(E)$}\label{sec:symb0}

Given a finite-dimensional vector space $E$ over $\C$, we write
$\D(E)$ for the Weyl algebra of $E$.
As in Section \ref{sec:prelim2},
let $E^\vee$ and $\Diff^\const(E)$ denote the dual space of $E$ 
and 
the space of holomorphic differential operators on $E$ with constant coefficients,
respectively.

\begin{defn}
We define a linear map
\begin{equation*}
\Trun_0\colon \D(E) \To \Diff^\const(E)
\end{equation*}
by
\begin{equation*}
\Trun_0(D):=\text{the constant coefficient part of $D$}.
\end{equation*}
We call $\Trun_0$ the \emph{truncation map of $\D(E)$}.
\end{defn}

\begin{example}
If 
$D=\partial_1\partial_2-\frac{1}{2}x_1\partial_1\partial_3
+\frac{1}{2}x_2\partial_2\partial_3-\frac{1}{4}x_1x_2\partial_3^2$, then
$\Trun_0(D)=\partial_1\partial_2$.
\end{example}

\begin{rem}
Since any change of basis on $E$ does not change
the degrees of monomials and order of partial derivatives, 
the truncation map $\Trun_0$ is well-defined.
\end{rem}

Now observe that we have 
\begin{equation*}
\D(E)\stackrel{\Trun_0}{\To}\Diff^\const(E)\stackrel[\sim]{\Symb}{\To}\Pol(E^\vee).
\end{equation*}
Write
\begin{equation*}
\Symb_0:=\Symb\circ \Trun_0
\end{equation*}
so that the following diagram commutes.
\begin{equation}\label{eqn:diagram2}
\begin{tikzcd}[row sep=1cm, column sep=1cm]
\mathrm{Pol}(E^\vee)
& 
\arrow[dl, pos=0.85, phantom, "\circlearrowleft"]
\\
\D(E) 
\arrow[u, "\Symb_0"]
\arrow[r, "\Trun_0"']
& \Diff^{\const}(E)
\arrow[lu, "\Symb"',  "\sim"' sloped]
\end{tikzcd}
\end{equation}
We call $\Symb_0$ the \emph{truncated symbol map}.

%%%%%%%%%%%%%%%%%%%%%%%%%%%%%%%%%%%%%%%%%%
\subsection{The case of a simply-connected, connected, nilpotent Lie group}\label{sec:nilp}

Now, let $N$ be 
a simply-connected, connected, nilpotent Lie group with Lie algebra $\fn(\R)$.
%As above, we write $\fn:=\fn(\R)\otimes_\R\C$.
In the following, we resume the notation in Sections \ref{sec:prelim2} and  
\ref{sec:symb0} for the nilpotent group $N$, unless otherwise specified.
%For instance, $\{X_1, \ldots, X_n\}$ denotes an ordered basis of $\fn(\R)$.

In this case 
the exponential map $\exp\colon \fn(\R) \stackrel{\sim}{\to} N$ 
is a diffeomorphism (cf.\ \cite[Thm.\ 1.127]{Knapp02}). 
Thus, via the local coordinates,
\begin{equation}\label{eqn:diffeo}
\R^n \stackrel{\sim}{\To} N,
\quad
(x_1, \ldots, x_n) \longmapsto \exp(x_1X_1+ \cdots + x_n X_n),
\end{equation}
the space $\Diff_N(N)$ can be understood as $\Diff_N(N) \subset  \D(\fn)$.
Then we write
\begin{equation*}
\Trun_0\big\vert_N:=\Trun_0\big\vert_{\Diff_N(N)}
\quad \text{and} \quad
\Symb_0\big\vert_N:=\Symb\circ \Trun_0\big\vert_N,
\end{equation*}
so that one can add the information about $\Diff_N(N)$ to
the diagram \eqref{eqn:diagram2} for $E=\fn$ as follows.
\begin{equation}\label{eqn:diagram3}
\begin{tikzcd}[row sep=1cm, column sep=1cm]
\Symb_0(\Diff_N(N)) 
\ar[draw=none]{d}{\bigcap}&\\[-15pt]
\mathrm{Pol}(\fn^\vee)
& 
\arrow[dl, pos=0.85, phantom, "\circlearrowleft"]
\\
\D(\fn) 
\arrow[u, "\Symb_0"]
\arrow[r, "\Trun_0"']
\ar[draw=none]{d}{\bigcup}
& 
\Diff^{\const}(\fn)
\arrow[lu, "\Symb"',  "\sim"' sloped]
\ar[draw=none]{d}{\bigcup}\\[-15pt]
\Diff_N(N)
\arrow[uuu, bend left=60, "\Symb_0\vert_{N}"]
\arrow[r, "\Trun_0\vert_{N}"']
&
\Trun_0(\Diff_N(N))
\end{tikzcd}
\end{equation}
Meanwhile, as in \eqref{eqn:diagram1}, the following diagram also commutes.
\begin{equation}\label{eqn:diagram4}
\begin{tikzcd}[row sep=1cm, column sep=1cm]
\mathrm{Pol}(\fn^\vee)
& 
\arrow[dl, pos=0.85, phantom, "\circlearrowleft"]
\\
\Diff_N(N) 
\arrow[u, "\Symb_N",  "\sim"' sloped]
\arrow[r, "\phi"', "\sim" sloped]
& \Diff^{\const}(\fn)
\arrow[lu, "\Symb"',  "\sim"' sloped]
\end{tikzcd}
\end{equation}
By comparing \eqref{eqn:diagram3} with \eqref{eqn:diagram4}, we have 
\begin{equation}\label{eqn:symbN}
\begin{tikzcd}[row sep=1.5cm, column sep=1.5cm]
\mathrm{Pol}(\fn^\vee)
& 
\arrow[dl, pos=0.85, phantom, "\circlearrowleft"]
\\
\Diff_N(N) 
\arrow[u, shift left=0.7ex, "\Symb_N"]
\arrow[u, shift right=0.7ex,"\Symb_0\vert_N"']
\arrow[r, shift right=0.5ex,"\phi"']
\arrow[r, shift left=0.5ex, "\Trun_0\vert_N"]
& \Diff^{\const}(\fn)
\arrow[lu, "\Symb"']
\end{tikzcd}
\end{equation}

We claim that $\Symb_N=\Symb_0\vert_N$. To prove it,
we first show the following lemma.
For $\mathbf{r}=(r_1,\ldots,r_n) \in \Z_{\geq 0}^n$, write
$|\mathbf{r}|:=r_1+\cdots+r_n$
and 
$\frac{\partial^{|\mathbf{r}|}}{\partial x^{\mathbf{r}}}:=
\frac{\partial^{|\mathbf{r}|}}{\partial x_1^{r_1}\cdots \partial x_n^{r_n}}$.

\begin{lem}\label{lem:trun}
If $D_1$ and $D_2$ are differential operators with constant coefficients
on $\R^n$ such that
\begin{equation}\label{eqn:trun}
\IP{\delta_0}{D_1 f} = \IP{\delta_0}{D_2 f} 
\quad \text{for all $f \in C^\infty(\R^n)$},
\end{equation}
then $D_1 = D_2$, where $\IP{\cdot}{\cdot}$ denotes the natural paring of 
$\D'_0(\R^n) \times C^\infty(\R^n)$ and $\delta_0$ is the Dirac delta function.
\end{lem}

\begin{proof}
Write $D_j:=\sum_{\mathbf{r}\in \Z_{\geq 0}^n}a^j_{\mathbf{r}}
\frac{\partial^{|\mathbf{r}|}}{\partial x^{\mathbf{r}}}$ 
with $a^j_{\mathbf{r}} \in \C$ for $j=1,2$.
Then, given $\mathbf{r} = (r_1, \ldots, r_n) \in \Z_{\geq 0}^n$, 
for $f_{\mathbf{r}}(x):=(r_1!\cdots r_n!)^{-1}x_1^{r_1}\cdots x_n^{r_n}$,
we have 
\begin{align*}
a^1_{\mathbf{r}}
=\IP{\delta_0}{D_1  f_{\mathbf{r}}(x)}
=\IP{\delta_0}{D_2  f_{\mathbf{r}}(x)}
=a^2_{\mathbf{r}}.
\end{align*}
Now the lemma follows.
\end{proof}

\begin{prop}\label{prop:phi}
We have
\begin{equation*}
\Symb_N=\Symb_0\vert_N.
\end{equation*}
\end{prop}

\begin{proof}
It follows from \eqref{eqn:symbN} that 
it suffices to show $\phi=\Trun_0\vert_N$.
Take $D \in \Diff_N(N)$. Then, by Theorem \ref{thm:Hel00}, 
there exists a unique $P(\zeta_1,\ldots,\zeta_n)\in \Pol(\fn^\vee) =S(\fn)$ such that $\Symb_N(D)=P(\zeta_1,\ldots,\zeta_n)$.
Then the differential operator $\phi(D) \in \Diff^\const(\fn)$ is given by
\begin{equation*}
\phi(D)=P(\partial_1, \ldots, \partial_n),
\end{equation*}
which satisfies
\begin{align*}
(Df)(e)
&=P(\partial_1,\ldots, \partial_n)f(\exp(x_1X_1+\cdots + x_nX_n))\big\vert_{x=0}\\[3pt]
&\equiv P(\partial_1,\ldots, \partial_n)f(0,\ldots,0)\\[3pt]
&=\phi(D)f(0,\ldots,0)
\end{align*}
for $f(n) \in C^\infty(N)\simeq C^\infty(\R^n)$.

On the other hand, via the local coordinates \eqref{eqn:diffeo}, the 
differential operator $D\in \Diff_N(N)\subset \D(\fn)$ can be realized as
\begin{equation*}
D=
\sum_{\mathbf{r} \in \Z^n_{\geq 0}}
a_\mathbf{r}(x)\frac{\partial^{|\mathbf{r}|}}{\partial x^{\mathbf{r}}}.
\end{equation*}
Then $\Trun_0(D) \in  \Diff^\const(\fn)$ is 
\begin{equation*}
\Trun_0(D)=\sum_{\mathbf{r} \in \Z^n_{\geq 0}}
a_\mathbf{r}(0)\frac{\partial^{|\mathbf{r}|}}{\partial x^{\mathbf{r}}}.
\end{equation*}
Observe that, in the local coordinates \eqref{eqn:diffeo}, we have 
\begin{align*}
(Df)(e)
&=\big(\sum_{\mathbf{r} \in \Z^n_{\geq 0}}
a_\mathbf{r}(x)\frac{\partial^{|\mathbf{r}|}}{\partial x^{\mathbf{r}}}\big)
f(x_1,\ldots, x_n)\big\vert_{x=0}\\[3pt]
&=\big(\sum_{\mathbf{r} \in \Z^n_{\geq 0}}
a_\mathbf{r}(0)\frac{\partial^{|\mathbf{r}|}}{\partial x^{\mathbf{r}}}\big)
f(0,\ldots,0)\\[3pt]
&=\Trun_0(D)f(0,\ldots,0).
\end{align*}
Thus,
\begin{equation*}
\Trun_0(D)f(0,\ldots,0) = Df(e) = \phi(D)f(0,\ldots,0)
\end{equation*}
for all $f \in C^\infty(\R^n) \simeq C^\infty(N)$.
Now Lemma \ref{lem:trun} concludes the proposition.
\end{proof}

Proposition \ref{prop:phi} shows that the truncated symbol map
$\Symb_0$ of the Weyl algebra $\D(E)$ gives rise to a linear isomorphism
\begin{equation*}
\Symb_0\big\vert_{N}\colon \Diff_N(N) \stackrel{\sim}{\To}\Pol(\fn^\vee),
\end{equation*}
which satisfies the following commutative diagram.
\begin{equation}\label{eqn:diagram5}
\begin{tikzcd}[row sep=1cm, column sep=1cm]
\mathrm{Pol}(\fn^\vee)
& 
\arrow[dl, pos=0.85, phantom, "\circlearrowleft"]
\\
\Diff_N(N) 
\arrow[u, "\Symb_0\vert_N",  "\sim"' sloped]
\arrow[r, "\Trun_0\vert_N"', "\sim" sloped]
& \Diff^{\const}(\fn)
\arrow[lu, "\Symb"',  "\sim"' sloped]
\end{tikzcd}
\end{equation}

Recall from \eqref{eqn:symm} that 
$\s(X_1^{r_1}\cdots X_n^{r_n})$ denotes the symmetrization of 
the product $X_1^{r_1}\cdots X_n^{r_n} \in \Cal{U}(\fn_-)$.
The next lemma will simplify later arguments.

\begin{lem}\label{lem:pr}
We have 
\begin{equation*}
\Trun_0(dR(\s(X_1^{r_1}\cdots X_n^{r_n})))
=\partial_1^{r_1}\cdots \partial_n^{r_n}.
\end{equation*}
\end{lem}

\begin{proof}
It follows from \eqref{eqn:symG} with $G=N$ and Proposition \ref{prop:phi} that 
\begin{equation}\label{eqn:symb0}
dR(\s(X_1^{r_1}\cdots X_n^{r_n}))
=
(\Symb_0\vert_{N})^{-1}(\zeta_1^{r_1}\cdots \zeta_n^{r_n}).
\end{equation}
Since 
\begin{equation*}
\Trun_0\big\vert_N = \Symb^{-1}\circ \Symb_0\big\vert_N,
\end{equation*}
we have
\begin{align*}
\Trun_0(dR(\s(X_1^{r_1}\cdots X_n^{r_n})))
&=\Symb^{-1}\circ \Symb_0\big\vert_N \circ (\Symb_0\big\vert_N)^{-1}
(\zeta_1^{r_1}\cdots \zeta_n^{r_n})\\[3pt]
&=\Symb^{-1}(\zeta_1^{r_1}\cdots \zeta_n^{r_n})\\[3pt]
&=\partial_1^{r_1}\cdots \partial_n^{r_n},
\end{align*}
which shows the desired identity.
\end{proof}

\begin{rem}\label{rem:symb}
If $\fn$ is abelian, then $\Symb_0'\vert_{N} = \Symb$.
Indeed, in this case, we have 
$\Diff_N(N)=\Diff^\const(\fn)$ as
the differential operators $dR(X_j)$ are 
$dR(X_j) = \frac{\partial}{\partial x_j}$ 
via the diffeomorphism \eqref{eqn:diffeo}
and also as $dR\colon \Cal{U}(\fn) \stackrel{\sim}{\to} \Diff_N(N)$
is an algebra isomorphism (cf.\ \cite[Ch.\ II, Prop.\ 1.9]{Hel01}).
\end{rem}

%%%%%%%%%%%%%%%%%%%%%%%%%%%%%%%%%%%%%%%%%%
\subsection{The inverse of the algebraic Fourier transform $F_c$}\label{sec:Fc}

In the rest of this section, we resume the notation from Section \ref{sec:Fmethod}.
For instance, the Lie group $G$ is real reductive and 
$P=MAN_+$ is a parabolic subgroup of $G$.
Via the bilinear form $\kappa$, we identify $\fn_+ \simeq \fn_-^\vee$,
yielding $\Pol(\fn_-^\vee)\simeq \Pol(\fn_+)$.

As $N_-$ is a simply-connected, connected, nilpotent Lie group, the arguments
in Section \ref{sec:nilp} can be applied to $N_-$; in particular,
the following diagram commutes.
\begin{equation*}
\begin{tikzcd}[row sep=1cm, column sep=1cm]
\mathrm{Pol}(\fn_+)
& 
\arrow[dl, pos=0.85, phantom, "\circlearrowleft"]
\\
\Diff_{N_-}(N_-) 
\arrow[u, "\Symb_0\vert_{N_-}",  "\sim"' sloped]
\arrow[r, "\Trun_0\vert_{N_-}"', "\sim" sloped]
& \Diff^{\const}(\fn_-)
\arrow[lu, "\Symb"',  "\sim"' sloped]
\end{tikzcd}
\end{equation*}

Recall from \eqref{eqn:FcL0} and \eqref{eqn:FcL} that the algebraic 
Fourier transform $F_c$ satisfies
\begin{equation}\label{eqn:FcL1}
F_c\colon \Cal{U}(\fn_-) \otimes V^\vee \stackrel{\sim}{\To} \Pol(\fn_+)\otimes V^\vee,
\; 
u\otimes v^\vee
\longmapsto
\widehat{dL}(u)_1\otimes v^\vee
\end{equation}
with $\widehat{dL}(u)_1:=\widehat{dL}(u)\acts 1$.
Our goal is to determine $F_c^{-1}$. 
It follows from \eqref{eqn:FcL1} that, 
for each $\zeta_1^{r_1}\cdots \zeta_n^{r_n} \in \Pol(\fn_+)$,
it suffices to determine $u_{\mathbf{r}} \in \Cal{U}(\fn_-)$ such that 
\begin{equation*}
\hdL(u_{\mathbf{r}})\acts 1 = \zeta_1^{r_1}\cdots \zeta_n^{r_n}.
\end{equation*}

To the end, we first prove one technical lemma, 
which relates $dL$ with $dR$.
For $u = Z_1\cdots Z_\ell$ with $Z_j \in \fn_-$, write
\begin{equation*}
u^\circ:=(-1)^\ell Z_\ell \cdots Z_1.
\end{equation*}
Under the identification $N_- \simeq \R^n$ via the local coordinates
\eqref{eqn:diffeo}, the distribution $dL(u)\delta_0$ is defined by
\begin{equation*}
\IP{dL(u)\delta_0}{f} := \IP{\delta_0}{dL(u^\circ) f}
\quad 
\text{for $f \in C^\infty(N_-)$}.
\end{equation*}

\begin{lem}\label{lem:dLR}
For $u\in \Cal{U}(\fn_-)$, we have 
\begin{equation*}
\IP{dL(u)\delta_0}{f} = \IP{\delta_0}{dR(u) f}
\quad 
\text{for $f \in C^\infty(N_-)$}.
\end{equation*}
\end{lem}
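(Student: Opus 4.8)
The plan is to reduce the identity to an evaluation at the identity element and then verify it by a direct iterated-derivative computation. By the very definition $\IP{dL(u)\delta_0}{f} = \IP{\delta_0}{dL(u^\circ)f}$, and since $u\mapsto u^\circ$ is an involutive bijection of $\Cal{U}(\fn_-)$ (it is the principal anti-automorphism extending $X\mapsto -X$), the assertion of the lemma is equivalent to
\begin{equation*}
(dL(u)f)(e) = (dR(u^\circ)f)(e)\qquad\text{for all } u\in\Cal{U}(\fn_-),\ f\in C^\infty(N_-).
\end{equation*}
Note that $dL(u)$ and $dR(u)$ with $u\in\Cal{U}(\fn_-)$ do preserve $C^\infty(N_-)$, because $N_-$ is a group, so all the translations occurring below stay inside $N_-$. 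By linearity it suffices to treat a monomial $u = X_1\cdots X_\ell$ with $X_1,\dots,X_\ell\in\fn_-$, in which case $u^\circ = (-1)^\ell X_\ell X_{\ell-1}\cdots X_1$.

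Next I would expand both sides as iterated directional derivatives. Iterating $dL(X)h(g) = \frac{d}{dt}\big|_{t=0}h(\exp(-tX)g)$ gives
\begin{equation*}
(dL(X_1\cdots X_\ell)f)(e) = \frac{\partial^\ell}{\partial t_1\cdots\partial t_\ell}\bigg|_{t=0} f\big(\exp(-t_\ell X_\ell)\cdots\exp(-t_1X_1)\big),
\end{equation*}
where the exponential factors appear in reverse order because $dL(X)$ inserts a translation on the left. Similarly, iterating $dR(X)h(g) = \frac{d}{dt}\big|_{t=0}h(g\exp(tX))$ and applying it to the reversed monomial yields
\begin{equation*}
(dR(u^\circ)f)(e) = (-1)^\ell\,\frac{\partial^\ell}{\partial s_1\cdots\partial s_\ell}\bigg|_{s=0} f\big(\exp(s_1X_\ell)\exp(s_2X_{\ell-1})\cdots\exp(s_\ell X_1)\big).
\end{equation*}
(For $\ell = 1$ both displays reduce to $dL(X)f(e) = -dR(X)f(e)$, which is exactly \eqref{eqn:LR} evaluated at $\bar n = e$.)

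Finally, the two expressions are identified by the change of variables $s_j = -t_{\ell+1-j}$. This substitution turns the argument $\exp(s_1X_\ell)\cdots\exp(s_\ell X_1)$ into $\exp(-t_\ell X_\ell)\cdots\exp(-t_1X_1)$, and the Jacobian contributes a factor $(-1)^\ell$ which cancels the sign $(-1)^\ell$ coming from $u^\circ$; hence $(dR(u^\circ)f)(e) = (dL(u)f)(e)$, which is the desired equivalent form. The only subtle points are purely bookkeeping: keeping straight that $dL$ inserts translations on the left and therefore in reversed order while $dR$ inserts them on the right in the given order, and tracking the interplay between the order-reversal in $u^\circ$, the sign $(-1)^\ell$ in its definition, and the Jacobian of $t_j\mapsto -t_j$. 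Once these are aligned the identity follows with no further computation.
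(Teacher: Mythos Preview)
Your proof is correct and takes essentially the same approach as the paper's: both reduce the identity to an evaluation at $e$ and exploit that $dL(Z)f(e)=-dR(Z)f(e)$ iterated over a monomial. The paper compresses the key step $(-1)^\ell dL(Z_\ell)\cdots dL(Z_1)f(e)=dR(Z_1)\cdots dR(Z_\ell)f(e)$ into a single line, whereas you spell it out via the explicit iterated-derivative formulas and the change of variables $s_j=-t_{\ell+1-j}$; the only minor quibble is that calling the sign factor a ``Jacobian'' is a slight misnomer (it is just the chain rule $\partial/\partial s_j=-\partial/\partial t_{\ell+1-j}$), but the computation itself is fine.
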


\begin{proof}
Observe that $dL(Z)f(e) = -dR(Z)f(e)$ for 
$Z \in \fn_-$ and $f \in C^\infty(N_-)$. Thus,
for 
$f \in C^\infty(N_-)$ and 
$u = Z_1\cdots Z_\ell$ with $Z_j \in \fn_-$, we have
\begin{align*}
\IP{dL(u)\delta_0}{f}
&=\IP{\delta_0}{dL(u^\circ)f}\\[3pt]
&=(-1)^\ell\IP{\delta_0}{dL(Z_\ell)\cdots dL(Z_1) f}\\[3pt]
&=(-1)^\ell dL(Z_\ell)\cdots dL(Z_1)f(e)\\[3pt]
&=dR(Z_1)\cdots dR(Z_\ell)f(e)\\[3pt]
&=\IP{\delta_0}{dR(u)f},
\end{align*}
which shows the lemma.
\end{proof}

\begin{thm}\label{thm:hdL}
We have 
\begin{equation}\label{eqn:hdL1}
\hdL(\s(X_1^{r_1}\cdots X_n^{r_n}))\acts 1
=\zeta_1^{r_1}\cdots \zeta_n^{r_n}.
\end{equation}
Consequently, the following identity holds on $\Cal{U}(\fn_-)$:
\begin{equation}\label{eqn:hdL2}
\hdL(\cdot)_1 = \Symb_0\big\vert_{N_-} \circ dR.
\end{equation}
\end{thm}

\begin{proof}
We start with a proof of \eqref{eqn:hdL1}.
It follows from Lemmas \ref{lem:pr} and \ref{lem:dLR} that,
for  $f\in C^\infty(N_-)\simeq C^\infty(\R^n)$, we have 
\begin{align}\label{eqn:dL}
\IP{dL(\s(X_1^{r_1}\cdots X_n^{r_n}))\delta_0}{f}
&=\IP{\delta_0}{dR(\s(X_1^{r_1}\cdots X_n^{r_n}))f} \nonumber\\[3pt]
&=\IP{\delta_0}{\Trun_0(dR(\s(X_1^{r_1}\cdots X_n^{r_n})))f}\nonumber\\[3pt]
&=\IP{\delta_0}{\partial_1^{r_1}\cdots \partial_n^{r_n}f}\nonumber\\[3pt]
&=\IP{(-1)^{|r|}\partial_1^{r_1}\cdots \partial_n^{r_n}\delta_0}{f}.
\end{align}
Therefore,
\begin{equation*}
dL(\s(X_1^{r_1}\cdots X_n^{r_n}))\delta_0
=(-1)^{|r|}\partial_1^{r_1}\cdots \partial_n^{r_n}\delta_0,
\end{equation*}
which yields
\begin{equation*}
\hdL(\s(X_1^{r_1}\cdots X_n^{r_n}))\acts 1
=(-1)^{|r|}\widehat{\partial_1^{r_1}\cdots \partial_n^{r_n}}\acts 1
=\zeta_1^{r_1}\cdots \zeta_n^{r_n}.
\end{equation*}

To show \eqref{eqn:hdL2},
let $S^j(\fn_-)$ be the subspace of homogeneous polynomials
in $S(\fn_-)$ of degree $j$.
It is known that, for any $u \in \Cal{U}(\fn_-)$, there exists $k \in \Z_{\geq 0}$
such that $u \in \bigoplus_{j=0}^k \textbf{symm}_j(S^j(\fn_-))$,
where 
$\textbf{symm}_j\colon S^j(\fn_-) \hookrightarrow \Cal{U}(\fn_-)$,
$Y_1\cdots Y_j \mapsto \s(Y_1\cdots Y_j)$,
is the symmetrization map from $S^j(\fn_-)$ to $\Cal{U}(\fn_-)$
(cf.\ \cite[p.\ 225]{Knapp02}).
Now \eqref{eqn:symb0} and \eqref{eqn:hdL1} conclude the desired identity.
\end{proof}

By Theorem \ref{thm:hdL}, the following diagram commutes.
\begin{equation*}%\label{eqn:diagram6}
\begin{tikzcd}[row sep=1cm, column sep=1cm]
&
\mathrm{Pol}(\fn_+)
& 
\arrow[dl, pos=0.7, phantom, "\circlearrowleft"]
\\
\Cal{U}(\fn_-)
\arrow[ur, pos=0.2, phantom, "\circlearrowleft"]
\arrow[ur, bend left=30,"\hdL(\cdot)_1", "\sim"' sloped]
\arrow[r, "dR"', "\sim" sloped]
&
\Diff_{N_-}(N_-) 
\arrow[u, "\Symb_0\vert_{N_-}",  "\sim"' sloped]
\arrow[r, "\Trun_0\vert_{N_-}"', "\sim" sloped]
& \Diff^{\const}(\fn_-)
\arrow[lu,bend right=30, "\Symb"',  "\sim"' sloped]
\end{tikzcd}
\end{equation*}

Now we determine the inverse of the algebraic Fourier transform $F_c^{-1}$.

\begin{cor}\label{cor:Fc}
The inverse 
\begin{equation*}
F_c^{-1}\colon \Pol(\fn_+)\otimes V 
\stackrel{\sim}{\To} 
\Cal{U}(\fn_-)\otimes V
\end{equation*}
of the algebraic Fourier transform $F_c$ satisfies
\begin{equation*}
F_c^{-1}(\zeta_1^{r_1}\cdots \zeta_n^{r_n}\otimes v) =
\s(X_1^{r_1}\cdots X_n^{r_n})\otimes v.
\end{equation*}
\end{cor}

\begin{proof}
This is an immediate consequence of Theorem \ref{thm:hdL}.
\end{proof}

%%%%%%%%%%%%%%%%%%%%%%%%%%%%%%%%%%%%%%%%%%
\subsection{Truncated symbol map $\Symb_0$ 
of  $\Diff_{G'}(\Cal{V}, \Cal{W})$}
\label{sec:symb2b}

For finite-dimensional irreducible 
representations $V$ and $W$ of $P$ and $P'$,
respectively, 
the truncated symbol map 
$\Symb_0\big\vert_{N_-}\ \colon\Diff_{N_-}(N_-) \stackrel{\sim}{\to} \Pol(\fn_+)$ 
gives rise to a linear isomorphism
\begin{equation*}
\Symb_0\big\vert_{N_-}\otimes \id\colon
\Diff_{N_-}(N_-)
 \otimes \operatorname{Hom}_{\C}(V,W)
\stackrel{\sim}{\To}
\Pol(\fn_+)
\otimes \operatorname{Hom}_{\C}(V,W),
\end{equation*}
where $\id$ stands for the identity map on $\Hom_{\C}(V,W)$.
Since $\Diff_{G'}(\Cal{V},\Cal{W})$ can be understood as
\begin{equation*}
\Diff_{G'}(\Cal{V},\Cal{W}) \subset \Diff_{N_-}(N_-)
 \otimes \operatorname{Hom}_{\C}(V,W),
\end{equation*}
by abuse of notation, we write
\begin{equation*}
\Symb_0
:=(\Symb_0\big\vert_{N_-}\otimes \id)
\big\vert_{\Diff_{G'}(\Cal{V},\Cal{W})}.
\end{equation*}

Observe that we have
\begin{alignat}{3}\label{eqn:const2}
\Symb_0\big\vert_{N_-}\otimes \id\colon&
\Diff_{N_-}(N_-)
 \otimes \operatorname{Hom}_{\C}(V,W)
\stackrel{\sim}{\To} &&\;
\Pol(\fn_+)
\otimes \operatorname{Hom}_{\C}(V,W) \nonumber \\[-3pt]
&\text{\hspace{2.07cm}{\footnotesize{$\bigcup$}}}&& 
\hspace{1.5cm} 
\text{{\footnotesize{$\bigcup$}}} \\[-1pt]
\Symb_0\colon&
\hspace{1.3cm}
\textnormal{Diff}_{G'}(\mathcal{V}, \mathcal{W}) 
&&
\hspace{-0.7cm}
\stackrel{\sim}{\To}
\hspace{0.5cm} 
\Symb_0(\textnormal{Diff}_{G'}(\mathcal{V}, \mathcal{W})).
 \hspace{0.5cm}\nonumber
\end{alignat}
On the other hand, by \eqref{eqn:Sol2a}, we also have
\begin{equation*}
\mathrm{Sol}(\mathfrak n_+;V,W)
\subset 
\Pol(\fn_+)
\otimes \operatorname{Hom}_{\C}(V,W).
\end{equation*}
Hence, the following holds:
\begin{equation*}
\xymatrix@R=-4pt@C=-35pt{
& 
\hspace{-15pt}
\mathrm{Pol}(\mathfrak{n}_+) \otimes \operatorname{Hom}_{\C}(V,W)
&\\
\rotatebox[origin=c]{-45}{$\bigcup$}
%\hspace{5pt}
&
&
\rotatebox[origin=c]{45}{$\bigcup$}
%\hspace{20pt}
\\
\Symb_0(\textnormal{Diff}_{G'}(\mathcal{V}, \mathcal{W}))
 \hspace{10pt}
&
&
\hspace{10pt}
\mathrm{Sol}(\mathfrak n_+;V,W)
}
\end{equation*}
We claim that
\begin{equation*}
\Symb_0(\textnormal{Diff}_{G'}(\mathcal{V}, \mathcal{W}))
= 
\mathrm{Sol}(\mathfrak n_+;V,W).
\end{equation*}

\begin{thm}\label{thm:symb}
We have a linear isomorphism
\begin{equation*}
\Symb_0 \colon \Diff_{G'}(\Cal{V}, \Cal{W}) \stackrel{\sim}{\To}
\Sol(\fn_+;V,W).
\end{equation*}
\end{thm}

\begin{proof}
It follows from \eqref{eqn:HD} and \eqref{eqn:hdL2} that we have 
\begin{equation}\label{eqn:symbFD}
\Symb_0 = (F_c\otimes \id_W) \circ \EuD^{-1},
\end{equation}
where $\EuD$ is the linear isomorphism 
from 
$\operatorname{Hom}_{P'}(W^\vee,\Mp(V^\vee))$
to 
$\operatorname{Diff}_{G'}(\mathcal V, \mathcal W)$
in \eqref{eqn:duality2}.
Theorem \ref{thm:Fmethod} then concludes the assertion.
\end{proof}

Let $\Rest$ denote the restriction map 
from $C^\infty(G/P',\Cal{W})$ to $C^\infty(G'/P',\Cal{W})$ with respect to 
the morphism $G'/P' \to G/P$.

\begin{cor}\label{cor:symb}
The map $\Rest \circ \Symb_0^{-1}$ provides a linear isomorphism
\begin{equation*}
\Rest \circ \Symb_0^{-1} \colon 
\Sol(\fn_+;V,W) \stackrel{\sim}{\To} 
\operatorname{Diff}_{G'}(\mathcal V, \mathcal W)
\end{equation*}
such that the following diagram commutes.
\begin{equation}\label{eqn:isom3}
\begin{tikzcd}[row sep=1.2cm, column sep=0.7cm]
& 
\mathrm{Sol}(\mathfrak n_+;V,W)
\arrow[dr,  "\sim"' sloped, "\Rest\, \circ \, \Symb_0^{-1}"] \\
\operatorname{Hom}_{\mathfrak{g}',P'}(\Mpp(W^\vee),\Mp(V^\vee)) 
  \arrow[rr, "\sim", "\EuScript{D}"']
  \ar[ur, "\sim"' sloped, "F_c\otimes\mathrm{id}_W"] 
 &
 \arrow[u,  pos=0.5, phantom, "\circlearrowleft"]
 & 
 \operatorname{Diff}_{G'}(\mathcal V, \mathcal W).
\end{tikzcd}
\end{equation}
\end{cor}

\begin{proof}
This readily follows from the proof of Theorem \ref{thm:symb}.
\end{proof}

It follows from Corollary \ref{cor:symb} that
the following diagram commutes.
\begin{equation}\label{eqn:diagramKP}
\begin{tikzcd}[row sep=1cm, column sep=1.5cm]
\Hom_\C(W^\vee, \Mp(V^\vee))
\ar[r, "\sim"' sloped, "F_c\otimes\mathrm{id}_W"] 
\ar[draw=none]{d}{\bigcup}
&
\mathrm{Pol}(\fn_+)\otimes\Hom_{\C}(V,W)
& 
\Diff_{N_-}(N_-)\otimes \Hom_{\C}(V,W)
\ar[l, "\sim"' sloped, "\Symb_0\vert_{N_-}\otimes \id"'] 
\ar[draw=none]{d}{\bigcup}
\\[-5pt]
\operatorname{Hom}_{P'}(W^\vee,\Mp(V^\vee)) 
  \arrow[rr, "\sim", "\EuScript{D}"']
 &
 \arrow[u,  pos=0.5, phantom, "\circlearrowleft"]
 & 
 \operatorname{Diff}_{G'}(\mathcal V, \mathcal W)
\end{tikzcd}
\end{equation}

\begin{rem}
It follows from Remark \ref{rem:symb} that if $\fn_{\pm}$ are abelian,
then
$\Symb_0 =(\Symb\otimes \id)
\big\vert_{\Diff_{G'}(\Cal{V},\Cal{W})}$
and $\Diff_{N_-}(N_-) = \Diff^\const(\fn_-)$.
Thus, Corollary \ref{cor:symb}
and the diagram \eqref{eqn:diagramKP} 
generalize \cite[Cor.\ 4.3]{KP1}
and the diagram in \cite[Thm.\ 4.1 (2)]{KP1}, respectively.
\end{rem}
  
It follows from \eqref{eqn:symbFD} that we have
$\Symb_0^{-1}=\EuD \circ (F_c^{-1}\otimes \id_W)$.
Thus, the differential operator $\Symb_0^{-1}(\psi(\zeta))
\in \operatorname{Diff}_{G'}(\mathcal V, \mathcal W)$
for $\psi(\zeta) \in  \Pol(\fn_+)\otimes V^\vee\otimes W$ 
is obtained in the following way.
As $\psi(\zeta) \in  \Pol(\fn_+)\otimes V^\vee\otimes W$,
the polynomial $\psi(\zeta)$ is of the form 
\begin{equation*}
\psi(\zeta)=\sum_{i,j}\psi_{i,j}(\zeta) \otimes v_i^\vee \otimes w_j
\end{equation*}
for some $\psi_{i,j}(\zeta) \in \Pol(\fn_+)$, $v_i^\vee \in V^\vee$, and $w_j \in W$.
Write
\begin{equation*}
\psi_{i,j}(\zeta)=\sum_{\mathbf{r} \in \Z_{\geq 0}^n}a_\mathbf{r} \, 
\zeta_1^{r_1}\cdots \zeta_n^{r_n}
\end{equation*}
for $\mathbf{r} = (r_1, \ldots, r_n)$. 
We put
\begin{equation*}
\psi(\s(X)):=\sum_{i,j}\psi_{i,j}(\s(X)) \otimes v_i^\vee \otimes w_j
\in \Cal{U}(\fn_-)\otimes V^\vee \otimes W,
\end{equation*}
where
\begin{equation*}
\psi_{i,j}(\s(X))
:=\sum_{\mathbf{r} \in \Z_{\geq 0}^n}a_\mathbf{r}\, 
\s(X_1^{r_1}\cdots X_n^{r_n}).
\end{equation*}
Then $(F_c^{-1} \otimes \id_W)(\psi(\zeta))$ and
$\Symb_0^{-1}(\psi(\zeta))$ are given by
\begin{align}
(F_c^{-1} \otimes \id_W)(\psi(\zeta))&=\psi(\s(X)),
\label{eqn:Finv2}\\[3pt]
\Symb_0^{-1}(\psi(\zeta))&=\psi(dR(\s(X))).\label{eqn:symb} 
\end{align}
Remark that the polynomial $\psi(\zeta)$ satisfies the following identities:
\begin{equation}\label{eqn:Finv3}
\psi(\zeta) = \Symb_0(\psi(dR(\s(X)))=
(F_c \otimes \id_W)(\psi(\s(X))) =
\sum_{i,j}\widehat{dL}(\psi_{i,j}(\s(X)))_1\otimes v_i^\vee \otimes w_j.
\end{equation}

%%%%%%%%%%%%%%%%%%%%%%%%%%%%%%%%%%%%%%%%%%
\subsection{A recipe of the F-method}
\label{sec:recipe}

By \eqref{eqn:Sol} and Corollary \ref{cor:symb},
one can classify and construct DSBOs
$\bD \in  \operatorname{Diff}_{G'}(\mathcal V, \mathcal W)$ and
$(\fg', P')$-homomorphisms $\Phi \in 
\Hom_{\mathfrak{g}',P'}(\Mpp(W^\vee),\Mp(V^\vee))$
by computing $\psi \in \mathrm{Sol}(\mathfrak n_+;V,W)$ as follows.
\vskip 0.1in

\begin{enumerate}

\item[Step 1]
Compute $d\pi_{(\xi,\lambda)^*}(C)$ and 
$\widehat{d\pi_{(\xi,\lambda)^*}}(C)$
for $C \in \fn_+'$.
\vskip 0.1in

\item[Step 2]
Classify and construct
$\psi \in \Hom_{M'A'}(W^\vee, \Pol(\fn_+)\otimes V^\vee)$.
\vskip 0.1in

\item[Step 3]
Solve the F-system \eqref{eqn:Fsys}
for $\psi \in \Hom_{M'A'}(W^\vee, \Pol(\fn_+)\otimes V^\vee)$.
\vskip 0.1in

\item[Step 4]
For $\psi \in \mathrm{Sol}(\mathfrak n_+;V,W)$ obtained in Step 3,
do the following.
\vskip 0.1in

\begin{enumerate}
\item[Step 4a]
Apply $\Rest \circ \Symb_0^{-1}$ 
to $\psi \in \mathrm{Sol}(\mathfrak n_+;V,W)$
to obtain $\bD \in  \operatorname{Diff}_{G'}(\mathcal V, \mathcal W)$.
\vskip 0.1in
\item[Step 4b]
Apply $F_c^{-1} \otimes \id_W$ 
to $\psi \in \mathrm{Sol}(\mathfrak n_+;V,W)$
to obtain $\Phi \in \operatorname{Hom}_{\mathfrak{g}',P'}
(\Mpp(W^\vee), \Mp(V^\vee))$.
\end{enumerate}

\vskip 0.2in

\end{enumerate}

In Sections \ref{sec:proof1} and \ref{sec:proof2},
we shall apply the recipe to classify and construct  
differential intertwining operators $\D$ and homomorphisms $\varphi$
between Verma modules
for $G=G'=SL(3,\R)$ with $P=P'=B$, where $B$ is
a minimal parabolic subgroup of $G$.

%%%%%%%%%%%%%%%%%%%%%%%%%%%%%%%%%%%%%%%%%%%%%%%
\section{DIOs $\D$ and $(\fg, B)$-homomorphisms $\varphi$ 
between Verma modules for $SL(3,\R)$}\label{sec:SL3}

The aim of this section is to 
classify and construct  DIOs $\D$ and $(\fg, B)$-homomorphisms 
$\varphi$ between Verma modules for 
 $G=SL(3,\R)$ and $P=B$, a minimal parabolic subgroup of $G$.
The classification is done in Theorems \ref{thm:DIO1} and 
\ref{thm:Hom1}, while 
the explicit formulas are given in Theorems \ref{thm:DIO2} and \ref{thm:Hom2}.
At the end of this section we also give the classification of 
$(\fg, B)$-homomorphisms $\varphi$ in terms of infinitesimal characters.
The proof of these results will be discussed in detail in 
Sections \ref{sec:proof1} and \ref{sec:proof2}.

%%%%%%%%%%%%%%%%%%%%%%%%%%%%%%%%%%%%%%%%%%%%%%%
\subsection{Notation and normalizations}
\label{subsec:SL3}

We start with some notation.
Let $G = SL(3,\R)$ with Lie algebra $\fg(\R)=\f{sl}(3,\R)$.
We put
\begin{alignat*}{3}
N_1^+:&=E_{1,2},
\quad
N_2^+&:=E_{2,3},
\quad
N_3^+&:=E_{1,3},\\[3pt]
N_1^-&:=E_{2,1},
\quad
N_2^-&:=E_{3,2},
\quad
N_3^-&:=E_{3,1},
\end{alignat*}
where $E_{i,j}$ denote matrix units.
We also set $\fa(\R):=\text{span}_\R\{E_{i,i}-E_{i+1, i+1}: i=1, 2\}$ 
and $\fn_\pm(\R):=\spn_\R\{N_1^\pm, N_2^\pm, N_3^\pm\}$.
Then $\fg(\R) = \fn_-(\R) \oplus \fa(\R) \oplus \fn_+(\R)$ is a Gelfand--Naimark 
decomposition of $\fg(\R)$ and
$\fb(\R):=\fa(\R) \oplus \fn_+(\R)$ is a minimal parabolic subalgebra of $\fg(\R)$.

Let $K=SO(3)$ with Lie algebra $\fk(\R)=\f{so}(3)$ . 
Also, let $A$ and $N_+$ 
be the analytic subgroups of $G$ with Lie algebras 
$\fa(\R)$ and $\fn_+(\R)$, respectively.
Then $G=K AN_+$ is an Iwasawa decomposition of $G$.
We write $M = Z_{K}(\fa(\R))$, so that
$B:=MAN_+$ is a minimal parabolic subgroup of $G$ 
with Lie algebra $\fb(\R)$.

We denote by $\fg$ the complexification of the Lie algebra $\fg(\R)$ of $G$.
A similar convention is employed also for subgroups of $G$;
for instance, $\mathfrak{b}= \mathfrak{a} \oplus \mathfrak{n}_+$ 
is a Borel subalgebra of $\fg= \f{sl}(3,\C)$.

Let $\Delta\equiv \Delta(\mathfrak{g},\mathfrak{a})$ denote
the set of roots of $\mathfrak{g}$ with respect to $\mathfrak{a}$. 
We denote by $\Delta^+$ and $\Pi$ 
the positive system corresponding to $\fb$ and 
the set of simple roots of $\gD^+$, respectively.
Then we have 
\begin{equation*}
\gD^+=\{\gamma_1-\gamma_2, \gamma_2-\gamma_3, \gamma_1-\gamma_3\}
\quad \text{and} \quad
\Pi=\{\gamma_1-\gamma_2, \gamma_2-\gamma_3\},
\end{equation*}
where $\gamma_j$ are the dual basis of $E_{j,j}$ for $j=1, 2, 3$.
The root spaces $\fg_{\pm(\gamma_i-\gamma_j)}$ for 
$1\leq i < j \leq 3$ are then given as 
\begin{equation*}
\fg_{\pm(\gamma_1-\gamma_2)} = \C N_1^{\pm},
\quad
\fg_{\pm(\gamma_2-\gamma_3)} = \C N_2^{\pm}, 
\quad
\fg_{\pm(\gamma_1-\gamma_3)} = \C N_3^{\pm}.
\end{equation*}
We write $\rho$ for half the sum of the positive roots, namely,
$\rho=\gamma_1 - \gamma_3$.

For a closed subgroup $J$ of $G$, we denote by $\Irr(J)$ and $\Irr(J)_{\fin}$
the sets of equivalence classes of irreducible representations of $J$  and 
finite-dimensional irreducible representations of $J$, respectively.

\vskip 0.1in

Write 
\begin{equation}\label{eqn:H}
H_1 := 
\begin{small}\begin{pmatrix}1 & 0 & 0 \\ 0 & -1 & 0 \\ 0 & 0 & 0\end{pmatrix}\end{small}
\quad
\text{and}
\quad
H_2 := 
\begin{small}\begin{pmatrix}0 & 0 & 0 \\ 0 & 1 & 0 \\ 0 & 0 & -1\end{pmatrix}\end{small}.
\end{equation}
Then $\fa(\R) = \spn_{\R}\{ H_1, H_2\}$. For $\lambda_1, \lambda_2 \in \C$, 
we define a one-dimensional representation 
$\C_{(\lambda_1, \lambda_2)}:=(\chi^{(\lambda_1,\lambda_2)}_A, \C)$ 
of $A = \exp(\fa(\R))$ by
\begin{equation}\label{eqn:chi}
\chi_A^{(\lambda_1,\lambda_2)} \colon \exp(t_1 H_1 + t_2 H_2) 
\longmapsto \exp(\lambda_1t_1 + \lambda_2 t_2).
\end{equation}
Then $\Irr(A)$ is given by
\begin{equation*}
\Irr(A)=\{\C_{(\lambda_1, \lambda_2)} : \lambda_1, \lambda_2 \in \C\} \simeq \C^2.
\end{equation*}

Let
\begin{equation*}
m_0:=
\begin{small}
\begin{pmatrix}
1& 0 & 0\\
0 & 1 & 0\\
0 & 0 & 1
\end{pmatrix}
\end{small}
,\;
m_1:=
\begin{small}
\begin{pmatrix}
-1& 0 & 0\\
0 & 1 & 0\\
0 & 0 & -1
\end{pmatrix}
\end{small}
,\;
m_2:=
\begin{small}
\begin{pmatrix}
1& 0 & 0\\
0 & -1 & 0\\
0 & 0 & -1
\end{pmatrix}
\end{small}
,\;
m_3:=
\begin{small}
\begin{pmatrix}
-1& 0 & 0\\
0 & -1 & 0\\
0 & 0 & 1
\end{pmatrix}.
\end{small}
\end{equation*}
Then 
\begin{equation*}
M=\{m_0, m_1, m_2, m_3\}.
\end{equation*}
For $\eps, \eps' \in \{\pm\}$,
a one-dimensional $M$-representation 
$\C_{(\eps,\eps')}:=(\chi_M^{(\eps, \eps')}, \C)$ is defined by
a character $\chi^{(\eps,\eps')}_M\colon M \to \{\pm 1\}$ as
\begin{equation*}
\chi_M^{(\eps, \eps')}
(\mathrm{diag}(b_1, b_2, b_3))
:=|b_1|_{\eps}\;|b_3|_{\eps'},
\end{equation*}
where
$|b|_+:=|b|$ and $|b|_-:=b$.
Table \ref{table:char} summarizes the values of the character 
$(\eps,\eps') :=\chi_M^{(\eps,\eps')}$ at each $m_j \in M$.
The set $\Irr(M)$ of 
equivalence classes of irreducible representations of $M$ is given as
\begin{equation}\label{eqn:char}
\Irr(M)= \{ \pp, \pmi, \mip, \mm\} \simeq (\Z/2\Z)^2.
\end{equation}

\begin{table}
\caption{Character table for $(\eps,\eps')$}
\begin{center}
\begin{normalsize}
\renewcommand{\arraystretch}{1.2} 
{
\begin{tabular}{|c|c|c|c|c|}
\hline
& $m_0$ 
& $m_1$ 
& $m_2$ 
& $m_3$\\
\hline
$\pp$ & $1$ & $1$ & $1$ & $1$ \\
\hline
$\pmi$ & $1$  & $-1$  & $-1$ & $1$\\
\hline
$\mip$ & $1$ & $-1$ & $1$ & $-1$\\
\hline
$\mm$ & $1$ & $1$ & $-1$ & $-1$\\
\hline
\end{tabular}
\label{table:char}
}
\end{normalsize}
\end{center}
%\vspace{-10pt}
\end{table}%

Since $\Irr(B)_{\fin}\simeq \Irr(M) \times \Irr(A)$, the set $\Irr(B)_\fin$ can be 
parametrized as
\begin{equation*}
\Irr(B)_{\fin}\simeq (\Z/2\Z)^2 \times \C^2.
\end{equation*}
For $(\eps_1, \eps_2;\lambda_1,\lambda_2) \in \Irr(B)_{\fin}$,
we write
\begin{equation}\label{eqn:Ind}
I(\lambda_1,\lambda_2)^{(\eps_1,\eps_2)} 
:= 
\Ind_{B}^G\left(\C_{(\eps_1,\eps_2)} \boxtimes \C_{(\lambda_1,\lambda_2)}\right)
\end{equation}
for an unnormalized parabolically induced representation 
(a principal series representation) of $G$.
Also, as a $(\fg, B)$-module, we denote by 
\begin{equation}\label{eqn:Verma}
M(\lambda_1,\lambda_2)^{(\eps_1,\eps_2)} 
:=\Cal{U}(\fg) \otimes_{\Cal{U}(\fb)}
(\C_{(\eps_1, \eps_2)}\boxtimes \C_{(\lambda_1,\lambda_2)})
\end{equation}
the Verma module induced from 
$\C_{(\eps_1, \eps_2)}\boxtimes \C_{(\lambda_1,\lambda_2)}$.
When we regard it merely as a $\fg$-module,
we simply write $M(\lambda_1,\lambda_2)$. 

We shall classify and construct  differential intertwining  operators 
\begin{equation*}
\D \in 
\Diff_G(I(\lambda_1, \lambda_2)^{(\eps_1, \eps_2)}, I(\nu_1,\nu_2)^{(\delta_1,\delta_2)})
\end{equation*}
and $(\fg, B)$-homomorphisms
\begin{equation*}
\varphi \in 
\Hom_{\fg, B}(
M(-\nu_1,-\nu_2)^{(\delta_1, \delta_2)},
M(-\lambda_1, -\lambda_2)^{(\eps_1, \eps_2)})
\end{equation*}
in the next subsections.
%in Sections \ref{sec:DIO}.

%%%%%%%%%%%%%%%%%%%%%%%%%%%%%%%%%%%%%%%%%%%%%%%
\subsection{Classification of DIOs $\D$ and $(\fg, B)$-homomorphisms $\varphi$}
\label{sec:DIO}

We put
\begin{align*}
\Lambda_a&:=(-\Z_{\geq 0})\times \C, \\[3pt]
\swap{\Lambda}_a&:=\C \times (-\Z_{\geq 0}), \\[3pt]
\Lambda_b&:=\{(1-k, 1-\ell+k) : k, \ell \in 1+\Z_{\geq 0}\}, \\[3pt]
\swap{\Lambda}_b&:=\{(1-k+\ell, 1-\ell) : k, \ell \in 1+\Z_{\geq 0}\}, \\[3pt]
\Lambda_c&:=\{(\tfrac{1}{2}(2-k-s), \tfrac{1}{2}(2-k+s)) : k \in 1+\Z_{\geq 0} \; 
\text{and} \;  s \in \C\}.
\end{align*}
Remark that, for instance, the set $\Lambda_b$ can also be defined as 
$\Lambda_b=\{(-k, 1-\ell+k) : k, \ell \in \Z_{\geq 0}\}$. 
We defined $\Lambda_b$, $\swap{\Lambda}_b$,
and $\Lambda_c$ as above for convenience to later arguments
(see, for instance, Sections \ref{sec:Fsys1} and \ref{subsec:Verma2}).

Write
$\eps:= (\eps_1, \eps_2),\,
\delta:= (\delta_1, \delta_2)
\in (\Z/2\Z)^2$
with 
$\Z/2\Z = \{ \pm 1\} \equiv \{\pm\}$
and 
$\lambda:=(\lambda_1, \lambda_2),\,
\nu:=(\nu_1,\nu_2)
\in \C^2$.
We then define 
\begin{equation*}
\Xi_a, \;
\swap{\Xi}_a, \;
\Xi_b, \;
\swap{\Xi}_b,\;
\Xi_c
\subset (\Z/2\Z)^4 \times \C^4
\end{equation*}
as follows. 
\begin{alignat*}{2}
&\Xi_a&&:=\{(\eps, \delta; \lambda, \nu) \in (\Z/2\Z)^4\times \C^4: 
\text{\eqref{eqn:L1} holds.}\} \\[3pt]
&\swap{\Xi}_a&&:=\{(\eps, \delta; \lambda, \nu) \in (\Z/2\Z)^4\times \C^4: 
\text{\eqref{eqn:L2} holds.}\} \\[3pt]
&\Xi_b&&:=\{(\eps, \delta; \lambda, \nu) \in (\Z/2\Z)^4\times \C^4: 
\text{\eqref{eqn:L3+} holds.}\}  \\[3pt]
&\swap{\Xi}_b&&:=\{(\eps, \delta; \lambda, \nu) \in (\Z/2\Z)^4\times \C^4: 
\text{\eqref{eqn:L3-} holds.}\} \\[3pt]
&\Xi_c&&:=\{(\eps, \delta; \lambda, \nu) \in (\Z/2\Z)^4\times \C^4: 
\text{\eqref{eqn:L4} holds.}\} 
\end{alignat*}
\begin{alignat}{3}
&(\lambda_1, \lambda_2) \in \Lambda_a, \quad
&&(\nu_1, \nu_2) = (2-\lambda_1, \lambda_1 + \lambda_2 -1 ), \quad
&&(\delta_1, \delta_2) = (\eps_1, \eps_2\cdot (-)^{1-\lambda_1}) \label{eqn:L1}\\[3pt]
&(\lambda_1, \lambda_2) \in \swap{\Lambda}_a, \quad
&&(\nu_1, \nu_2) = (\lambda_1 + \lambda_2 -1, 2-\lambda_2), \quad
&&(\delta_1, \delta_2) = (\eps_1\cdot (-)^{1-\lambda_2}, \eps_2) \label{eqn:L2} \\[3pt]
&(\lambda_1, \lambda_2) \in \Lambda_b, \quad
&&(\nu_1, \nu_2) = (\lambda_2, 3-\lambda_1-\lambda_2), \quad
&&(\delta_1, \delta_2) 
= (\eps_1\cdot (-)^{2-\lambda_1-\lambda_2}, \eps_2\cdot (-)^{1-\lambda_1})\label{eqn:L3+} \\[3pt]
&(\lambda_1, \lambda_2) \in \swap{\Lambda}_b, \quad
&&(\nu_1, \nu_2) = (3-\lambda_1-\lambda_2, \lambda_1), \quad
&&(\delta_1, \delta_2) 
= (\eps_1\cdot (-)^{1-\lambda_2}, \eps_2\cdot (-)^{2-\lambda_1-\lambda_2})\label{eqn:L3-} \\[3pt]
&(\lambda_1, \lambda_2) \in \Lambda_c, \quad
&&(\nu_1, \nu_2) = (2-\lambda_2, 2-\lambda_1), \quad
&&(\delta_1, \delta_2) 
= (\eps_1\cdot (-)^{2-\lambda_1-\lambda_2}, \eps_2\cdot (-)^{2-\lambda_1-\lambda_2})
\label{eqn:L4}
\end{alignat}

We put
\begin{equation*}
\Xi: = \Xi_a \cup \swap{\Xi}_a \cup \Xi_b \cup \swap{\Xi}_b \cup \Xi_c.
\end{equation*}

\begin{thm}\label{thm:DIO1}
The following conditions on $(\eps, \delta; \lambda, \nu) \in (\Z/2\Z)^4\times \C^4$ 
are equivalent:

\begin{enumerate}

\item[\emph{(i)}] 
$\Diff_G(I(\lambda_1, \lambda_2)^{(\eps_1, \eps_2)}, I(\nu_1,\nu_2)^{(\delta_1, \delta_2)})
\neq \{0\}$.

\vskip 0.1in

\item[\emph{(ii)}] 
$\dim_\C \Diff_G(I(\lambda_1, \lambda_2)^{(\eps_1, \eps_2)}, 
I(\nu_1,\nu_2)^{(\delta_1, \delta_2)})=1$.

\vskip 0.1in

\item[\emph{(iii)}] 
One of the following conditions holds.
\vskip 0.1in

\begin{enumerate}
\item[\emph{(iii-a)}] $(\delta;\nu) = (\eps; \lambda)$.
\vskip 0.1in

\item[\emph{(iii-b)}] $(\eps, \delta; \lambda, \nu) \in \Xi$.

\end{enumerate}

\end{enumerate}

\end{thm}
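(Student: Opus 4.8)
The plan is to convert the statement into a solvability question for the F-system by means of duality and the truncated symbol, and then to run the analysis family by family according to which positive root (equivalently, which element of $W=S_3$ for $\f{sl}(3,\C)$) is responsible. By Theorem \ref{thm:duality} and Theorem \ref{thm:symb2}, applied with $G=G'$, $P=P'=B$, and with $V=\C_{(\eps_1,\eps_2)}$, $W=\C_{(\delta_1,\delta_2)}$ the one-dimensional $M$-modules carrying the $A$-twists $\lambda$ and $\nu$, one has
\[
\Diff_G\!\big(I(\lambda)^{(\eps)},I(\nu)^{(\delta)}\big)\;\cong\;\Sol(\fn_+;V,W)\;\cong\;\Hom_{\fg,B}\!\big(M(-\nu)^{(\delta)},M(-\lambda)^{(\eps)}\big).
\]
Since $V$ and $W$ are one-dimensional and $\fn_+'=\fn_+$ is the Heisenberg radical of $\f{sl}(3,\C)$, generated by $N_1^+$ and $N_2^+$, an element of $\Sol(\fn_+;V,W)$ is a polynomial $\psi\in\Pol(\fn_+)=\C[\zeta_1,\zeta_2,\zeta_3]$ lying in a prescribed $MA$-isotypic component and annihilated by the two operators $\widehat{d\pi_{(\xi,\lambda)^*}}(N_1^+)$, $\widehat{d\pi_{(\xi,\lambda)^*}}(N_2^+)$ produced in Step 1 of the recipe of Section \ref{sec:recipe}. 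As an independent check of the list obtained this way, one may instead combine the duality theorem with the Verma--BGG classification of homomorphisms between Verma modules for $\f{sl}(3,\C)$: the six elements $e,\,s_\alpha,\,s_\beta,\,s_\alpha s_\beta,\,s_\beta s_\alpha,\,s_\gamma\,(=w_0)$ produce exactly the cases (iii-a), $\Xi_a$, $\swap{\Xi}_a$, $\Xi_b$, $\swap{\Xi}_b$, $\Xi_c$, and the classical fact that $\Hom(M(\cdot),M(\cdot))$ is at most one-dimensional yields (i)$\Leftrightarrow$(ii).

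Next I would extract the parameter conditions. The $\fa$-weight of a monomial $\zeta_1^a\zeta_2^b\zeta_3^c$ is explicit, so $MA$-equivariance forces $\psi$ to be supported on a single $\fa$-weight space $\Pol(\fn_+)_\mu$; demanding that the prescribed exponents be nonnegative integers is precisely where the membership $\lambda\in\Lambda_a,\swap{\Lambda}_a,\Lambda_b,\swap{\Lambda}_b,\Lambda_c$ and the linear relations $(\nu_1,\nu_2)=\cdots$ of \eqref{eqn:L1}--\eqref{eqn:L4} appear; the unique ``trivial'' possibility, $\mu=0$ with $\psi$ constant, is $(\delta;\nu)=(\eps;\lambda)$, i.e.\ case (iii-a), the identity operator. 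The condition on $\delta$ in each family comes from the $M$-equivariance alone: by Theorem \ref{thm:dL1} the generator of the image of the corresponding $\varphi$ is $F_c^{-1}(\psi)$, an element of $\Cal{U}(\fn_-)$ (tensored with the one-dimensional $V^\vee$) of $\fn_-$-weight $\lambda-\nu$, which in each family equals $-(k\alpha+\ell\beta)$ for explicit integers $k,\ell\geq0$ ($\ell=0$, resp.\ $k=0$, resp.\ $k=\ell$, in the $s_\alpha$, $s_\beta$, $s_\gamma$ families); each $m=\mathrm{diag}(b_1,b_2,b_3)\in M$ scales it by the sign $(b_1b_2)^k(b_2b_3)^\ell$, and using $b_1b_2b_3=1$ this character equals $\chi_M^{(\eps)}(\chi_M^{(\delta)})^{-1}$ exactly for the $(\delta_1,\delta_2)$ recorded in \eqref{eqn:L1}--\eqref{eqn:L4}. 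This establishes (i)$\Rightarrow$(iii).

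For the converse and the multiplicity statement I would solve the F-system on each weight space $\Pol(\fn_+)_\mu$, a finite-dimensional linear problem whose solution space has dimension at most one (this gives (i)$\Leftrightarrow$(ii), degenerate overlaps among the families included). For the two simple-root families $\Lambda_a,\swap{\Lambda}_a$ the space $\Pol(\fn_+)_\mu$ is one-dimensional, spanned by a power of a single $\zeta_i$, and it is annihilated by both operators precisely for these parameters; for $\Lambda_b,\swap{\Lambda}_b$ it has dimension $\min(k,\ell)+1$ and the F-system has a one-dimensional kernel with binomial coefficients; for $\Lambda_c$ it has dimension $k+1$, independent of the free parameter $s=\lambda_2-\lambda_1$, and, after the T-saturation reduction to an ODE carried out in Sections \ref{sec:proof1}--\ref{sec:proof2}, the kernel is one-dimensional and spanned by a vector whose entries are the Cayley continuants $\Cay_m(\cdot;\cdot)$. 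Since in each case the explicit operator of Theorem \ref{thm:DIO2} realizes a nonzero element of this kernel, (iii)$\Rightarrow$(ii) follows, completing the cycle of implications.

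The main obstacle is exactly the last family: one must prove that the pair of second-order operators $\widehat{d\pi_{(\xi,\lambda)^*}}(N_1^+),\widehat{d\pi_{(\xi,\lambda)^*}}(N_2^+)$ has a one-dimensional common kernel on the $(k+1)$-dimensional space $\Pol(\fn_+)_\mu$ for \emph{every} value of $s\in\C$, and to identify that kernel. The reduction to a single ODE (T-saturation) and the defining recursions of the Cayley continuants are what make this step tractable, whereas the other four families, and the verification that nonvanishing of the kernel forces $\mu$ into the listed forms, are essentially mechanical.
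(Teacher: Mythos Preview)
Your proposal is correct and follows essentially the same route as the paper: reduce via the duality theorem and the truncated symbol map to $\Sol(\fn_+;V,W)$, use $MA$-equivariance to confine $\psi$ to a single weight space $\Pol(k,\ell)$ (this is the paper's Proposition~\ref{prop:MA} and Corollary~\ref{cor:Sol}), and then solve the F-system case by case, invoking T-saturation for the diagonal family $k=\ell$ where the Cayley continuants appear (Propositions~\ref{prop:1}--\ref{prop:4} and Theorem~\ref{thm:Sol}). Two small remarks: your pairing of $\Xi_b$ with $s_\alpha s_\beta$ and $\swap{\Xi}_b$ with $s_\beta s_\alpha$ is swapped relative to the paper's conventions (see Theorem~\ref{thm:Hom3}, where $\varphi_+$ lands in $N(s_\beta s_\alpha\mu_\lambda)$); and the BGG--Verma cross-check you mention is a legitimate shortcut for (i)$\Leftrightarrow$(ii) and the parameter list, but the paper does not rely on it for the proof, obtaining multiplicity one directly from the one-dimensionality of each $\wSol_{(k,\ell)}$.
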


\begin{thm}\label{thm:Hom1}
The following conditions on $(\eps, \delta; \lambda, \nu) \in (\Z/2\Z)^4\times \C^4$ 
are equivalent:

\begin{enumerate}

\item[\emph{(i)}] 
$
\Hom_{\fg, B}(
M(-\nu_1,-\nu_2)^{(\delta_1, \delta_2)},
M(-\lambda_1, -\lambda_2)^{(\eps_1, \eps_2)})
\neq \{0\}
$.

\vskip 0.1in

\item[\emph{(ii)}] 
$\dim_\C 
\Hom_{\fg, B}(
M(-\nu_1,-\nu_2)^{(\delta_1, \delta_2)},
M(-\lambda_1, -\lambda_2)^{(\eps_1, \eps_2)})
=1
$.

\vskip 0.1in

\item[\emph{(iii)}] 
One of the following conditions holds.
\vskip 0.1in

\begin{enumerate}
\item[\emph{(iii-a)}] $(\delta;\nu) = (\eps; \lambda)$.
\vskip 0.1in

\item[\emph{(iii-b)}] $(\eps, \delta; \lambda, \nu) \in \Xi$.

\end{enumerate}

\end{enumerate}

\end{thm}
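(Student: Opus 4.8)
The strategy is to deduce Theorem~\ref{thm:Hom1} from Theorem~\ref{thm:DIO1} by invoking the duality theorem (Theorem~\ref{thm:duality}) in the specialized setting $G=G'=SL(3,\R)$, $P=P'=B$. Since the excerpt already commits to proving both classification theorems by the recipe of the F-method in Sections~\ref{sec:proof1} and \ref{sec:proof2}, the cleanest route is to observe that the two statements are \emph{equivalent} rather than to reprove the F-system analysis. Concretely, the plan is: first record the precise match-up of parameters. By the conventions in \eqref{eqn:Verma} and the discussion of $\xi^*_\lambda$ in Section~\ref{sec:dpi2}, the generalized Verma module $\Mp(V^\vee)$ attached to the inducing data $(\eps;\lambda)$ of $I(\lambda_1,\lambda_2)^{(\eps_1,\eps_2)}$ is, as a $(\fg,B)$-module, exactly $M(-\lambda_1,-\lambda_2)^{(\eps_1,\eps_2)}$ (one must be careful that the trivial character $\C_{2\rho}$ of $B=MAN_+$ restricted through $M$ is trivial, since $M$ is finite, so no extra sign enters; only the $A$-weight is negated). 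Likewise the data for $I(\nu_1,\nu_2)^{(\delta_1,\delta_2)}$ gives $\Mpp(W^\vee)=M(-\nu_1,-\nu_2)^{(\delta_1,\delta_2)}$.

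With this dictionary in hand, Theorem~\ref{thm:duality} supplies a natural linear isomorphism
\begin{equation*}
\EuD\colon \Hom_{\fg,B}\!\big(M(-\nu_1,-\nu_2)^{(\delta_1,\delta_2)},\,M(-\lambda_1,-\lambda_2)^{(\eps_1,\eps_2)}\big)
\stackrel{\sim}{\To}
\Diff_G\!\big(I(\lambda_1,\lambda_2)^{(\eps_1,\eps_2)},\,I(\nu_1,\nu_2)^{(\delta_1,\delta_2)}\big).
\end{equation*}
In particular the two Hom/Diff spaces have the same dimension for every fixed $(\eps,\delta;\lambda,\nu)$. Hence statement (i) of Theorem~\ref{thm:Hom1} holds iff statement (i) of Theorem~\ref{thm:DIO1} holds, and statement (ii) of Theorem~\ref{thm:Hom1} holds iff statement (ii) of Theorem~\ref{thm:DIO1} holds; statement (iii) is literally the same combinatorial condition on $(\eps,\delta;\lambda,\nu)\in(\Z/2\Z)^4\times\C^4$ in both theorems. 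Therefore the equivalence of (i), (ii), (iii) in Theorem~\ref{thm:Hom1} follows immediately from the already-established equivalence in Theorem~\ref{thm:DIO1}. (Alternatively, and this is what the recipe in Section~\ref{sec:recipe} makes transparent: both spaces are identified via $F_c\otimes\id_W$ and $\Rest\circ\symb_0^{-1}$ with the single space $\Sol(\fn_+;V,W)$ by the commuting diagram \eqref{eqn:isom3}, so one really only computes $\Sol$ once in Sections~\ref{sec:proof1}--\ref{sec:proof2}, and both theorems drop out.)

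I expect the only genuine obstacle to be the bookkeeping in the parameter dictionary — verifying that the $M$-character attached to $\Mp(V^\vee)$ is $\C_{(\eps_1,\eps_2)}$ with no twist, and that the $A$-part is exactly $-\lambda$, i.e.\ reconciling the definition of $\xi^*_\lambda=\xi^\vee\boxtimes\C_{2\rho-\lambda}$ used to form $\Cal V^*$ with the definition of $\Mp(V^\vee)$ induced from $d\xi^\vee\boxtimes\C_{-\lambda}$ (the $2\rho$-shift lives in the bundle of densities, not in the Verma module). This is a routine but error-prone sign check; once it is pinned down, the rest of the proof is a one-line appeal to Theorem~\ref{thm:duality}. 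Since the heavy lifting (solving the F-system, which produces the five families $\Lambda_a,\swap\Lambda_a,\Lambda_b,\swap\Lambda_b,\Lambda_c$ and the accompanying $\Xi$) is carried out in Sections~\ref{sec:proof1} and \ref{sec:proof2}, the proof of Theorem~\ref{thm:Hom1} itself should be no more than a short paragraph citing Theorem~\ref{thm:DIO1} and Theorem~\ref{thm:duality}.
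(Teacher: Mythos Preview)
Your proposal is correct, and the parenthetical alternative you mention is in fact exactly what the paper does: it proves Theorems~\ref{thm:DIO1}, \ref{thm:Hom1}, \ref{thm:DIO2}, and \ref{thm:Hom2} simultaneously in Section~\ref{sec:Step4} by reading them all off the classification of $\Sol(\eps,\delta;\lambda,\nu)$ (Theorem~\ref{thm:Sol}) via the commuting diagram~\eqref{eqn:isom3}, applying $F_c^{-1}$ for the Verma-module side and $\symb_0^{-1}$ for the differential-operator side. Your primary route---deducing Theorem~\ref{thm:Hom1} from Theorem~\ref{thm:DIO1} via the duality isomorphism $\EuD$---is an equally valid one-line argument, but the paper never invokes it in that direction; instead both theorems are obtained in parallel from the single $\Sol$-computation.
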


%%%%%%%%%%%%%%%%%%%%%%%%%%%%%%%%%%%%%%%%%%%%%%%
\subsection{Construction of DIOs $\D$ and $(\fg, B)$-homomorphisms $\varphi$}
\label{sec:construction}

Next we shall give the explicit formulas for DIOs $\D$ 
and $(\fg,B)$-homomorphisms $\varphi$. Let 
$\{\Cay_m(x;y)\}_{m=0}^\infty$ denote the family of 
the determinants of $m\times m$ tridiagonal matrices given by
\begin{itemize}

\item[$\diamond$] $m =0:$ $\Cay_0(x;y)=1$,
\vskip 0.1in

\item[$\diamond$] $m=1:$ $\Cay_1(x;y) = x$,
\vskip 0.1in

\item[$\diamond$] $m \geq 2:$ $\Cay_m(x;y) = 
\begin{small}
\begin{vmatrix}
x& 1    &              &            &             &       \\
y& x    &     2       &            &             &       \\
  & y-1 &     x       &  3        &             &       \\
  &       &  \dots  &  \dots & \dots  &       \\
  &       &              &y-m+3   &x            & m-1 \\
  &       &              &            &y-m+2     & x    
\end{vmatrix}
\end{small}
$.
%\vskip 0.1in
\end{itemize}
The tridiagonal determinants $\Cay_m(x;y)$ are called
\emph{Cayley continuants} (cf.\ \cite{MT05}).
We have
\begin{equation*}
\Cay_m(x;y)=\sum_{j=0}^m\binom{m}{j}
\left(\frac{x+y}{2}\right)^{\underline{j}}
\left(\frac{x-y}{2}\right)^{\overline{m-j}},
\end{equation*}
where $r^{\underline{j}}$ and $r^{\overline{j}}$ denote the 
falling factorial and rising factorial, respectively, namely,
\begin{align*}
r^{\underline{j}}&:=r(r-1)(r-2)\cdots (r-j+1)
=\frac{\Gamma(r+1)}{\Gamma(r-j+1)},\\[3pt]
r^{\overline{j}}&:=r(r+1)(r+2)\cdots (r+j-1)
=\frac{\Gamma(r+j)}{\Gamma(r)}.
\end{align*}
(See, for instance, \cite[(5)]{MT05}.)

Let $\Cal{A}$ be an algebra over $\C$.
Then, for $k, \ell \in \Z_{\geq 0}$ and $s \in \C$, we define polynomials
$\eta_+^{(k,\ell)}(a_1,a_2,a_3)$, 
$\eta_-^{(k,\ell)}(a_1,a_2,a_3)$,  
and $\eta_c^{(s;k)}(a_1,a_2,a_3)$ on $\Cal{A}$ as follows.
\begin{align}
\eta_+^{(k,\ell)}(a_1,a_2,a_3) &:= \sum_{m=0}^{\min(k,\ell)} \frac{1}{2^m}m!\binom{k}{m}\binom{\ell}{m}
\s(a_1^{k-m} a_2^{\ell-m} a_3^m) \label{eqn:eta1} \\[3pt]
\eta_-^{(k,\ell)}(a_1,a_2,a_3) &:= \sum_{m=0}^{\min(k,\ell)} \frac{(-1)^m}{2^m}m!\binom{k}{m}\binom{\ell}{m}
\s(a_1^{k-m} a_2^{\ell-m} a_3^m) \label{eqn:eta2}\\[3pt]
\eta_c^{(s;k)}(a_1,a_2,a_3)&:= \sum_{m=0}^{k} \frac{1}{2^m}\binom{k}{m}\Cay_m(s;k)
\s(a_1^{k-m} a_2^{k-m} a_3^m) \label{eqn:eta3}
\end{align}
Here, $\s(a_{j_1}\cdots a_{j_n})$ denotes the symmetrization of 
the product $a_{j_1}\cdots a_{j_n} \in \Cal{A}$. 

As in \eqref{eqn:DN}, we understand 
$\D \in 
\Diff_G(I(\lambda_1, \lambda_2)^{(\eps_1, \eps_2)}, 
I(\nu_1,\nu_2)^{(\delta_1,\delta_2)})$
for $(\eps, \delta; \lambda, \nu) \in \Xi$ as a linear map
$\D\colon C^\infty(\R^3) \to C^\infty(\R^3)$
via the diffeomorphism
\begin{equation}\label{eqn:n-}
\R^3 \stackrel{\sim}{\To} N_-, \quad (x_1, x_2, x_3) 
\mapsto \exp(x_1 N_1^- + x_2 N_2^- +  x_3 N_3^-).
\end{equation}

We set
\begin{align*}
\D_1
:=dR(N_1^-)
=\frac{\partial}{\partial x_1} + \frac{x_2}{2}\frac{\partial}{\partial x_3},%\\[3pt]
\quad
\D_2
:=dR(N_2^-)
=\frac{\partial}{\partial x_2} - \frac{x_1}{2}\frac{\partial}{\partial x_3},%\\[3pt]
\quad
\D_3
:=dR(N_3^-)=\frac{\partial}{\partial x_3}.
\end{align*}
(We shall  discuss the formulas for $dR(N_j^-)$ in Lemma \ref{lem:dR} in
the next section.) Then, 
for $k, \ell \in \Z_{\geq 0}$ and $s \in \C$, we define three differential operators
$\D_+^{(k,\ell)}$, $\D_-^{(k,\ell)}$, and $\D_c^{(s;k)}$ as follows.
\begin{equation*}
\D_+^{(k,\ell)}:=\eta_+^{(k,\ell)}(\D_1,\D_2,\D_3), \quad
\D_-^{(k,\ell)}:=\eta_-^{(k,\ell)}(\D_1,\D_2,\D_3), \quad
\D_c^{(s;k)} := \eta_c^{(s;k)}(\D_1,\D_2,\D_3).
\end{equation*}

\vskip 0.1in

\begin{thm}\label{thm:DIO2} 
The following holds.
\begin{equation*}
\Diff_G(I(\lambda_1, \lambda_2)^{(\eps_1, \eps_2)}, 
I(\nu_1,\nu_2)^{(\delta_1,\delta_2)})
=
\begin{cases}
\C \id & \emph{if $(\delta;\nu) = (\eps; \lambda)$},\\[3pt]
\C \D_1^{1-\lambda_1} & \emph{if $(\eps, \delta; \lambda,\nu) \in \Xi_a$}, \\[3pt]
\C \D_2^{1-\lambda_2} & \emph{if $(\eps, \delta; \lambda,\nu) \in \swap{\Xi}_a$}, \\[3pt]
\C \D_+^{(1-\lambda_1, 2-\lambda_1-\lambda_2)} & \emph{if $(\eps, \delta; \lambda,\nu) \in \Xi_b$}, \\[3pt]
\C \D_-^{(2-\lambda_1-\lambda_2, 1-\lambda_2)} & \emph{if $(\eps, \delta; \lambda,\nu) \in \swap{\Xi}_b$}, \\[3pt]
\C \D_c^{(\lambda_2-\lambda_1; 2-\lambda_1-\lambda_2)} & \emph{if $(\eps, \delta; \lambda,\nu) \in \Xi_c$}, \\[3pt]
\{0\} & \emph{otherwise}.
\end{cases}
\end{equation*}
\end{thm}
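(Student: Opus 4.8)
The strategy is to run the recipe of the F-method from Section~\ref{sec:recipe} in the case $G=G'=SL(3,\R)$, $P=P'=B$, so that $\fn_+'=\fn_+$ and $M'A'=MA$. Since every irreducible representation of $B$ is one-dimensional, $\Hom_{M'A'}(W^\vee,\Pol(\fn_+)\otimes V^\vee)$ is identified with the space of polynomials $\psi(\zeta_1,\zeta_2,\zeta_3)\in\Pol(\fn_+)$ transforming under $MA$ by the character attached to $(\eps,\delta;\lambda,\nu)$, and $\Sol(\fn_+;V,W)$ is the subspace cut out by the F-system \eqref{eqn:Fsys}. By Theorem~\ref{thm:DIO1} the space $\Diff_G(I(\lambda_1,\lambda_2)^{(\eps_1,\eps_2)},I(\nu_1,\nu_2)^{(\delta_1,\delta_2)})$ vanishes unless $(\delta;\nu)=(\eps;\lambda)$ or $(\eps,\delta;\lambda,\nu)\in\Xi$, in which case it is one-dimensional; this disposes of the ``otherwise'' line, and when $(\delta;\nu)=(\eps;\lambda)$ the identity map $\id$ lies in the space and spans it. For the five families, Theorem~\ref{thm:symb} gives a linear isomorphism $\Rest\circ\symb_0^{-1}\colon\Sol(\fn_+;V,W)\stackrel{\sim}{\To}\Diff_G(\Cal V,\Cal W)$, so it suffices to produce a nonzero $\psi\in\Sol(\fn_+;V,W)$ in each case and push it through $\symb_0^{-1}$.

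First one computes $\widehat{d\pi_{(\xi,\lambda)^*}}(N_1^+)$ and $\widehat{d\pi_{(\xi,\lambda)^*}}(N_2^+)$ from \eqref{eqn:dpi3} and the algebraic Fourier transform \eqref{eqn:Weyl}; these are first-order elements of $\C[\zeta,\tfrac{\partial}{\partial\zeta}]$ with coefficients affine in $(\lambda_1,\lambda_2)$. Since $\widehat{d\pi_{(\xi,\lambda)^*}}$ is a Lie algebra homomorphism and $[N_1^+,N_2^+]=N_3^+$, a polynomial is killed by $\widehat{d\pi_{(\xi,\lambda)^*}}(C)$ for all $C\in\fn_+$ as soon as it is killed by $\widehat{d\pi_{(\xi,\lambda)^*}}(N_1^+)$ and $\widehat{d\pi_{(\xi,\lambda)^*}}(N_2^+)$. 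The $A$-part of the $MA$-equivariance (writing the simple roots $\ga=\gamma_1-\gamma_2$, $\gb=\gamma_2-\gamma_3$ and $\gamma=\ga+\gb$, so that $\zeta_1^a\zeta_2^b\zeta_3^c$ has weight $-(a\ga+b\gb+c\gamma)$) forces $\psi$ to be supported on the monomials $\zeta_1^{p-c}\zeta_2^{q-c}\zeta_3^{c}$, $0\leq c\leq\min(p,q)$, for integers $p,q$ determined by $\nu-\lambda$; non-negativity of $p,q$ in the respective families is exactly what the discrete conditions $\lambda\in\Lambda_a,\swap\Lambda_a,\Lambda_b,\swap\Lambda_b,\Lambda_c$ encode, and the $M$-part imposes the parities relating $\delta$ to $\eps$ in \eqref{eqn:L1}--\eqref{eqn:L4}.

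The crux is then to solve the two PDEs on this finite-dimensional space. Substituting $\psi=\sum_c t_c\,\zeta_1^{p-c}\zeta_2^{q-c}\zeta_3^{c}$ and applying the T-saturation of Section~\ref{sec:proof1}, the system collapses to a three-term recurrence for $(t_c)$ whose coefficients are linear in $\lambda_1,\lambda_2$, which one analyses case by case:
\begin{itemize}
\item if only $\zeta_1$ (resp. $\zeta_2$) survives, i.e. $q=0$ (resp. $p=0$) --- the cases $\Xi_a$ (resp. $\swap\Xi_a$) --- the solution is $\psi=\zeta_1^{\,1-\lambda_1}$ (resp. $\zeta_2^{\,1-\lambda_2}$);
\item in the cases $\Xi_b$, $\swap\Xi_b$ the recurrence telescopes to $t_c=\pm\tfrac{1}{2^c}c!\binom{p}{c}\binom{q}{c}$, i.e. $\psi=\eta_\pm^{(p,q)}(\zeta_1,\zeta_2,\zeta_3)$ with $(p,q)=(1-\lambda_1,\,2-\lambda_1-\lambda_2)$, resp. $(2-\lambda_1-\lambda_2,\,1-\lambda_2)$;
\item in the case $\Xi_c$ one has $p=q=:k=2-\lambda_1-\lambda_2$, and with $s:=\lambda_1-\lambda_2$ the recurrence is precisely the tridiagonal recursion defining the Cayley continuants, giving $t_c=\tfrac{1}{2^c}\binom{k}{c}\Cay_c(s;k)$, i.e. $\psi=\eta_c^{(s;k)}(\zeta_1,\zeta_2,\zeta_3)$.
\end{itemize}
Finally one applies $\Rest\circ\symb_0^{-1}$: by \eqref{eqn:symb} the monomial $\zeta_1^{a}\zeta_2^{b}\zeta_3^{c}$ is sent to $\sigma_{a+b+c}(dR(N_1^-)^{a}dR(N_2^-)^{b}dR(N_3^-)^{c})=\sigma_{a+b+c}(\D_1^{a}\D_2^{b}\D_3^{c})$, so comparing with \eqref{eqn:eta1}--\eqref{eqn:eta3} one obtains $\symb_0^{-1}(\eta_\bullet(\zeta_1,\zeta_2,\zeta_3))=\eta_\bullet(\D_1,\D_2,\D_3)=\D_\bullet$, while $\symb_0^{-1}(\zeta_1^{\,1-\lambda_1})=\D_1^{\,1-\lambda_1}$, $\symb_0^{-1}(\zeta_2^{\,1-\lambda_2})=\D_2^{\,1-\lambda_2}$, and $\symb_0^{-1}(1)=\id$. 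Each operator so obtained is nonzero (its truncated symbol is the polynomial $\psi$, whose $c=0$ term does not vanish), and since the ambient space is one-dimensional it is spanned by the displayed operator; this yields the theorem.

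The main obstacle is the middle paragraph's computation: carrying out the T-saturation so as to bring the two first-order PDEs into a closed-form three-term recurrence, and recognizing in the symmetric case $p=q$ that this recurrence is exactly the one defining the Cayley continuants --- together with keeping track of the four $\Z/2\Z$-parities so that the $\delta$-versus-$\eps$ relations in \eqref{eqn:L1}--\eqref{eqn:L4} come out correctly.
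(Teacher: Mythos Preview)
Your proposal is correct and follows essentially the same route as the paper: classify the $MA$-equivariant polynomials in $\Pol(\fn_+)$, compute $\widehat{d\pi_{\lambda^*}}(N_j^+)$, reduce the F-system via the T-saturation $T_{k,\ell}$ to a three-term recurrence in one variable, solve it case by case (recognizing the Cayley-continuant recursion when $p=q$), and then push the resulting polynomial through $\symb_0^{-1}$ to obtain the displayed operators. One small slip: the Fourier-transformed operators $\widehat{d\pi_{\lambda^*}}(N_j^+)$ are not first-order in $\partial/\partial\zeta$ (cf.\ Proposition~\ref{prop:dpih}), though this does not affect the structure of your argument.
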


The order of the DIO $\D$ for $(\eps, \delta; \lambda,\nu) \in \Xi$ is 
given by $(\nu_1+\nu_2)-(\lambda_1+\lambda_2)$.

\vskip 0.1in

Next, we give explicit formulas of $(\fg,B)$-homomorphisms $\varphi$.
For $k,\ell \in \Z_{\geq 0}$ and $s \in \C$, 
we define maps 
\begin{equation*}
\varphi_1^{(k)},
\varphi_2^{(k)},
\varphi_+^{(k,\ell)},
\varphi_-^{(k,\ell)},
\varphi_c^{(s;k)} 
\in \Hom_{\C}(\C , \Cal{U}(\fn_-) \otimes \C)
\end{equation*}
as follows.
\begin{align*}
\varphi_1^{(k)} &\colon \mathbb{1} 
\longmapsto (N_1^-)^{k} \otimes \mathbb{1}, \\[3pt]
\varphi_2^{(k)} & \colon \mathbb{1}
\longmapsto (N_2^-)^{k} \otimes \mathbb{1}, \\[3pt]
\varphi_+^{(k,\ell)}& \colon \mathbb{1} 
\longmapsto \eta_+^{(k,\ell)}(N_1^-,N_2^-,N_3^-) \otimes \mathbb{1}, \\[3pt]
\varphi_-^{(k,\ell)}& \colon \mathbb{1}
\longmapsto \eta_-^{(k,\ell)}(N_1^-,N_2^-,N_3^-) \otimes \mathbb{1}, \\[3pt]
\varphi_c^{(s;k)} & \colon \mathbb{1}
\longmapsto \eta_c^{(s;k)}(N_1^-,N_2^-,N_3^-)\otimes \mathbb{1}.
\end{align*}

Via the identification
\begin{align*}
&\Hom_{\fg, B}(
M(-\nu_1,-\nu_2)^{(\delta_1, \delta_2)},
M(-\lambda_1, -\lambda_2)^{(\eps_1, \eps_2)})\\
&\simeq
\Hom_B(\C_{(\delta_1,\delta_2)} \boxtimes \C_{(-\nu_1,-\nu_2)},
M(-\lambda_1, -\lambda_2)^{(\eps_1, \eps_2)}),
\end{align*}
the $(\fg,B)$-homomorphisms $\varphi$ are given as follows.

\begin{thm}\label{thm:Hom2} 
The following holds.
\begin{equation*}
\Hom_{\fg, B}(
M(-\nu_1,-\nu_2)^{(\delta_1, \delta_2)},
M(-\lambda_1, -\lambda_2)^{(\eps_1, \eps_2)})
=
\begin{cases}
\C \id & \emph{if $(\delta;\nu) = (\eps; \lambda)$},\\[3pt]
\C \varphi_1^{(1-\lambda_1)} & \emph{if $(\eps, \delta; \lambda,\nu) \in \Xi_a$}, \\[3pt]
\C \varphi_2^{(1-\lambda_2)} & \emph{if $(\eps, \delta; \lambda,\nu) \in \swap{\Xi}_a$}, \\[3pt]
\C \varphi_+^{(1-\lambda_1, 2-\lambda_1-\lambda_2)} & \emph{if $(\eps, \delta; \lambda,\nu) \in \Xi_b$}, \\[3pt]
\C \varphi_-^{(2-\lambda_1-\lambda_2,1-\lambda_2)} & \emph{if $(\eps, \delta; \lambda,\nu) \in \swap{\Xi}_b$}, \\[3pt]
\C \varphi_c^{(\lambda_2-\lambda_1; 2-\lambda_1-\lambda_2)} & \emph{if $(\eps, \delta; \lambda,\nu) \in \Xi_c$}, \\[3pt]
\{0\} & \emph{otherwise}.
\end{cases}
\end{equation*}
\end{thm}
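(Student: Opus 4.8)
The plan is to obtain Theorem~\ref{thm:Hom2} as the Verma-module counterpart of Theorem~\ref{thm:DIO2}, transporting it across the duality theorem and the F-method diagram~\eqref{eqn:isom3}. First I would use Theorem~\ref{thm:Hom1} to reduce the problem: the space $\Hom_{\fg,B}(M(-\nu_1,-\nu_2)^{(\delta_1,\delta_2)},M(-\lambda_1,-\lambda_2)^{(\eps_1,\eps_2)})$ vanishes unless $(\delta;\nu)=(\eps;\lambda)$ or $(\eps,\delta;\lambda,\nu)\in\Xi$, and is one-dimensional otherwise; the case $(\delta;\nu)=(\eps;\lambda)$ evidently contributes $\C\,\id$. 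Hence it suffices to exhibit, in each of the five families, a nonzero element of the (one-dimensional) Hom space and to check that it is the stated $\varphi_\bullet$. Here I would recall that a $(\fg,B)$-homomorphism $M(-\nu)^{(\delta)}\to M(-\lambda)^{(\eps)}$ is determined by the image of the highest weight vector, an element of $\Hom_B(\C_{(\delta)}\boxtimes\C_{(-\nu)},M(-\lambda)^{(\eps)})$; under the identification $M_\fp(V^\vee)\simeq M(-\lambda)^{(\eps)}$ of Section~\ref{sec:duality}, this image is precisely the vector produced by the F-method.

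For the computation I would invert the F-method isomorphism. Since $G=G'$, the map $\Rest$ in~\eqref{eqn:isom3} is the identity, so $\EuD=\symb_0^{-1}\circ(F_c\otimes\id_W)$; by Theorem~\ref{thm:symb2}, $\symb_0$ inverts $\symb_0^{-1}$, whence $\EuD^{-1}=(F_c^{-1}\otimes\id_W)\circ\symb_0$. Now I take $\D$ as in Theorem~\ref{thm:DIO2}, written as $\eta_\bullet(dR(N_1^-),dR(N_2^-),dR(N_3^-))$ for the appropriate $\bullet$ and parameters (with $\D_j=dR(N_j^-)$). Normalizing $\kappa$ so that $\kappa(N_i^+,N_j^-)=\delta_{i,j}$, i.e.\ identifying $N_j^-$ with the basis vector $X_j$ of Section~\ref{sec:FcSymb}, the identity established in the proof of Theorem~\ref{thm:symb2},
\[
\symb_0\bigl(\sigma_{|\mathbf r|}\bigl(dR(N_1^-)^{r_1}dR(N_2^-)^{r_2}dR(N_3^-)^{r_3}\bigr)\bigr)=\zeta_1^{r_1}\zeta_2^{r_2}\zeta_3^{r_3},
\]
shows that $\symb_0(\D)$ is obtained from $\eta_\bullet$ by replacing each symmetrized monomial by the corresponding ordinary $\zeta$-monomial; denote this polynomial $\psi_\bullet(\zeta)$. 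By Proposition~\ref{prop:Fc}, applying $F_c^{-1}\otimes\id_W$ symmetrizes the monomials back, so
\[
\EuD^{-1}(\D)=(F_c^{-1}\otimes\id_W)(\psi_\bullet(\zeta))=\psi_\bullet^\sigma(N^-)\otimes\mathbb{1}=\eta_\bullet(N_1^-,N_2^-,N_3^-)\otimes\mathbb{1},
\]
which is exactly $\varphi_\bullet$, the parameters in Theorems~\ref{thm:DIO2} and~\ref{thm:Hom2} being the same in each family. Nonvanishing is free: $\D\neq 0$ and $\EuD$ is an isomorphism. Together with the vanishing and one-dimensionality from Theorem~\ref{thm:Hom1}, this yields the statement.

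I expect this translation to be short and essentially formal; the real obstacle lies upstream, in producing the polynomial solutions $\psi(\zeta)\in\Sol(\fn_+;V,W)$ of the F-system, which is what the proof of Theorem~\ref{thm:DIO2} carries out by the recipe of Section~\ref{sec:recipe}. The subtle family is $\Xi_c$: there the F-system, after the T-saturation described in the introduction, collapses to a three-term recurrence for the coefficients of $\psi(\zeta)$, and identifying its solution with $\tfrac{1}{2^m}\binom{k}{m}\Cay_m(s;k)$ (with $s=\lambda_1-\lambda_2$, $k=2-\lambda_1-\lambda_2$) is the step that brings in the Cayley continuants. As an independent check on Theorem~\ref{thm:Hom2} one can bypass the F-method: verify directly that $\eta_\bullet(N_1^-,N_2^-,N_3^-)\otimes\mathbb{1}$ lies in the correct $\fa$-weight and $M$-isotype — immediate, since all monomials appearing share the same weight and $\Ad(M)$-character by the $\f{sl}(3,\C)$ root data — and that it is annihilated by $\fn_+$, the latter being equivalent, through $F_c$, to the F-system itself.
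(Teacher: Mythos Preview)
Your proposal is correct and follows essentially the same approach as the paper. The paper proves Theorems~\ref{thm:DIO2} and~\ref{thm:Hom2} simultaneously in Section~\ref{sec:Step4}: having computed $\Sol(\eps,\delta;\lambda,\nu)$ explicitly in Theorem~\ref{thm:Sol}, it applies $\symb_0^{-1}$ to obtain the $\D_\bullet$ and $F_c^{-1}$ (via Theorem~\ref{thm:dL1}) to obtain the $\varphi_\bullet$, invoking Theorems~\ref{thm:symb} and~\ref{thm:Fmethod} respectively. Your route is a harmless detour through $\Diff_G$: you start from the $\D_\bullet$ already given by Theorem~\ref{thm:DIO2}, apply $\symb_0$ to recover the polynomial in $\Sol$, and then $F_c^{-1}$ to land on $\varphi_\bullet$ --- but since $\symb_0$ and $\symb_0^{-1}$ are inverse, this is the same computation, and your identification of the ``real obstacle'' with solving the F-system (Propositions~\ref{prop:1}--\ref{prop:4}, culminating in the Cayley-continuant recurrence of Claim~\ref{claim:am}) matches the paper exactly.
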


%%%%%%%%%%%%%%%%%%%%%%%%%%%%%%%%%%%%%%%%%%%%%%%
\subsection{Classification in terms of infinitesimal characters}
\label{sec:InfChar}

For later convenience we also give the classification of $(\fg, B)$-homomorphisms
$\varphi$ in terms of infinitesimal characters of Verma modules.

Let $\varpi_1$ and $\varpi_2$ be the first and second fundamental weights,
respectively, so that $\varpi_i(H_j) = \delta_{i,j}$ for $H_j$ in \eqref{eqn:H}.
Then the differential
$d\chi_A^{(\mu_1,\mu_2)}$ of the character 
$\chi_A^{(\mu_1,\mu_2)}$ defined in \eqref{eqn:chi} is given by
\begin{equation}\label{eqn:dchi}
d\chi_A^{(\mu_1,\mu_2)} = \mu_1\varpi_1+\mu_2\varpi_2.
\end{equation}
Therefore the Verma module $M(\mu_1,\mu_2)$ defined in 
\eqref{eqn:Verma}
can be understood as
\begin{equation*}
M(\mu_1,\mu_2)
=\Cal{U}(\fg)\otimes_{\Cal{U}(\fb)} \C_{\mu_1\varpi_1+\mu_2\varpi_2}.
\end{equation*}

Now, for $\mu \in \fa^*$, we denote by
\begin{equation*}
N(\mu):=\Cal{U}(\fg)\otimes_{\Cal{U}(\fb)} \C_{\mu-\rho},
\end{equation*}
the Verma module with infinitesimal character $\chi_\mu$.
As $\rho=\varpi_1 + \varpi_2$, 
for $\mu=(\mu_1,\mu_2)$, we have
\begin{equation*}
M(\mu_1,\mu_2) = N(\mu_1+1,\mu_2+1).
\end{equation*}

We wish to state Theorem \ref{thm:Hom2} in terms of $N(\mu)$.
To do so,
it is convenient to identify $\fa^*$ with
$\fa^*=\{(v_1, v_2 , v_3) \in \C^3: v_1+v_2+v_3=0\}$. 
In the realization we have
\begin{equation*}
\varpi_1=\tfrac{1}{3}(2,-1,-1)
\quad
\text{and}
\quad
\varpi_2=\tfrac{1}{3}(1,1,-2);
\end{equation*}
in particular, $\rho = \varpi_1+\varpi_2 = (1,0,-1)$.

In this notation, the weight $\mu_1\varpi_1+\mu_2\varpi_2+\rho$ is given by
\begin{equation*}
\mu_1\varpi_1+\mu_2\varpi_2+\rho
=
\tfrac{1}{3}(2\mu_1+\mu_2+3, -\mu_1+\mu_2, -\mu_1-2\mu_2-3).
\end{equation*}
Then, for $\lambda=(\lambda_1,\lambda_2)$, we put
\begin{align}
\mu_\lambda
&:=-(\lambda_1\varpi_1+\lambda_2\varpi_2)+\rho \nonumber\\[3pt]
&=\tfrac{1}{3}(-(2\lambda_1+\lambda_2-3), \lambda_1-\lambda_2, \lambda_1+2\lambda_2-3) \label{eqn:muL},
\end{align}
which gives
\begin{align*}
M(-\lambda_1, -\lambda_2)^{(\eps_1,\eps_2)}
&=
N(-\lambda_1+1,-\lambda_2+1)^{(\eps_1,\eps_2)}\\
&\equiv
N(\mu_\lambda)^{(\eps_1,\eps_2)},
\end{align*}
where the identification $\fa^* \subset \C^3$ is applied from the first 
line to the second.

For Theorem \ref{thm:Hom3} below, we put
\begin{equation*}
\ga:=\gamma_1-\gamma_2=(1,-1,0),\quad
\beta:=\gamma_2-\gamma_3=(0,1,-1), \quad
\gamma:=\gamma_1-\gamma_3=(1,0,-1).
\end{equation*}
For $\eta \in \gD$, we denote by $s_\eta$ the root reflection with respect to $\eta$.

In the following,
we only consider the case $(\eps_1,\eps_2) = (+, +)$;
the other cases can be obtained easily by
the conditions for $(\delta_1,\delta_2)$ in \eqref{eqn:L1}--\eqref{eqn:L4}.

\begin{thm}\label{thm:Hom3}
For $(\eps_1, \eps_2) = (+, +)$,  the $(\fg, B)$-homomorphisms $\varphi$
are classified as follows.
\begin{alignat*}{4}
&\varphi_1^{(1-\lambda_1)} 
&&\colon 
N(s_{\ga}\mu_\lambda)^{(+,\, (-)^{1-\lambda_1})}
&&\To 
N(\mu_\lambda)^{(+,+)}
\quad  
&&\emph{for $(\lambda_1,\lambda_2) \in \gL_a$}, \\[3pt]
&\varphi_2^{(1-\lambda_2)} 
&&\colon 
N(s_{\gb}\mu_\lambda)^{((-)^{1-\lambda_2},\, +)}
&&\To
N(\mu_\lambda)^{(+,+)} 
\quad  
&&\emph{for $(\lambda_1,\lambda_2) \in \swap{\gL}_a$}, \\[3pt]
&\varphi_+^{(1-\lambda_1,2-\lambda_1-\lambda_2)}
&& \colon 
N(s_{\gb}s_{\ga}\mu_\lambda)^{((-)^{2-\lambda_1-\lambda_2},\, (-1)^{1-\lambda_1})}
&& \To 
N(\mu_\lambda)^{(+,+)}
\quad  
&&\emph{for $(\lambda_1,\lambda_2) \in \gL_b$}, \\[3pt]
&\varphi_-^{(2-\lambda_1-\lambda_2,1-\lambda_2)}
&& \colon 
N(s_{\ga}s_{\gb}\mu_\lambda)^{((-1)^{1-\lambda_2},\, (-)^{2-\lambda_1-\lambda_2})} 
&&\To 
N(\mu_\lambda)^{(+,+)} 
\quad  
&&\emph{for $(\lambda_1,\lambda_2) \in \swap{\gL}_b$}, \\[3pt]
&\varphi_c^{(\lambda_2-\lambda_1;2-\lambda_1-\lambda_2)} 
&& \colon 
N(s_{\gamma}\mu_\lambda)^{((-1)^{2-\lambda_1-\lambda_2},\, (-)^{2-\lambda_1-\lambda_2})}
&&\To 
N(\mu_\lambda)^{(+,+)}
\quad  
&&\emph{for $(\lambda_1,\lambda_2) \in \gL_c$}.
\end{alignat*}
\end{thm}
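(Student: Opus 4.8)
The plan is to obtain Theorem \ref{thm:Hom3} as a direct reformulation of Theorem \ref{thm:Hom2}: one recasts the $\lambda$-parametrization of Verma modules in terms of the Harish--Chandra parameter notation $N(\mu)$, and then identifies, in each of the five families, the target parameter with the image of $\mu_\lambda$ under a single Weyl group reflection. Since Theorem \ref{thm:Hom2} already lists the homomorphisms explicitly, the remaining work is purely a bookkeeping translation.

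First I would fix the dictionary. By \eqref{eqn:dchi} and the discussion preceding the theorem, $M(-\lambda_1,-\lambda_2)^{(\eps_1,\eps_2)} \cong N(\mu_\lambda)^{(\eps_1,\eps_2)}$ with $\mu_\lambda$ as in \eqref{eqn:muL}, and likewise $M(-\nu_1,-\nu_2)^{(\delta_1,\delta_2)} \cong N(\mu_\nu)^{(\delta_1,\delta_2)}$, where $\mu_\nu := -(\nu_1\varpi_1+\nu_2\varpi_2)+\rho = \tfrac{1}{3}\bigl(-(2\nu_1+\nu_2-3),\,\nu_1-\nu_2,\,\nu_1+2\nu_2-3\bigr)$ in the realization $\fa^* = \{(v_1,v_2,v_3): v_1+v_2+v_3=0\}$. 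In this realization $s_{\gamma_i-\gamma_j}$ transposes the $i$-th and $j$-th coordinates, so $s_\ga$, $s_\gb$, $s_\gamma$ transpose the pairs $(1,2)$, $(2,3)$, $(1,3)$ respectively.

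Next I would carry out the five substitutions. Writing $\mu_\lambda = \tfrac{1}{3}(A,B,C)$ with $A = 3-2\lambda_1-\lambda_2$, $B = \lambda_1-\lambda_2$, $C = \lambda_1+2\lambda_2-3$, plugging the values of $(\nu_1,\nu_2)$ prescribed in \eqref{eqn:L1}--\eqref{eqn:L4} into the formula for $\mu_\nu$ gives, one line each, $\mu_\nu = s_\ga\mu_\lambda = \tfrac{1}{3}(B,A,C)$ for $(\lambda_1,\lambda_2)\in\Lambda_a$; $\mu_\nu = s_\gb\mu_\lambda = \tfrac{1}{3}(A,C,B)$ for $(\lambda_1,\lambda_2)\in\swap{\Lambda}_a$; $\mu_\nu = s_\gb s_\ga\mu_\lambda = \tfrac{1}{3}(B,C,A)$ for $(\lambda_1,\lambda_2)\in\Lambda_b$; $\mu_\nu = s_\ga s_\gb\mu_\lambda = \tfrac{1}{3}(C,A,B)$ for $(\lambda_1,\lambda_2)\in\swap{\Lambda}_b$; and $\mu_\nu = s_\gamma\mu_\lambda = \tfrac{1}{3}(C,B,A)$ for $(\lambda_1,\lambda_2)\in\Lambda_c$. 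The $M$-types require no extra work: specializing the sign conditions in \eqref{eqn:L1}--\eqref{eqn:L4} to $(\eps_1,\eps_2)=(+,+)$ produces exactly the characters displayed in the statement (here $(-)^m = (-1)^m$, and the exponents $1-\lambda_1$, $1-\lambda_2$, $2-\lambda_1-\lambda_2$ are the orders of the respective operators, hence nonnegative integers on $\Lambda_a$, $\swap{\Lambda}_a$, $\Lambda_b$, $\swap{\Lambda}_b$, $\Lambda_c$). Combining these identifications with Theorem \ref{thm:Hom2} yields the five displayed maps, and the one-dimensionality of each Hom space is Theorem \ref{thm:Hom1}.

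I do not expect a genuine obstacle; the content of Theorem \ref{thm:Hom3} is entirely contained in Theorem \ref{thm:Hom2}. The only point requiring care is the bookkeeping of the $\rho$-shift: $N(\mu)$ is induced from $\C_{\mu-\rho}$ whereas $M(\mu_1,\mu_2)$ is induced from $\C_{\mu_1\varpi_1+\mu_2\varpi_2}$, so the shift is absorbed into \eqref{eqn:muL}, and consequently the dot action on highest weights becomes the plain linear reflection action on Harish--Chandra parameters, which is why the reflections $s_\eta$ rather than $s_\eta\cdot$ appear. As a consistency check, each identification $\mu_\nu \in W\mu_\lambda$ is precisely what the classical BGG theory forces for a nonzero homomorphism $N(\mu_\nu)\to N(\mu_\lambda)$.
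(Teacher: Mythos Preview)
Your proposal is correct and follows essentially the same approach as the paper: both start from Theorem \ref{thm:Hom2}, pass from the $M(-\lambda_1,-\lambda_2)$ notation to $N(\mu_\lambda)$ via \eqref{eqn:muL}, and then verify by direct substitution that the target parameter $\mu_\nu$ coincides with the appropriate Weyl reflection of $\mu_\lambda$ (the paper spells out only the $\varphi_1^{(1-\lambda_1)}$ case and leaves the rest to the reader). Your use of the coordinate form $\mu_\lambda=\tfrac{1}{3}(A,B,C)$ together with the observation that $s_\ga,s_\gb,s_\gamma$ act as coordinate transpositions makes the five verifications particularly transparent, but this is a presentational rather than a substantive difference.
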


\begin{proof}
We show only $\varphi_1^{(1-\lambda_1)}$;
the other cases can be handled similarly.
It follows from Theorem \ref{thm:Hom2} that,
for $(\lambda_1, \lambda_2) \in \Lambda_a$, we have 
\begin{equation}\label{eqn:T421}
\varphi_1^{(1-\lambda_1)} \colon
M(-(2-\lambda_1),-(\lambda_1+\lambda_2-1))^{(+,(-)^{1-\lambda_1})}
\To 
M(-\lambda_1,-\lambda_2)^{(+,+)}.
\end{equation}
As in \eqref{eqn:muL}, for $(\nu_1,\nu_2)=(2-\lambda_1,\lambda_1+\lambda_2-1)$,
one computes that
\begin{equation*}
-(\nu_1\varpi_1+\nu_2\varpi_2)+\rho 
=\tfrac{1}{3}(\lambda_1-\lambda_2,
-(2\lambda_1+\lambda_2-3), 
\lambda_1+2\lambda_2-3)
=s_{\ga}\mu_\lambda,
\end{equation*}
which shows
\begin{equation*}
M(-(2-\lambda_1),-(\lambda_1+\lambda_2-1))^{(+,(-)^{1-\lambda_1})}
=N(s_\ga\mu_\lambda)^{(+,(-)^{1-\lambda_1})}.
\end{equation*}
Now the equality
$M(-\lambda_1,-\lambda_2)^{(+,+)}=N(\mu_\lambda)^{(+,+)}$ and
\eqref{eqn:T421} conclude the case for $\varphi_1^{(1-\lambda_1)}$.
\end{proof}

\begin{example}
The diagrams \eqref{eqn:homs1} and \eqref{eqn:homs2} below
illustrate all $\fg$-homomorphisms $\varphi$ in Theorem \ref{thm:Hom3} 
for the case $(\lambda_1, \lambda_2) = (0,0)$.
\begin{equation}\label{eqn:homs1}
\begin{tikzcd}[row sep=1cm, column sep=1cm]
&N(s_{\beta}s_{\ga}\rho) 
\arrow[r, "\varphi_2^{(2)}"]  
\arrow[ddr, pos=0.8,"\varphi_c^{(3; 1)}"]
& N(s_\ga\rho)  
\arrow[dr, "\varphi_1^{(1)}"]&\\
N(s_\gamma \rho) 
\arrow[ur, "\varphi_1^{(1)}"]
\arrow[dr, "\varphi_2^{(1)}"']
\arrow[rrr, pos=0.2, "\varphi_c^{(0;2)}"]
& & & N(\rho)\\
&N(s_\ga s_\gb \rho)  
\arrow[r, "\varphi_1^{(2)}"']  
\arrow[uur, pos=0.85, "\varphi_c^{(-3;1)}"']
&N(s_\gb \rho) 
\arrow[ur, "\varphi_2^{(1)}"']&
\end{tikzcd}
\end{equation}
\begin{equation}\label{eqn:homs2}
\begin{tikzcd}[row sep=1cm, column sep=1cm]
&N(s_{\beta}s_{\ga}\rho) 
\arrow[rrd, "\varphi_+^{(1,2)}"']  
& N(s_\ga\rho)  &\\
N(s_\gamma \rho) 
\arrow[urr, "\varphi_-^{(1,2)}"']
\arrow[drr, "\varphi_+^{(2,1)}"]
& & & N(\rho)\\
&N(s_\ga s_\gb \rho)    
\arrow[urr, "\varphi_-^{(2,1)}"]
&N(s_\gb \rho) &
\end{tikzcd}
\end{equation}

Here we have
$s_\gamma \rho = s_{\ga}s_{\gb}s_{\ga}\rho = s_{\gb}s_{\ga}s_{\gb}\rho = -\rho$.
Further, these homomorphisms admit factorization identities such as
\begin{alignat}{3}
\varphi_c^{(0;2)} 
&= \varphi_1^{(1)} \circ  && \,\, \varphi_2^{(2)} \circ  \varphi_1^{(1)}
&&=  \varphi_2^{(1)} \circ \varphi_1^{(2)} \circ  \varphi_2^{(1)},\label{eqn:ExF1}\\[3pt]
\varphi_c^{(3;1)} \circ \varphi_1^{(1)}
&=&&\varphi_+^{(2,1)}
&&=\varphi_1^{(2)}\circ \varphi_2^{(1)}.\label{eqn:ExF2}
\end{alignat}
In terms of singular vectors,
the identities \eqref{eqn:ExF1} and \eqref{eqn:ExF2} read
\begin{equation*}%\label{eqn:ExF3}
\left(
\s((N_1^-)^2(N_2^-)^2)-\tfrac{1}{2}(N_3^-)^2
\right)
\otimes \mathbb{1}_0
=(N_1^-)(N_2^-)^2(N_1^-)\otimes \mathbb{1}_0
=(N_2^-)(N_1^-)^2(N_2^-)\otimes \mathbb{1}_0
\end{equation*}
and
\begin{equation}\label{eqn:ExF4}
\begin{aligned}
\left(
(N_1^-)\s((N_1^-)(N_2^-))+\tfrac{3}{2}(N_3^-)
\right)\otimes \mathbb{1}_{s_\beta \rho-\rho}
&=
\left(
\s((N_1^-)^2(N_2^-))+\s((N_1^-)(N_3^-))
\right)
\otimes \mathbb{1}_{s_\beta \rho-\rho}\\[3pt]
&=
(N_2^-)(N_1^-)^2\otimes \mathbb{1}_{s_\beta \rho-\rho},
\end{aligned}
\end{equation}
respectively.
Remark that we have
$(\varphi_1^{(2)}\circ \varphi_2^{(1)})(1\otimes \mathbb{1}_{s_\gamma\rho-\rho})=(N_2^-)(N_1^-)^2\otimes \mathbb{1}_{s_\beta \rho-\rho}$, not 
$(N_1^-)^2(N_2^-)\otimes \mathbb{1}_{s_\beta \rho-\rho}$
(see Lemma \ref{lem:hom-comp}).
We shall discuss the factorization identities 
in detail in Section \ref{sec:factorization}.
\end{example}

\begin{rem}
Via the algebraic Fourier transform $F_c$,
the symmetrized form of a singular vector can be found
from the non-symmetrized one.
Indeed, for instance,
it follows from a direct computation that
$\hdL(N_2^-)\hdL(N_1^-)^2\acts 1 = \zeta_1^2\zeta_2+\zeta_1\zeta_3$.
Then we have 
\begin{align*}
(N_2^-)(N_1^-)^2\otimes \mathbb{1}_{s_\beta\rho-\rho}
&=(F_c^{-1}\circ F_c)((N_2^-)(N_1^-)^2\otimes \mathbb{1}_{s_\beta\rho-\rho})\\[3pt]
&=F_c^{-1}\big((\hdL(N_2^-)\hdL(N_1^-)^2\acts 1)
\otimes \mathbb{1}_{s_\beta\rho-\rho}\big)\\[3pt]
&=F_c^{-1}\big((\zeta_1^2\zeta_2+\zeta_1\zeta_3)\otimes \mathbb{1}_{s_\beta\rho-\rho})\\[3pt]
&=\big(\s((N_1^-)^2(N_2^-))+\s((N_1^-)(N_3^-))\big)
\otimes \mathbb{1}_{s_\beta\rho-\rho}.
\end{align*}
\end{rem}

\begin{example}\label{example:symb3}
It follows from \eqref{eqn:ExF4} that the truncated symbol 
$\Symb_0(\D_2\D_1^2)$ of $\D_2\D_1^2$ is 
\begin{equation}\label{eqn:symbEx}
\Symb_0(\D_2\D_1^2) 
= \Symb_0(\s(\D_1^2\D_2)+\s(\D_1\D_3))
=\zeta_1^2\zeta_2+\zeta_1\zeta_3. 
\end{equation}
The equality \eqref{eqn:symbEx} can be obtained 
directly from the definition of $\Symb_0$.
Indeed, a direct computation shows that the differential operator
$\D_2\D_1^2$ is given in the local coordinates \eqref{eqn:n-} as 
\begin{align*}
\D_2\D_1^2
=\frac{\partial^3}{\partial x_1^2 \partial x_2}
+\frac{\partial^2}{\partial x_ 1\partial x_3}
&+\frac{1}{2}x_2\frac{\partial^3}{\partial x_3^3}
+x_2\frac{\partial^3}{\partial x_1\partial x_2 \partial x_3}\\[3pt]
&+\frac{1}{4}x_2^2\frac{\partial^3}{\partial x_2 \partial x_3^2}
-\frac{1}{2}x_1\frac{\partial^3}{\partial x_1^2 \partial x_3}
-\frac{1}{2}x_1x_2\frac{\partial^3}{\partial x_1 \partial x_3^2}
-\frac{1}{8}x_1x_2^2\frac{\partial^3}{\partial x_3^3}.
\end{align*}
Thus the truncated operator $(\D_2\D_1^2)_0:=\Trun_0(\D_2\D_1^2)$ is
%\begin{equation*}
$(\D_2\D_1^2)_0 =\frac{\partial^3}{\partial x_1^2 \partial x_2}
+\frac{\partial^2}{\partial x_ 1\partial x_3}$.
%\end{equation*}
Therefore, the truncated symbol $\Symb_0(\D_2\D_1^2)$ is given by
\begin{equation*}
\Symb_0(\D_2\D_1^2)=\Symb((\D_2\D_1^2)_0)=\zeta_1^2\zeta_2+\zeta_1\zeta_3,
\end{equation*}
which agrees with \eqref{eqn:symbEx}.
\end{example}

%%%%%%%%%%%%%%%%%%%%%%%%%%%%%%%%%%%%%%%%%%%%%%%
\section{Proof of the classification and construction of $\D$ and $\varphi$: Part I}
\label{sec:proof1}

The aim of the next two sections is to prove 
Theorems 
\ref{thm:DIO1},
\ref{thm:Hom1}, 
\ref{thm:DIO2},
and
\ref{thm:Hom2}.
Our primary goal is to classify
the space $\Sol(\eps,\delta;\lambda,\nu)$
of solutions  to the F-system:
\begin{align*}
&
\Sol(\eps,\delta;\lambda,\nu)\\
&=
\{ \psi \in 
\Hom_{MA}(\C_{\delta}\boxtimes \C_{-\nu}, 
\Pol(\fn_+)\otimes (\C_\eps\boxtimes \C_{-\lambda})): 
\text{
$\psi$ solves \eqref{eqn:Fsys3} below.}\}.
\end{align*}
\begin{equation}\label{eqn:Fsys3}
\widehat{\dpi_{(\eps, \lambda)^*}}(N_j^+)\psi =0
\quad
\text{for $j=1,2$}.
\end{equation}
Remark that since $N_3^+ = [N_1^+, N_2^+]$, 
it suffices to consider 
$\widehat{\dpi_{(\eps, \lambda)^*}}(N_j^+)\psi=0$ for $j=1,2$ in \eqref{eqn:Fsys3}.

We proceed with the following recipe to compute 
$\Sol(\eps,\delta;\lambda,\nu)$,
where 
Steps 1 and 2 are interchanged relative to
those in Section \ref{sec:recipe}.

\vskip 0.1in

\begin{enumerate}

\item[Step 1]
Classify and construct
$\psi \in
\Hom_{MA}(\C_{\delta}\boxtimes \C_{-\nu}, 
\Pol(\fn_+)\otimes (\C_\eps\boxtimes \C_{-\lambda}))$.

\vskip 0.1in

\item[Step 2]
Compute $d\pi_{(\eps,\lambda)^*}(N_j^+)$ and 
$\widehat{d\pi_{(\eps,\lambda)^*}}(N_j^+)$
for $j=1,2$.

\vskip 0.1in

\item[Step 3]
Solve the F-system \eqref{eqn:Fsys3}.
\vskip 0.1in

\item[Step 4]
Apply $\Symb_0^{-1}$ and $F_c^{-1}$ 
to the solutions $\psi \in \Sol(\eps,\delta;\lambda,\nu)$.

\end{enumerate}

\vskip 0.1in

In this section we focus on Steps 1 and 2. 
Further,
the F-system \eqref{eqn:Fsys3}, a system of PDEs,  
will be reduced to that of ODEs via the so-called T-saturation.
The remaining steps will be 
handled in Section \ref{sec:proof2} by solving the resulting system of ODEs. 

%%%%%%%%%%%%%%%%%%%%%%%%%%%%%%%%%%%%%%%%%%%%%%%
\subsection{Step 1: 
Classify and construct
$\psi \in
\Hom_{MA}(\C_{\delta}\boxtimes \C_{-\nu}, 
\Pol(\fn_+)\otimes (\C_\eps\boxtimes \C_{-\lambda}))$}
For $X, Y \in \fg$, let $\Tr(X,Y)=\text{Trace}(XY)$ denote the trace form of $\fg$. 
Then $N_i^+$ and $N_j^-$ satisfy $\Tr(N_i^+,N_j^-)=\delta_{i,j}$.
We identify  the dual $\fn_+^\vee$
with $\fn_+^\vee \simeq \fn_-$ via the trace form $\Tr(\cdot, \cdot)$,
which gives rise to an $MA$-isomorphism
$\Pol(\fn_+)=S(\fn_+^\vee) \simeq S(\fn_-)$.
In particular, we have
$\zeta_j(\cdot)=\Tr(\cdot, N_j^-)$
for $\Pol(\fn_+)=\C[\zeta_1, \zeta_2, \zeta_3]$.

As $N_j^-$ are root vectors, the spaces 
$\C\zeta_j^n\simeq \C(N_j^-)^n$
are $MA$-representations for any $n \in \Z_{\geq 0}$.
Observe that, given character $(\eps_1,\eps_2)$ of $M$, we have
$(\eps_1, \eps_2)^n=(\eps_1^n, \eps_2^n)$.

\begin{lem}\label{lem:zetaChar}
Let $n \in \Z_{\geq 0}$.
Then, as $MA$-representations, we have
\begin{equation}\label{eqn:zetaChar1}
\C \zeta_1^n\simeq \C_{(+,(-)^n)} \boxtimes \C_{(-2n,n)},
\quad
\C \zeta_2^n\simeq \C_{((-)^n,+)} \boxtimes \C_{(n,-2n)},
\quad
\C \zeta_3^n\simeq \C_{((-)^n,(-)^n)} \boxtimes \C_{(-n,-n)}.
\end{equation}
In particular, for $k, \ell \in \Z_{\geq 0}$ and $j \in \{0, 1,2,\ldots, \min(k,\ell)\}$,
we have 
\begin{equation}\label{eqn:zetaChar2}
\C\zeta_1^{k-j}\zeta_2^{\ell-j}\zeta_3^j \simeq 
\C_{((-)^\ell,(-)^k)}\boxtimes \C_{(\ell-2k, k-2\ell)}.
\end{equation}
\end{lem}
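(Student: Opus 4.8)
The plan is to reduce the statement to two short weight computations, one for $A$ and one for $M$, and then obtain \eqref{eqn:zetaChar1} and \eqref{eqn:zetaChar2} by multiplying characters monomial by monomial.

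First I would observe that, under the action $\Ad_\#$ of \eqref{eqn:sharp}, both $A$ (being abelian) and $M$ (centralizing $\fa(\R)$, hence preserving each $\fa$-weight space) act on $\Pol(\fn_+)$ through its decomposition into one-dimensional weight spaces, and that the identification $\Pol(\fn_+) = S(\fn_+^\vee) \simeq S(\fn_-)$ recalled above is $MA$-equivariant. In particular each coordinate $\zeta_j$ is an $MA$-weight vector: concretely $\zeta_j(\cdot) = \Tr(\cdot, N_j^-)$ is the functional dual to $N_j^+$, so its weight is $-\theta_j$, where $\theta_1 = \ga$, $\theta_2 = \gb$, $\theta_3 = \gamma$ are the roots of $N_1^+, N_2^+, N_3^+$. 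Consequently every monomial $\zeta_1^a\zeta_2^b\zeta_3^c$ spans a one-dimensional $MA$-subrepresentation of $\Pol(\fn_+)$, and it suffices to record the $A$-character and the $M$-character of each $\zeta_j$ separately.

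For the $A$-part I would compute the values of $\ga, \gb, \gamma$ on $H_1, H_2$ from \eqref{eqn:H} using $\gamma_i(\diag(a_1,a_2,a_3)) = a_i$; this yields $(\ga(H_1), \ga(H_2)) = (2,-1)$, $(\gb(H_1), \gb(H_2)) = (-1,2)$, $(\gamma(H_1), \gamma(H_2)) = (1,1)$, so that in the coordinates of \eqref{eqn:chi} the characters of $\C\zeta_1^n, \C\zeta_2^n, \C\zeta_3^n$ are $\chi_A^{(-2n,n)}, \chi_A^{(n,-2n)}, \chi_A^{(-n,-n)}$. For the $M$-part I would write $m = \diag(b_1,b_2,b_3) \in M$ with $b_i \in \{\pm 1\}$ and use $\Ad(m)E_{p,q} = b_p b_q E_{p,q}$ together with the $\Ad$-invariance of the trace form to see that $\zeta_1, \zeta_2, \zeta_3$ are scaled under $\Ad_\#(m)$ by $b_1 b_2$, $b_2 b_3$, $b_1 b_3$, respectively; invoking $\det m = b_1 b_2 b_3 = 1$, i.e.\ $b_2 = b_1 b_3$, this simplifies to $b_3$, $b_1$, $b_1 b_3$, and comparison with Table \ref{table:char} identifies the $M$-characters of $\C\zeta_1^n, \C\zeta_2^n, \C\zeta_3^n$ as $(+, (-)^n)$, $((-)^n, +)$, $((-)^n, (-)^n)$. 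Combining the two parts gives \eqref{eqn:zetaChar1}.

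For \eqref{eqn:zetaChar2} I would simply multiply the data above over $\zeta_1^{k-j}\zeta_2^{\ell-j}\zeta_3^j$, using $\gamma = \ga + \gb$: the $A$-weight collapses to $-(k-j)\ga - (\ell-j)\gb - j\gamma = -k\ga - \ell\gb$, which in the coordinates of \eqref{eqn:chi} is $(\ell - 2k,\, k - 2\ell)$, while the $M$-character becomes $m \mapsto b_3^{k-j} b_1^{\ell-j} (b_1 b_3)^j = b_1^\ell b_3^k$, i.e.\ $((-)^\ell, (-)^k)$. I do not expect a genuine obstacle here; the only delicate point is the sign and normalization bookkeeping — in particular keeping track that $\zeta_j$ is dual to $N_j^+$ (so its weight is $-\theta_j$, not $\theta_j$) and exploiting $b_1 b_2 b_3 = 1$ to match the scalars $b_p b_q$ with the two-sign labels $(\eps,\eps')$ of Table \ref{table:char}.
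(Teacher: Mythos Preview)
Your proof is correct and follows essentially the same approach as the paper, which simply remarks that the isomorphisms in \eqref{eqn:zetaChar1} can be checked directly via the action $\Ad_\#$ (or equivalently by computing the characters on $\C(N_j^-)^n$), and that \eqref{eqn:zetaChar2} then follows immediately. You have filled in the explicit weight and sign computations that the paper leaves to the reader.
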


\begin{proof}
The isomorphisms in \eqref{eqn:zetaChar1}
can be checked directly via the action $\Ad_{\#}$ in \eqref{eqn:sharp}.
Or,  as $\zeta_j(\cdot)=\Tr(\cdot, N_j^-)$, they can also be shown by computing
the characters on $\C (N_j^-)^n$. 
The second identity \eqref{eqn:zetaChar2} is an immediate consequence 
of \eqref{eqn:zetaChar1}. 
\end{proof}

For $k,\ell \in \Z_{\geq 0}$, we put
\begin{equation*}%\label{eqn:Pol1}
\Pol(k,\ell):=\spn_{\C}\{\zeta_1^{k-j}\zeta_2^{\ell-j}\zeta_3^j:
j=0,1,\ldots, \min(k,\ell)\}.
\end{equation*}
By Lemma \ref{lem:zetaChar}, as $MA$-modules, we have
\begin{equation*}
\Pol(k,\ell) \simeq  
\bigoplus_{j=0}^{\min(k,\ell)}
\C_{((-)^\ell,(-)^k)}\boxtimes \C_{(\ell-2k, k-2\ell)}.
\end{equation*}

\begin{prop}\label{prop:MA}
The following conditions on 
$(\eps, \delta; \lambda,\nu) \in (\Z/2\Z)^4 \times \C^4$ are equivalent.
\begin{enumerate}
\item[\emph{(i)}] 
$\Hom_{MA}(\C_{\delta}\boxtimes \C_{-\nu}, 
\Pol(\fn_+)\otimes (\C_\eps\boxtimes \C_{-\lambda}))\neq \{0\}$.
\item[\emph{(ii)}] There exist $k, \ell \in \Z_{\geq 0}$ such that
\begin{equation}\label{eqn:MAcond}
(\eps_1\delta_1, \eps_2\delta_2)=((-)^\ell, (-)^k)
\quad
\text{and}
\quad
(\nu_1-\lambda_1,\nu_2-\lambda_2)
= (2k-\ell,2\ell-k).
\end{equation}
\end{enumerate}
\end{prop}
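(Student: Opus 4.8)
The plan is to decompose $\Pol(\fn_+)$ into $MA$-irreducibles and read off when the required Hom-space is nonzero. First I would recall that $\Pol(\fn_+) = S(\fn_+^\vee) \simeq S(\fn_-)$ as $MA$-modules, with the monomial basis $\zeta_1^{a}\zeta_2^{b}\zeta_3^{c}$ corresponding (via $\zeta_j \leftrightarrow N_j^-$) to $(N_1^-)^a(N_2^-)^b(N_3^-)^c$. Since $N_1^-, N_2^-, N_3^-$ are root vectors for $-\ga, -\gb, -\gamma = -(\ga+\gb)$ respectively, each monomial $\zeta_1^{a}\zeta_2^{b}\zeta_3^{c}$ spans a one-dimensional $MA$-submodule, and its $\fa$-weight is $-(a\,\ga + b\,\gb + c\,\gamma) = -((a+c)\ga + (b+c)\gb)$. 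The key observation is that two monomials $\zeta_1^{a}\zeta_2^{b}\zeta_3^{c}$ and $\zeta_1^{a'}\zeta_2^{b'}\zeta_3^{c'}$ carry the same $MA$-character precisely when $a+c = a'+c'$ and $b+c = b'+c'$, i.e. when they differ by shifting a factor $\zeta_1\zeta_2 \leftrightarrow \zeta_3$; setting $k := a+c$, $\ell := b+c$, the monomials with character indexed by $(k,\ell)$ are exactly those spanning $\Pol(k,\ell)$, and there are $\min(k,\ell)+1$ of them.

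Next I would pin down the actual character of $\Pol(k,\ell)$ using Lemma~\ref{lem:zetaChar}: the $M$-character is $((-)^\ell, (-)^k)$ and the $A$-weight (in the $(\lambda_1,\lambda_2)$-coordinates dual to $H_1, H_2$) is $(\ell - 2k,\, k - 2\ell)$. Then, by Schur's lemma applied to the one-dimensional $MA$-module $\C_\delta \boxtimes \C_{-\nu}$, we have
\begin{equation*}
\Hom_{MA}(\C_\delta \boxtimes \C_{-\nu},\ \Pol(\fn_+)\otimes(\C_\eps \boxtimes \C_{-\lambda})) \neq \{0\}
\end{equation*}
if and only if the $MA$-character $(\delta; -\nu)$ equals $(\eps; -\lambda)$ times the character of some monomial in $\Pol(\fn_+)$, i.e. times the character of $\Pol(k,\ell)$ for some $k,\ell \in \Z_{\geq 0}$. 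Writing this out on the $M$-part gives $\delta_1 = \eps_1 \cdot (-)^\ell$ and $\delta_2 = \eps_2 \cdot (-)^k$, which (since $\eps_i^2 = 1$) is exactly $(\eps_1\delta_1, \eps_2\delta_2) = ((-)^\ell, (-)^k)$; on the $A$-part it gives $-\nu_j = -\lambda_j + (\text{weight of }\Pol(k,\ell))_j$, i.e. $\nu_1 - \lambda_1 = -(\ell - 2k) = 2k - \ell$ and $\nu_2 - \lambda_2 = -(k - 2\ell) = 2\ell - k$. This establishes the equivalence of (i) and (ii).

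I do not expect any genuine obstacle here; the whole proposition is essentially a bookkeeping exercise around Lemma~\ref{lem:zetaChar} and Schur's lemma. The one point requiring a little care is the sign/normalization in passing between the representation $\C_{-\lambda}$ sitting on the target and the induced condition on $\nu$ — that is, keeping straight that it is $(\Pol(\fn_+)\otimes \C_{-\lambda})$ whose character must match $\C_{-\nu}$, so the weight of $\Pol(k,\ell)$ appears with the sign shown above. I would also note explicitly that since distinct $(k,\ell)$ give distinct $A$-weights $(\ell-2k, k-2\ell)$ (the map $(k,\ell) \mapsto (\ell-2k, k-2\ell)$ is injective on $\Z^2$), the pair $(k,\ell)$ in (ii), when it exists, is uniquely determined by $(\lambda,\nu)$ — a remark that will be convenient for the dimension count in Theorems~\ref{thm:DIO1} and~\ref{thm:Hom1}, though it is not strictly needed for the statement as given.
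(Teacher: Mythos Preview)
Your proof is correct and follows essentially the same approach as the paper: decompose $\Pol(\fn_+)=\bigoplus_{k,\ell}\Pol(k,\ell)$ into one-dimensional $MA$-summands via Lemma~\ref{lem:zetaChar}, then match characters. The paper's proof is simply a terser version of what you wrote; your added remarks on the Schur-lemma step, the sign bookkeeping, and the injectivity of $(k,\ell)\mapsto(\ell-2k,k-2\ell)$ are all correct and helpful elaborations, though not strictly required for the statement.
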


\begin{proof}
Observe that we have
\begin{equation*}
\Pol(\fn_+)=\bigoplus_{k,\ell \in \Z_{\geq 0}} \Pol(k,\ell).
\end{equation*}
Thus, the $MA$-representation $\Pol(\fn_+)$ can be decomposed into
\begin{equation}\label{eqn:PolDecomp}
\Pol(\fn_+)\simeq \bigoplus_{k,\ell \in \Z_{\geq 0}} 
\bigoplus_{j=0}^{\min(k,\ell)}
\C_{((-)^\ell,(-)^k)}\boxtimes \C_{(\ell-2k, k-2\ell)},
\end{equation}
which yields the proposition.
\end{proof}

\begin{cor}\label{cor:Sol}
If $\Sol(\eps, \delta; \lambda,\nu) \neq \{0\}$, then
$(\eps, \delta;\lambda, \nu)$ satisfies \eqref{eqn:MAcond} 
for some $k,\ell \in \Z_{\geq0}$.
\end{cor}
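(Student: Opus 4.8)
The plan is to read this off immediately from Proposition \ref{prop:MA}. By the very definition of $\Sol(\eps,\delta;\lambda,\nu)$ recalled at the start of Section \ref{sec:proof1}, this space is a linear subspace of $\Hom_{MA}(\C_{\delta}\boxtimes \C_{-\nu}, \Pol(\fn_+)\otimes (\C_\eps\boxtimes \C_{-\lambda}))$, namely the subspace cut out by the F-system \eqref{eqn:Fsys3}. Hence, if $\Sol(\eps,\delta;\lambda,\nu)\neq\{0\}$, then a fortiori $\Hom_{MA}(\C_{\delta}\boxtimes \C_{-\nu}, \Pol(\fn_+)\otimes (\C_\eps\boxtimes \C_{-\lambda}))\neq\{0\}$, i.e.\ condition (i) of Proposition \ref{prop:MA} holds. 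Applying the equivalence (i) $\Leftrightarrow$ (ii) of that proposition produces integers $k,\ell\in\Z_{\geq 0}$ for which \eqref{eqn:MAcond} holds, which is exactly the assertion of the corollary.

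There is no real obstacle here: the substantive content is entirely contained in Proposition \ref{prop:MA}, whose proof in turn rests on the explicit $MA$-isotypic decomposition \eqref{eqn:PolDecomp} of $\Pol(\fn_+)$, which is assembled from the weight computations of Lemma \ref{lem:zetaChar}. The only point requiring a little care is the bookkeeping of which tensor factor carries which $M$-character and which $A$-weight, but this is routine once the normalizations of Section \ref{subsec:SL3} are fixed, and it has already been carried out in the proof of Lemma \ref{lem:zetaChar} and Proposition \ref{prop:MA}.

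The reason the corollary is singled out is its role in the recipe of Section \ref{sec:proof1}: it lets one replace the unconstrained search over $\Hom_{MA}$ in Step 1 by the assumption that $(\eps,\delta;\lambda,\nu)$ already satisfies the parity/weight constraint \eqref{eqn:MAcond}. Consequently, for each admissible $(\eps,\delta;\lambda,\nu)$ the candidate polynomial solutions $\psi$ are confined to the finitely many homogeneous pieces $\Pol(k,\ell)$ with $k,\ell$ compatible with \eqref{eqn:MAcond}, which is precisely the reduction needed before computing and solving the F-system in Steps 2 and 3.
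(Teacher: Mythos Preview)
Your proof is correct and follows exactly the same approach as the paper: observe that $\Sol(\eps,\delta;\lambda,\nu)$ is a subspace of $\Hom_{MA}(\C_{\delta}\boxtimes \C_{-\nu}, \Pol(\fn_+)\otimes (\C_\eps\boxtimes \C_{-\lambda}))$ and apply Proposition~\ref{prop:MA}. The additional contextual remarks about the role of the corollary in the recipe are accurate but not part of the paper's one-line proof.
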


\begin{proof}
As $\Sol(\eps, \delta; \lambda,\nu) \subset 
\Hom_{MA}(\C_{\delta}\boxtimes \C_{-\nu}, 
\Pol(\fn_+)\otimes (\C_\eps\boxtimes \C_{-\lambda}))$,
this is an immediate consequence of Proposition \ref{prop:MA}.
\end{proof}

Given $k, \ell \in \Z_{\geq 0}$,
for $(\eps,\delta;\lambda,\nu)$ satisfying \eqref{eqn:MAcond}
with $\lambda = (\lambda_1, \lambda_2)$, 
we put
\begin{equation*}
\Sol_{(k,\ell)}(-\lambda_1,-\lambda_2):=
\{ \psi \in 
\Hom_{MA}(\C_{\delta}\boxtimes \C_{-\nu}, 
\Pol(k,\ell)\otimes (\C_\eps\boxtimes \C_{-\lambda})): 
\text{$\psi$ solves \eqref{eqn:Fsys3}.}\}.
\end{equation*}
By Proposition \ref{prop:MA}, to determine 
$\Sol(\eps,\delta;\lambda,\nu)$,
it suffices to compute 
$\Sol_{(k,\ell)}(-\lambda_1,-\lambda_2)$ for 
all $k, \ell \in \Z_{\geq 0}$. We shall carry it out in Section \ref{sec:proof2}.

%%%%%%%%%%%%%%%%%%%%%%%%%%%%%%%%%%%%%%%%%%%%%%%
\subsection{Step 2: Compute 
$d\pi_{(\eps,\lambda)^*}(N_j^+)$ and 
$\widehat{d\pi_{(\eps,\lambda)^*}}(N_j^+)$ for $j=1,2$}

For the rest of this section, we prepare for computing
$\Sol_{(k,\ell)}(-\lambda_1,-\lambda_2)$.
The first objects to look at are
$d\pi_{(\eps,\lambda)^*}(N_j^+)$ and 
$\widehat{d\pi_{(\eps,\lambda)^*}}(N_j^+)$ for $j=1,2$.
To the end, we first compute $dR(N_j^-)$ for $j=1,2,3$.

\begin{lem}\label{lem:dR}
The differential operators $dR(N_j^-)$ for $j=1,2,3$ are given as follows.
\begin{equation*}
dR(N_1^-) = \frac{\partial}{\partial x_1} + \frac{x_2}{2}\frac{\partial}{\partial x_3},
\quad
dR(N_2^-) = \frac{\partial}{\partial x_2} - \frac{x_1}{2}\frac{\partial}{\partial x_3},
\quad
dR(N_3^-) = \frac{\partial}{\partial x_3}
\end{equation*}
\end{lem}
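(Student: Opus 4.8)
The plan is to compute $dR(N_j^-)$ directly from the definition of the infinitesimal right translation using the explicit coordinates \eqref{eqn:n-}. Concretely, for $f \in C^\infty(N_-) \simeq C^\infty(\R^3)$ and $\bar n = \exp(x_1 N_1^- + x_2 N_2^- + x_3 N_3^-)$, one has
\begin{equation*}
dR(N_j^-)f(\bar n) = \frac{d}{dt}\bigg\vert_{t=0} f(\bar n \exp(t N_j^-)),
\end{equation*}
so the task reduces to expressing the product $\exp(x_1 N_1^- + x_2 N_2^- + x_3 N_3^-)\exp(t N_j^-)$ back in the coordinate form $\exp(y_1(t) N_1^- + y_2(t) N_2^- + y_3(t) N_3^-)$ and then differentiating $y_i(t)$ at $t=0$. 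Since $\fn_-(\R)$ is the $2$-step nilpotent (Heisenberg-type) subalgebra of $\f{sl}(3,\R)$ with the single nontrivial bracket $[N_1^-, N_2^-] = -N_3^-$ (equivalently $[N_2^-,N_1^-]=N_3^-$, to be checked against the chosen sign conventions for $E_{i,j}$) and $N_3^-$ central, the Baker–Campbell–Hausdorff formula terminates after the quadratic term.

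First I would record the bracket relations among $N_1^-, N_2^-, N_3^-$ from the matrix-unit definitions in Section~\ref{subsec:SL3}, isolating the sign of $[N_1^-,N_2^-]$. Next, I would apply the truncated BCH identity
\begin{equation*}
\exp(Y)\exp(tN_j^-) = \exp\!\Big(Y + tN_j^- + \tfrac12[Y, tN_j^-]\Big),
\qquad Y = x_1N_1^- + x_2N_2^- + x_3N_3^-,
\end{equation*}
valid because all higher nested brackets vanish in this nilpotent algebra. For $j=1$: $[Y, N_1^-] = x_2[N_2^-,N_1^-] = x_2 N_3^-$ (up to the sign to be fixed), giving the new coordinates $(x_1 + t,\ x_2,\ x_3 + \tfrac12 x_2 t)$; differentiating at $t=0$ yields $dR(N_1^-) = \partial_{x_1} + \tfrac{x_2}{2}\partial_{x_3}$. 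For $j=2$: $[Y, N_2^-] = x_1[N_1^-,N_2^-] = -x_1 N_3^-$, giving coordinates $(x_1,\ x_2 + t,\ x_3 - \tfrac12 x_1 t)$ and hence $dR(N_2^-) = \partial_{x_2} - \tfrac{x_1}{2}\partial_{x_3}$. For $j=3$: $N_3^-$ is central, so $[Y,N_3^-]=0$, the coordinates shift to $(x_1, x_2, x_3 + t)$, and $dR(N_3^-) = \partial_{x_3}$.

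The only genuine subtlety — really a bookkeeping point rather than a mathematical obstacle — is getting the sign and normalization of $[N_1^-,N_2^-]$ right relative to the identification $(x_1,x_2,x_3)\mapsto \exp(\sum x_i N_i^-)$, so that the factor of $\tfrac12$ and the signs in the formulas come out exactly as stated; this is precisely what pins down the $\pm\tfrac{x_2}{2}$ and $\mp\tfrac{x_1}{2}$ terms. I expect no difficulty beyond this, since the nilpotency of $\fn_-$ makes BCH exact at second order and the computation is a short explicit matrix exponential check.
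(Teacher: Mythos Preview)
Your proposal is correct and is essentially the same argument as the paper's own proof: compute $\bar n\exp(tN_j^-)$ in the coordinates \eqref{eqn:n-} via the (here second-order exact) BCH formula and differentiate at $t=0$. The paper only writes out the $j=1$ case explicitly and leaves the bracket computation implicit, but your extra care with the sign of $[N_1^-,N_2^-]=-N_3^-$ is exactly the bookkeeping needed.
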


\begin{proof}
We only demonstrate the identity for $dR(N_1^-)$; the other cases can be handled
similarly. Via the local coordinates \eqref{eqn:n-}, for 
$f(\bar{n})\in C^\infty(N_-)\simeq C^\infty(\R^3)$,
we have
\begin{align*}
dR(N_1^-)f(\bar{n})
&=\frac{d}{dt}\bigg\vert_{t=0} f\left(\bar{n}\exp(tN_1^-)\right)\\[3pt]
&=\frac{d}{dt}\bigg\vert_{t=0} 
f\left(\exp(x_1 N_1^- + x_2 N_2^- + x_3 N_3^-)
\exp(tN_1^-)\right)\\[3pt]
&=\frac{d}{dt}\bigg\vert_{t=0} 
f\left(\exp((x_1+t)N_1^- + x_2 N_2^- + (x_3 +\tfrac{x_2}{2}t)N_3^-)\right)\\[3pt]
&=\frac{d}{dt}\bigg\vert_{t=0} f(x_1+t, x_2, x_3 +\tfrac{x_2}{2}t).
%&=\left(\frac{\partial}{\partial x_1} + \frac{x_2}{2}\frac{\partial}{\partial x_3}\right)
%f(x_1, x_2 ,x_3),
\end{align*}
Evaluate the derivative at $t=0$ to obtain the desired identity.
\end{proof}

As the character $\eps=(\eps_1,\eps_2)$ 
of $M$ does not contribute to solving the F-system  \eqref{eqn:Fsys3},
in the following, we simply write $\dpi_{\lambda^*}=d\pi_{(\eps,\lambda)^*}$
with
\begin{equation*}
\lambda^* :=2\rho - d\chi_A^{(\lambda_1,\lambda_2)}
=(2-\lambda_1)\varpi_1 + (2-\lambda_2)\varpi_2,
\end{equation*}
where $d\chi_A^{(\lambda_1,\lambda_2)}$ is the differential of the character of
$\chi_A^{(\lambda_1,\lambda_2)}$ given in  \eqref{eqn:dchi}.

\begin{prop}\label{prop:dpi}
We have 
\begin{align*}
\dpil(N_1^+)&=
(2-\lambda_1)x_1+x_1^2\frac{\partial}{\partial x_1} 
-\frac{1}{2}(x_1x_2-2x_3)\frac{\partial}{\partial x_2} 
+\frac{1}{4}(x_1^2x_2+2x_1x_3)\frac{\partial}{\partial x_3},\\[3pt]
\dpil(N_2^+)&=
(2-\lambda_2)x_2-\frac{1}{2}(x_1x_2+2x_3)\frac{\partial}{\partial x_1}
+x_2^2\frac{\partial}{\partial x_2} 
-\frac{1}{4}(x_1x_2^2-2x_2x_3)\frac{\partial}{\partial x_3}.
\end{align*}
\end{prop}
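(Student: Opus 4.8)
The plan is to apply the general formula \eqref{eqn:dpi3} for $d\pi_{(\xi,\lambda)^*}(Z)$ with $Z=N_1^+$ and $Z=N_2^+$, using the local coordinates \eqref{eqn:n-}. Since the character $\eps$ of $M$ plays no role (it is locally constant and contributes nothing to the differential action on polynomials), I write $d\xi_\lambda^*$ as the scalar action through $\lambda^*=2\rho-d\chi_A^{(\lambda_1,\lambda_2)}=(2-\lambda_1)\varpi_1+(2-\lambda_2)\varpi_2$. The formula \eqref{eqn:dpi3} then reads
\begin{equation*}
\dpil(N_j^+)f(\bar n)=\big(\lambda^*\big)\big((\Ad(\bar n^{-1})N_j^+)_{\fl}\big)f(\bar n)-\big(dR((\Ad(\cdot^{-1})N_j^+)_{\fn_-})f\big)(\bar n),
\end{equation*}
so everything reduces to computing the Gelfand--Naimark decomposition of $\Ad(\bar n^{-1})N_j^+$ for $\bar n=\exp(x_1N_1^-+x_2N_2^-+x_3N_3^-)$, extracting the $\fl=\fa$-component and the $\fn_-$-component, and then substituting the explicit $dR(N_i^-)$ from Lemma \ref{lem:dR}.

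First I would compute $\Ad(\bar n^{-1})N_j^+$ explicitly as a matrix. Writing $\bar n^{-1}=\exp(-x_1N_1^--x_2N_2^--x_3N_3^-)$ and using that $N_1^-N_2^-$ contributes a term in $N_3^-$ while triple products vanish in $\f{sl}(3,\R)$, one gets $\bar n^{-1}$ as an explicit lower-triangular unipotent matrix with entries polynomial in $x_1,x_2,x_3$ (entry $(2,1)$ is $-x_1$, entry $(3,2)$ is $-x_2$, entry $(3,1)$ is $-x_3+\tfrac12 x_1x_2$, after accounting for the BCH correction — the precise constants must be pinned down against the normalization $N_3^-=E_{1,3}$ and the coordinates in \eqref{eqn:n-}). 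Then $\Ad(\bar n^{-1})N_j^+=\bar n^{-1}N_j^+\bar n$ is a straightforward $3\times 3$ matrix multiplication. Decompose the result into its strictly-lower-triangular part ($\fn_-$), diagonal part ($\fa$), and strictly-upper-triangular part ($\fn_+$); the $\fn_+$ part is discarded by \eqref{eqn:dpi3}. The diagonal part, paired with $\lambda^*$, produces the zeroth-order coefficients $(2-\lambda_1)x_1$ and $(2-\lambda_2)x_2$; the $\fn_-$ part, written in the basis $\{N_1^-,N_2^-,N_3^-\}$ with coefficients that are polynomials in $x$, gets fed through $dR$ using Lemma \ref{lem:dR}, which is exactly where the $\tfrac{x_2}{2}\frac{\partial}{\partial x_3}$ and $-\tfrac{x_1}{2}\frac{\partial}{\partial x_3}$ corrections enter and generate the mixed second-order-looking coefficients like $\tfrac14(x_1^2x_2+2x_1x_3)\frac{\partial}{\partial x_3}$.

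The main obstacle will be bookkeeping the half-integer constants correctly: the factor $\tfrac12$ in $dR(N_1^-)$ and $dR(N_2^-)$, the $\tfrac12 x_1x_2$ BCH term in $\bar n^{-1}$, and the identification $\zeta_j(\cdot)=\Tr(\cdot,N_j^-)$ all interact, and a sign or factor-of-two slip propagates into every coefficient. A good consistency check is to verify that the leading symbol of $\dpil(N_j^+)$ (the part not involving the $\frac{\partial}{\partial x_3}$-corrections) matches the naive adjoint-action vector field $x_j^2\frac{\partial}{\partial x_j}+\dots$ expected from the $SL(2)$-subalgebra generated by $N_j^\pm$, and that the coefficient of $f(\bar n)$ itself (the order-zero term) is exactly $(2-\lambda_j)x_j$, consistent with $\lambda^*(\text{diagonal part})$. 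Once the matrix computation is done and the coefficients are read off, the stated formulas follow directly, and applying the Weyl-algebra Fourier transform \eqref{eqn:Weyl} ($x_i\mapsto\frac{\partial}{\partial\zeta_i}$, $\frac{\partial}{\partial x_i}\mapsto-\zeta_i$) then gives $\widehat{\dpil}(N_j^+)$ for the subsequent F-system analysis.
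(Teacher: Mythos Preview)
Your proposal is correct and follows essentially the same approach as the paper: apply \eqref{eqn:dpi3}, compute $\Ad(\bar n^{-1})N_j^+$, extract the $\fa$- and $\fn_-$-components, and feed the latter through Lemma~\ref{lem:dR}. The only cosmetic difference is that the paper computes $\Ad(\bar n^{-1})N_1^+$ via the series $\exp(\ad(-X))N_1^+$ rather than by explicit matrix conjugation, and note a small slip in your write-up: $N_3^-=E_{3,1}$, not $E_{1,3}$.
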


\begin{proof}
We only show the formula for $\dpil(N_1^+)$; 
the other case can be shown analogously.
By \eqref{eqn:dpi3} with $\fl =\fa$, 
one needs to compute
\begin{equation*}
d\pi_{\lambda^*}(N_1^+)f(\bar{n})
=\lambda^*((\Ad(\bar{n}^{-1})N_1^+)_\fa)f(\bar{n})
-\left(dR((\Ad(\cdot^{-1})N_1^+)_{\fn_-})f\right)(\bar{n}).
\end{equation*}

Write $X: =x_1 N_1^- + x_2 N_2^- + x_3 N_3^-$ and set $\bar{n}:=\exp(X)$.
Then it follows from a direct computation that 
$\Ad(\bar{n}^{-1})N_1^-$ is evaluated as
\begin{align*}
\Ad(\bar{n}^{-1})N_1^+
&=\exp(\ad(-X))N_1^+\\[3pt]
&=N_1^+
+x_1H_1
-x_1^2N_1^-
+\tfrac{1}{2}(x_1x_2-2x_3)N_2^-
+\tfrac{1}{2}(x_1^2x_2-2x_1x_3)N_3^-.
\end{align*}
Thus, the terms
$(\Ad(\bar{n}^{-1})N_1^+)_{\fa}$ and $(\Ad(\bar{n}^{-1})N_1^+)_{\fn_-}$
are 
\begin{align*}
(\Ad(\bar{n}^{-1})N_1^+)_{\fa}&=x_1 H_1,\\
(\Ad(\bar{n}^{-1})N_1^+)_{\fn_-}&=-x_1^2N_1^-
+\tfrac{1}{2}(x_1x_2-2x_3)N_2^-
+\tfrac{1}{2}(x_1^2x_2-2x_1x_3)N_3^-.
\end{align*}
Therefore, we have
\begin{align*}
\dpil(N_1^+)
&=\lambda^*((\Ad(\bar{n}^{-1})N_1^+)_\fa)f(\bar{n})
-\left(dR((\Ad(\cdot^{-1})N_1^+)_{\fn_-})f\right)(\bar{n})\\[3pt]
&=x_1 \lambda^*(H_1)
+x_1^2dR(N_1^-)
+\tfrac{1}{2}(x_1x_2-2x_3)dR(N_2^-)
+\tfrac{1}{2}(x_1^2x_2-2x_1x_3)dR(N_3^-).
\end{align*}
As $\lambda^*(H_1)=2-\lambda_1$,
Lemma \ref{lem:dR} and some simple manipulations conclude the formula.
\end{proof}

The next step is to compute $\hdpil(N_j^+)$ for $j=1,2$.
For later convenience, we instead consider $-\zeta_j\hdpil(N_j^+)$.
Remark that the system of equations $\hdpil(N_j^+)\psi(\zeta)=0$ 
for $\psi(\zeta) \in \Pol(\fn_+)$ is 
equivalent to $-\zeta_j\hdpil(N_j^+)\psi(\zeta)=0$.
For $i=1,2,3$, we write $\vartheta_i := \zeta_i \tfrac{\partial}{\partial \zeta_i}$
for the Euler operator for $\zeta_i$.

\begin{prop}\label{prop:dpih}
We have 
\begin{align*}
-\zeta_1\hdpil(N_1^+)
&=\lambda_1\vartheta_1 + (\vartheta_1^2-\vartheta_1)-\frac{1}{2}\vartheta_1\vartheta_2+
\frac{1}{2}\vartheta_1\vartheta_3+\frac{\zeta_1\zeta_2}{\zeta_3}\vartheta_3
+\frac{1}{4}\frac{\zeta_3}{\zeta_1\zeta_2}(\vartheta_1^2-\vartheta_1)\vartheta_2,\\[3pt]
-\zeta_2\hdpil(N_2^+)
&=\lambda_2\vartheta_2+(\vartheta_2^2-\vartheta_2)-\frac{1}{2}\vartheta_1\vartheta_2
+\frac{1}{2}\vartheta_2\vartheta_3-\frac{\zeta_1\zeta_2}{\zeta_3}\vartheta_3
-\frac{1}{4}\frac{\zeta_3}{\zeta_1\zeta_2}\vartheta_1(\vartheta_2^2-\vartheta_2).
\end{align*}
\end{prop}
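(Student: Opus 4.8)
\textbf{Proof plan for Proposition \ref{prop:dpih}.}
The plan is to obtain the formulas for $-\zeta_j\hdpil(N_j^+)$ directly from the expressions for $\dpil(N_j^+)$ established in Proposition \ref{prop:dpi}, by applying the algebraic Fourier transform $\widehat{\,\cdot\,}$ of the Weyl algebra termwise and then multiplying by $-\zeta_j$. Since only the $j$-th computation matters by the symmetry of the setup, I would present the case $j=1$ in detail and remark that $j=2$ is entirely analogous (it follows from the $\ga\leftrightarrow\beta$, $x_1\leftrightarrow x_2$ symmetry visible in Proposition \ref{prop:dpi}).

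First I would recall from \eqref{eqn:Weyl} that the Fourier transform sends $x_i \mapsto \tfrac{\partial}{\partial\zeta_i}$ and $\tfrac{\partial}{\partial x_i}\mapsto -\zeta_i$. Applying this to
\[
\dpil(N_1^+)=
(2-\lambda_1)x_1+x_1^2\frac{\partial}{\partial x_1}
-\frac{1}{2}(x_1x_2-2x_3)\frac{\partial}{\partial x_2}
+\frac{1}{4}(x_1^2x_2+2x_1x_3)\frac{\partial}{\partial x_3}
\]
gives, term by term,
\[
\hdpil(N_1^+)=
-(2-\lambda_1)\frac{\partial}{\partial\zeta_1}
-\frac{\partial^2}{\partial\zeta_1^2}\zeta_1
+\frac{1}{2}\Bigl(\frac{\partial^2}{\partial\zeta_1\partial\zeta_2}-2\frac{\partial}{\partial\zeta_3}\Bigr)\zeta_2
-\frac{1}{4}\Bigl(\frac{\partial^2}{\partial\zeta_1^2}\frac{\partial}{\partial\zeta_2}+2\frac{\partial}{\partial\zeta_1}\frac{\partial}{\partial\zeta_3}\Bigr)\zeta_3,
\]
where I must be careful about operator ordering: each factor $x_i$ sits to the \emph{left} of $\tfrac{\partial}{\partial x_r}$ in $\dpil(N_1^+)$, so after the transform each $\tfrac{\partial}{\partial\zeta_i}$ sits to the left of the multiplication operator $\zeta_r$. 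Next I would multiply this identity on the left by $-\zeta_1$ and rewrite everything in terms of the Euler operators $\vartheta_i=\zeta_i\tfrac{\partial}{\partial\zeta_i}$. The key bookkeeping identities here are $\zeta_i\tfrac{\partial}{\partial\zeta_i}=\vartheta_i$, $\zeta_i^2\tfrac{\partial^2}{\partial\zeta_i^2}=\vartheta_i^2-\vartheta_i$, and the commutation rule $\tfrac{\partial}{\partial\zeta_j}\zeta_i=\zeta_i\tfrac{\partial}{\partial\zeta_j}+\delta_{i,j}$; for the mixed cross terms one picks up factors like $\tfrac{\zeta_1\zeta_2}{\zeta_3}$ and $\tfrac{\zeta_3}{\zeta_1\zeta_2}$ after forcing each monomial into the shape $(\text{rational prefactor})\cdot(\text{polynomial in the }\vartheta_i)$. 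For instance the term $-\tfrac14\zeta_1\cdot\tfrac{\partial^2}{\partial\zeta_1^2}\tfrac{\partial}{\partial\zeta_2}\zeta_3$ should, after normal-ordering and writing $\zeta_1\tfrac{\partial^2}{\partial\zeta_1^2}=\tfrac{1}{\zeta_1}(\vartheta_1^2-\vartheta_1)$ and $\tfrac{\partial}{\partial\zeta_2}\zeta_3 = \zeta_3\tfrac{\partial}{\partial\zeta_2}=\tfrac{\zeta_3}{\zeta_2}\vartheta_2$, collapse to $-\tfrac14\tfrac{\zeta_3}{\zeta_1\zeta_2}(\vartheta_1^2-\vartheta_1)\vartheta_2$, matching the claimed last summand.

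The only real obstacle is the careful normal-ordering of the Weyl-algebra elements: one must track whether the differentiation in each term acts before or after the various multiplications, because the commutators $[\tfrac{\partial}{\partial\zeta_i},\zeta_j]=\delta_{i,j}$ generate the lower-order corrections (the shift from $\vartheta_1^2$ to $\vartheta_1^2-\vartheta_1$, the appearance of the bare $\tfrac{\zeta_1\zeta_2}{\zeta_3}\vartheta_3$ term, etc.). A clean way to organize this is: (1) fix the convention that in $\dpil(N_1^+)$ every coefficient polynomial stands to the left; (2) transform; (3) left-multiply by $-\zeta_1$; (4) for each resulting monomial in $\zeta$'s and $\tfrac{\partial}{\partial\zeta}$'s, push all $\zeta$'s to the left of all $\tfrac{\partial}{\partial\zeta}$'s, recording the commutator terms; (5) replace each $\zeta_i^{a}\tfrac{\partial^{a}}{\partial\zeta_i^{a}}$ by the corresponding falling-factorial polynomial in $\vartheta_i$ and absorb the leftover $\zeta$-monomial into a rational prefactor. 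Carrying this out yields exactly the two displayed formulas; the $N_2^+$ case is identical after swapping indices $1\leftrightarrow 2$ and adjusting signs as dictated by Proposition \ref{prop:dpi}. I would then simply state that a direct computation along these lines gives the result.
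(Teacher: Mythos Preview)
Your approach is exactly the paper's: the proof there reads in its entirety ``Direct applications of the algebraic Fourier transform \eqref{eqn:Weyl2} to $\dpil(N_j^+)$ in Proposition \ref{prop:dpi},'' and your plan is a fleshed-out version of precisely that computation. Watch the signs in your intermediate display---for instance $\widehat{(2-\lambda_1)x_1}=+(2-\lambda_1)\tfrac{\partial}{\partial\zeta_1}$, not minus, and your sample cross term should carry a $+\tfrac14$ after multiplying by $-\zeta_1$---but these are write-up slips, not flaws in the method.
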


\begin{proof}
Direct applications of 
the algebraic Fourier transform
\eqref{eqn:Weyl2} to $\dpil(N_j^+)$ in Proposition \ref{prop:dpi}.
\end{proof}

%%%%%%%%%%%%%%%%%%%%%%%%%%%%%%%%%%%%%%%%%%%%%%%
\subsubsection{T-saturation}
\label{sec:T}

Now we reduce the system of PDEs  $-\zeta_j\hdpil(N_j^+)\psi(\zeta)=0$
to that of ODEs. To do so, identify $\Pol(k,\ell)$ with the space
$\Pol_{\min(k,\ell)}[t]$ of polynomials $p(t)$ of one variable $t$ with
$\deg p(t) \leq \min(k,\ell)$ via the linear map
\begin{equation*}
T_{k,\ell} \colon 
\Pol_{\min(k,\ell)}[t] \stackrel{\sim}{\To}\Pol(k,\ell),
\quad
p(t) \longmapsto 
\zeta_1^k\zeta_2^\ell p(\tfrac{\zeta_3}{\zeta_1\zeta_2}).
\end{equation*}
Given $D \colon \Pol(k,\ell) \to \Pol(k,\ell)$, 
we define
$T^\sharp_{k,\ell}(D) \colon \Pol_{\min(k,\ell)}[t] \to \Pol_{\min(k,\ell)}[t]$ by
\begin{equation*}
T_{k, \ell} \circ T^\sharp_{k,\ell}(D) = D \circ T_{k,\ell},
\end{equation*}
so that the following diagram commutes.
\begin{equation*}
\begin{tikzcd}[row sep=1cm, column sep=1cm]
\Pol_{\min(k,\ell)}[t] 
\arrow[d, "T^\sharp_{k,\ell}(D)"']  
\arrow[r, "T_{k, \ell}", "\sim"']  
%\ar@{}[rd]|{\circlearrowleft}
&\Pol(k,\ell)
\arrow[d, "D"]  \\
\Pol_{\min(k,\ell)}[t] 
\arrow[r, "T_{k, \ell}", "\sim"']  
\arrow[ur,  pos=0.5, phantom, "\circlearrowleft"]
&\Pol(k,\ell)
\end{tikzcd}
\end{equation*}

The linear map $T_{k,\ell} \colon 
\Pol_{\min(k,\ell)}[t] \stackrel{\sim}{\To}\Pol(k,\ell)$
is called the \emph{T-saturation} (\cite[Sect.\ 3.2]{KP2}).
Via the linear isomorphism $T_{k,\ell}$, the equation 
$T^\sharp_{k,\ell}(D)p(t)=0$ is equivalent to 
$DT_{k,\ell}(p)(\zeta)=0$ for a differential operator $D$ on $\Pol(k,\ell)$.
Proposition \ref{prop:dpih} shows that $-\zeta_1\hdpil(N_j^+)$ for $j=1,2$
preserve $\Pol(k,\ell)$. We then aim to compute the T-saturated operators
$T^\sharp_{k,\ell}(-\zeta_1\hdpil(N_j^+))$. 

The next lemma collects several useful formulas 
for some basic operators on $\Pol(k,\ell)$.
We write $\vartheta_t:=t\frac{d}{dt}$, the Euler operator for $t$.

\begin{lem}\label{lem:T}
The following hold for any $k, \ell \in \Z_{\geq 0}$.

\begin{enumerate}

\item[\emph{(1)}]
$T^\sharp_{k,\ell}(\vartheta_1)=k-\vartheta_t$

%\vskip 0.1in

\item[\emph{(2)}]
$T^\sharp_{k,\ell}(\vartheta_2)=\ell-\vartheta_t$

%\vskip 0.1in

\item[\emph{(3)}]
$T^\sharp_{k,\ell}(\vartheta_3)=\vartheta_t$

%\vskip 0.1in

\item[\emph{(4)}]
$T^\sharp_{k,\ell}(\frac{\zeta_3}{\zeta_1\zeta_2})=t$

%\vskip 0.1in

\item[\emph{(5)}]
$T^\sharp_{k,\ell}(\frac{\zeta_1\zeta_2}{\zeta_3})=t^{-1}$

%\vskip 0.1in

\end{enumerate}
\end{lem}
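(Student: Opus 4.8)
The plan is to prove all five identities at once by evaluating both sides on the monomial basis $\{t^j : 0\le j\le \min(k,\ell)\}$ of $\Pol_{\min(k,\ell)}[t]$; since $T^\sharp_{k,\ell}(D)=T_{k,\ell}^{-1}\circ D\circ T_{k,\ell}$ is linear, this suffices. The only input needed is that $T_{k,\ell}(t^j)=\zeta_1^{k}\zeta_2^{\ell}\big(\tfrac{\zeta_3}{\zeta_1\zeta_2}\big)^{j}=\zeta_1^{k-j}\zeta_2^{\ell-j}\zeta_3^{j}$, so computing $T^\sharp_{k,\ell}(D)\,t^j$ just means applying $D$ to the monomial $\zeta_1^{k-j}\zeta_2^{\ell-j}\zeta_3^{j}$ and reading off the resulting power of $t$ via $T_{k,\ell}^{-1}$.

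First I would dispatch (1)--(3): the Euler operators act diagonally, with $\vartheta_1,\vartheta_2,\vartheta_3$ producing the eigenvalues $k-j$, $\ell-j$, $j$ respectively on $\zeta_1^{k-j}\zeta_2^{\ell-j}\zeta_3^{j}$. Since $\vartheta_t\,t^j=j\,t^j$, this gives $T^\sharp_{k,\ell}(\vartheta_1)t^j=(k-j)t^j=(k-\vartheta_t)t^j$, and likewise $T^\sharp_{k,\ell}(\vartheta_2)t^j=(\ell-\vartheta_t)t^j$ and $T^\sharp_{k,\ell}(\vartheta_3)t^j=\vartheta_t\,t^j$; as $j$ runs over the basis, (1)--(3) follow. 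Next, for (4)--(5), multiplication by $\tfrac{\zeta_3}{\zeta_1\zeta_2}$ sends $\zeta_1^{k-j}\zeta_2^{\ell-j}\zeta_3^{j}$ to $\zeta_1^{k-(j+1)}\zeta_2^{\ell-(j+1)}\zeta_3^{j+1}=T_{k,\ell}(t^{j+1})$, so $T^\sharp_{k,\ell}\big(\tfrac{\zeta_3}{\zeta_1\zeta_2}\big)$ is multiplication by $t$; dually, multiplication by $\tfrac{\zeta_1\zeta_2}{\zeta_3}$ sends that monomial to $T_{k,\ell}(t^{j-1})$, giving multiplication by $t^{-1}$.

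The one point that needs care — and the only mild obstacle — is bookkeeping about domains: unlike $\vartheta_1,\vartheta_2,\vartheta_3$, multiplication by $\tfrac{\zeta_3}{\zeta_1\zeta_2}$ and by $\tfrac{\zeta_1\zeta_2}{\zeta_3}$ does not preserve $\Pol(k,\ell)$ (the endpoints $j=\min(k,\ell)$, resp. $j=0$, leave the space), so (4)--(5) should be read as identities on the larger space $\bigoplus_{m\in\Z}\C\,\zeta_1^{k-m}\zeta_2^{\ell-m}\zeta_3^{m}$, obtained by extending $T_{k,\ell}$ to an isomorphism from $\C[t,t^{-1}]$ onto it. These factors are only ever used in the combinations occurring in Proposition \ref{prop:dpih}, where the Euler factors accompanying them vanish at precisely the offending endpoints, so the composite operators land back in $\Pol(k,\ell)$; together with the multiplicativity $T^\sharp_{k,\ell}(D_1D_2)=T^\sharp_{k,\ell}(D_1)\,T^\sharp_{k,\ell}(D_2)$ of conjugation, (1)--(5) then give all the T-saturated operators $T^\sharp_{k,\ell}\big(-\zeta_j\hdpil(N_j^+)\big)$. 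Everything beyond this remark is a one-line computation.
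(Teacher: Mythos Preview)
Your proof is correct and follows essentially the same approach as the paper: a direct verification of the conjugation identity $T_{k,\ell}\circ T^\sharp_{k,\ell}(D)=D\circ T_{k,\ell}$. The paper carries this out on a general $p(t)$ using the chain rule, whereas you check it on the monomial basis $\{t^j\}$; these are equivalent, and your added remark on the domain issue for (4)--(5) is a welcome clarification that the paper leaves implicit.
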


\begin{proof}
We only prove (1) and (4); other formulas can be shown in a similar way.

For (1), it suffices to show
$T_{k,\ell} \circ T^\sharp_{k,\ell}(\vartheta_1)p(t)=T_{k,\ell}(kp(t)-\vartheta_tp(t))$.
Indeed, we have
\begin{align*}
T_{k,\ell} \circ T^\sharp_{k,\ell}(\vartheta_1)p(t)
&=\vartheta_1 \circ T_{k,\ell}(p(t))\\[3pt]
&=\vartheta_1 \left(\zeta_1^k\zeta_2^\ell\,  
p\left(\frac{\zeta_3}{\zeta_1\zeta_2}\right)\right)\\[3pt]
&=\zeta_1^k\zeta_2^\ell\left(
kp\left(\frac{\zeta_3}{\zeta_1\zeta_2}\right) - \frac{\zeta_3}{\zeta_1\zeta_2}
\frac{dp}{dt}\left(\frac{\zeta_3}{\zeta_1\zeta_2}\right)\right)\\[3pt]
&=T_{k,\ell}(kp(t)-\vartheta_tp(t)).
\end{align*}

For (4), we show 
$T_{k,\ell}\circ T^\sharp_{k,\ell}(\frac{\zeta_3}{\zeta_1\zeta_2})p(t)
=T_{k,\ell}(tp(t))$. We have
\begin{align*}
T_{k,\ell}\circ T^\sharp_{k,\ell}\left(\frac{\zeta_3}{\zeta_1\zeta_2}\right)p(t)
&=\frac{\zeta_3}{\zeta_1\zeta_2}T_{k,\ell}(p(t))\\[3pt]
&=\frac{\zeta_3}{\zeta_1\zeta_2} \cdot \zeta_1^k\zeta_2^\ell\,
p\left(\frac{\zeta_3}{\zeta_1\zeta_2}\right)\\[3pt]
&=T_{k,\ell}(tp(t)).
\end{align*}
\end{proof}

\begin{prop}\label{prop:tdpih}
We have 
\begin{align}
T^\sharp_{k,\ell}(-\zeta_1\hdpil(N_1^+))
&=
\frac{d}{dt} + (\lambda_1+k-\tfrac{1}{2}\ell-1)(k-\vartheta_t)
+\frac{1}{4}t(k-1-\vartheta_t)(k-\vartheta_t)(\ell-\vartheta_t), \label{prop:thdpi1}\\[3pt]
T^\sharp_{k,\ell}(-\zeta_2\hdpil(N_2^+))
&=
-\frac{d}{dt}+(\lambda_2-\tfrac{1}{2}k+\ell-1)(\ell-\vartheta_t)
-\frac{1}{4}t(k-\vartheta_t)(\ell-1-\vartheta_t)(\ell-\vartheta_t).\label{prop:thdpi2}
\end{align}
\end{prop}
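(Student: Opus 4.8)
The plan is to compute $T^\sharp_{k,\ell}$ applied to each of the two Weyl-algebra expressions in Proposition \ref{prop:dpih}, term by term, using the dictionary in Lemma \ref{lem:T}. The key point is that $-\zeta_j\hdpil(N_j^+)$ is written as a polynomial in the commuting (up to well-understood commutators) operators $\vartheta_1,\vartheta_2,\vartheta_3$ together with the multiplication operators $\tfrac{\zeta_1\zeta_2}{\zeta_3}$ and $\tfrac{\zeta_3}{\zeta_1\zeta_2}$, so $T^\sharp_{k,\ell}$ of a product is computed by replacing each factor via Lemma \ref{lem:T}(1)--(5) — but one must be careful that $T^\sharp_{k,\ell}$ is an algebra homomorphism on the relevant subalgebra only after one checks that the factors appearing are composed in an order that is respected. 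Concretely, since $T_{k,\ell}\circ T^\sharp_{k,\ell}(D) = D\circ T_{k,\ell}$ and this is multiplicative in $D$ (i.e. $T^\sharp_{k,\ell}(D_1D_2)=T^\sharp_{k,\ell}(D_1)T^\sharp_{k,\ell}(D_2)$), I would simply substitute $\vartheta_1\mapsto k-\vartheta_t$, $\vartheta_2\mapsto \ell-\vartheta_t$, $\vartheta_3\mapsto\vartheta_t$, $\tfrac{\zeta_1\zeta_2}{\zeta_3}\mapsto t^{-1}$, $\tfrac{\zeta_3}{\zeta_1\zeta_2}\mapsto t$ into each monomial of the two formulas in Proposition \ref{prop:dpih}.

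For the first formula, $-\zeta_1\hdpil(N_1^+)=\lambda_1\vartheta_1 + (\vartheta_1^2-\vartheta_1)-\tfrac12\vartheta_1\vartheta_2+\tfrac12\vartheta_1\vartheta_3+\tfrac{\zeta_1\zeta_2}{\zeta_3}\vartheta_3+\tfrac14\tfrac{\zeta_3}{\zeta_1\zeta_2}(\vartheta_1^2-\vartheta_1)\vartheta_2$, I would substitute to get
\[
\lambda_1(k-\vartheta_t) + (k-\vartheta_t)(k-1-\vartheta_t)-\tfrac12(k-\vartheta_t)(\ell-\vartheta_t)+\tfrac12(k-\vartheta_t)\vartheta_t + t^{-1}\vartheta_t + \tfrac14 t\,(k-\vartheta_t)(k-1-\vartheta_t)(\ell-\vartheta_t),
\]
noting that $t^{-1}\vartheta_t = t^{-1}\cdot t\tfrac{d}{dt}=\tfrac{d}{dt}$, and then collecting the terms linear in $(k-\vartheta_t)$: the coefficient from $\lambda_1(k-\vartheta_t)$, from $(k-\vartheta_t)(k-1-\vartheta_t)$ contributing $(k-1)(k-\vartheta_t)$ after re-expressing, from $-\tfrac12(k-\vartheta_t)(\ell-\vartheta_t)$ contributing $-\tfrac12\ell(k-\vartheta_t)$, and so on, so that everything except $\tfrac{d}{dt}$ and the cubic term combines into $(\lambda_1+k-\tfrac12\ell-1)(k-\vartheta_t)$. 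The same routine for the second formula, using $-t^{-1}\vartheta_t = -\tfrac{d}{dt}$, yields \eqref{prop:thdpi2}. This is entirely mechanical polynomial bookkeeping in the single operator $\vartheta_t$ acting on $\Pol_{\min(k,\ell)}[t]$; I would present it compactly rather than expanding every product.

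The main obstacle — really the only subtle point — is justifying that $T^\sharp_{k,\ell}$ commutes with composition, i.e. that it is multiplicative on the subalgebra of $\mathrm{End}(\Pol(k,\ell))$ generated by the five operators in question, and that the rational operators $\tfrac{\zeta_1\zeta_2}{\zeta_3}$, $\tfrac{\zeta_3}{\zeta_1\zeta_2}$ genuinely preserve $\Pol(k,\ell)$ when they occur in these combinations (they do, because in each monomial the power of $\tfrac{\zeta_3}{\zeta_1\zeta_2}$ or $\tfrac{\zeta_1\zeta_2}{\zeta_3}$ is balanced against an appropriate $\vartheta$ to keep the bidegree in $(k,\ell)$; this was already used implicitly in Proposition \ref{prop:dpih} and the remark preceding it that $-\zeta_j\hdpil(N_j^+)$ preserves $\Pol(k,\ell)$). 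Multiplicativity of $T^\sharp_{k,\ell}$ is immediate from its defining relation $T_{k,\ell}\circ T^\sharp_{k,\ell}(D)=D\circ T_{k,\ell}$ together with invertibility of $T_{k,\ell}$. Once these two points are noted, the proof reduces to the substitution and collection described above, and I would simply write "Direct computation using Lemma \ref{lem:T} and Proposition \ref{prop:dpih}" after indicating the key recombination of linear terms.
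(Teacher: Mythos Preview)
Your proposal is correct and follows essentially the same route as the paper: the paper's proof simply notes the multiplicativity $T^\sharp_{k,\ell}(D_1D_2)=T^\sharp_{k,\ell}(D_1)T^\sharp_{k,\ell}(D_2)$ and then says the formulas follow by straightforward computation with Lemma~\ref{lem:T}. Your extra remarks (the identity $t^{-1}\vartheta_t=\tfrac{d}{dt}$ and the recombination of the $(k-\vartheta_t)$-terms) spell out exactly the bookkeeping the paper leaves implicit.
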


\begin{proof}
Observe that, for differential operators $D_1, D_2$ on $\Pol(k,\ell)$,
we have $T^\sharp_{k,\ell}(D_1 \cdot D_2) 
= T^\sharp_{k,\ell}(D_1)\cdot T^\sharp_{k,\ell}(D_2)$.
The proposed formulas are then obtained by 
straightforward computations with the aid of Lemma \ref{lem:T}.
\end{proof}

For $a, b, \mu \in \C$, we write
\begin{equation*}
D^{(a,b)}(\mu;t):=
\frac{d}{dt}+(\mu+a-\tfrac{b}{2}-1)(a-\vartheta_t)
+\frac{1}{4}t(a-1-\vartheta_t)(a-\vartheta_t)(b-\vartheta_t).
\end{equation*}
It follows from \eqref{prop:thdpi1} and \eqref{prop:thdpi2} that 
\begin{equation*}
D^{(k,\ell)}(\lambda_1;t)=
T^\sharp_{k,\ell}(-\zeta_1\hdpil(N_1^+))
\quad
\text{and}
\quad
D^{(\ell,k)}(\lambda_2;-t)=
T^\sharp_{k,\ell}(-\zeta_2\hdpil(N_2^+)).
\end{equation*}

For $\lambda_j \in \C$ for $j=1,2$ and $k,\ell \in \Z_{\geq 0}$, we put 
\begin{equation*}
\wSol_{(k,\ell)}(-\lambda_1,-\lambda_2)
:=\{p(t) \in \Pol_{\min(k,\ell)}[t] :
D^{(k,\ell)}(\lambda_1;t)p(t)=D^{(\ell,k)}(\lambda_2;-t)p(t)=0\}.
\end{equation*}
Then the T-saturation
$T_{k,\ell} \colon 
\Pol_{\min(k,\ell)}[t] \stackrel{\sim}{\To}\Pol(k,\ell)$
induces a linear isomorphism
\begin{equation}\label{eqn:SolSol}
\wSol_{(k,\ell)}(-\lambda_1,-\lambda_2) 
\stackrel{\sim}{\To}
\Sol_{(k,\ell)}(-\lambda_1,-\lambda_2).
\end{equation}
Thus, to solve the F-system \eqref{eqn:Fsys3},
it suffices to consider 
\begin{equation}\label{eqn:Deq}
D^{(k,\ell)}(\lambda_1;t)p(t)=0
\quad
\text{and}
\quad
D^{(\ell,k)}(\lambda_2;-t)p(t)=0
\end{equation}
for each $k,\ell \in \Z_{\geq 0}$.
The next proposition reduces the range of $k,\ell$ to consider.

\begin{prop}\label{prop:duality}
There exists a linear isomorphism
\begin{equation*}
\wSol_{(k,\ell)}(-\lambda_1,-\lambda_2)
\stackrel{\sim}{\To}
\wSol_{(\ell, k)}(-\lambda_2,-\lambda_1),
\quad
p(t) \longmapsto p(-t).
\end{equation*}
\end{prop}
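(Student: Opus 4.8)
The plan is to exhibit the isomorphism concretely as the substitution $\iota\colon p(t)\mapsto p(-t)$. First I would observe that $\iota$ is a linear involution on polynomials and that, since $\min(k,\ell)=\min(\ell,k)$, it restricts to a linear bijection $\Pol_{\min(k,\ell)}[t]\stackrel{\sim}{\to}\Pol_{\min(\ell,k)}[t]$; so the whole content is that $\iota$ intertwines the two pairs of defining equations of $\wSol_{(k,\ell)}(-\lambda_1,-\lambda_2)$ and $\wSol_{(\ell,k)}(-\lambda_2,-\lambda_1)$.

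The key computational input is how the basic operators behave under conjugation by $\iota$ (which I also write for the induced automorphism of the Weyl algebra in $t$). One has $\iota\,\tfrac{d}{dt}\,\iota=-\tfrac{d}{dt}$ and $\iota\,(t\,\cdot)\,\iota=-(t\,\cdot)$ on multiplication operators, hence $\iota\,\vartheta_t\,\iota=\vartheta_t$ since $\vartheta_t=t\tfrac{d}{dt}$ picks up two sign changes. Since in the operator $D^{(a,b)}(\mu;t)$ the variable $t$ enters only through the single explicit multiplication by $t$ and through $\vartheta_t$, I would get
\[
\iota\,D^{(a,b)}(\mu;t)\,\iota
= -\frac{d}{dt}+\big(\mu+a-\tfrac12 b-1\big)(a-\vartheta_t)-\tfrac14\,t\,(a-1-\vartheta_t)(a-\vartheta_t)(b-\vartheta_t),
\]
and this is exactly $D^{(a,b)}(\mu;-t)$: both the derivative term and the degree-three term change sign. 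In other words, replacing $t$ by $-t$ in the operator is the same as conjugating it by $\iota$, which is consistent with the two formulas of Proposition \ref{prop:tdpih} (and is really just the content of the notation $D^{(a,b)}(\mu;-t)$). This is the single point where care is needed: one must remember that $\tfrac{d}{dt}$, not only the polynomial coefficient, flips sign under $t\mapsto -t$.

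Given the identity $D^{(a,b)}(\mu;-t)=\iota\,D^{(a,b)}(\mu;t)\,\iota$ together with $\iota^2=\id$, the proposition follows at once. For $p\in\wSol_{(k,\ell)}(-\lambda_1,-\lambda_2)$ I would compute
\[
D^{(k,\ell)}(\lambda_1;-t)(\iota p)=\iota\,D^{(k,\ell)}(\lambda_1;t)\,\iota\,\iota p=\iota\big(D^{(k,\ell)}(\lambda_1;t)p\big)=0,
\]
and likewise
\[
D^{(\ell,k)}(\lambda_2;t)(\iota p)=\iota\,D^{(\ell,k)}(\lambda_2;-t)\,\iota\,\iota p=\iota\big(D^{(\ell,k)}(\lambda_2;-t)p\big)=0,
\]
so $\iota p\in\wSol_{(\ell,k)}(-\lambda_2,-\lambda_1)$. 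Running the identical argument with $(k,\ell,\lambda_1,\lambda_2)$ and $(\ell,k,\lambda_2,\lambda_1)$ interchanged shows $\iota$ also carries $\wSol_{(\ell,k)}(-\lambda_2,-\lambda_1)$ into $\wSol_{(k,\ell)}(-\lambda_1,-\lambda_2)$; since these two maps are mutually inverse, $\iota$ is the claimed linear isomorphism. I do not expect any real obstacle here beyond the sign bookkeeping above — the statement is just the manifest $t\leftrightarrow -t$ symmetry of the ODE system \eqref{eqn:Deq}.
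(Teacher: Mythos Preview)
Your proof is correct and follows essentially the same approach as the paper: both arguments reduce to the substitution $t\mapsto -t$, with your version phrasing it as conjugation by the involution $\iota$ and the paper simply writing $s:=-t$ and reading off $D^{(\ell,k)}(\lambda_2;t)p(-t)=D^{(\ell,k)}(\lambda_2;-s)p(s)=0$ and $D^{(k,\ell)}(\lambda_1;-t)p(-t)=D^{(k,\ell)}(\lambda_1;s)p(s)=0$. Your explicit verification of $\iota\,D^{(a,b)}(\mu;t)\,\iota=D^{(a,b)}(\mu;-t)$ is exactly what the paper's notation $D^{(a,b)}(\mu;\pm t)$ is meant to encode, so there is no substantive difference.
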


\begin{proof}
By definition,  polynomials 
$p(t) \in \wSol_{(k,\ell)}(-\lambda_1,-\lambda_2)$ satisfy
\begin{equation*}
D^{(k,\ell)}(\lambda_1;t)p(t)=D^{(\ell,k)}(\lambda_2;-t)p(t)=0.
\end{equation*}
Thus, for $s :=-t$, we have
\begin{align*}
D^{(\ell,k)}(\lambda_2;t)p(-t)&=
D^{(\ell,k)}(\lambda_2;-s)p(s)=0,\\[3pt]
D^{(k,\ell)}(\lambda_1;-t)p(-t)&=
D^{(k,\ell)}(\lambda_1;s)p(s)=0,
\end{align*}
which show $p(-t) \in \wSol_{(\ell,k)}(-\lambda_2,-\lambda_1)$.
\end{proof}

In the next section, we explicitly compute 
$\wSol_{(k,\ell)}(-\lambda_1,-\lambda_2)$.

%%%%%%%%%%%%%%%%%%%%%%%%%%%%%%%%%%%%%%%%%%%%%%%
\section{Proof of the classification and construction of $\D$ and $\varphi$: Part II}
\label{sec:proof2}

In this section we perform Steps 3 and 4 of the recipe of the F-method 
given at the beginning of Section \ref{sec:proof1} to prove
Theorems 
\ref{thm:DIO1},
\ref{thm:Hom1}, 
\ref{thm:DIO2},
and
\ref{thm:Hom2}.
The proof is achieved in Section \ref{sec:Step4}.

%%%%%%%%%%%%%%%%%%%%%%%%%%%%%%%%%%%%%%%%%%%%%%%
\subsection{Step 3: Solve the F-system \eqref{eqn:Fsys3}}
\label{sec:Fsys1}

Via the isomorphism \eqref{eqn:SolSol}, 
we first aim to solve the system \eqref{eqn:Deq} of 
ODEs for each $k, \ell \in \Z_{\geq 0}$.
Figure \ref{graph:lattice} depicts the lattices of $(k,\ell)$ in consideration.

%%%%%%%%%%%%%%%%%%%%%%%%%%%%%%%%%%%%%%%%%%%%
\begin{figure}[H]
\caption{Lattices of $(k,\ell)$ with $k, \ell \in \Z_{\geq 0}$}
\begin{center}
\begin{tikzpicture}[xscale =1, yscale=1]
\tikzset{axes/.style={}}
 
  \draw[-latex, name path = xline] 
 (-0.7, 0) -- (5, 0) 
 node[right, font = \normalsize]{$k$};
  
 \draw[-latex] 
 (0, -.5) -- (0, 5) 
 node[left, font = \normalsize] {$\ell$};

\draw (0, .05) -- (0, -.05) node[below, font = \normalsize] {0 \phantom{aa}};  
\draw (1, .05) -- (1, -.05) node[below, font = \normalsize] {1};
\draw (2, .05) -- (2, -.05) node[below, font = \normalsize] {2};
\draw (3, .05) -- (3, -.05) node[below, font = \normalsize] {3};
\draw (4, .05) -- (4, -.05) node[below, font = \normalsize] {4};

\foreach \y in {1, 2, 3, 4}{
\draw (.05, \y) -- (-.05, \y) node[left, font = \normalsize] {\y}; }

\draw[black, name path = para] plot[domain = 0:4.5, samples = 100]
({\x}, {(\x)});

\draw[blue!40, fill = black] (0, 0) circle[radius = .07cm];

%\node[mark size=3pt] at (0,4) {\pgfuseplotmark{star}};
%\node[mark size=3pt] at (1,2) {\pgfuseplotmark{pentagon*}};

\node[mark size=2.5pt] at (0,1) {\pgfuseplotmark{square*}};
\node[mark size=2.5pt] at (0,2) {\pgfuseplotmark{square*}};
\node[mark size=2.5pt] at (0,3) {\pgfuseplotmark{square*}};
\node[mark size=2.5pt] at (0,4) {\pgfuseplotmark{square*}};

%\draw[blue!40, fill = black] (0, 1) circle[radius = .05cm];
%\draw[blue!40, fill = black] (0, 2) circle[radius = .05cm];
%\draw[blue!40, fill = black] (0, 3) circle[radius = .05cm];
%\draw[blue!40, fill = black] (0, 4) circle[radius = .05cm];

\node[mark size=3.5pt] at (1,0) {\pgfuseplotmark{triangle*}};
\node[mark size=3.5pt] at (2,0) {\pgfuseplotmark{triangle*}};
\node[mark size=3.5pt] at (3,0) {\pgfuseplotmark{triangle*}};
\node[mark size=3.5pt] at (4,0) {\pgfuseplotmark{triangle*}};

%\draw[blue!40, fill = black] (1, 0) circle[radius = .05cm];  
%\draw[blue!40, fill = black] (2, 0) circle[radius = .05cm];
%\draw[blue!40, fill = black] (3, 0) circle[radius = .05cm];
%\draw[blue!40, fill = black] (4, 0) circle[radius = .05cm];

\node[mark size=3.5pt] at (1,1) {\pgfuseplotmark{diamond*}};
\node[mark size=3.5pt] at (2,2) {\pgfuseplotmark{diamond*}};
\node[mark size=3.5pt] at (3,3) {\pgfuseplotmark{diamond*}};
\node[mark size=3.5pt] at (4,4) {\pgfuseplotmark{diamond*}};

\node[mark size=3pt] at (1,2) {\pgfuseplotmark{o}};
\node[mark size=3pt] at (1,3) {\pgfuseplotmark{o}};
\node[mark size=3pt] at (1,4) {\pgfuseplotmark{o}};

%\draw[blue!40, fill = black] (1, 1) circle[radius = .05cm];
%\draw[blue!40, fill = black] (1, 2) circle[radius = .05cm];
%\draw[blue!40, fill = black] (1, 3) circle[radius = .05cm];
%\draw[blue!40, fill = black] (1, 4) circle[radius = .05cm];

\node[mark size=3pt] at (2,1) {\pgfuseplotmark{o}};
\node[mark size=3pt] at (2,3) {\pgfuseplotmark{o}};
\node[mark size=3pt] at (2,4) {\pgfuseplotmark{o}};

%\draw[blue!40, fill = black] (2, 1) circle[radius = .05cm];
%\draw[blue!40, fill = black] (2, 2) circle[radius = .05cm];
%\draw[blue!40, fill = black] (2, 3) circle[radius = .05cm];
%\draw[blue!40, fill = black] (2, 4) circle[radius = .05cm];

\node[mark size=3pt] at (3,1) {\pgfuseplotmark{o}};
\node[mark size=3pt] at (3,2) {\pgfuseplotmark{o}};
\node[mark size=3pt] at (3,4) {\pgfuseplotmark{o}};

%\draw[blue!40, fill = black] (3, 1) circle[radius = .05cm];
%\draw[blue!40, fill = black] (3, 2) circle[radius = .05cm];
%\draw[blue!40, fill = black] (3, 3) circle[radius = .05cm];
%\draw[blue!40, fill = black] (3, 4) circle[radius = .05cm];

\node[mark size=3pt] at (4,1) {\pgfuseplotmark{o}};
\node[mark size=3pt] at (4,2) {\pgfuseplotmark{o}};
\node[mark size=3pt] at (4,3 ){\pgfuseplotmark{o}};

%\draw[blue!40, fill = black] (4, 1) circle[radius = .05cm];
%\draw[blue!40, fill = black] (4, 2) circle[radius = .05cm];
%\draw[blue!40, fill = black] (4, 3) circle[radius = .05cm];
%\draw[blue!40, fill = black] (4, 4) circle[radius = .05cm];

\end{tikzpicture}
\end{center}\label{graph:lattice}
\end{figure}
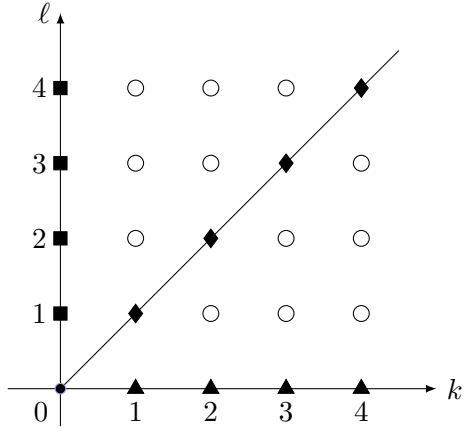

We consider the following five cases separately.

\begin{enumerate}

\item $(k,\ell) = (0,0)$: the origin (\textbullet)
\vskip 0.1in

\item $(k, \ell) \in (1+\Z_{\geq 0}) \times \{0\}$: the $k$-axis $\setminus \{(0,0)\}$
($\blacktriangle$)
\vskip 0.1in

\item $(k, \ell) \in \{0\}  \times  (1+\Z_{\geq 0})$: the $\ell$-axis $\setminus \{(0,0)\}$
($\blacksquare$)
\vskip 0.1in

\item $(k,\ell) \in (1+\Z_{\geq 0})^2$ with $k \neq \ell$:
any lattice neither on the $k$-axis, $\ell$-axis, nor diagonal line
($\circ$)
\vskip 0.1in

\item $(k,\ell) \in (1+\Z_{\geq 0})^2$ with $k=\ell$:
the diagonal line $\setminus \{(0,0)\}$
($\blacklozenge$)
\vskip 0.1in

\end{enumerate}

We write
\begin{align}
p_+^{(k,\ell)}(t) &:= \sum_{m=0}^{\min(k,\ell)} \frac{1}{2^m}m!\binom{k}{m}\binom{\ell}{m}
t^m, \label{eqn:p+0}\\[3pt]
p_-^{(k,\ell)}(t) &:= \sum_{m=0}^{\min(k,\ell)} \frac{(-1)^m}{2^m}m!\binom{k}{m}\binom{\ell}{m}
t^m, \label{eqn:p-0}\\[3pt]
p_{c}^{(s;k)}(t)&:= \sum_{m=0}^{k} \frac{1}{2^m}\binom{k}{m}\Cay_m(s;k)t^m.\label{eqn:pc0}
\end{align}

In the following,
we deal with Case (1) in Proposition \ref{prop:1},
Cases (2) and (3) in Proposition \ref{prop:2},
Case (4) in Proposition \ref{prop:3},
and Case (5) in Proposition \ref{prop:4},
in order.

\begin{prop}\label{prop:1}
Let $(k,\ell) = (0,0)$. Then we have
\begin{equation*}
\wSol_{(0, 0)}(-\lambda_1, -\lambda_2) = \C
\quad 
\text{for all $\lambda_1 ,\lambda_2 \in \C$}.
\end{equation*}
\end{prop}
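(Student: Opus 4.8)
The key structural observation is that the hypothesis $(k,\ell)=(0,0)$ forces $\min(k,\ell)=0$, so the ambient space $\Pol_{\min(k,\ell)}[t]=\Pol_0[t]$ consists of constant polynomials only, i.e.\ $\Pol_0[t]=\C$. Since by definition $\wSol_{(0,0)}(-\lambda_1,-\lambda_2)\subseteq \Pol_0[t]=\C$, the inclusion ``$\subseteq$'' in the asserted equality is automatic. Thus the entire content of the proposition is the reverse inclusion $\C\subseteq\wSol_{(0,0)}(-\lambda_1,-\lambda_2)$, i.e.\ that every constant polynomial is annihilated by both operators $D^{(0,0)}(\lambda_1;t)$ and $D^{(0,0)}(\lambda_2;-t)$ for all $\lambda_1,\lambda_2\in\C$.

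For this I would specialize the formula
\[
D^{(a,b)}(\mu;t)=
\frac{d}{dt}+(\mu+a-\tfrac{1}{2}b-1)(a-\vartheta_t)
+\frac{1}{4}t(a-1-\vartheta_t)(a-\vartheta_t)(b-\vartheta_t)
\]
to $a=b=0$, obtaining
\[
D^{(0,0)}(\mu;t)=\frac{d}{dt}-(\mu-1)\vartheta_t-\frac{1}{4}t(1+\vartheta_t)\vartheta_t^{\,2},
\]
and then observe that each summand contains a factor of $\frac{d}{dt}$ (directly, or inside $\vartheta_t=t\frac{d}{dt}$). Hence $D^{(0,0)}(\mu;t)$ annihilates every constant $c\in\C$, for every $\mu\in\C$; in particular this applies with $\mu=\lambda_1$ and the variable $t$, and with $\mu=\lambda_2$ and the variable $-t$ (replacing $t$ by $-t$ does not affect the property of killing constants). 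Therefore $\C\subseteq\wSol_{(0,0)}(-\lambda_1,-\lambda_2)$, and combined with the automatic reverse inclusion this gives $\wSol_{(0,0)}(-\lambda_1,-\lambda_2)=\C$.

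There is essentially no obstacle here: the statement is a degenerate base case of the forthcoming case analysis, and the only point that requires a moment's care is the bookkeeping convention that $\Pol_{\min(k,\ell)}[t]$ reduces to the one-dimensional space $\C$ of constants when $\min(k,\ell)=0$, after which the verification that the T-saturated operators annihilate constants is immediate from their explicit form in Proposition \ref{prop:tdpih}.
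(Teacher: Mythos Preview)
Your proof is correct and follows essentially the same approach as the paper: both reduce to the observation that $\Pol_{\min(0,0)}[t]=\C$, specialize $D^{(a,b)}(\mu;t)$ to $a=b=0$, and note that every term of the resulting operator contains a factor of $\tfrac{d}{dt}$ (either directly or via $\vartheta_t$), hence annihilates constants. The paper additionally writes out $D^{(0,0)}(\lambda_2;-t)$ explicitly, but your remark that replacing $t$ by $-t$ preserves annihilation of constants covers this equally well.
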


\begin{proof}
As $\Pol_{\min(0,0)}[t] = \C$, the set  
$\wSol_{(0,0)}(-\lambda_1,-\lambda_2)$
is given by
\begin{equation*}
\wSol_{(0,0)}(-\lambda_1,-\lambda_2)
=\{a \in \C : 
D^{(0,0)}(\lambda_1;t)a=
D^{(0,0)}(\lambda_2;-t)a=0\}.
\end{equation*}
Since
\begin{align*}
D^{(0,0)}(\lambda_1;t)&=
\frac{d}{dt} - (\lambda_1-1)\vartheta_t - \frac{1}{4}t(\vartheta_t+1)\vartheta_t^2,\\[3pt]
D^{(0,0)}(\lambda_2;-t)&=
-\frac{d}{dt} - (\lambda_2-1)\vartheta_t + \frac{1}{4}t(\vartheta_t+1)\vartheta_t^2,
\end{align*}
it is clear that
\begin{equation*}
D^{(0,0)}(\lambda_1;t)a=
D^{(0,0)}(\lambda_2;-t)a=0
\quad
\text{for all $\lambda_1, \lambda_2 \in \C$},
\end{equation*}
which shows the proposition.
\end{proof}

\begin{prop}\label{prop:2}
The following hold.
\begin{enumerate}
\item[\emph{(a)}]
Let $(k,\ell) \in (1+\Z_{\geq 0}) \times \{0\}$. Then the following conditions on
$(\lambda_1,\lambda_2) \in \C^2$ are equivalent.
\begin{enumerate}
\item[\emph{(i)}]
$\wSol_{(k,0)}(-\lambda_1,-\lambda_2) \neq \{0\}$.
\item[\emph{(ii)}]
$(\lambda_1,\lambda_2) \in \{1-k\} \times \C $.
\end{enumerate}
\vskip 0.1in

\item[\emph{(b)}]
Let $(k,\ell) \in \{0\} \times (1+\Z_{\geq 0})$. Then the following conditions on
$(\lambda_1,\lambda_2) \in \C^2$ are equivalent.
\begin{enumerate}
\item[\emph{(i)}]
$\wSol_{(0,\ell)}(-\lambda_1,-\lambda_2) \neq \{0\}$.

\item[\emph{(ii)}]
$(\lambda_1,\lambda_2) \in \C \times \{1-\ell\}$.

\end{enumerate}
\end{enumerate}
Moreover, we have
\begin{align*}
\wSol_{(k,0)}( -(1-k), -\lambda_2) &= \C,\\[3pt]
\wSol_{(0,\ell)}(-\lambda_1, -(1-\ell)) &= \C.
\end{align*}
\end{prop}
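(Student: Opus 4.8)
The plan is to prove Proposition~\ref{prop:2} by directly analyzing the two operators $D^{(k,\ell)}(\lambda_1;t)$ and $D^{(\ell,k)}(\lambda_2;-t)$ in the boundary cases $\ell=0$ and $k=0$, where the T-saturated space $\Pol_{\min(k,\ell)}[t]=\C$ collapses to the constants, so everything reduces to a single scalar condition. By Proposition~\ref{prop:duality}, the map $p(t)\mapsto p(-t)$ identifies $\wSol_{(k,0)}(-\lambda_1,-\lambda_2)$ with $\wSol_{(0,k)}(-\lambda_2,-\lambda_1)$, so it suffices to prove part (a); part (b) follows by swapping the roles of $\lambda_1$ and $\lambda_2$ and of $k$ and $\ell$. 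So I would fix $(k,\ell)=(k,0)$ with $k\geq 1$.

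First I would write down the two operators explicitly in this case. Specializing the definition of $D^{(a,b)}(\mu;t)$ with $(a,b)=(k,0)$ gives
\begin{equation*}
D^{(k,0)}(\lambda_1;t)=\frac{d}{dt}+(\lambda_1+k-1)(k-\vartheta_t)-\frac{1}{4}t(k-1-\vartheta_t)(k-\vartheta_t)\vartheta_t,
\end{equation*}
and specializing with $(a,b)=(0,k)$ and evaluating at $-t$ gives
\begin{equation*}
D^{(0,k)}(\lambda_2;-t)=-\frac{d}{dt}+(\lambda_2-\tfrac{1}{2}k-1)(-\vartheta_t)-\frac{1}{4}(-t)(-\vartheta_t)(-1-\vartheta_t)(k-\vartheta_t).
\end{equation*}
Since the ambient space is just $\C$, for a constant $a$ we have $\vartheta_t a=0$ and $\frac{d}{dt}a=0$, so every term containing $\vartheta_t$ or $\frac{d}{dt}$ annihilates $a$. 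Hence $D^{(0,k)}(\lambda_2;-t)a=0$ automatically for every $\lambda_2\in\C$, while $D^{(k,0)}(\lambda_1;t)a=(\lambda_1+k-1)\cdot k\cdot a$. Thus $\wSol_{(k,0)}(-\lambda_1,-\lambda_2)\neq\{0\}$ iff $a\neq 0$ can be chosen, i.e.\ iff $(\lambda_1+k-1)k=0$; since $k\geq 1$ this forces $\lambda_1=1-k$, with $\lambda_2$ free. This gives the equivalence (i)$\Leftrightarrow$(ii) in part (a), and when $\lambda_1=1-k$ the computation shows every constant solves the system, so $\wSol_{(k,0)}(-(1-k),-\lambda_2)=\C$; likewise $\wSol_{(0,\ell)}(-\lambda_1,-(1-\ell))=\C$ by the duality.

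I do not expect a genuine obstacle here — the main point is simply to be careful in expanding the operator $D^{(a,b)}(\mu;t)$ with the boundary values of $(a,b)$ and to check the sign bookkeeping in the $t\mapsto -t$ substitution, since the cleanest route is to invoke Proposition~\ref{prop:duality} rather than recompute $D^{(0,k)}(\lambda_2;-t)$ from scratch. One should also note explicitly that $\Pol_{\min(k,0)}[t]=\Pol_0[t]=\C$, so the polynomial degree constraint $\deg p\leq\min(k,\ell)=0$ is what makes the problem one-dimensional; this is where the hypothesis $(k,\ell)$ on an axis is used.
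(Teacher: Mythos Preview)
Your proposal is correct and follows essentially the same approach as the paper: reduce (b) to (a) via Proposition~\ref{prop:duality}, note $\Pol_{\min(k,0)}[t]=\C$, apply the two operators to a constant, and observe that only the term $(\lambda_1+k-1)k$ survives. There is a harmless sign slip in your displayed formula for $D^{(0,k)}(\lambda_2;-t)$ (the cubic term should carry the opposite sign), but since every term of that operator contains $\vartheta_t$ or $\frac{d}{dt}$ it annihilates constants regardless, so the conclusion is unaffected.
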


\begin{proof}
Case (b) simply follows from Proposition \ref{prop:duality} and Case (a).
Thus, it suffices to consider $(k,\ell) \in (1+\Z_{\geq 0}) \times  \{0\}$.
As $\Pol_{\min(k,0)}[t] = \C$, 
the set 
$\wSol_{(k,0)}(-\lambda_1,-\lambda_2)$
is given by
\begin{equation*}
\wSol_{(k,0)}(-\lambda_1,-\lambda_2)
=\{a \in \C : 
D^{(k,0)}(\lambda_1;t)a=
D^{(0,k)}(\lambda_2;-t)a=0\},
\end{equation*}
where
\begin{align*}
D^{(k,0)}(\lambda_1;t)&=
\tfrac{d}{dt} + (\lambda_1+k-1)(k-\vartheta_t) 
- \tfrac{1}{4}t(k-1-\vartheta_t)(k-\vartheta_t)\vartheta_t,\\[3pt]
D^{(0,k)}(\lambda_2;-t)&=
-\tfrac{d}{dt} - (\lambda_2-\tfrac{1}{2}k-1)\vartheta_t 
- \tfrac{1}{4}t(1+\vartheta_t)(k-\vartheta_t)\vartheta_t.
\end{align*}
Therefore, 
$D^{(k,0)}(\lambda_1;t)a=D^{(0,k)}(\lambda_2;-t)a=0$
if and only if $k(\lambda_1+k-1)=0$, which is equivalent to 
$(\lambda_1, \lambda_2) \in \{1-k\} \times \C$ as $k \neq 0$.
\end{proof}

\begin{prop}\label{prop:3}
Let $k, \ell \in 1+\Z_{\geq 0}$ with $ k\neq \ell$. Then the following 
two conditions on $(\lambda_1,\lambda_2) \in \C^2$  are equivalent.
\begin{enumerate}
\item[\emph{(i)}] 
$\wSol_{(k,\ell)}(-\lambda_1,-\lambda_2) \neq \{0\}$.

\item[\emph{(ii)}] 
$(\lambda_1,\lambda_2) \in \{(1-k, 1-\ell+k), (1-k+\ell ,1-\ell )\}$.
\end{enumerate}
Moreover, we have 
\begin{align*}
\wSol_{(k,\ell)}(-(1-k), -(1-\ell+k)) &= \C  p_+^{(k,\ell)}(t),\\[3pt]
\wSol_{(k,\ell)}(-(1-k+\ell), -(1-\ell)) &= \C  p_-^{(k,\ell)}(t).
\end{align*}
\end{prop}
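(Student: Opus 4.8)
The plan is to solve the pair of ODEs in \eqref{eqn:Deq} by passing to the Taylor coefficients of $p(t)$, using a cancellation that occurs in the \emph{sum} of the two operators $D^{(k,\ell)}(\lambda_1;t)$ and $D^{(\ell,k)}(\lambda_2;-t)$. First I would reduce to the case $k<\ell$: by Proposition~\ref{prop:duality} the substitution $p(t)\mapsto p(-t)$ identifies $\wSol_{(k,\ell)}(-\lambda_1,-\lambda_2)$ with $\wSol_{(\ell,k)}(-\lambda_2,-\lambda_1)$, and since $p_+^{(\ell,k)}(-t)=p_-^{(k,\ell)}(t)$, $p_-^{(\ell,k)}(-t)=p_+^{(k,\ell)}(t)$ by \eqref{eqn:p+0}--\eqref{eqn:p-0}, while the two-element set in (ii) is invariant under $(k,\ell;\lambda_1,\lambda_2)\mapsto(\ell,k;\lambda_2,\lambda_1)$, the case $k>\ell$ will follow from $k<\ell$. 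So assume $k<\ell$; then $\min(k,\ell)=k$ and, via the T-saturation of Section~\ref{sec:T}, $\wSol_{(k,\ell)}(-\lambda_1,-\lambda_2)=\{p\in\Pol_k[t]:L_1p=L_2p=0\}$ where $L_1:=D^{(k,\ell)}(\lambda_1;t)$ and $L_2:=D^{(\ell,k)}(\lambda_2;-t)$.

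The computational heart of the argument — and the step I expect to be the main obstacle — is the observation that, with $u:=\lambda_1+k-\tfrac{\ell}{2}-1$ and $v:=\lambda_2+\ell-\tfrac{k}{2}-1$, the first-order terms of $L_1$ and $L_2$ cancel in $L_1+L_2$ while the cubic factors telescope, producing the \emph{first-order} operator
\begin{equation*}
L_1+L_2=u(k-\vartheta_t)+v(\ell-\vartheta_t)+\tfrac{k-\ell}{4}\,t\,(k-\vartheta_t)(\ell-\vartheta_t),
\end{equation*}
whereas $L_1-L_2=2\tfrac{d}{dt}+(\text{terms that do not lower the degree})$. Writing $p(t)=\sum_{m=0}^{k}c_mt^m$ and $e_m:=u(k-m)+v(\ell-m)$, the equation $(L_1+L_2)p=0$ becomes the two-term recurrence $e_m\,c_m+\tfrac{k-\ell}{4}(k-m+1)(\ell-m+1)\,c_{m-1}=0$ for $0\le m\le k$ (with $c_{-1}:=0$); and because the non-derivative part of each $L_i$ does not lower degree, the coefficient of $t^{m_0-1}$ in $L_ip$ is $m_0c_{m_0}$, where $m_0$ is the lowest exponent occurring in $p$. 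Once these identities are in hand the rest is elementary.

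For ``(i)$\Rightarrow$(iii)'' take $0\ne p\in\wSol$. Comparing lowest-degree terms in $L_1p=0$ forces $c_0\ne0$; the $t^0$-coefficient of $(L_1+L_2)p=0$ then gives $e_0=uk+v\ell=0$. If $u+v=0$ this would force $u=v=0$ (as $k\ne\ell$), and then the $t^1$-coefficient of $(L_1+L_2)p=0$ would read $\tfrac{k-\ell}{4}k\ell\,c_0=0$, impossible; so $u+v\ne0$, hence $e_m=-(u+v)m\ne0$ for $1\le m\le k$ and the recurrence forces $c_m=\big(\tfrac{k-\ell}{4(u+v)}\big)^{m}m!\binom{k}{m}\binom{\ell}{m}c_0$. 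Finally the $t^0$-coefficient of $(L_1-L_2)p=0$ gives $2c_1+(uk-v\ell)c_0=0$; using $uk+v\ell=0$ and the formula for $c_1$ this becomes $u=\tfrac{\ell(\ell-k)}{4(u+v)}$, hence $v=-\tfrac{k}{\ell}u=\tfrac{k(k-\ell)}{4(u+v)}$, and adding these gives $(u+v)^2=\tfrac{(\ell-k)^2}{4}$. If $u+v=\tfrac{k-\ell}{2}$ then $\tfrac{k-\ell}{4(u+v)}=\tfrac12$, $(u,v)=(-\tfrac{\ell}{2},\tfrac{k}{2})$, i.e.\ $(\lambda_1,\lambda_2)=(1-k,1-\ell+k)$ and $p\in\C\,p_+^{(k,\ell)}$; if $u+v=\tfrac{\ell-k}{2}$ then $\tfrac{k-\ell}{4(u+v)}=-\tfrac12$, $(u,v)=(\tfrac{\ell}{2},-\tfrac{k}{2})$, i.e.\ $(\lambda_1,\lambda_2)=(1-k+\ell,1-\ell)$ and $p\in\C\,p_-^{(k,\ell)}$. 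In particular $\wSol=\{0\}$ for all other parameters, so $\dim_\C\wSol\le1$ always.

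For the converse I would check that $p_+^{(k,\ell)}$ actually lies in $\wSol$ at $(\lambda_1,\lambda_2)=(1-k,1-\ell+k)$: here $u=-\tfrac{\ell}{2}$, and a one-line computation using $c_{m+1}=\tfrac{(k-m)(\ell-m)}{2(m+1)}c_m$ and $c_{m-1}=\tfrac{2m}{(k-m+1)(\ell-m+1)}c_m$ for the coefficients of $p_+^{(k,\ell)}$ shows that the coefficient of $t^m$ in $L_1p_+^{(k,\ell)}$ equals $\tfrac{k-m}{2}c_m\big[(\ell-m)-\ell+m\big]=0$, so $L_1p_+^{(k,\ell)}=0$; since $e_m=\tfrac m2(\ell-k)$ vanishes only at $m=0$, the recurrence above has $\C\,p_+^{(k,\ell)}$ as its full kernel in $\Pol_k[t]$, so $(L_1+L_2)p_+^{(k,\ell)}=0$ and therefore $L_2p_+^{(k,\ell)}=0$ as well. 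Hence $p_+^{(k,\ell)}\in\wSol$, which together with the previous paragraph gives $\wSol_{(k,\ell)}\big(-(1-k),-(1-\ell+k)\big)=\C\,p_+^{(k,\ell)}$; the parameter value $(1-k+\ell,1-\ell)$ (now $u=\tfrac{\ell}{2}$, coefficients carrying the extra sign $(-1)^m$) is handled identically and yields $\C\,p_-^{(k,\ell)}$. This also proves (i)$\Leftrightarrow$(ii). The case $k>\ell$ then follows from the duality of the first paragraph, so the only genuine care needed throughout is the bookkeeping between $(u,v)$ and $(\lambda_1,\lambda_2)$ and the identification of the recurrence solution with the binomial coefficients of $p_\pm^{(k,\ell)}$.
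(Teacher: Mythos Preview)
Your argument is correct and follows essentially the same strategy as the paper: reduce to one ordering of $(k,\ell)$ via Proposition~\ref{prop:duality}, then analyze the Taylor coefficients of $p(t)$ under the two ODEs, using the sum of the equations at low degree to pin down $(\lambda_1,\lambda_2)$. Your organizing device---computing the operator identity $L_1+L_2=u(k-\vartheta_t)+v(\ell-\vartheta_t)+\tfrac{k-\ell}{4}\,t\,(k-\vartheta_t)(\ell-\vartheta_t)$ once and for all---is a clean way to package what the paper does by writing out the individual coefficient equations $A_m,B_m$ and adding them at $m=0,1$; the resulting constraints and recurrences are identical. You also include an explicit verification that $p_+^{(k,\ell)}$ solves $L_1$ (and hence $L_2$ via $(L_1+L_2)p_+^{(k,\ell)}=0$), a consistency check the paper leaves implicit in the phrase ``by solving the system of equations $A_m=0$ and $B_m=0$''. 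One harmless slip: ``(i)$\Rightarrow$(iii)'' should read ``(i)$\Rightarrow$(ii)''.
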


\begin{proof}
It follows from Proposition \ref{prop:duality} that it suffices to consider $k > \ell$.
As $\Pol_{\min(k,\ell)}[t] = \Pol_\ell[t]$, 
the set 
$\wSol_{(k,\ell)}(-\lambda_1,-\lambda_2)$
is given by
\begin{equation*}
\wSol_{(k,\ell)}(-\lambda_1,-\lambda_2)
=\{p(t) \in  \Pol_\ell[t] : 
D^{(k,\ell)}(\lambda_1;t)p(t)=
D^{(\ell,k)}(\lambda_2;-t)p(t)=0\}.
\end{equation*}

Take $p(t)=  \sum_{j=0}^\ell a_jt^j\in \Pol_\ell[t]$.
For simplicity, we put
\begin{align*}
\gamma_\lambda(k,\ell)&:=\lambda_1+k-\tfrac{\ell}{2}-1,\\[3pt]
\delta_\lambda(k,\ell)&:=\lambda_2-\tfrac{k}{2}+\ell-1,
\end{align*}
so that 
\begin{align*}
D^{(k,\ell)}(\lambda_1;t)
&=
\tfrac{d}{dt} + \gamma_\lambda(k,\ell)(k-\vartheta_t)
+\tfrac{1}{4}t(k-1-\vartheta_t)(k-\vartheta_t)(\ell-\vartheta_t), \\[3pt]
D^{(\ell,k)}(\lambda_2;-t)
&=
-\tfrac{d}{dt}+\delta_\lambda(k,\ell)(\ell-\vartheta_t)
-\tfrac{1}{4}t(k-\vartheta_t)(\ell-1-\vartheta_t)(\ell-\vartheta_t).
\end{align*}

Write $D^{(k,\ell)}(\lambda_1;t)p(t) = \sum_{m=0}^\ell A_m t^m$
and $D^{(\ell,k)}(\lambda_2;-t)p(t) = \sum_{m=0}^\ell B_m t^m$.
A direct computation shows that the coefficients $A_m$ and $B_m$ are given by
\begin{align*}
A_0&=\gamma_\lambda(k,\ell)ka_0+a_1,\\[3pt]
A_1&=\tfrac{1}{4}(k-1)k\ell a_0 + \gamma_\lambda(k,\ell)(k-1)a_1+2a_2,\\[3pt]
A_m&=\tfrac{1}{4}(k-m)(k-m+1)(\ell-m+1)a_{m-1}+\gamma_\lambda(k,\ell)(k-m)a_m
+(m+1)a_{m+1} \quad
\text{for $m=2,\ldots, \ell-1$},\\[3pt]
A_\ell&=\tfrac{1}{4}(k-\ell)(k-\ell+1) a_{\ell-1} +\gamma_\lambda(k,\ell)(k-\ell)a_\ell,
\end{align*}
and
\begin{align*}
B_0&=\delta_\lambda(k,\ell)\ell a_0-a_1,\\[3pt]
B_1&=-\tfrac{1}{4}(\ell-1)k\ell a_0 + \delta_\lambda(k,\ell)(\ell-1)a_1-2a_2,\\[3pt]
B_m&=-\tfrac{1}{4}(\ell-m)(k-m+1)(\ell-m+1)a_{m-1}+\delta_\lambda(k,\ell)(\ell-m)a_m
-(m+1)a_{m+1} \quad
\text{for $m=2,\ldots, \ell-1$}, \\[3pt]
B_\ell&=0.
\end{align*}
Then, to solve the system of differential equations 
$D^{(k,\ell)}(\lambda_1;t)p(t)=D^{(\ell,k)}(\lambda_2;-t)p(t)=0$,
it suffices to consider that of linear equations
$A_m=0$ and $B_m=0$ for $m=0,1,\ldots, \ell$.

If $a_0=0$, then the recurrence relations force $p(t)\equiv 0$. 
Thus, without loss of generality, we assume that $a_0=1$.
Since $k,\ell, k-\ell \neq 0$, by the system of linear equations
\begin{align*}
A_0 + B_0 &= 0,\\
A_1 + B_1 &= 0,
\end{align*}
one obtains
\begin{equation*}
\lambda_1 \in \{1-k,1-k+\ell\}
\quad
\text{and}
\quad 
-\ell \delta_\lambda(k,\ell) = k\gamma_\lambda(k,\ell).
\end{equation*}

First, suppose that $\lambda_1=1-k$. In this case, we have
\begin{equation*}
\lambda_2=1-\ell+k,
\quad
\gamma_\lambda(k,\ell)=-\frac{\ell}{2},
\quad
\delta_\lambda(k,\ell)=\frac{k}{2}.
\end{equation*}
Then, by solving the system of equations $A_m=0$, one obtains
\begin{equation}\label{eqn:am}
a_m=\frac{1}{2^m}m! \binom{k}{m}\binom{\ell}{m}
\quad
\text{for $1\leq m \leq \ell$},
\end{equation}
which amounts to $p(t)= p_+^{(k,\ell)}(t)$.
It is easy to check that the constants $a_m$ in \eqref{eqn:am} 
also satisfy the other system of equations $B_m =0$.

Next, observe that if $\lambda_1 = 1-k+\ell$, 
then $\lambda_2=1-\ell$. It thus follows from 
Proposition \ref{prop:duality} and the former case that, in the case, 
we have $p(t)= p_-^{(k,\ell)}(t)$.
This completes the proof.
\end{proof}

\begin{prop}\label{prop:4}
Let $k, \ell \in 1+\Z_{\geq 0}$ with $ k=\ell$. Then the following 
two conditions on $(\lambda_1,\lambda_2) \in \C^2$  are equivalent.
\begin{enumerate}
\item[\emph{(i)}] 
$\wSol_{(k,k)}(-\lambda_1,-\lambda_2) \neq \{0\}$.

\item[\emph{(ii)}] 
$(\lambda_1,\lambda_2) \in 
\{(\tfrac{1}{2}(2-k-s), \tfrac{1}{2}(2-k+s)) : s\in \C\}$.
\end{enumerate}
Moreover, we have 
\begin{equation*}
\wSol_{(k,k)}(-\tfrac{1}{2}(2-k-s), -\tfrac{1}{2}(2-k+s))= \C p_c^{(s;k)}(t).
\end{equation*}
\end{prop}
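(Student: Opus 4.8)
The plan is to parallel the computation in the proof of Proposition \ref{prop:3}, now specialised to $\ell=k$, where the diagonal case produces a free continuous parameter together with the Cayley continuants. Write $p(t)=\sum_{j=0}^{k}a_jt^j\in\Pol_{\min(k,k)}[t]$. Specialising the operator $D^{(a,b)}(\mu;t)$ (equivalently Proposition \ref{prop:tdpih}) to $a=b=k$ gives
\begin{equation*}
D^{(k,k)}(\lambda_1;t)=\tfrac{d}{dt}+\gamma_\lambda(k-\vartheta_t)+\tfrac14 t(k-1-\vartheta_t)(k-\vartheta_t)^2,\qquad
D^{(k,k)}(\lambda_2;-t)=-\tfrac{d}{dt}+\delta_\lambda(k-\vartheta_t)-\tfrac14 t(k-1-\vartheta_t)(k-\vartheta_t)^2,
\end{equation*}
where $\gamma_\lambda:=\lambda_1+\tfrac k2-1$ and $\delta_\lambda:=\lambda_2+\tfrac k2-1$. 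Expanding the two equations $D^{(k,k)}(\lambda_1;t)p(t)=0$ and $D^{(k,k)}(\lambda_2;-t)p(t)=0$ in powers of $t$ yields scalar relations $A_m=0$ and $B_m=0$ for $m=0,1,\dots,k$, each linear in $a_{m-1},a_m,a_{m+1}$ (with $a_{-1}=a_{k+1}=0$); in particular $A_m=(m+1)a_{m+1}+\gamma_\lambda(k-m)a_m+\tfrac14(k-m)(k-m+1)^2a_{m-1}$. The key observation is that adding the two differential operators cancels the $\tfrac{d}{dt}$ and cubic parts and leaves $(\gamma_\lambda+\delta_\lambda)(k-\vartheta_t)$, so that $A_m+B_m=(\gamma_\lambda+\delta_\lambda)(k-m)a_m=(\lambda_1+\lambda_2+k-2)(k-m)a_m$.

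From this identity I would first establish (i) $\Leftrightarrow$ (ii). If $\lambda_1+\lambda_2\neq 2-k$, then $A_m+B_m=0$ forces $a_m=0$ for every $m<k$, whence $A_{k-1}=0$ reads $ka_k=0$ and $p\equiv 0$, so $\wSol_{(k,k)}(-\lambda_1,-\lambda_2)=\{0\}$. Conversely, the condition $\lambda_1+\lambda_2=2-k$ is precisely $(\lambda_1,\lambda_2)=(\tfrac12(2-k-s),\tfrac12(2-k+s))$ with $s:=\lambda_2-\lambda_1$, and under it $\delta_\lambda=-\gamma_\lambda$ (indeed $\gamma_\lambda=-s/2$), so $\{A_m=0\}$ and $\{B_m=0\}$ are the same system. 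Since the coefficient of $a_{m+1}$ in $A_m$ is $m+1\neq 0$, the relations $A_0=\dots=A_{k-1}=0$ determine $a_1,\dots,a_k$ recursively from $a_0$ (with $A_k=0$ automatic), so $\dim_\C\wSol_{(k,k)}(-\lambda_1,-\lambda_2)=1$.

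To pin down the generator I would normalise $a_0=1$ and prove by induction that $a_m=\tfrac{1}{2^m}\binom km\Cay_m(s;k)$, i.e. that $p(t)=p_c^{(s;k)}(t)$ as in \eqref{eqn:pc0}. Substituting this ansatz into $A_m=0$ and simplifying with $(m+1)\binom{k}{m+1}=(k-m)\binom km$ and $\binom{k}{m-1}=\tfrac{m}{k-m+1}\binom km$, the common factor $\tfrac{k-m}{2^{m+1}}\binom km$ cancels and the relation collapses to $\Cay_{m+1}(s;k)=s\,\Cay_m(s;k)-m(k-m+1)\,\Cay_{m-1}(s;k)$. It then remains to note that this is exactly the three-term recurrence obeyed by the tridiagonal determinants $\Cay_m(x;y)$: cofactor expansion of the $(m{+}1)\times(m{+}1)$ determinant along its last row gives $\Cay_{m+1}(x;y)=x\,\Cay_m(x;y)-m(y-m+1)\,\Cay_{m-1}(x;y)$, which at $(x,y)=(s,k)$ — with $\Cay_0=1$ and $\Cay_1(s;k)=s$ matching $a_0=1,\ a_1=\tfrac{ks}{2}$ — closes the induction. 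Hence $\wSol_{(k,k)}(-\tfrac12(2-k-s),-\tfrac12(2-k+s))=\C\,p_c^{(s;k)}(t)$.

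The bookkeeping is the delicate step: one must extract the scalar recurrences $A_m,B_m$ correctly from the third-order operators — in particular keeping the operator ordering in $\tfrac14 t(k-1-\vartheta_t)(k-\vartheta_t)^2$ straight — and then run the binomial manipulations so that the Cayley ansatz collapses cleanly. Recognising that the recursion governing $a_m$ is the defining tridiagonal-determinant recurrence is the conceptual crux and is what makes $\Cay_m(s;k)$ surface. As a built-in consistency check, Proposition \ref{prop:duality} sends $(\lambda_1,\lambda_2)\mapsto(\lambda_2,\lambda_1)$, i.e. $s\mapsto -s$, via $p(t)\mapsto p(-t)$, in agreement with the identity $p_c^{(-s;k)}(-t)=p_c^{(s;k)}(t)$, which itself follows from the recurrence.
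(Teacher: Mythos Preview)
Your proof is correct and follows essentially the same route as the paper's: both reduce via the sum of the two operators (your $A_m+B_m$, the paper's $D^{(k)}_+$) to the linear constraint $\lambda_1+\lambda_2+k-2=0$, and then verify by induction that the three-term recursion coming from a single equation forces the coefficients to be $\tfrac{1}{2^m}\binom{k}{m}\Cay_m(s;k)$ via the Cayley-continuant recurrence. The only difference is cosmetic: the paper passes to the sum/difference operators $D^{(k)}_\pm$ before expanding, whereas you expand first and then add; and the paper indexes the recursion as $a_m$ in terms of $a_{m-1},a_{m-2}$ rather than your $a_{m+1}$ in terms of $a_m,a_{m-1}$.
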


\begin{proof}
In this case we have
\begin{equation*}
\wSol_{(k,k)}(-\lambda_1,-\lambda_2)
=\{p(t) \in  \Pol_k[t] : 
D^{(k,k)}(\lambda_1;t)p(t)=
D^{(k,k)}(\lambda_2;-t)p(t)=0\},
\end{equation*}
where
\begin{align*}
D^{(k,k)}(\lambda_1;t)
&=
\tfrac{d}{dt} + (\lambda_1+\tfrac{1}{2}k-1)(k-\vartheta_t)
+\tfrac{1}{4}t(k-1-\vartheta_t)(k-\vartheta_t)^2, \\[3pt]
D^{(k,k)}(\lambda_2;-t)
&=
-\tfrac{d}{dt}+(\lambda_2+\tfrac{1}{2}k-1)(k-\vartheta_t)
-\tfrac{1}{4}t(k-1-\vartheta_t)(k-\vartheta_t)^2.
\end{align*}

Now we put
\begin{align*}
D^{(k)}_+&:=D^{(k,k)}(\lambda_1;t)+D^{(k,k)}(\lambda_2;-t)
=(\lambda_1+\lambda_2+k-2)(k-\vartheta_t),\\[3pt]
D^{(k)}_-&:=D^{(k,k)}(\lambda_1;t)-D^{(k,k)}(\lambda_2;-t)
=2\tfrac{d}{dt}+(\lambda_1-\lambda_2)(k-\vartheta_t)
+\tfrac{1}{2}t(k-1-\vartheta_t)(k-\vartheta_t)^2.
\end{align*}
It suffices to consider $D^{(k)}_+p(t)=D^{(k)}_-p(t)=0$.

For $p(t)=\sum_{m=0}^k a_m t^m \in \Pol_k[t]$,
the system $D^{(k)}_+p(t)=D^{(k)}_-p(t)=0$ of equations forces
\begin{equation}\label{eqn:cond1}
\lambda_1+\lambda_2+k-2=0
\end{equation}
and
\begin{equation}\label{eqn:cond2}
a_m=\frac{-1}{4m}(k-m+1)
\{(k-m+2)^2a_{m-2} + 2(\lambda_1-\lambda_2)a_{m-1}\}
\end{equation}
for $m=1,\ldots, k$ with $a_{-1}=0$. It follows from \eqref{eqn:cond2} that
if $a_0=0$, then $p(t)\equiv 0$. Then, without loss of generality, we assume that 
$a_0=1$.

Now, we put
\begin{equation*}
\gL_c(k):=
\{(\tfrac{1}{2}(2-k-s), \tfrac{1}{2}(2-k+s)): s\in \C\}.
\end{equation*}
Then \eqref{eqn:cond1} is equivalent to 
$(\lambda_1,\lambda_2) \in \gL_c(k)$.
For $(\lambda_1,\lambda_2) \in \gL_c(k)$, the coefficients
$a_m$ are given by
\begin{equation*}
a_m=\frac{-1}{4m}(k-m+1)
\{(k-m+2)^2a_{m-2} - 2sa_{m-1}\}.
\end{equation*}

\begin{claim}\label{claim:am}
We have 
\begin{equation*}
a_m=\frac{1}{2^m}\binom{k}{m}\Cay_m(s;k),
\end{equation*}
where $\Cay_m(s;k)$ is a Cayley continuant defined 
at the beginning of Section \ref{sec:construction}.
\end{claim}

If Claim \ref{claim:am} holds, then $p(t)=p_c^{(s;k)}(t)$.
It thus suffices to show the claim. Put 
\begin{equation*}
b_m:=\frac{1}{2^m}\binom{k}{m}\Cay_m(s;k).
\end{equation*}
We wish to show $a_m = b_m$ for $m=0,1,2\ldots, k$.
We prove it by induction.

As $\Cay_0(s;k) =1$ and $\Cay_1(s;k)=s$, we have 
$b_0 = 1$ and 
\begin{equation*}
b_1 = \frac{1}{2}\binom{k}{1}\Cay_1(s;k)=\frac{1}{2}ks,
\end{equation*}
while $a_0=1$ by assumption and
\begin{equation*}
a_1 = \frac{-1}{4}(k-1+1)\{(k-1+2)^2a_{-1} - 2sa_0\}=\frac{1}{2}ks.
\end{equation*}
Now assume that $a_m=b_m$ for $m=0, 1, \ldots, j-1$. Then, for $m=j$, we have
\begin{align}
&(k-j+2)^2 a_{j-2}-2sa_{j-1} \nonumber \\[3pt]
&=(k-j+2)^2 \frac{1}{2^{j-2}} \binom{k}{j-2} \Cay_{j-2}(s;k)
-2s\frac{1}{2^{j-1}}\binom{k}{j-1}\Cay_{j-1}(s;k) \nonumber \\[3pt]
&=\frac{-1}{2^{j-2}}\binom{k}{j-1}
\{s\Cay_{j-1}(s;k) - (j-1)(k-j+2)\Cay_{j-2}(s;k)\}.\label{eqn:acay}
\end{align}
It is known that Cayley continuants $\Cay_{j}(s;k)$ satisfy
the recurrence relation
\begin{equation*}
\Cay_j(s;k)=s\Cay_{j-1}(s;k) - (j-1)(k-j+2)\Cay_{j-2}(s;k)
\end{equation*}
(cf.\ \cite[(3)]{MT05}). Therefore it follows from \eqref{eqn:acay} that
\begin{equation*}
(k-j+2)^2 a_{j-2}-2sa_{j-1}
=\frac{-1}{2^{j-2}}\binom{k}{j-1}\Cay_j(s;k).
\end{equation*}
Hence,
\begin{align*}
a_j 
&= \frac{-1}{4j}(k-j+1)\{(k-j+2)^2 a_{j-2}-2sa_{j-1}\}\\[3pt]
&= \frac{-1}{4j}(k-j+1)\cdot \frac{-1}{2^{j-2}}\binom{k}{j-1}\Cay_j(s;k)\\[3pt]
&=\frac{1}{2^{j}}\binom{k}{j}\Cay_j(s;k).
\end{align*}
By induction, the claim holds.
\end{proof}

Recall from \eqref{eqn:SolSol} that 
the T-saturation $T_{k,\ell}$ provides the linear isomorphism
\begin{align*}
\wSol_{(k,\ell)}(-\lambda_1,-\lambda_2) 
&\stackrel{\sim}{\To}
\Sol_{(k,\ell)}(-\lambda_1,-\lambda_2),\\
p(t) &\longmapsto T_{k,\ell}p(t):=
\zeta_1^k\zeta_2^\ell  p(\tfrac{\zeta_3}{\zeta_1\zeta_2}).
\end{align*}
For simplicity, we write
\begin{equation*}
T_{k,\ell}( p_\pm) := T_{k,\ell}\, p_\pm^{(k,\ell)}(t)
\quad
\text{and}
\quad
T_{k}(p_c^{(s)}) := T_{k,k}\, p_c^{(s;k)}(t).
\end{equation*}
Now we classify 
$\Sol(\eps,\delta;\lambda,\nu)$.

\begin{thm}\label{thm:Sol}
The space 
$\Sol(\eps,\delta;\lambda,\nu)$
of polynomial solutions to the F-system \eqref{eqn:Fsys3}
is classified as follows.
\begin{equation*}
\Sol(\eps,\delta;\lambda,\nu)
=
\begin{cases}
\C \id & \emph{if $(\delta;\nu) = (\eps; \lambda)$},\\[3pt]
\C \zeta_1^{1-\lambda_1} & \emph{if $(\eps, \delta; \lambda,\nu) \in \Xi_a$}, \\[3pt]
\C \zeta_
2^{1-\lambda_2} & \emph{if $(\eps, \delta; \lambda,\nu) \in \swap{\Xi}_a$}, \\[3pt]
\C T_{1-\lambda_1, 2-\lambda_1-\lambda_2}(p_+) & \emph{if $(\eps, \delta; \lambda,\nu) \in \Xi_b$}, \\[3pt]
\C T_{2-\lambda_1-\lambda_2, 1-\lambda_2}(p_-) & \emph{if $(\eps, \delta; \lambda,\nu) \in \swap{\Xi}_b$}, \\[3pt]
\C T_{2-\lambda_1-\lambda_2}(p_c^{(\lambda_2-\lambda_1)}) & \emph{if $(\eps, \delta; \lambda,\nu) \in \Xi_c$}, \\[3pt]
\{0\} & \emph{otherwise},
\end{cases}
\end{equation*}
where $\Xi_a,\swap{\Xi}_a, \ldots, \Xi_c$ are the sets defined in Section \ref{sec:DIO}.
\end{thm}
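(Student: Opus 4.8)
The plan is to assemble Theorem \ref{thm:Sol} from the pieces that Section \ref{sec:proof1} has already put in place. The starting point is Corollary \ref{cor:Sol}, which tells us that $\Sol(\eps,\delta;\lambda,\nu)$ can only be nonzero when the $MA$-matching condition \eqref{eqn:MAcond} holds for some $k,\ell \in \Z_{\geq 0}$; moreover, by Proposition \ref{prop:MA} and the decomposition \eqref{eqn:PolDecomp}, when \eqref{eqn:MAcond} holds the relevant Hom-space equals $\Pol(k,\ell)$-component, so that $\Sol(\eps,\delta;\lambda,\nu)$ coincides with $\Sol_{(k,\ell)}(-\lambda_1,-\lambda_2)$ for the (essentially unique, up to the $(k,\ell)\leftrightarrow(\ell,k)$ symmetry) pair $(k,\ell)$ determined by $(\eps,\delta;\lambda,\nu)$. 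Via the T-saturation isomorphism \eqref{eqn:SolSol}, this reduces everything to computing $\wSol_{(k,\ell)}(-\lambda_1,-\lambda_2)$, which is exactly what Propositions \ref{prop:1}--\ref{prop:4} accomplish in the five cases of the lattice partition.

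The argument then proceeds case by case along the partition of Figure \ref{graph:lattice}. For $(k,\ell)=(0,0)$ Proposition \ref{prop:1} gives $\wSol_{(0,0)}=\C$ for all $\lambda$; translating the condition \eqref{eqn:MAcond} with $k=\ell=0$ yields $(\eps_1\delta_1,\eps_2\delta_2)=(+,+)$ and $(\nu_1,\nu_2)=(\lambda_1,\lambda_2)$, i.e.\ precisely $(\delta;\nu)=(\eps;\lambda)$, and $T_{0,0}$ sends the constant $1$ to the constant polynomial, which corresponds to $\id$. For $(k,\ell)\in(1+\Z_{\geq 0})\times\{0\}$, Proposition \ref{prop:2}(a) forces $\lambda_1=1-k$, and substituting $k=1-\lambda_1$, $\ell=0$ into \eqref{eqn:MAcond} gives $(\nu_1-\lambda_1,\nu_2-\lambda_2)=(2(1-\lambda_1),-(1-\lambda_1))$ together with the sign condition; a direct comparison with \eqref{eqn:L1} shows this is exactly membership in $\Xi_a$, and $T_{k,0}$ sends $1\in\wSol_{(k,0)}$ to $\zeta_1^k=\zeta_1^{1-\lambda_1}$. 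The $\ell$-axis case is the mirror image via Proposition \ref{prop:duality}, landing in $\swap{\Xi}_a$ with solution $\zeta_2^{1-\lambda_2}$. The off-diagonal case $(k,\ell)\in(1+\Z_{\geq 0})^2$, $k\neq\ell$, uses Proposition \ref{prop:3}: the two branches $\lambda_1=1-k$ and $\lambda_1=1-k+\ell$ correspond, after solving for $(k,\ell)$ in terms of $\lambda$ and matching against \eqref{eqn:L3+}--\eqref{eqn:L3-}, to $\Xi_b$ and $\swap{\Xi}_b$ respectively, with solutions $T_{k,\ell}(p_+)$ and $T_{k,\ell}(p_-)$ for $(k,\ell)=(1-\lambda_1,2-\lambda_1-\lambda_2)$ and $(2-\lambda_1-\lambda_2,1-\lambda_2)$. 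Finally the diagonal $k=\ell$ case uses Proposition \ref{prop:4}: the constraint $\lambda_1+\lambda_2+k-2=0$ with $k=2-\lambda_1-\lambda_2$ and $s=\lambda_1-\lambda_2$ matches \eqref{eqn:L4} and identifies the solution space as $\C\,T_{k}(p_c^{(s)})$.

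**The main bookkeeping obstacle** will be verifying, in each of the five cases, that the parameter conditions coming from \eqref{eqn:MAcond} plus the eigenvalue constraint in the relevant Proposition \ref{prop:1}--\ref{prop:4} are \emph{equivalent} to the defining conditions \eqref{eqn:L1}--\eqref{eqn:L4} of $\Xi_a,\swap{\Xi}_a,\Xi_b,\swap{\Xi}_b,\Xi_c$ — in particular getting all the $(-)$-power sign conditions on $(\delta_1,\delta_2)$ to line up. This is where one must be careful: the sign part of \eqref{eqn:MAcond} reads $(\eps_1\delta_1,\eps_2\delta_2)=((-)^\ell,(-)^k)$, and one has to check that substituting $k,\ell$ in terms of $\lambda$ reproduces exactly the exponents appearing in \eqref{eqn:L1}--\eqref{eqn:L4}. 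I would handle this by tabulating, for each case, the triple $(k,\ell)$, the resulting $(\nu_1-\lambda_1,\nu_2-\lambda_2)$, and the resulting signs, and comparing line by line; the $A$-character computations from Lemma \ref{lem:zetaChar} guarantee consistency on the $A$-side automatically.

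**One remaining point** to address is the "otherwise" clause and the one-dimensionality: if $(\eps,\delta;\lambda,\nu)$ satisfies none of conditions (iii-a) or (iii-b), then by Corollary \ref{cor:Sol} either \eqref{eqn:MAcond} fails for every $(k,\ell)$ — in which case $\Sol=\{0\}$ trivially — or it holds for a unique $(k,\ell)$ but the eigenvalue constraint of the corresponding Proposition \ref{prop:1}--\ref{prop:4} fails, again forcing $\wSol_{(k,\ell)}=\{0\}$ and hence $\Sol=\{0\}$. When a solution does exist, each of Propositions \ref{prop:1}--\ref{prop:4} shows $\wSol_{(k,\ell)}$ is exactly one-dimensional (spanned by $1$, $p_+^{(k,\ell)}$, $p_-^{(k,\ell)}$, or $p_c^{(s;k)}$ as appropriate), so $\Sol$ is one-dimensional with the stated generator after applying $T_{k,\ell}$. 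Here I should also note that a given $(\eps,\delta;\lambda,\nu)$ cannot simultaneously satisfy two different conditions among (iii-a), $\Xi_a$, $\swap\Xi_a$, $\Xi_b$, $\swap\Xi_b$, $\Xi_c$ for a reason that makes the case list well-defined — this follows because the associated $(k,\ell)$, when it exists, is pinned down by $(\eps_1\delta_1,\eps_2\delta_2,\nu-\lambda)$ via \eqref{eqn:MAcond}, and each of the five lattice regions forces a distinct shape of $(k,\ell)$ (origin; $k$-axis; $\ell$-axis; off-diagonal; diagonal), with the overlaps already accounted for by Proposition \ref{prop:duality}. Putting these observations together with the five case computations completes the proof.
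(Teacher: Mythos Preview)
Your proposal is correct and follows essentially the same approach as the paper's proof: reduce via Corollary \ref{cor:Sol} and \eqref{eqn:PolDecomp} to $\Sol_{(k,\ell)}(-\lambda_1,-\lambda_2)$, pass through the T-saturation isomorphism \eqref{eqn:SolSol} to $\wSol_{(k,\ell)}$, invoke Propositions \ref{prop:1}--\ref{prop:4} on the five lattice regions, and translate the resulting constraints on $(k,\ell,\lambda)$ back into the conditions \eqref{eqn:L1}--\eqref{eqn:L4}. One small imprecision: your parenthetical ``essentially unique, up to the $(k,\ell)\leftrightarrow(\ell,k)$ symmetry'' is not right---the linear map $(k,\ell)\mapsto(2k-\ell,\,2\ell-k)$ is injective, so the pair $(k,\ell)$ satisfying \eqref{eqn:MAcond} is genuinely unique when it exists (as you yourself note later); Proposition \ref{prop:duality} relates $\wSol_{(k,\ell)}(-\lambda_1,-\lambda_2)$ to $\wSol_{(\ell,k)}(-\lambda_2,-\lambda_1)$, which swaps the $\lambda$'s as well and is used only to halve the casework inside Propositions \ref{prop:2} and \ref{prop:3}, not to identify different $(k,\ell)$ for the same $(\eps,\delta;\lambda,\nu)$.
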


\begin{proof}
It follows from Corollary \ref{cor:Sol} that 
$\Sol(\eps,\delta;\lambda,\nu)\neq \{0\}$ if and only 
if $\Sol_{(k,\ell)}(-\lambda_1,-\lambda_2)\neq \{0\}$ for some
$k,\ell \in \Z_{\geq 0}$, which is further equivalent to
$\wSol_{(k,\ell)}(-\lambda_1,-\lambda_2)\neq \{0\}$.
One then rewrites $k,\ell \in \Z_{\geq 0}$ and $s \in \C$ 
in terms of $\lambda_1, \lambda_2$ 
given in Propositions \ref{prop:1}, \ref{prop:2}, \ref{prop:3}, and \ref{prop:4}
to obtain the proposed parameters in $\Xi_a,\swap{\Xi}_a, \ldots, \Xi_c$.
The generators simply follow from the application of 
the T-saturation $T_{k,\ell}$ to $1$, $p_{\pm}^{(k,\ell)}$, and $p_c^{(s;k)}$.
Now the theorem holds.
\end{proof}

%%%%%%%%%%%%%%%%%%%%%%%%%%%%%%%%%%%%%%%%%%%%%%%
\subsection{Step 4:
Apply $\Symb_0^{-1}$ and $F_c^{-1}$ 
to the solution $\psi \in \Sol(\eps,\delta;\lambda,\nu)$}
\label{sec:Step4}

Recall from \eqref{eqn:Finv2} and \eqref{eqn:symb} that, 
given $\psi(\zeta) \in \Sol(\eps,\delta;\lambda,\nu)$ with
\begin{equation*}
\psi(\zeta) 
= \sum_{\mathbf{r} \in \Z_{\geq 0}^3}a_\mathbf{r}  \zeta_1^{r_1}\zeta_2^{r_2}\zeta_3^{r_3},
\end{equation*}
we have 
\begin{align*}
\Symb_0^{-1}(\psi(\zeta))
&=
\sum_{\mathbf{r} \in \Z_{\geq 0}^3}a_\mathbf{r}  
dR(\s((N_1^-)^{r_1}(N_3^-)^{r_2}(N_3^-)^{r_3})
=\sum_{\mathbf{r} \in \Z_{\geq 0}^3}a_\mathbf{r}  
\s(\D_1^{r_1}\D_2^{r_2}\D_3^{r_3}),\\[3pt]
F_c^{-1}(\psi(\zeta))
&=\sum_{\mathbf{r} \in \Z_{\geq 0}^3}a_\mathbf{r}  
\s((N_1^-)^{r_1}(N_2^-)^{r_2}(N_3^-)^{r_3}),
\end{align*}
where $\D_j = dR(N_j^-)$ for $j=1,2,3$ and
$\s$ denotes the symmetrization.

Now we are ready to prove 
Theorems 
\ref{thm:DIO1},
\ref{thm:Hom1}, 
\ref{thm:DIO2},
and
\ref{thm:Hom2}.

\begin{proof}[Proof of Theorems 
\ref{thm:DIO1},
\ref{thm:Hom1}, 
\ref{thm:DIO2},
and
\ref{thm:Hom2}]

It is clear that $\D_j^k=\Symb_0^{-1}(\zeta_j^k)$ for $j=1,2$,
$\D_\pm^{(k,\ell)} = \Symb_0^{-1}(T_{k,\ell}(p_{\pm}))$, and 
$\D_c^{(s;k)}=\Symb_0^{-1}(T_{k}(p_c^{s}))$. Therefore,
Theorems \ref{thm:DIO1} and \ref{thm:DIO2} follow 
from Theorems \ref{thm:symb} and \ref{thm:Sol}.
Likewise, let 
$\eta_{\pm}^{(k,\ell)}(N_1^-,N_2^-,N_3^-)$ and 
$\eta_c^{(s;k)}(N_1^-,N_2^-,N_3^-)$ be the polynomials
defined in \eqref{eqn:eta1}, \eqref{eqn:eta2}, and \eqref{eqn:eta3}.
By Corollary \ref{cor:Fc}, we have
$(N_j^-)^k=F_c^{-1}(\zeta_j^k)$,
$\eta_{\pm}^{(k,\ell)}(N_1^-,N_2^-,N_3^-) =F_c^{-1}(T_{k,\ell}(p_{\pm}))$,
and
$\eta_c^{(s;k)}(N_1^-,N_2^-,N_3^-) = F_c^{-1}(T_{k}(p_c^{s}))$.
Hence, Theorems \ref{thm:Fmethod} and \ref{thm:Sol} yield
the desired theorems. This ends  the proof.
\end{proof}

\begin{rem}
The polynomials $p_{\pm}^{(k,\ell)}(t)$ can be expressed as 
\begin{equation*}
p_{\pm}^{(k,\ell)}(t)={}_2F_0(-k,-\ell;\pm\tfrac{t}{2}),
\end{equation*}
where ${}_2F_0(a,b;z)$ is the generalized 
hypergeometric series 
\begin{equation*}
{}_2F_0(a,b;z) =\sum_{m=0}^\infty \frac{(a)_m (b)_m}{m!} z^k.
\end{equation*}
Here, $(r)_m$ is the Pochhammer symbol, i.e.,
$(r)_m (= r^{\overline{m}})= r(r+1)(r+2)\cdots (r+m-1)=\frac{\Gamma(r+m)}{\Gamma(r)}$.
Indeed, as $r^{\underline{m}} = (-1)^m(-r)_m$ and 
$\binom{r}{m} = \frac{r^{\underline{m}}}{m!}$, we have 
\begin{align*}
p_{\pm}^{(k,\ell)}(t) 
&=\sum_{m=0}^{\min(k,\ell)} \frac{(\pm1)^m}{2^m}m!\binom{k}{m}\binom{\ell}{m}t^m\\[3pt]
&=\sum_{m=0}^{\min(k,\ell)} \frac{(-k)_m(-\ell)_m}{m!} \left(\pm\frac{t}{2}\right)^m\\[3pt]
&={}_2F_0(-k,-\ell;\pm \frac{t}{2}).
\end{align*}
\end{rem}

%%%%%%%%%%%%%%%%%%%%%%%%%%%%%%%%%%%%%%%%%%%%%%%
\section{Uniform expressions of $p_{\pm}^{(k,\ell)}(t)$ and $p_c^{(s;k)}(t)$
by Jacobi polynomials $P_m^{(\ga,\beta)}(z)$}
\label{sec:Uniform}

In this short section we aim to show that
the key polynomials $p_{\pm}^{(k,\ell)}(t)$ and $p_c^{(s;k)}(t)$ obtained in 
Section \ref{sec:proof2} can be uniformly expressed by Jacobi polynomials
$P_m^{(\ga,\beta)}(z)$ for certain parameters $\ga, \beta$ at $z=0$.
This is accomplished in Theorem \ref{thm:unif}.
We also discuss that $p_c(s;k)$ can be 
given also by binary Krawtchouk polynomials $K_m(x;y)$,
which play a role in the factorization identities in Section \ref{sec:factorization}.

%%%%%%%%%%%%%%%%%%%%%%%%%%%%%%%%%%%%%%%%%%%%%%%
\subsection{Jacobi polynomials $P^{(\ga,\gb)}_m(z)$} 
\label{sec:Jacobi}

First, we briefly recall some basic properties of 
Jacobi polynomials $P^{(\ga,\gb)}_m(z)$.
For $\ga, \gb \in \C$ and $m \in \Z_{\geq 0}$, we write
\begin{equation*}
P^{(\ga,\gb)}_m(z) := \sum_{j=0}^m\binom{m+\ga}{m-j}\binom{m+\beta}{j}
\left(\frac{z-1}{2}\right)^j\left(\frac{z+1}{2}\right)^{m-j}.
\end{equation*}
The polynomials $P^{(\ga,\gb)}_m(z)$ are called \emph{Jacobi polynomials} 
(cf.\ \cite[p.\ 68]{Szego75}).

The Jacobi polynomials $P_m^{(\ga,\beta)}(z)$ satisfy the following 
generating function: %(cf.\ \cite{Carlitz63, Milch68}]\label{fact:Jacobi1}):
\begin{equation}\label{eqn:Jacobi1}
(1+\frac{z+1}{2}w)^\ga(1+\frac{z-1}{2}w)^\beta
=\sum_{m=0}^\infty P_m^{(\ga-m,\beta-m)}(z) w^m.
\end{equation}
In particular, if $z=0$ and $w=2t$, then
\begin{equation}\label{eqn:Jacobi2}
(1+t)^\ga(1-t)^\beta = \sum_{m=0}^\infty 2^m P_m^{(\ga-m,\beta-m)}(0) t^m.
\end{equation}

It follows from the generating function \eqref{eqn:Jacobi1} that
\begin{equation}\label{eqn:Jacobi3}
P^{(\ga,\beta)}_m(-z) = (-1)^mP_m^{(\beta,\ga)}(z).
\end{equation}
In particular,
\begin{equation}\label{eqn:Jacobi4}
P^{(\ga,\beta)}_m(0) = (-1)^mP_m^{(\beta,\ga)}(0).
\end{equation}

%%%%%%%%%%%%%%%%%%%%%%%%%%%%%%%%%%%%%%%%%%%%%%%
\subsection{Uniform expressions by $P_m^{(\ga,\beta)}(z)$} 

Now we show that the polynomials 
$p_\pm^{(k,\ell)}(t)$ and $p_c^{(s;k)}(t)$ are given in terms of 
Jacobi polynomials $P_m^{(\ga,\beta)}(z)$.

\begin{thm}\label{thm:unif}
Let $k, \ell \in \Z_{\geq 0}$ and $s\in \C$.
Then $p_{\pm}^{(k,\ell)}(t)$ and $p_c^{(s;k)}(t)$ can be expressed as follows:
\begin{align}
p_+^{(k,\ell)}(t)
&=
\begin{cases}
\sum_{m=0}^\ell k^{\underline{m}} \, 
P_m^{(\ell-m,\, -m)}(0)t^m & \text{if $k \geq \ell$},\\[5pt]
\sum_{m=0}^k \ell^{\underline{m}} \, 
P_m^{(k-m,\, -m)}(0)t^m & \text{if $k < \ell$},
\end{cases}\label{eqn:p+J}\\[5pt]
p_-^{(k,\ell)}(t)
&=
\begin{cases}
\sum_{m=0}^\ell k^{\underline{m}}\, P_m^{(-m,\, \ell-m)}(0)t^m 
& \text{if $k \geq \ell$},\\[5pt]
\sum_{m=0}^k \ell^{\underline{m}} \, P_m^{(-m,\, k-m)}(0)t^m 
& \text{if $k < \ell$},
\end{cases}\label{eqn:p-J}\\[3pt]
p_c^{(s;k)}(t)
&=\sum_{m=0}^k k^{\underline{m}}\, 
P_m^{(\frac{k+s-2m}{2}, \, \frac{k-s-2m}{2})}(0)t^m, \label{eqn:pcJ}
\end{align}
where $r^{\underline{m}}:=r(r-1)(r-2)\cdots (r-m+1) = \frac{\Gamma(r+1)}{\Gamma(r-m+1)}$. In particular, we have 
\begin{equation}\label{eqn:pcJpm}
p_c^{(\pm k;k)}(t) = p_{\pm}^{(k,k)}(t).
\end{equation}
\end{thm}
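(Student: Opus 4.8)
The plan is to extract every identity from the single generating function \eqref{eqn:Jacobi2}, which states that $2^m P_m^{(\alpha-m,\beta-m)}(0)$ is the coefficient of $t^m$ in $(1+t)^\alpha(1-t)^\beta$. First I would record two elementary specializations. Taking $(\alpha,\beta)=(\ell,0)$ gives $(1+t)^\ell=\sum_m 2^m P_m^{(\ell-m,-m)}(0)\,t^m$, hence $P_m^{(\ell-m,-m)}(0)=\tfrac{1}{2^m}\binom{\ell}{m}$; taking $(\alpha,\beta)=(0,\ell)$ gives $(1-t)^\ell=\sum_m 2^m P_m^{(-m,\ell-m)}(0)\,t^m$, hence $P_m^{(-m,\ell-m)}(0)=\tfrac{(-1)^m}{2^m}\binom{\ell}{m}$ (which also follows from \eqref{eqn:Jacobi4}). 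Both are easily confirmed directly from the definition of $P_m^{(\alpha,\beta)}$ at $z=0$, since one of the two binomials $\binom{m+\alpha}{m-j}$, $\binom{m+\beta}{j}$ collapses to a single nonzero term.

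For $p_+^{(k,\ell)}$, the coefficient of $t^m$ in \eqref{eqn:p+0} is $\tfrac{1}{2^m}m!\binom{k}{m}\binom{\ell}{m}$. Writing $m!\binom{k}{m}=k^{\underline{m}}$ and inserting $\tfrac{1}{2^m}\binom{\ell}{m}=P_m^{(\ell-m,-m)}(0)$ gives coefficient $k^{\underline{m}}P_m^{(\ell-m,-m)}(0)$, which is exactly the $k\ge\ell$ branch of \eqref{eqn:p+J} (the summation range $0\le m\le\ell$ being forced by $\binom{\ell}{m}=0$ for $m>\ell$); since $\binom{k}{m}\binom{\ell}{m}$ is symmetric in $k,\ell$, pulling out $\ell^{\underline{m}}$ instead yields the $k<\ell$ branch. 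The polynomial $p_-^{(k,\ell)}$ is handled identically using the second specialization, or simply by applying \eqref{eqn:Jacobi4} to \eqref{eqn:p+J}.

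The substantive step is $p_c^{(s;k)}$. Here I would first establish the generating function for Cayley continuants,
\begin{equation*}
\sum_{m=0}^\infty \frac{\Cay_m(x;y)}{m!}\,t^m = (1+t)^{(x+y)/2}(1-t)^{(y-x)/2},
\end{equation*}
starting from the explicit formula for $\Cay_m(x;y)$ recalled at the beginning of Section \ref{sec:construction}: after dividing by $m!$ one recognizes the double sum as the Cauchy product of $\sum_j \binom{(x+y)/2}{j}t^j=(1+t)^{(x+y)/2}$ with $\sum_i \binom{(x-y)/2+i-1}{i}t^i=(1-t)^{-(x-y)/2}$. Setting $(x,y)=(s,k)$ and comparing with \eqref{eqn:Jacobi2} at $(\alpha,\beta)=\bigl(\tfrac{k+s}{2},\tfrac{k-s}{2}\bigr)$ gives $\Cay_m(s;k)=m!\,2^m P_m^{(\frac{k+s}{2}-m,\frac{k-s}{2}-m)}(0)$; substituting into \eqref{eqn:pc0} and using $m!\binom{k}{m}=k^{\underline{m}}$ produces \eqref{eqn:pcJ}. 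Finally \eqref{eqn:pcJpm} is immediate: specializing the Cayley generating function at $s=\pm k$ yields $\Cay_m(\pm k;k)=(\pm1)^m k^{\underline{m}}$, so the coefficient of $t^m$ in $p_c^{(\pm k;k)}$ collapses to $\tfrac{(\pm1)^m}{2^m}m!\binom{k}{m}^2$, which is precisely the coefficient of $t^m$ in $p_{\pm}^{(k,k)}$; alternatively one just compares \eqref{eqn:pcJ} at $s=\pm k$ with \eqref{eqn:p+J}, \eqref{eqn:p-J} at $k=\ell$.

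The only point requiring genuine care is the Cayley-continuant generating function: one must track the exact shape of the closed form for $\Cay_m$ (in particular the index on the rising factorial) and keep the Cauchy-product manipulation at the level of formal power series in $t$. No new identity beyond \cite[(5)]{MT05} is needed, and everything else reduces to coefficient extraction from \eqref{eqn:Jacobi2} together with $m!\binom{k}{m}=k^{\underline{m}}$.
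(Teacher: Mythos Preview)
Your argument is correct and follows essentially the same route as the paper: everything is pulled out of the generating function \eqref{eqn:Jacobi2}, specialized to $(\alpha,\beta)=(\ell,0)$ for $p_+$, to $(0,\ell)$ (or via \eqref{eqn:Jacobi4}) for $p_-$, and to $\bigl(\tfrac{k+s}{2},\tfrac{k-s}{2}\bigr)$ for $p_c$, with the identification $m!\binom{k}{m}=k^{\underline{m}}$.

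The only difference is in how the Cayley-continuant generating function is obtained. The paper simply cites it from \cite[(4)]{MT05}, whereas you propose to derive it by recognizing $\Cay_m(x;y)/m!$ as the Cauchy product of $(1+t)^{(x+y)/2}$ and $(1-t)^{(y-x)/2}$. That derivation is fine, but it requires the closed form $\Cay_m(x;y)=\sum_j\binom{m}{j}\bigl(\tfrac{x+y}{2}\bigr)^{\underline{j}}\bigl(\tfrac{x-y}{2}\bigr)^{\overline{m-j}}$; note that the version displayed in Section~\ref{sec:construction} has a typo (the rising-factorial exponent should be $m-j$, not $j$---check $m=1$). Your warning about tracking ``the index on the rising factorial'' is exactly the right instinct.
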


\begin{proof}
The identity \eqref{eqn:pcJpm} readily follows from
\eqref{eqn:p+J}, \eqref{eqn:p-J}, and \eqref{eqn:pcJ}.
Thus, we only focus on showing the first three identities.

We start with the proof for \eqref{eqn:p+J}. 
Without loss of generality, we assume $k \geq \ell$. Thus,
\begin{equation}\label{eqn:p+}
p_+^{(k,\ell)}(t) 
= \sum_{m=0}^{\ell} \frac{1}{2^m}m!\binom{k}{m}\binom{\ell}{m}t^m
= \sum_{m=0}^{\ell} \frac{k^{\underline{m}}}{2^m}\binom{\ell}{m}t^m.
\end{equation}
Observe that the polynomial $(1+t)^\ell$ is expanded as
\begin{equation}\label{eqn:BinomGen}
(1+t)^\ell = \sum_{m=0}^\ell \binom{\ell}{m}t^m,
\end{equation}
which is the case for $(\ga, \beta) = (\ell, 0)$ in \eqref{eqn:Jacobi2}. Therefore,
\begin{equation}\label{eqn:binom1}
\binom{\ell}{m} = 2^m \cdot P_m^{(\ell-m, -m)}(0).
\end{equation}
Now \eqref{eqn:binom1} and \eqref{eqn:p+} conclude the proposed identity \eqref{eqn:p+J}.
The identity \eqref{eqn:p-J} for $p_-^{(k,\ell)}(t)$ is obtained by applying 
\eqref{eqn:Jacobi4} to \eqref{eqn:p+J}.

To show \eqref{eqn:pcJ}, recall from \eqref{eqn:pc0} that
\begin{equation}\label{eqn:pc}
p_c^{(s;k)}(t)
=\sum_{m=0}^k \frac{1}{2^m}\binom{k}{m}\Cay_m(s;k) t^m
=\sum_{m=0}^k \frac{1}{2^m} \frac{k^{\underline{m}} }{m!}\Cay_m(s;k) t^m.
\end{equation}
It is known that 
the generating function of Cayley continuants $\{\Cay_m(x;y)\}$
is given by 
\begin{equation*}
(1+t)^{\frac{y+x}{2}}(1-t)^{\frac{y-x}{2}} = 
\sum_{m=0}^\infty\Cay_m(x;y) \frac{t^m}{m!},
\end{equation*}
which is the case for $(\ga, \beta) = (\frac{y+x}{2}, \frac{y-x}{2})$ in \eqref{eqn:Jacobi2}
(cf.\ \cite[(4)]{MT05}). Thus,
\begin{align}\label{eqn:CayP}
\Cay_m(s;k) 
&= m! \cdot 2^m \cdot P_m^{(\frac{k+s-2m}{2}, \frac{k-s-2m}{2})}(0).
\end{align}
The formula \eqref{eqn:pcJ} is then given by substituting \eqref{eqn:CayP} into \eqref{eqn:pc}. 
\end{proof}

\begin{rem}\label{rem:pJ}
The identity \eqref{eqn:pcJpm} 
plays a role in the degenerate cases of
the factorization identities of DIO $\D$ 
and $(\fg, B)$-homomorphisms $\varphi$.
(See Cases (5) and (6) of 
Theorems \ref{thm:factor1}  and \ref{thm:factor2}.)
\end{rem}

For $x, y \in \C$ and $m\in \Z_{\geq 0}$, we write
\begin{equation}\label{eqn:BKraw}
K_m(x;y):=\sum_{j=0}^m(-1)^j\binom{x}{j}\binom{y-x}{m-j},
\end{equation}
where the binomial coefficient $\binom{a}{m}$ is defined as follows.
\begin{equation*}
\binom{a}{m}=
\begin{cases}
\frac{a(a-1) \cdots (a-m+1)}{m!} & \text{if $m \in 1+\Z_{\geq 0}$},\\
1 & \text{if $m=0$},\\
0 & \text{otherwise}.
\end{cases}
\end{equation*}
The polynomials $K_m(x;y)$ are called (binary) Krawtchouk polynomials 
(cf.\ \cite[p.\ 151]{MS77} and \cite{Podesta16+}).

\begin{cor}\label{cor:pcK}
For $k, \ell \in \Z_{\geq 0}$, the following hold.
\begin{align}
p_c^{(k-2\ell;k)}(t) &= \sum_{m=0}^k \frac{1}{2^m} 
 k^{\underline{m}} K_m(\ell;k) t^m, \label{eqn:Kraw1}\\[3pt]
p_c^{(2k-\ell;\ell)}(t) &= \sum_{m=0}^\ell \frac{(-1)^m}{2^m}  \ell^{\underline{m}} 
K_m(k;\ell) t^m. \label{eqn:Kraw2}
\end{align}
\end{cor}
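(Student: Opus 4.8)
The plan is to derive both formulas from generating functions, matching the generating function of the Cayley continuants against that of the Krawtchouk polynomials. First I would record the generating function of the $K_m$: expanding $(1-w)^{x}=\sum_{i\ge 0}(-1)^i\binom{x}{i}w^i$ and $(1+w)^{y-x}=\sum_{j\ge 0}\binom{y-x}{j}w^j$ and taking the Cauchy product, the definition \eqref{eqn:BKraw} gives
\begin{equation*}
\sum_{m=0}^{\infty} K_m(x;y)\,w^m \;=\; (1-w)^{x}(1+w)^{y-x}
\end{equation*}
as an identity of formal power series (equivalently, of analytic functions near $w=0$). On the other hand, the generating function of the Cayley continuants used in the proof of Theorem~\ref{thm:unif} reads $\sum_{m\ge 0}\tfrac{1}{m!}\Cay_m(x;y)\,t^m=(1+t)^{\frac{x+y}{2}}(1-t)^{\frac{y-x}{2}}$.

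For \eqref{eqn:Kraw1} I would put $(x;y)=(k-2\ell;k)$ in the Cayley generating function, so its right-hand side becomes $(1+t)^{k-\ell}(1-t)^{\ell}$, and put $(x;y)=(\ell;k)$, $w=t$ in the Krawtchouk generating function, so its right-hand side is $(1-t)^{\ell}(1+t)^{k-\ell}$; comparing coefficients of $t^{m}$ yields $\Cay_m(k-2\ell;k)=m!\,K_m(\ell;k)$. Substituting this into $p_c^{(s;k)}(t)=\sum_{m=0}^{k}\tfrac{1}{2^m}\binom{k}{m}\Cay_m(s;k)t^{m}$ from \eqref{eqn:pc0} with $s=k-2\ell$, and using $\binom{k}{m}=k^{\underline{m}}/m!$, gives exactly \eqref{eqn:Kraw1}. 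For \eqref{eqn:Kraw2} I would instead take $(x;y)=(2k-\ell;\ell)$ in the Cayley generating function, whose right-hand side is then $(1+t)^{k}(1-t)^{\ell-k}$, and $(x;y)=(k;\ell)$, $w=-t$ in the Krawtchouk generating function, giving $\sum_m(-1)^m K_m(k;\ell)t^m=(1+t)^{k}(1-t)^{\ell-k}$; matching coefficients yields $\Cay_m(2k-\ell;\ell)=(-1)^m m!\,K_m(k;\ell)$, and substituting into \eqref{eqn:pc0} (now with $\ell$ in the role of $k$ and $s=2k-\ell$) produces \eqref{eqn:Kraw2}.

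I do not expect any genuine obstacle; the only points needing a line of care are verifying that the two half-integers $\tfrac{x\pm y}{2}$ come out to the claimed integer exponents after each substitution, and keeping track of the sign $(-1)^m$ introduced by $w=-t$ in the second case. An alternative, purely finite-sum route — which I would mention as a remark — is to substitute $s=k-2\ell$ and $s=2k-\ell$ directly into \eqref{eqn:pcJ} of Theorem~\ref{thm:unif}, evaluate $P_m^{(\ga,\gb)}(0)=2^{-m}\sum_{j=0}^m(-1)^j\binom{m+\ga}{m-j}\binom{m+\gb}{j}$ from the definition of the Jacobi polynomials, and recognize the resulting alternating sum as $K_m(\ell;k)$ in the first case and as $(-1)^mK_m(k;\ell)$ in the second (the sign there coming from \eqref{eqn:Jacobi4}).
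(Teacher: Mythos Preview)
Your proof is correct. Both identities follow cleanly by matching the Cayley generating function against the Krawtchouk generating function, exactly as you outline.

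The paper takes a slightly less direct route: it invokes the Jacobi-polynomial expression \eqref{eqn:pcJ} from Theorem~\ref{thm:unif} (so that the coefficients are already written as $k^{\underline{m}}\,P_m^{(\alpha,\beta)}(0)$), and then identifies $2^m P_m^{(k-\ell-m,\ell-m)}(0)=K_m(\ell;k)$ by recognizing the Krawtchouk generating function \eqref{eqn:KroGen} as the special case $(\alpha,\beta)=(y-x,x)$ of \eqref{eqn:Jacobi2}; the sign in \eqref{eqn:Kraw2} then comes from the symmetry \eqref{eqn:Jacobi4}. In other words, your ``alternative, purely finite-sum route'' is precisely the paper's argument. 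Your main argument short-circuits this by comparing the Cayley and Krawtchouk generating functions directly, which yields $\Cay_m(k-2\ell;k)=m!\,K_m(\ell;k)$ and $\Cay_m(2k-\ell;\ell)=(-1)^m m!\,K_m(k;\ell)$ without passing through $P_m^{(\alpha,\beta)}(0)$ at all. This is marginally more economical and makes the corollary independent of Theorem~\ref{thm:unif}; the paper's route, on the other hand, keeps the Jacobi-polynomial framework uniform across $p_\pm^{(k,\ell)}$ and $p_c^{(s;k)}$.
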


\begin{proof}
To show \eqref{eqn:Kraw1},
observe that if $s=k-2\ell$, then
\begin{equation*}
\tfrac{1}{2}(k+s-2m)=k-\ell-m
\quad \text{and} \quad
\tfrac{1}{2}(k-s-2m) = \ell-m.
\end{equation*}
Then, by Theorem \ref{thm:unif}, we have 
\begin{equation}\label{eqn:pcP}
p_c^{(k-2\ell;k)}(t) = \sum_{m=0}^k k^{\underline{m}}
P_m^{(k-\ell-m, \ell-m)}(0)t^m.
\end{equation}
On the other hand, it follows from \eqref{eqn:BKraw} that
$K_m(x;y)$ satisfies the following generating function:
\begin{equation}\label{eqn:KroGen}
(1+t)^{y-x}(1-t)^x = \sum_{m=0}^\infty K_m(x;y) t^m,
\end{equation}
which is the case for $(\ga, \beta)= (y-x, x)$ in \eqref{eqn:Jacobi2}. Thus,
\begin{equation}\label{eqn:KJ}
K_m(\ell;k) = 
2^m \cdot P_m^{(k-\ell-m, \ell-m)}(0).
\end{equation}
Now \eqref{eqn:pcP} and \eqref{eqn:KJ} yield the desired formula.

Likewise, if $s=2k-\ell$, then the same argument shows that
\begin{align*}%\label{eqn:pcP2}
p_c^{(2k-\ell;\ell)}(t) 
&= \sum_{m=0}^\ell \ell^{\underline{m}}
P_m^{(k-m, \ell-k-m)}(0)t^m \\[3pt]
&= \sum_{m=0}^\ell \ell^{\underline{m}}(-1)^m
P_m^{(\ell-k-m,k-m)}(0)t^m,
\end{align*}
where \eqref{eqn:Jacobi4} is applied from line one to line two.
The identity $K_m(k;\ell)=2^m \cdot P_m^{(\ell-k-m,k-m)}(0)$ 
then concludes the other formula \eqref{eqn:Kraw2}.
\end{proof}

We end this section by showing an identity involving
binary Krawtchouk polynomials $K_m(x;y)$, which will 
be used in the proof of  the factorization identities in the next section.

\begin{lem}
For $x,y \in \C$ and $m \in \Z_{\geq 0}$, we have 
\begin{equation}\label{eqn:BinKro}
\binom{x}{m} = (-1)^m\sum^m_{r=0}\binom{x-y}{m-r}K_r(x;y).
\end{equation}

\end{lem}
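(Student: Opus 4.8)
The plan is to derive \eqref{eqn:BinKro} from the generating function \eqref{eqn:KroGen} of the binary Krawtchouk polynomials by a one-line manipulation of formal power series, followed by comparison of coefficients. Recall that \eqref{eqn:KroGen} states
\begin{equation*}
(1+t)^{y-x}(1-t)^x = \sum_{m=0}^\infty K_m(x;y)\, t^m.
\end{equation*}
Multiplying both sides by $(1+t)^{x-y}$, so that the left-hand side collapses to $(1-t)^x$, and expanding $(1+t)^{x-y} = \sum_{i=0}^\infty \binom{x-y}{i}t^i$ and $(1-t)^x = \sum_{m=0}^\infty (-1)^m\binom{x}{m}t^m$ as (formal) binomial series, I would obtain
\begin{equation*}
\sum_{m=0}^\infty (-1)^m\binom{x}{m}t^m
= \left(\sum_{i=0}^\infty \binom{x-y}{i}t^i\right)\left(\sum_{r=0}^\infty K_r(x;y)t^r\right).
\end{equation*}
Comparing the coefficient of $t^m$ on both sides gives $(-1)^m\binom{x}{m} = \sum_{r=0}^m \binom{x-y}{m-r}K_r(x;y)$, which is precisely \eqref{eqn:BinKro} after multiplying through by $(-1)^m$.

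The only point that needs a word of care is that $x,y$ are arbitrary complex numbers, so the above takes place in the ring of formal power series in $t$ with generalized binomial coefficients as coefficients; alternatively, since for each fixed $m$ both sides of \eqref{eqn:BinKro} are polynomials in $(x,y)$, it suffices to verify the identity for $x,y\in\Z_{\geq 0}$ with $y$ sufficiently large, where all the series are honest finite binomial expansions. I do not expect any real obstacle here: the statement is essentially the coefficient-wise reading of the trivial factorization $(1-t)^x = (1+t)^{x-y}\cdot\big[(1+t)^{y-x}(1-t)^x\big]$, and the generating-function identity \eqref{eqn:KroGen} has already been established in the excerpt.
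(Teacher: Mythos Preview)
Your proof is correct and is essentially identical to the paper's own argument: the paper also writes $(1-t)^x = (1+t)^{x-y}(1+t)^{y-x}(1-t)^x$, expands using the binomial series and \eqref{eqn:KroGen}, and compares coefficients of $t^m$. Your extra remark on working in formal power series (or reducing to integer parameters by polynomiality) is a harmless elaboration that the paper omits.
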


\begin{proof}
Observe that 
\begin{equation*}
(1-t)^{x} = (1+t)^{x-y}(1+t)^{y-x}(1-t)^x,
\end{equation*}
which implies that 
\begin{equation*}
\sum_{m=0}^\infty(-1)^m\binom{x}{m}t^m = 
\big(\sum_{u=0}^\infty \binom{x-y}{u}t^u \big) \big(\sum_{r=0}^\infty K_r(x;y)t^r\big)
\end{equation*}
by \eqref{eqn:BinomGen} and \eqref{eqn:KroGen}.
Now take the coefficients of $t^m$ on the both sides to conclude \eqref{eqn:BinKro}.
\end{proof}

%%%%%%%%%%%%%%%%%%%%%%%%%%%%%%%%%%%%%%%%%%%%%%%
\section{Factorization identities}
\label{sec:factorization}

%%%%%%%%%%%%%%%%%%%%%%%%%%%%%%%%%%%%%%%%%%%%%%%

The aim of this section is to show the factorization identities 
of the differential intertwining operators $\D$ and 
$(\fg, B)$-homomorphisms $\varphi$.
Such identities are accomplished  
in Theorems \ref{thm:factor1} and \ref{thm:factor2}
for $\varphi$ and $\D$, respectively. 

%%%%%%%%%%%%%%%%%%%%%%%%%%%%%%%%%%%%%%%%%%%%%%%
\subsection{General theory of homomorphisms between Verma modules}
\label{subsec:Verma1}
We begin with a quick overview of a general theory of homomorphisms 
between Verma modules.
Let $\fg$ be a complex simple Lie algebra and fix a Cartan subalgebra $\fh$ of $\fg$.
We also fix an inner product $\IP{\cdot}{\cdot}$ on $\fh^*$.
Let $\Delta$ denote the set of roots of $\fg$ with respect to $\fh$.
Choose a positive system $\gD^+$ for $\gD$ and write $\Pi$ for the 
set of simple roots for $\gD^+$. We write $\fb = \fh \oplus \fn_+$ for
the Borel subalgebra corresponding to $\gD^+$.
For $\ga \in \gD$ and $\mu \in \fh^*$, 
we write $s_\ga\mu = \mu - \IP{\mu}{\ga^\ssv}\ga$
with $\ga^\ssv =\frac{2}{\IP{\ga}{\ga}}\ga$.

As in Section \ref{sec:InfChar}, for $\mu \in \fh^*$, we write
\begin{equation*}
N(\mu):=\Cal{U}(\fg)\otimes_{\Cal{U}(\fb)} \C_{\mu-\rho},
\end{equation*}
where $\rho$ is half the sum of the positive roots.

It is well known that we have
$\dim_\C\Hom_{\fg}(N(\nu),N(\lambda)) \leq 1$
for any $\nu, \lambda \in \fh^*$.
To describe the condition when $\dim_\C\Hom_{\fg}(N(\nu),N(\lambda)) = 1$,
we next recall the notion of a \emph{link} between weights.

\begin{defn}
[Bernstein--Gelfand--Gelfand]
\label{def:Link}
Let $\gl, \lambda \in \fh^*$ and $\gb_1, \ldots, \gb_t \in \gD^+$. Set $\lambda_0 = \lambda$
and $\lambda_i = s_{\gb_i} \cdots s_{\gb_1}(\lambda)$ for $1 \leq i \leq t$.
We say that the sequence $(\gb_1, \ldots, \gb_t)$ \emph{links} $\lambda$ to $\gl$ if 
the following two conditions are satisfied:
\begin{enumerate}
\item[(1)] $\lambda_t = \gl$;
\item[(2)] $\IP{\lambda_{i-1}}{\gb_i^{\ssv}} \in \Z_{\geq 0}$ for $1\leq i \leq t$.
\end{enumerate}
\end{defn}

Let $L(\gl)$ denote the unique irreducible quotient of 
the Verma module $N(\gl)$.
The following celebrated result of BGG--Verma shows when 
$\dim_\C\Hom_{\fg}(N(\nu),N(\lambda)) = 1$.

\begin{thm}
[BGG--Verma]
\label{thm:BGGV}
The following three conditions on $\gl, \lambda \in \fh^*$ are equivalent.
\begin{enumerate}
\item[\emph{(i)}] $\dim_\C\Hom_{\fg}(N(\nu),N(\lambda)) =1$.
\item[\emph{(ii)}] $L(\gl)$ is a composition factor of $N(\lambda)$.
\item[\emph{(iii)}] There exists a sequence $(\gb_1, \ldots, \gb_t)$ with $\gb_i \in \gD^+$
that links $\lambda$ to $\gl$.
\end{enumerate}
\end{thm}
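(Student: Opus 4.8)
The plan is to prove the cycle of implications (i) $\Rightarrow$ (ii) $\Rightarrow$ (iii) $\Rightarrow$ (i). Since $\dim_\C\Hom_\fg(N(\nu),N(\lambda))\leq 1$ holds unconditionally, as recalled just before the statement, condition (i) amounts merely to the non-vanishing of $\Hom_\fg(N(\nu),N(\lambda))$, so in the last step I only need to produce one nonzero homomorphism. For (i) $\Rightarrow$ (ii) the key observation is that any nonzero $\fg$-homomorphism $\phi\colon N(\nu)\to N(\lambda)$ is injective: $N(\lambda)$ is free of rank one as a $\Cal{U}(\fn_-)$-module and $\Cal{U}(\fn_-)$ is an integral domain, so $\Cal{U}(\fn_-)\phi(v_\nu)$ is again free of rank one, which forces $\phi$ to be injective on $\Cal{U}(\fn_-)v_\nu=N(\nu)$, where $v_\nu$ is the highest weight vector. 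Hence $N(\nu)\hookrightarrow N(\lambda)$, and every composition factor of $N(\nu)$ — in particular its irreducible head $L(\nu)$ — occurs in $N(\lambda)$.

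The implication (ii) $\Rightarrow$ (iii) is the substantive one, and I would split it into two stages. First, the Harish--Chandra description of the centre of $\Cal{U}(\fg)$ shows that a composition factor $L(\nu)$ of $N(\lambda)$ must have the same infinitesimal character as $N(\lambda)$, which in the $\rho$-shifted normalisation used here means $\nu\in W\lambda$. Second, to promote $\nu\in W\lambda$ to an actual link I would invoke the \emph{strong linkage principle}: if $L(\nu)$ is a composition factor of $N(\lambda)$ then there is a sequence $(\gb_1,\dots,\gb_t)$ in $\gD^+$ linking $\lambda$ to $\nu$ in the sense of Definition \ref{def:Link}. This can be derived either from Jantzen's sum formula for the Jantzen filtration of $N(\lambda)$, whose leading term $\sum_{\gb\in\gD^+,\,\IP{\lambda}{\gb^\ssv}\in\Z_{>0}}\mathrm{ch}\,N(s_\gb\lambda)$ drives an induction on a suitable height function on $W\lambda$, or along the lines of the original Bernstein--Gelfand--Gelfand argument, which runs an induction on $\ell(w)$ for $\nu=w\lambda$ using a minimal parabolic $\f{sl}(2)$-reduction.

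For (iii) $\Rightarrow$ (i) the single-step lemma of Verma is the essential input: whenever $\mu'=s_\gb\mu$ with $\IP{\mu}{\gb^\ssv}=n\in\Z_{>0}$ for some $\gb\in\gD^+$, one has $\Hom_\fg(N(\mu'),N(\mu))\neq\{0\}$. For $\gb$ simple this is the classical singular-vector construction: $f_\gb^{\,n}v_\mu$ is a highest weight vector of weight $\mu'-\rho$ and generates a submodule of $N(\mu)$ isomorphic to $N(\mu')$, which one verifies by an explicit $\f{sl}(2)$-triple computation. For $\gb$ non-simple one invokes Verma's lemma, which reduces to the simple case by iterating the $\f{sl}(2)$ construction along a sequence of simple reflections. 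Feeding each step of a link $\lambda=\lambda_0,\lambda_1,\dots,\lambda_t=\nu$ into this lemma (and using the identity map whenever $\IP{\lambda_{i-1}}{\gb_i^\ssv}=0$) produces nonzero maps $N(\lambda_i)\to N(\lambda_{i-1})$; each is injective by the argument of the first step, so their composite $N(\nu)\to N(\lambda)$ is nonzero. Together with the a priori bound this gives $\Hom_\fg(N(\nu),N(\lambda))=\C$, closing the cycle.

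The one genuinely hard step is (ii) $\Rightarrow$ (iii): the other two implications are essentially formal once one has, respectively, the torsion-freeness of Verma modules over $\Cal{U}(\fn_-)$ and the explicit construction of homomorphisms from singular vectors. Accordingly I would concentrate the real effort on a clean derivation of Jantzen's sum formula (or on importing the Bernstein--Gelfand--Gelfand resolution machinery), and treat the remaining arguments as bookkeeping with Weyl group elements and central characters.
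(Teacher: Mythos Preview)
The paper does not actually prove this theorem: it is quoted as a classical result of Bernstein--Gelfand--Gelfand and Verma, stated without proof in Section~\ref{subsec:Verma1} as background for the factorization identities. So there is no ``paper's own proof'' to compare against.

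That said, your outline is the standard textbook route (as in Humphreys' \emph{Representations of Semisimple Lie Algebras in the BGG Category $\mathcal{O}$} or Dixmier's \emph{Enveloping Algebras}): injectivity of nonzero Verma module maps gives (i) $\Rightarrow$ (ii); strong linkage via the Jantzen filtration or the original BGG argument gives (ii) $\Rightarrow$ (iii); and Verma's singular-vector construction iterated along a link gives (iii) $\Rightarrow$ (i). Your sketch is accurate, and you correctly identify (ii) $\Rightarrow$ (iii) as the substantive step. One small caution: for the single-step embedding $N(s_\gb\mu)\hookrightarrow N(\mu)$ with $\gb$ a non-simple positive root, the reduction to the simple case is itself nontrivial and is usually handled either via Shapovalov's determinant formula or via a separate inductive argument; your phrase ``iterating the $\f{sl}(2)$ construction along a sequence of simple reflections'' glosses over this, since the intermediate weights need not satisfy the required integrality conditions.
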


To show factorization identities,
singular vectors of the composition of 
two homomorphisms between Verma modules have to be computed.
The next two lemmas will be useful for this matter.

\begin{lem}\label{lem:hom-comp}
For $\lambda, \nu, \mu \in \fh^*$,
let 
$\varphi_1\colon N(\nu)\to N(\mu)$ and $\varphi_2\colon N(\mu)\to N(\lambda)$
be
homomorphisms 
between Verma modules such that 
$\varphi_1(1\otimes \mathbb{1}_{\nu-\rho})=u_1\otimes \mathbb{1}_{\mu-\rho}$
and $\varphi_2(1\otimes \mathbb{1}_{\mu-\rho})=u_2\otimes \mathbb{1}_{\lambda-\rho}$.
Then $(\varphi_2 \circ \varphi_1)(1\otimes \mathbb{1}_{\nu-\rho})$ 
is given by 
\begin{equation*}
(\varphi_2 \circ \varphi_1)(1\otimes \mathbb{1}_{\nu-\rho}) 
= u_1u_2\otimes \mathbb{1}_{\lambda-\rho}.
\end{equation*}
\end{lem}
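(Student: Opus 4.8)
\textbf{Proof plan for Lemma \ref{lem:hom-comp}.}

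The plan is to unwind the definitions of the two homomorphisms and use the fact that a $\fg$-homomorphism out of a Verma module is determined by the image of the canonical highest-weight generator, together with $\fg$-equivariance. First I would fix notation: write $v_\nu := 1\otimes\mathbb{1}_{\nu-\rho}\in N(\nu)$, $v_\mu := 1\otimes\mathbb{1}_{\mu-\rho}\in N(\mu)$, and $v_\lambda := 1\otimes\mathbb{1}_{\lambda-\rho}\in N(\lambda)$ for the standard generators. By hypothesis $\varphi_1(v_\nu)=u_1 v_\mu$ and $\varphi_2(v_\mu)=u_2 v_\lambda$, where $u_1\in\Cal{U}(\fn_-)$ and $u_2\in\Cal{U}(\fn_-)$ and I am writing $u\cdot v$ for the $\Cal{U}(\fg)$-action. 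Since $N(\nu)=\Cal{U}(\fg)\otimes_{\Cal{U}(\fb)}\C_{\nu-\rho}=\Cal{U}(\fn_-)v_\nu$ as a left $\Cal{U}(\fn_-)$-module, every element of $N(\nu)$ has the form $u\cdot v_\nu$ for some $u\in\Cal{U}(\fn_-)$, and a $\fg$-homomorphism is completely pinned down by its value on $v_\nu$.

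The key computation is then the following short chain, using $\fg$-equivariance (in particular $\Cal{U}(\fg)$-linearity) of $\varphi_1$ and $\varphi_2$:
\begin{equation*}
(\varphi_2\circ\varphi_1)(v_\nu)
=\varphi_2(\varphi_1(v_\nu))
=\varphi_2(u_1\cdot v_\mu)
=u_1\cdot\varphi_2(v_\mu)
=u_1\cdot(u_2\cdot v_\lambda)
=(u_1 u_2)\cdot v_\lambda,
\end{equation*}
where the product $u_1 u_2$ is taken inside $\Cal{U}(\fn_-)\subset\Cal{U}(\fg)$ and the last equality is associativity of the module action. Translating back to tensor notation, this says exactly $(\varphi_2\circ\varphi_1)(1\otimes\mathbb{1}_{\nu-\rho})=u_1 u_2\otimes\mathbb{1}_{\lambda-\rho}$, which is the claim.

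There is essentially no obstacle here; the only point that warrants a word of care is the order of the factors. The composition $\varphi_2\circ\varphi_1$ produces $u_1 u_2$ and \emph{not} $u_2 u_1$, because $\varphi_1$ is applied first and its output $u_1\cdot v_\mu$ is then hit by $\varphi_2$, which by linearity pulls $u_1$ out on the left; this is consistent with the convention, already visible in \eqref{eqn:ExF4} and the remark following it, that singular-vector expressions compose with the later map's vector sitting to the right. I would flag this explicitly in the write-up so the reader is not tempted to commute the $N_j^-$'s, since $\Cal{U}(\fn_-)$ is noncommutative. Everything else is a direct application of the universal property of Verma modules as cyclic $\fb$-induced modules together with associativity of the $\Cal{U}(\fg)$-action.
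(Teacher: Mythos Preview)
Your proof is correct: the computation using $\Cal{U}(\fg)$-equivariance of $\varphi_2$ to pull $u_1$ through is exactly the right argument, and your emphasis on the order $u_1u_2$ versus $u_2u_1$ is well placed. The paper does not actually supply a proof of this lemma; it simply cites \cite[Lem.~3.7]{KuOr19}, so your self-contained argument is a genuine addition rather than a paraphrase.
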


One ensures the order of $u_1$ and $u_2$ in Lemma \ref{lem:hom-comp},
namely, $(\varphi_2 \circ \varphi_1)(1\otimes \mathbb{1}_{\nu-\rho}) 
\neq u_2u_1\otimes \mathbb{1}_{\lambda-\rho}$.
For the proof, see, for instance, \cite[Lem.\ 3.7]{KuOr19}.

\begin{lem}
Retain the hypothesis of Lemma \ref{lem:hom-comp}. 
Let $\widehat{\dpi}\equiv\widehat{d\pi_{(\xi,\lambda)^*}}$ denote the 
Fourier transformed representation of $\fg$ defined as in \eqref{eqn:hdpi}.
Then the algebraic Fourier transform
$F_c(\varphi_2\circ \varphi_1)$ of $\varphi_2\circ \varphi_1  \in \Hom_{\fg}(N(\nu),N(\lambda))$ 
is given by
\begin{equation}\label{eqn:Fc-comp0}
F_c(\varphi_2\circ \varphi_1) 
= (\widehat{\dpi}(u_1)\widehat{\dpi}(u_2))\acts 1
= \widehat{\dpi}(u_1) \acts (\widehat{\dpi}(u_2)\acts 1).
\end{equation}
In particular, if $u_j \in \Cal{U}(\fn_-)$ for $j=1,2$, then
\begin{equation}\label{eqn:Fc-comp}
F_c(\varphi_2\circ \varphi_1) 
= (\widehat{dL}(u_1)\widehat{dL}(u_2))\acts 1
= \widehat{dL}(u_1) \acts (\widehat{dL}(u_2)\acts 1).
\end{equation} 
\end{lem}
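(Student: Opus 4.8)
The statement to prove relates $F_c(\varphi_2\circ\varphi_1)$ to the iterated action of $\widehat{d\pi}$ on $1$, under the hypothesis that $\varphi_1(1\otimes\mathbb{1}_{\nu-\rho})=u_1\otimes\mathbb{1}_{\mu-\rho}$ and $\varphi_2(1\otimes\mathbb{1}_{\mu-\rho})=u_2\otimes\mathbb{1}_{\lambda-\rho}$. The plan is to combine Lemma \ref{lem:hom-comp}, which computes the singular vector of the composite as $(\varphi_2\circ\varphi_1)(1\otimes\mathbb{1}_{\nu-\rho})=u_1u_2\otimes\mathbb{1}_{\lambda-\rho}$, with the definition of the algebraic Fourier transform $F_c$ in \eqref{eqn:Fc}. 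Recall that $F_c(u\otimes v^\vee)=\widehat{d\pi_{(\xi,\lambda)^*}}(u)(1\otimes v^\vee)$; applying this to $u=u_1u_2$ and using that $\widehat{d\pi}$ is an algebra homomorphism (it is the composition of the Lie-algebra-to-Weyl-algebra map $d\pi_{(\xi,\lambda)^*}$ extended to $\Cal{U}(\fg)$ with the Weyl algebra isomorphism $\widehat{\;\cdot\;}$), one gets $\widehat{d\pi}(u_1u_2)=\widehat{d\pi}(u_1)\widehat{d\pi}(u_2)$ as Weyl algebra elements, whence $F_c(\varphi_2\circ\varphi_1)=(\widehat{d\pi}(u_1)\widehat{d\pi}(u_2))\acts 1$. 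The second equality in \eqref{eqn:Fc-comp0} is then just the associativity rule \eqref{eqn:acts} for the $\acts$-action.

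\textbf{Key steps, in order.} First I would invoke Lemma \ref{lem:hom-comp} to identify the singular vector of $\varphi_2\circ\varphi_1$ as $u_1u_2\otimes\mathbb{1}_{\lambda-\rho}$; here the order $u_1u_2$ (not $u_2u_1$) is essential and is exactly what that lemma guarantees. Second, I would unwind the definition \eqref{eqn:Fc} of $F_c$ applied to this vector, obtaining $F_c(\varphi_2\circ\varphi_1)=\widehat{d\pi}(u_1u_2)(1\otimes\mathbb{1}_{\lambda-\rho})$, and then drop the trivial $V^\vee=\C$ factor since we are in the $SL(3,\R)$ Borel setting with one-dimensional inducing characters (or more generally keep track of $\otimes\id_{V^\vee}$ harmlessly). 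Third, I would use multiplicativity of $\widehat{d\pi}$ on $\Cal{U}(\fg)$ to split $\widehat{d\pi}(u_1u_2)=\widehat{d\pi}(u_1)\widehat{d\pi}(u_2)$, and rewrite the Weyl-algebra action as $(\widehat{d\pi}(u_1)\widehat{d\pi}(u_2))\acts 1=\widehat{d\pi}(u_1)\acts(\widehat{d\pi}(u_2)\acts 1)$ by \eqref{eqn:acts}. Finally, for the special case $u_1,u_2\in\Cal{U}(\fn_-)$, I would apply Lemma \ref{lem:dpiL}, which gives $d\pi_{(\xi,\lambda)^*}\vert_{\Cal{U}(\fn_-)}=dL\vert_{\Cal{U}(\fn_-)}\otimes\id_{V^\vee}$, hence $\widehat{d\pi}(u_j)=\widehat{dL}(u_j)$ on $\Cal{U}(\fn_-)$, yielding \eqref{eqn:Fc-comp}.

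\textbf{Main obstacle.} This is essentially a bookkeeping lemma, so there is no deep obstacle; the one point requiring care is the \emph{order} of the factors. The map $(\,\cdot\,)^\circ$ and the various conventions (left versus right translations, the anti-involution hidden in $dL(u)\delta_0$) mean one must be careful that $\varphi_2\circ\varphi_1$ produces $u_1u_2$ rather than $u_2u_1$, and correspondingly that $\widehat{d\pi}$ is an honest algebra homomorphism $\Cal{U}(\fg)\to\C[\zeta,\tfrac{\partial}{\partial\zeta}]$ with no order reversal — this is what lets $\widehat{d\pi}(u_1u_2)=\widehat{d\pi}(u_1)\widehat{d\pi}(u_2)$ hold without a twist. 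Since $d\pi_{(\xi,\lambda)^*}$ is defined as a genuine Lie algebra homomorphism in \eqref{eqn:hdpi} and extends canonically to $\Cal{U}(\fg)$, and $\widehat{\;\cdot\;}$ is a Weyl algebra isomorphism by \eqref{eqn:Weyl2}, this multiplicativity is automatic; I would state it explicitly and then the rest is immediate.
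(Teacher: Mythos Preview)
Your proposal is correct and follows exactly the approach the paper takes: the paper's proof is the single sentence ``The identities \eqref{eqn:Fc-comp0} and \eqref{eqn:Fc-comp} are immediate consequences of Lemma \ref{lem:hom-comp} and \eqref{eqn:dpiL},'' and your write-up simply unpacks this, invoking Lemma \ref{lem:hom-comp} for the order $u_1u_2$, the definition \eqref{eqn:Fc} of $F_c$, multiplicativity of $\widehat{d\pi}$ on $\Cal{U}(\fg)$, the rule \eqref{eqn:acts}, and Lemma \ref{lem:dpiL} for the $\fn_-$ case.
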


\begin{proof}
These
are immediate consequences of Lemma \ref{lem:hom-comp} and \eqref{eqn:dpiL}.
\end{proof}

%%%%%%%%%%%%%%%%%%%%%%%%%%%%%%%%%%%%%%%%%%%%%%%
\subsection{Factorization identities}
\label{subsec:Verma2}
Now we consider the factorization identities of 
$(\fg, B)$-homomorphisms $\varphi$ and DIOs $\D$ in order.
Recall from Section \ref{sec:DIO} that
\begin{align*}
\Lambda_a&=(-\Z_{\geq 0})\times \C, \\[3pt]
\swap{\Lambda}_a&=\C \times (-\Z_{\geq 0}), \\[3pt]
\Lambda_b&=\{(1-k, 1-\ell+k) : k, \ell \in 1+\Z_{\geq 0}\}, \\[3pt]
\swap{\Lambda}_b&=\{(1-k+\ell, 1-\ell) : k, \ell \in 1+\Z_{\geq 0}\}, \\[3pt]
\Lambda_c&=\{(\tfrac{1}{2}(2-k-s), \tfrac{1}{2}(2-k+s)) : k \in 1+\Z_{\geq 0} \; 
\text{and} \;  s \in \C\}.
\end{align*}
Further, for $\square \in \{> , <, =\}$,
we put
\begin{align*}
\Lambda_b^{(k\,\square\, \ell)}
&:=\{(1-k, 1-\ell+k) \in \Lambda_b:  k\, \square\, \ell \}, \\[3pt]
\swap{\Lambda}{}_b^{(k\, \square\, \ell)}
&:=\{(1-k+\ell, 1-\ell)  \in \swap{\Lambda}_b:
k\, \square\, \ell\}.
\end{align*}
One can easily check that
\begin{equation*}
\Lambda_b \subset \Lambda_a \cap \Lambda_c,
\quad
\swap{\Lambda}_b \subset \swap{\Lambda}_a \cap \Lambda_c,
\quad
\text{and}
\quad
\Lambda_b^{(k < \ell)} = \swap{\Lambda}{}_b^{(k>\ell)}.
\end{equation*}

Also, recall from Section \ref{sec:InfChar} that 
in the identification 
$\fa^*=\{(v_1, v_2 , v_3) \in \C^3: v_1+v_2+v_3=0\}$,
we have 
\begin{align*}
\mu_\lambda
&:=-(\lambda_1\varpi_1+\lambda_2\varpi_2)+\rho\\[3pt]
&=\tfrac{1}{3}(-(2\lambda_1+\lambda_2-3), \lambda_1-\lambda_2, \lambda_1+2\lambda_2-3).
\end{align*}
We put 
\begin{equation*}
\theta_{(k,\ell)}:=\frac{1}{3}(k+\ell, \ell-2k, k-2\ell)
\quad
\text{and}
\quad
\swap{\theta}_{(k,\ell)}:=\frac{1}{3}(2k-\ell, 2\ell-k, -(k+\ell)).
\end{equation*}
Then,
\begin{equation*}
\mu_\lambda 
=
\begin{cases}
\theta_{(k,\ell)} &
\text{if $(\lambda_1,\lambda_2)=(1-k,1-\ell+k) \in \Lambda_b$},\\[3pt]
\swap{\theta}_{(k,\ell)} &
\text{if $(\lambda_1,\lambda_2)=(1-k+\ell,1-\ell) \in \swap{\Lambda}_b$}
\end{cases}
\end{equation*}
and
\begin{equation*}
\theta_{(k,\ell)} = \swap{\theta}_{(\ell, \ell-k)}
\quad
\text{for $k \leq \ell$}
\quad
\text{and}
\quad
\swap{\theta}_{(k, \ell)}= \theta_{(k-\ell,k)}
\quad
\text{for $k\geq \ell$}.
\end{equation*}
Moreover, we have
\begin{equation*}
s_\beta s_\ga\theta_{(k,k)} = s_\gamma \theta_{(k,k)}
\quad
\text{and}
\quad
s_\ga s_\beta\swap{\theta}_{(k,k)} = s_\gamma \swap{\theta}_{(k,k)}.
\end{equation*}

\medskip

First we show that, to study factorization identities, 
it suffices to consider $(\lambda_1,\lambda_2) \in \gL_b\cup \swap{\gL}_b$.
As in Theorem \ref{thm:Hom3},
we write
\begin{equation*}
\ga=(1,-1,0),\quad
\beta=(0,1,-1), \quad
\gamma=(1,0,-1).
\end{equation*}

\begin{prop}
Suppose that 
$\varphi\colon N(\mu'_\lambda)^{(\delta_1,\delta_2)} \to N(\mu_\lambda)^{(+,+)}$
is a non-zero $(\fg, B)$-homomorphism in Theorem \ref{thm:Hom3}.
If $\varphi$ admits a factorization identity, then 
$(\lambda_1,\lambda_2)\in \gL_b\cup \swap{\gL}_b$.
\end{prop}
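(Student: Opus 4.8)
The plan is to prove the contrapositive together with a direct analysis of the five families listed in Theorem~\ref{thm:Hom3}. Concretely, a non-zero homomorphism $\varphi\colon N(\mu'_\lambda)^{(\delta_1,\delta_2)}\to N(\mu_\lambda)^{(+,+)}$ is, up to scalar, one of $\varphi_1^{(1-\lambda_1)}$, $\varphi_2^{(1-\lambda_2)}$, $\varphi_+^{(1-\lambda_1,2-\lambda_1-\lambda_2)}$, $\varphi_-^{(2-\lambda_1-\lambda_2,1-\lambda_2)}$, $\varphi_c^{(\lambda_1-\lambda_2;2-\lambda_1-\lambda_2)}$, with $\lambda$ lying in $\gL_a$, $\swap{\gL}_a$, $\gL_b$, $\swap{\gL}_b$, $\gL_c$ respectively. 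For each of the first two families the associated Weyl group element is a simple reflection $s_\ga$ or $s_\gb$ (by the correspondence \eqref{eqn:corresp}), so the corresponding singular vector is a power of a simple negative root vector, $(N_1^-)^{1-\lambda_1}$ or $(N_2^-)^{1-\lambda_2}$. Since $s_\ga$ and $s_\gb$ have length one, they admit no non-trivial reduced decomposition; the plan is to argue that the only way such a $\varphi$ factors is through a composition $\varphi_1\circ\varphi_2$ in which one factor is the identity $\id$, and conclude that this is not a genuine factorization identity in the sense of Section~\ref{sec:IntroHom}. This handles $\gL_a$ and $\swap{\gL}_a$.

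The remaining case is $\gL_c$. Here the associated Weyl element is $s_\gamma$, which has length $3$, so a priori $\varphi_c^{(s;k)}$ could factor. The key point to establish is that $s_\gamma$ \emph{links} $\mu'_\lambda$ to $\mu_\lambda$ through an intermediate weight (in the sense of Definition~\ref{def:Link}) \emph{only} when an integrality condition on $\lambda$ forces $(\lambda_1,\lambda_2)$ into $\gL_b\cup\swap{\gL}_b$. The plan is to use the explicit form $\mu_\lambda=\tfrac13(-(2\lambda_1+\lambda_2-3),\lambda_1-\lambda_2,\lambda_1+2\lambda_2-3)$ and to compute, for $(\lambda_1,\lambda_2)=(\tfrac12(2-k-s),\tfrac12(2-k+s))\in\gL_c$, the pairings $\IP{\mu_\lambda}{\ga^\ssv}$ and $\IP{\mu_\lambda}{\gb^\ssv}$; these are $1-\lambda_1$-type and $1-\lambda_2$-type quantities, i.e.\ $\tfrac12(k+s)$ and $\tfrac12(k-s)$ up to normalization. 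A factorization through $s_\gamma=s_\ga s_\gb s_\ga=s_\gb s_\ga s_\gb$ requires, at the first step, either $\IP{\mu_\lambda}{\ga^\ssv}\in\Z_{\geq 0}$ or $\IP{\mu_\lambda}{\gb^\ssv}\in\Z_{\geq 0}$, and iterating the BGG linking condition along the two reduced words shows that \emph{both} $k+s$ and $k-s$ (equivalently $1-\lambda_1$ and $1-\lambda_2$) must be non-negative integers; that is exactly the statement that $(\lambda_1,\lambda_2)\in\gL_b\cup\swap{\gL}_b$ (recall $\gL_b=\{(1-k,1-\ell+k)\}$, $\swap{\gL}_b=\{(1-k+\ell,1-\ell)\}$, and $\gL_b,\swap{\gL}_b\subset\gL_c$ as noted just before the proposition).

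I would organize the argument as follows. First, invoke Theorem~\ref{thm:BGGV}: a non-zero $\varphi$ corresponds to a sequence $(\gb_1,\ldots,\gb_t)$ of positive roots linking $\lambda$ (really $\mu'_\lambda$) to $\mu_\lambda$, and a genuine factorization identity $\varphi=\varphi_1\circ\varphi_2$ with both $\varphi_i$ non-scalar corresponds to splitting this sequence non-trivially, hence to the existence of a strictly intermediate weight $\mu$ with non-zero $\Hom_\fg(N(\mu'_\lambda),N(\mu))$ and $\Hom_\fg(N(\mu),N(\mu_\lambda))$. Second, run through the five families: for $\gL_a$ and $\swap{\gL}_a$ the linking sequence has length one (a single simple root), so no strictly intermediate weight exists and no factorization identity is possible; for $\gL_b$ and $\swap{\gL}_b$ the conclusion $(\lambda_1,\lambda_2)\in\gL_b\cup\swap{\gL}_b$ is immediate. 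Third, for the genuinely new family $\gL_c$, perform the pairing computation above to show that the existence of a strictly intermediate weight forces $1-\lambda_1\in\Z_{\geq 0}$ or $1-\lambda_2\in\Z_{\geq 0}$, and then check (using $\lambda_1+\lambda_2=2-k$ for $\lambda\in\gL_c$) that either of these already places $\lambda$ in $\gL_b\cup\swap{\gL}_b$. The main obstacle I anticipate is the bookkeeping in this last step: one must carefully track which of the two reduced words for $s_\gamma$ is being used, verify the $\Z_{\geq 0}$-integrality at each of the three reflection steps via Definition~\ref{def:Link}(2), and rule out the degenerate possibility that an "intermediate" weight coincides with one of the endpoints (which would correspond to a trivial factorization through $\id$, not a genuine factorization identity). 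Everything else is a finite case check against the parameter sets $\gL_a,\swap{\gL}_a,\gL_b,\swap{\gL}_b,\gL_c$ defined in Section~\ref{sec:DIO}.
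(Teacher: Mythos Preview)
Your overall strategy matches the paper's: rule out $\varphi_1^{(1-\lambda_1)}$ and $\varphi_2^{(1-\lambda_2)}$ as length-one maps, note that $\varphi_\pm^{(k,\ell)}$ already have $(\lambda_1,\lambda_2)\in\gL_b\cup\swap{\gL}_b$, and then analyze $\varphi_c^{(s;k)}$. For this last case the paper takes a cleaner route than yours: rather than going back to abstract BGG linking, it uses the classification in Theorem~\ref{thm:Hom3} directly. If $\varphi_c^{(s;k)}=\varphi''\circ\varphi'$ is a non-trivial factorization, then the factor landing in $N(\mu_\lambda)^{(+,+)}$ is itself one of the five types in Theorem~\ref{thm:Hom3} (applied with the same $\mu_\lambda$), hence equal to some $\varphi_1^{(\ell)}$, $\varphi_2^{(\ell)}$, or $\varphi_\pm^{(\ell_1,\ell_2)}$ (the case $\varphi_c$ is excluded since that would force the other factor to be the identity). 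Each possibility immediately places $(\lambda_1,\lambda_2)$ in $\gL_a$, $\swap{\gL}_a$, $\gL_b$, or $\swap{\gL}_b$; intersecting with $\gL_c$ gives $\gL_b$ or $\swap{\gL}_b$ since $\gL_a\cap\gL_c=\gL_b$ and $\swap{\gL}_a\cap\gL_c=\swap{\gL}_b$.

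Your treatment of the $\gL_c$ case conflates two different things: a factorization $\varphi_c=\varphi''\circ\varphi'$ corresponds to a single intermediate weight $\mu_2$, \emph{not} to a reduced decomposition of $s_\gamma$ into simple reflections. In particular, the first factor into $N(\mu_\lambda)$ need not be $\varphi_1$ or $\varphi_2$ (corresponding to a simple root); it could be $\varphi_\pm$. Consequently your assertion that ``iterating the BGG linking condition along the two reduced words shows that \emph{both} $k+s$ and $k-s$ must be non-negative integers'' is incorrect as stated: only one of $\tfrac12(k+s)=1-\lambda_1$ or $\tfrac12(k-s)=1-\lambda_2$ is forced into $1+\Z_{\geq 0}$, depending on which intermediate weight occurs. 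You correct yourself in the next sentence (``either of these already places $\lambda$ in $\gL_b\cup\swap{\gL}_b$''), so the conclusion is salvageable, but the argument should be rewritten to enumerate the possible intermediate weights $\mu_2\in\{s_\ga\mu_\lambda,\,s_\gb\mu_\lambda,\,s_\gb s_\ga\mu_\lambda,\,s_\ga s_\gb\mu_\lambda\}$ rather than to track linking sequences through reduced words.
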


\begin{proof}
Since $\varphi$ admits a factorization identity, 
it follows from Theorem \ref{thm:BGGV} that
$\mu_\lambda'$ must be
$\mu'_\lambda \in 
\{s_\ga s_\beta \mu_\lambda, \, 
s_\beta s_\ga \mu_\lambda,\, s_\gamma \mu_\lambda\}$.
Theorem \ref{thm:Hom3} shows that
if $\mu_\lambda' \in \{ s_\ga s_\beta \mu_\lambda,\, s_\beta s_\ga \mu_\lambda\}$, then
$\varphi = \varphi^{(k.\ell)}_{\pm}$ for some $k,\ell \in 1+\Z_{\geq 0}$.
Thus, in this case, $(\lambda_1,\lambda_2) \in \gL_b\cup \swap{\gL}_b$.

We now show that even if $\mu_\lambda' = s_\gamma \mu_\lambda$,
the parameters $(\lambda_1,\lambda_2)$ satisfy the proposed condition.
Indeed, in this case, we have $\varphi = \varphi^{(s; k)}_c$ and 
$(\lambda_1,\lambda_2) = (\tfrac{1}{2}(2-k-s), \tfrac{1}{2}(2-k+s))$
for some $k \in 1+\Z_{\geq 0}$ and $s \in \C$. 
Let $\varphi^{(s;k)} = \varphi'' \circ \varphi'$ denote a factorization
of $\varphi^{(s;k)}$. Then 
$\varphi'' \in \{\varphi^{(\ell)}_1,  \varphi^{(\ell)}_2, 
\varphi^{(\ell_1, \ell_2)}_{\pm}\}$ for some $\ell, \ell_1, \ell_2 \in 1+\Z_{\geq 0}$.
If $\varphi'' = \varphi^{(\ell_1, \ell_2)}_{\pm}$, then 
$(\lambda_1,\lambda_2) \in \gL_b\cup \swap{\gL}_b$.
Next, suppose that $\varphi'' \in \{\varphi^{(\ell)}_1,  \varphi^{(\ell)}_2\}$.
If $\varphi'' = \varphi^{(\ell)}_1$, then
Theorem \ref{thm:Hom3} shows that
$\varphi''$ is a map
$\varphi'' \colon N(s_\ga\mu_\lambda)^{(+,(-)^\ell)} \to N(\mu_\lambda)^{(+,+)}$
with
$\IP{\mu_\lambda}{\ga} =\ell$. 
On the other hand,
a direct computation shows that 
$\IP{\mu_\lambda}{\ga}=\tfrac{1}{2}(k+s)$.
The equality $\tfrac{1}{2}(k+s)=\ell$
leads to the condition $(\lambda_1,\lambda_2) \in \gL_b$.
Likewise, if $\varphi'' = \varphi^{(\ell)}_2$, then
$(\lambda_1,\lambda_2) \in \swap{\gL}_b$.
Now the proposition follows.
\end{proof}

We next exhibit the factorization identities for 
$(\lambda_1,\lambda_2) \in \gL_b\cup \swap{\gL}_b$.

\begin{thm}\label{thm:factor1}
Let $(\lambda_1,\lambda_2) \in \Lambda_b \cup \swap{\Lambda}_b$.
Then the following factorization identities hold.

\begin{enumerate}

\item[\emph{(1)}] 
For $(\lambda_1,\lambda_2) = (1-k, 1-\ell+k) 
\in \Lambda_b^{(k < \ell)}(=\swap{\Lambda}{}_b^{(k>\ell)})$,
the homomorphisms $\varphi_+^{(k,\ell)}$, $\varphi_-^{(\ell-k,\ell)}$, 
and $\varphi_c^{(2k-\ell;\ell)}$
can be factored as follows.
\begin{equation*}
\varphi_+^{(k,\ell)}=\varphi_1^{(k)}\circ \varphi_2^{(\ell)},
\quad
\varphi_-^{(\ell-k,\ell)}=\varphi_2^{(\ell)}\circ \varphi_1^{(\ell-k)},
\end{equation*}
and
\begin{equation*}
\varphi_c^{(2k-\ell;\ell)}
=\varphi_+^{(k,\ell)}\circ \varphi_1^{(\ell-k)}
=\varphi_1^{(k)}\circ \varphi_-^{(\ell-k,\ell)}
=\varphi_1^{(k)}\circ \varphi_2^{(\ell)} \circ \varphi_1^{(\ell-k)}.
\end{equation*}
Equivalently, the following diagram commutes.
\begin{equation*}
\begin{tikzcd}[row sep=1cm, column sep=1cm]
&
N(s_\beta s_\alpha\theta_{(k,\ell)})^{((-)^\ell,(-)^k)} 
\arrow[rrd, "\varphi_+^{(k,\ell)}"']
\arrow[r,"\varphi_2^{(\ell)}"]  
& N(s_\alpha\theta_{(k,\ell)})^{(+,(-)^k)}
 \arrow[dr, "\varphi_1^{(k)}"]&\\
 N(s_\gamma\theta_{(k,\ell)})^{((-)^\ell,(-)^\ell)}
\arrow[urr, "\varphi_-^{(\ell-k,\ell)}"']
\arrow[ur, "\varphi_1^{(\ell-k)}"]
\arrow[rrr, "\varphi_c^{(2k-\ell;\ell)}"']
& 
\arrow[u,  pos=0.89, phantom, "\circlearrowleft"]
\arrow[ru,  pos=0.55, phantom, "\circlearrowleft"]
& \arrow[u,  pos=0.89, phantom, "\circlearrowleft"]
& N(\theta_{(k,\ell)})^{(+,+)}\\
\end{tikzcd}
\end{equation*}

\item[\emph{(2)}]
For $(\lambda_1,\lambda_2) =(1-k, 1-\ell+k) \in \Lambda_b^{(k>\ell)}$,
the homomorphism $\varphi_+^{(k,\ell)}$ can be factored as 
\begin{equation*}
\varphi_c^{(2k-\ell;\ell)} \circ \varphi^{(k-\ell)}_1
= \varphi_+^{(k,\ell)}
=\varphi^{(k)}_1 \circ \varphi^{(\ell)}_2.
\end{equation*}
Equivalently, 
the following diagram commutes.
\begin{equation*}
\begin{tikzcd}[row sep=1cm, column sep=1cm]
& N(s_\gamma \theta_{(k,\ell)})^{((-)^\ell,(-)^\ell)} 
\arrow[ddr, "\varphi_c^{(2k-\ell;\ell)}"] 
&\\
N(s_\beta s_\ga\theta_{(k,\ell)})^{((-)^\ell,(-)^k)} 
\arrow[drr,"\varphi_+^{(k,\ell)}"]
\arrow[ur, "\varphi_1^{(k-\ell)}"]
\arrow[dr, "\varphi_2^{(\ell)}"']
 & & \\
&N(s_\ga\theta_{(k,\ell)})^{(+,(-)^k)}
\arrow[uu,  pos=0.5, phantom, "\circlearrowleft"] 
\arrow[uu,  pos=0.03, phantom, "\circlearrowleft"] 
\arrow[r, "\varphi_1^{(k)}"']  
&N(\theta_{(k,\ell)})^{(+,+)}  
\\
\end{tikzcd}
\end{equation*}

\item[\emph{(3)}] 
For $(\lambda_1,\lambda_2) =(1-k+\ell, 1-\ell) \in
\swap{\Lambda}{}_b^{(k>\ell)}(=\Lambda_b^{(k < \ell)})$,
the homomorphisms 
$\varphi_+^{(k,k-\ell)}$,
$\varphi_-^{(k,\ell)}$, and $\varphi_c^{(k-2\ell;k)}$ can be 
factored as follows
\begin{equation*}
\varphi_+^{(k,k-\ell)} = \varphi^{(k)}_1 \circ \varphi^{(k-\ell)}_2,\quad
\varphi_-^{(k,\ell)} = \varphi^{(\ell)}_2 \circ \varphi^{(k)}_1,
\end{equation*}
and
\begin{equation*}
\varphi_c^{(k-2\ell;k)} 
=\varphi_2^{(\ell)}\circ \varphi_+^{(k,k-\ell)}
=\varphi_-^{(k,\ell)} \circ \varphi^{(k-\ell)}_2
=\varphi_2^{(\ell)}\circ \varphi_1^{(k)}\circ \varphi_2^{(k-\ell)}.
\end{equation*}
Equivalently, 
the following diagram commutes.
\begin{equation*}
\begin{tikzcd}[row sep=1cm, column sep=1cm]
N(s_\gamma \swap{\theta}_{(k,\ell)})^{((-)^k,(-)^k)}
\arrow[dr, "\varphi_2^{(k-\ell)}"']
\arrow[rrr, "\varphi_c^{(k-2\ell;k)}"]
\arrow[drr, "\varphi_+^{(k,k-\ell)}"]
& 
\arrow[d,  pos=0.89, phantom, "\circlearrowleft"]
\arrow[rd,  pos=0.55, phantom, "\circlearrowleft"]
& 
\arrow[d,  pos=0.89, phantom, "\circlearrowleft"]
&  N(\swap{\theta}_{(k,\ell)})^{(+,+)}\\
&N(s_\ga s_\beta\swap{\theta}_{(k,\ell)})^{((-)^\ell,(-)^k)}   
\arrow[r, "\varphi_1^{(k)}"']  
\arrow[urr, "\varphi_-^{(k,\ell)}"]
&N(s_\beta\swap{\theta}_{(k,\ell)})^{((-)^\ell,+)}  
\arrow[ur, "\varphi_2^{(\ell)}"']&\\
\end{tikzcd}
\end{equation*}

\item[\emph{(4)}]
For $(\lambda_1,\lambda_2)=(1-k+\ell, 1-\ell)\in \swap{\Lambda}{}_b^{(k<\ell)}$, 
the homomorphism $\varphi_-^{(k,\ell)}$ 
can be factored as 
\begin{equation*}
\varphi_c^{(k-2\ell;k)} \circ \varphi_2^{(\ell-k)}
= \varphi_-^{(k,\ell)}=\varphi_2^{(\ell)}\circ \varphi_1^{(k)}.
\end{equation*}
Equivalently, the following diagram commutes.
\begin{equation*}
\begin{tikzcd}[row sep=1cm, column sep=1cm]
&
N(s_\beta\swap{\theta}_{(k,\ell)})^{((-)^\ell,+)}
\arrow[r, "\varphi_2^{(\ell)}"]  
\arrow[dd,  pos=0.05, phantom, "\circlearrowleft"] 
\arrow[dd,  pos=0.5, phantom, "\circlearrowleft"] 
&N(\swap{\theta}_{(k,\ell)})^{(+,+)} &\\
N(s_\alpha  s_\beta\swap{\theta}_{(k,\ell)})^{((-)^\ell,(-)^k)} 
\arrow[rru, "\varphi_-^{(k,\ell)}"']
\arrow[ur, "\varphi_1^{(k)}"]
\arrow[dr, "\varphi_2^{(\ell-k)}"']
& & & \\
& N(s_\gamma\swap{\theta}_{(k,\ell)})^{(-)^k,(-)^k)} 
\arrow[uur, "\varphi_c^{(k-2\ell;k)}"']
& &\\
\end{tikzcd}
\end{equation*}

\item[\emph{(5)}]  
For $(\lambda_1,\lambda_2) =(1-k, 1) \in \Lambda_b^{(k=\ell)}$,
we have
\begin{equation*}
\varphi_c^{(k;k)} 
= \varphi_+^{(k,k)}
=\varphi^{(k)}_1 \circ \varphi^{(k)}_2.
\end{equation*}
Equivalently, 
the following diagram commutes.
\begin{equation*}
\begin{tikzcd}[row sep=1cm, column sep=1cm]
&
N(s_\ga\theta_{(k,k)})^{(+,(-)^k)}
\arrow[rd, "\varphi_1^{(k)}"]
 & \\
N(s_\gamma \theta_{(k,k)})^{((-)^k,(-)^k)} 
\arrow[rr, "\varphi_c^{(k;k)}=\varphi_+^{(k,k)}"']
\arrow[ur, "\varphi_2^{(k)}"]
&
\arrow[u,  pos=0.5, phantom, "\circlearrowleft"]
 &  
 N(\theta_{(k,k)})^{(+,+)}\\
\end{tikzcd}
\end{equation*}

\item[\emph{(6)}]
For $(\lambda_1,\lambda_2)=(1, 1-k)\in \swap{\Lambda}{}_b^{(k=\ell)}$, 
we have
\begin{equation*}
\varphi_c^{(-k;k)} 
= \varphi_-^{(k,k)}
=\varphi_2^{(k)}\circ \varphi_1^{(k)}.
\end{equation*}
Equivalently, the following diagram commutes.
\begin{equation*}
\begin{tikzcd}[row sep=1cm, column sep=1cm]
N(s_\gamma\swap{\theta}_{(k,k)})^{(-)^k,(-)^k)} 
\arrow[rr, "\varphi_c^{(-k;k)}=\varphi_-^{(k,k)}"]
\arrow[dr,"\varphi_1^{(k)}"']
&
\arrow[d,  pos=0.5, phantom, "\circlearrowleft"]
 & N(\swap{\theta}_{(k,k)})^{(+,+)} \\
&
N(s_\beta\swap{\theta}_{(k,k)})^{((-)^k,+)}
\arrow[ur, "\varphi_2^{(k)}"']
&
\end{tikzcd}
\end{equation*}

\end{enumerate}

\end{thm}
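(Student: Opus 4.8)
The plan is to treat each of the six cases — and each identity within a case — by the same two-step scheme: first reduce the asserted equality of $(\fg,B)$-homomorphisms to an equality up to a nonzero scalar, and then pin the scalar down to $1$ (which at the level of singular vectors reproduces the explicit identities drawn in the diagrams).

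\textbf{Step 1 (reduction to proportionality).} For a fixed case I would first check, using Theorem~\ref{thm:Hom3} together with the relations among $\theta_{(k,\ell)}$, $\swap{\theta}_{(k,\ell)}$ and $s_\ga,s_\gb,s_\gamma$ recorded just before the theorem (in particular $\theta_{(k,\ell)}=\swap{\theta}_{(\ell,\ell-k)}$ for $k\le\ell$, $\swap{\theta}_{(k,\ell)}=\theta_{(k-\ell,k)}$ for $k\ge\ell$, and $s_\gb s_\ga\theta_{(k,k)}=s_\gamma\theta_{(k,k)}$), that every homomorphism occurring in the claimed diagram really has the source and target Verma module dictated by the diagram, i.e.\ the infinitesimal characters and the $M$-characters $(\delta_1,\delta_2)$ are consistent all the way around. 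Granting this bookkeeping, Theorem~\ref{thm:BGGV} (equivalently Theorem~\ref{thm:Hom1}) gives $\dim_\C\Hom_{\fg,B}(\cdot,\cdot)\le 1$ for the relevant pair, so the two sides of each identity are proportional; and since $\Cal{U}(\fn_-)$ is an integral domain and each factor is a nonzero homomorphism, Lemma~\ref{lem:hom-comp} shows every composition appearing is nonzero, so the proportionality constant lies in $\C^\times$.

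\textbf{Step 2 (fixing the scalar and reproving the singular-vector identities).} Here I would pass to $\Pol(\fn_+)$ via the algebraic Fourier transform $F_c$. By \eqref{eqn:Fc-comp}, the image of a composition $\varphi_2\circ\varphi_1$ with inner singular vector $u_1$ and outer singular vector $u_2$ is $\widehat{dL}(u_1)\acts\big(\widehat{dL}(u_2)\acts 1\big)$, while by Theorem~\ref{thm:Sol} and Step~4 of Section~\ref{sec:proof2} the image of $\varphi_\pm^{(k,\ell)}$ is $T_{k,\ell}(p_\pm^{(k,\ell)})$ and that of $\varphi_c^{(s;k)}$ is $T_{k,k}(p_c^{(s;k)})$; also $\widehat{dL}((N_j^-)^k)\acts 1=\zeta_j^{\,k}$ by Theorem~\ref{thm:dL1}. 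A short computation from $dL(X)=-dR(\Ad(\cdot^{-1})X)$ and Lemma~\ref{lem:dR} (using that $\fn_-$ is the Heisenberg algebra with $[N_1^-,N_2^-]=-N_3^-$) gives
\begin{equation*}
\widehat{dL}(N_1^-)=\zeta_1-\tfrac12\tfrac{\partial}{\partial\zeta_2}\zeta_3,\qquad
\widehat{dL}(N_2^-)=\zeta_2+\tfrac12\tfrac{\partial}{\partial\zeta_1}\zeta_3,\qquad
\widehat{dL}(N_3^-)=\zeta_3,
\end{equation*}
and, since $\zeta_j$ commutes with $\tfrac{\partial}{\partial\zeta_{j'}}\zeta_3$ for $j\ne j'$, applying $\widehat{dL}$ of a power of a root vector reduces each identity to a polynomial identity in $\C[\zeta_1,\zeta_2,\zeta_3]$. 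For the ``pure'' identities $\varphi_+^{(k,\ell)}=\varphi_1^{(k)}\circ\varphi_2^{(\ell)}$ and $\varphi_-^{(k,\ell)}=\varphi_2^{(\ell)}\circ\varphi_1^{(k)}$ (which already cover the second identity in cases (1)--(4)) the resulting coefficient identity is simply $m!\binom km\binom\ell m=k^{\underline m}\binom\ell m=\ell^{\underline m}\binom km$; the degenerate cases (5), (6) are then the $k=\ell$ specialisations combined with \eqref{eqn:pcJpm}, which identifies $\varphi_c^{(\pm k;k)}$ with $\varphi_\pm^{(k,k)}$; and cases (3), (4) reduce to (1), (2) by the symmetry $\ga\leftrightarrow\gb$, i.e.\ $N_1^-\leftrightarrow N_2^-$ and $t\mapsto -t$ as in Proposition~\ref{prop:duality}, which interchanges $\Lambda_b$ with $\swap{\Lambda}_b$ and $\varphi_+$ with $\varphi_-$.

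\textbf{The main obstacle} will be the Cayley factorizations $\varphi_c^{(2k-\ell;\ell)}=\varphi_+^{(k,\ell)}\circ\varphi_1^{(\ell-k)}$ in case (1) and $\varphi_c^{(2k-\ell;\ell)}\circ\varphi_1^{(k-\ell)}=\varphi_+^{(k,\ell)}$ in case (2) (and their $\ga\leftrightarrow\gb$ mirrors, where $\varphi_c^{(k-2\ell;k)}$ appears). For these I would rewrite $p_c^{(2k-\ell;\ell)}(t)=\sum_m\frac{(-1)^m}{2^m}\ell^{\underline m}K_m(k;\ell)\,t^m$ by Corollary~\ref{cor:pcK}, apply $\widehat{dL}\big((N_1^-)^{|k-\ell|}\big)=\big(\zeta_1-\tfrac12\tfrac{\partial}{\partial\zeta_2}\zeta_3\big)^{|k-\ell|}$ to $T_{\ell,\ell}(p_c^{(2k-\ell;\ell)})$, resp.\ to $T_{k,\ell}(p_+^{(k,\ell)})$, and collect the coefficient of each monomial $\zeta_1^{\,a}\zeta_2^{\,b}\zeta_3^{\,q}$. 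Using the collapse $\ell^{\underline m}(\ell-m)!=\ell!$ this coefficient becomes, up to an explicit power of $-\tfrac12$, the convolution $\sum_{m}\binom{k-\ell}{q-m}K_m(k;\ell)$, resp.\ $\sum_m(-1)^{q-m}\binom{\ell-k}{q-m}\binom km$, and the binomial--Krawtchouk identity \eqref{eqn:BinKro} with $(x,y)=(k,\ell)$ evaluates it to $(-1)^q\binom kq$, resp.\ to $(-1)^qK_q(k;\ell)$ — precisely the coefficient appearing in $T_{k,\ell}(p_+^{(k,\ell)})$, resp.\ in $T_{\ell,\ell}(p_c^{(2k-\ell;\ell)})$. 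Throughout, the one point needing constant care is the ordering of factors in $\Cal{U}(\fn_-)$ forced by Lemma~\ref{lem:hom-comp} — the inner map contributes on the \emph{left} — which matters here precisely because $\fn_-$ is non-abelian.
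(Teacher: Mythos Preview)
Your proposal is correct and follows essentially the same route as the paper: reduce each factorization to a polynomial identity in $\Pol(\fn_+)$ via $F_c$ and \eqref{eqn:Fc-comp}, use the explicit formulas $\widehat{dL}(N_1^-)=\zeta_1-\tfrac{\zeta_3}{2}\tfrac{\partial}{\partial\zeta_2}$ etc., verify $\varphi_\pm^{(k,\ell)}=\varphi_j\circ\varphi_{j'}$ by the binomial expansion, handle the Cayley factorizations through the Krawtchouk expression of Corollary~\ref{cor:pcK} together with \eqref{eqn:BinKro}, and invoke \eqref{eqn:pcJpm} for the degenerate cases (5)--(6). Your Step~1 (one-dimensionality plus the integral-domain argument for nonvanishing of compositions) is a pleasant conceptual wrapper but is ultimately redundant, since your Step~2 computation already proves exact equality, not just proportionality; the paper omits this step and goes straight to the $F_c$ computation.
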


\begin{proof}
We demonstrate the factorization identities 
$\varphi_+^{(k,\ell)}=\varphi_1^{(k)}\circ \varphi_2^{(\ell)}$ and
$\varphi_c^{(2k-\ell;\ell)}=\varphi_+^{(k,\ell)}\circ \varphi_1^{(\ell-k)}$
in (1), and $\varphi_-^{(k,\ell)}=\varphi_2^{(\ell)}\circ \varphi_1^{(k)}$ in (2);
the other factorizations can be shown similarly.
As the characters $(\pm,\pm)$ of $M$ 
in the diagrams directly follow from Theorem \ref{thm:Hom3},
we only focus on the factorizations of the homomorphisms $\varphi$.
Remark that the identities $\varphi_c^{(\pm k;k)} = \varphi_\pm^{(k,k)}$
in Cases (5) and (6) follow from \eqref{eqn:pcJpm}.

To get ready for the proof, observe that
similar arguments for Lemma \ref{lem:dR}
and the algebraic Fourier transform $\widehat{\;\cdot\;}$ \eqref{eqn:Weyl} 
show that $\widehat{dL}(N_j^-)$ for $j=1,2,3$
are given by
\begin{equation}\label{eqn:dL8}
\widehat{dL}(N_1^-)=
\zeta_1 -\frac{\zeta_3}{2}\frac{\partial}{\partial \zeta_2},
\quad
\widehat{dL}(N_2^-)=\zeta_2 +\frac{\zeta_3}{2}\frac{\partial}{\partial \zeta_1},
\quad
\widehat{dL}(N_3^-)=\zeta_3.
\end{equation}
Further, recall from \eqref{eqn:eta3} that 
$\eta_c^{(s;k)}(a_1,a_2,a_3)$ is a polynomial on an algebra $\Cal{A}$ defined by
\begin{equation*}
\eta_c^{(s;k)}(a_1,a_2,a_3)
= \sum_{m=0}^{k} \frac{1}{2^m}\binom{k}{m}\Cay_m(s;k)
\s(a_1^{k-m} a_2^{k-m} a_3^m).
\end{equation*}
By \eqref{eqn:Kraw1} and \eqref{eqn:Kraw2}, we have 
\begin{align}
\eta_c^{(k-2\ell;k)}(a_1,a_2,a_3)&=\sum_{m=0}^k \frac{1}{2^m}k^{\underline{m}}K_m(\ell;k)
\s(a_1^{k-m} a_2^{k-m} a_3^m), \label{eqn:nKraw1}\\[3pt]
\eta_c^{(2k-\ell;\ell)}(a_1,a_2,a_3)&=\sum_{m=0}^\ell \frac{(-1)^m}{2^m}\ell^{\underline{m}}K_m(k;\ell)
\s(a_1^{\ell-m} a_2^{\ell-m} a_3^m).\label{eqn:nKraw2}
\end{align}

In the following, we omit writing the highest weights of Verma modules.
For instance, we simply write $1\otimes \mathbb{1}$
for a highest weight vector $1\otimes \mathbb{1}_{\theta_{(k,\ell)}-\rho}$
of $N(\theta_{(k,\ell)})$.

We start with $\varphi_+^{(k,\ell)} = \varphi^{(k)}_1 \circ \varphi^{(\ell)}_2$.
To prove this, it suffices to show 
$F_c(\varphi_+^{(k,\ell)}) = F_c(\varphi^{(k)}_1 \circ \varphi^{(\ell)}_2)$.
Observe that the homomorphisms
$\varphi^{(k)}_1 \circ \varphi^{(\ell)}_2$
and 
$\varphi_+^{(k,\ell)}$
map
\begin{align*}
\varphi^{(k)}_1 \circ \varphi^{(\ell)}_2
&\colon
1\otimes \mathbb{1}
\longmapsto
(N_2^-)^\ell(N_1^-)^k \otimes \mathbb{1},\\[3pt]
\varphi_+^{(k,\ell)}
&\colon
1\otimes \mathbb{1}
\longmapsto
\eta_+^{(k,\ell)}(N_1^-,N_2^-,N_3^-)\otimes \mathbb{1}.
\end{align*}
By \eqref{eqn:dL8}, we have 
$\widehat{dL}(N_1^-)^k\acts 1 = \zeta_1^k$.
Also, by \eqref{eqn:Finv2} and \eqref{eqn:eta1},
the polynomial $F_c(\varphi_+^{(k,\ell)})$ is 
\begin{equation*}
F_c(\varphi_+^{(k,\ell)})
=\eta_+^{(k,\ell)}(\zeta_1,\zeta_2,\zeta_3)
=\sum_{m=0}^k\frac{1}{2^m}m!\binom{k}{m}\binom{\ell}{m} 
\zeta_1^{k-m}\zeta_2^{\ell-m}\zeta_3^m.
\end{equation*}
It then follows from \eqref{eqn:Fc-comp} that the Fourier transform 
$F_c(\varphi^{(k)}_1 \circ \varphi^{(\ell)}_2)$ is evaluated as

\begin{align*}
F_c(\varphi^{(k)}_1 \circ \varphi^{(\ell)}_2)
&=\widehat{dL}(N_2^-)^\ell\acts \left(\widehat{dL}(N_1^-)^k\acts 1 \right) \\[3pt]
&=\widehat{dL}(N_2^-)^\ell\acts \zeta_1^k \\[3pt]
&=\left(\zeta_2+\frac{\zeta_3}{2}\frac{\partial}{\partial \zeta_1} \right)^\ell 
\acts \zeta_1^k\\[3pt]
&=\left(
\sum_{m=0}^\ell\frac{1}{2^m}\binom{\ell}{m} \zeta_2^{\ell-m}\zeta_3^m
\frac{\partial^m}{\partial \zeta_1^m} \right) \acts \zeta_1^k\\[3pt]
&=
\sum_{m=0}^k\frac{1}{2^m}m!\binom{k}{m}\binom{\ell}{m} 
\zeta_1^{k-m}\zeta_2^{\ell-m}\zeta_3^m\\[3pt]
&=F_c(\varphi_+^{(k,\ell)}).
\end{align*}

Next, we show 
$\varphi_c^{(2k-\ell;\ell)}
=\varphi_+^{(k,\ell)}\circ \varphi_1^{(\ell-k)}$.
The homomorphisms $\varphi_+^{(k,\ell)}\circ \varphi_1^{(\ell-k)}$ 
and $\varphi_c^{(2k-\ell;\ell)}$ map
\begin{align*}
\varphi_+^{(k,\ell)}\circ \varphi_1^{(\ell-k)}
&\colon
1\otimes \mathbb{1} \longmapsto
(N_1^-)^{\ell-k}\eta_+^{(k,\ell)}(N_1^-,N_2^-,N_3^-) \otimes \mathbb{1}, \\[3pt]
\varphi_c^{(2k-\ell;\ell)} &\colon 1\otimes \mathbb{1} \longmapsto
\eta_c^{(2k-\ell;\ell)}(N_1^-,N_2^-,N_3^-) \otimes \mathbb{1}.
\end{align*}
It follows from \eqref{eqn:Finv2}, \eqref{eqn:Finv3}, \eqref{eqn:eta1}, and \eqref{eqn:nKraw2} that
\begin{align*}
\widehat{dL}(\eta_+^{(k,\ell)}(N_1^-,N_2^-,N_3^-)) \acts 1
&=\eta_+^{(k,\ell)}(\zeta_1,\zeta_2,\zeta_3)\\
&=\sum_{j=0}^k\frac{1}{2^j}j!\binom{k}{j}\binom{\ell}{j} \zeta_1^{k-j}\zeta_2^{\ell-j}\zeta_3^j,\\[3pt]
F_c(\varphi_c^{(2k-\ell;\ell)})
&=\eta_c^{(2k-\ell;\ell)}(\zeta_1,\zeta_2,\zeta_3)\\
&=\sum_{m=0}^\ell \frac{(-1)^m}{2^m}\ell^{\underline{m}}K_m(k;\ell)
\zeta_1^{\ell-m} \zeta_2^{\ell-m} \zeta_3^m.
\end{align*} 
Thus we have 
\begin{align*}
F_c(\varphi_+^{(k,\ell)}\circ \varphi_1^{(\ell-k)})
&=
\widehat{dL}(N_1^-)^{\ell-k}\acts
(\widehat{dL}(\eta_+^{(k,\ell)}(N_1^-,N_2^-,N_3^-) \acts 1)\\[3pt]
&=
\left(\zeta_1-\frac{\zeta_3}{2}\frac{\partial}{\partial \zeta_2} \right)^{\ell-k}\acts
\left(
\sum_{j=0}^k\frac{1}{2^j}j!\binom{k}{j}\binom{\ell}{j} \zeta_1^{k-j}\zeta_2^{\ell-j}\zeta_3^j
\right)\\
&=\sum_{m=0}^\ell\frac{(-1)^m}{2^m}\ell^{\underline{m}}
\sum_{j=0}^m(-1)^j\binom{k}{j}\binom{\ell-k}{m-j}\zeta_1^{\ell-m}
\zeta_2^{\ell-m}\zeta_3^m\\[3pt]
&=\sum_{m=0}^\ell \frac{(-1)^m}{2^m}\ell^{\underline{m}}K_m(k;\ell)
\zeta_1^{\ell-m} \zeta_2^{\ell-m} \zeta_3^m\\[3pt]
&=F_c(\varphi_c^{(2k-\ell;\ell)}).
\end{align*}

As the last identity, we prove 
$\varphi_-^{(k,\ell)}= 
\varphi_c^{(k-2\ell;k)} \circ \varphi_2^{(\ell-k)}$.
Observe that we have
\begin{align*}
\varphi_c^{(k-2\ell;k)} \circ \varphi_2^{(\ell-k)}
&\colon 1\otimes \mathbb{1} \longmapsto
(N_2^-)^{\ell-k}\eta_c^{(k-2\ell;k)}(N_1^-,N_2^-,N_3^-)\otimes \mathbb{1},\\
\varphi_-^{(k,\ell)}
&\colon
1\otimes \mathbb{1} \longmapsto \eta_-^{(k,\ell)}(N_1^-,N_2^-,N_3^-)\otimes \mathbb{1}.
\end{align*}
It follows from \eqref{eqn:Finv2}, \eqref{eqn:Finv3}, \eqref{eqn:eta2}, and \eqref{eqn:nKraw1} that
\begin{align*}
\widehat{dL}(\eta_c^{(k-2\ell;k)}(N_1^-,N_2^-,N_3^-))\acts 1
&=\eta_c^{(k-2\ell;k)}(\zeta_1,\zeta_2,\zeta_3)\\[3pt]
&= \sum_{r=0}^k \frac{1}{2^r}k^{\underline{r}}K_r(\ell;k)
\zeta_1^{k-r} \zeta_2^{k-r} \zeta_3^r,\\
F_c(\varphi_-^{(k,\ell)})
&=
\sum_{m=0}^k \frac{(-1)^m}{2^m}m!
\binom{k}{m}\binom{\ell}{m}\zeta_1^{k-m}\zeta_2^{\ell-m}\zeta_3^m.
\end{align*}
Hence,
\begin{align*}
F_c(\varphi_c^{(k-2\ell;k)} \circ \varphi_2^{(\ell-k)})
&=
\widehat{dL}(N_2^-)^{\ell-k}\acts
(\widehat{dL}(\eta_c^{(k-2\ell;k)}(N_1^-,N_2^-,N_3^-))\acts 1)\\[3pt]
&=\left(\zeta_2+\frac{\zeta_3}{2}\frac{\partial}{\partial \zeta_1} \right)^{\ell-k} \acts
\left(\sum_{r=0}^k \frac{1}{2^r}k^{\underline{r}}K_r(\ell;k)
\zeta_1^{k-r} \zeta_2^{k-r} \zeta_3^r\right)\\[3pt]
&=\sum_{m=0}^k \frac{m!}{2^m}\binom{k}{m}
\sum_{r=0}^m\binom{\ell-k}{m-r}K_r(\ell;k)\zeta_1^{k-m}\zeta_2^{\ell-m}\zeta_3^m\\[3pt]
&=\sum_{m=0}^k \frac{(-1)^m}{2^m}m!
\binom{k}{m}\binom{\ell}{m}\zeta_1^{k-m}\zeta_2^{\ell-m}\zeta_3^m\\[3pt]
&=F_c(\varphi_-^{(k,\ell)}).
\end{align*}
Here, \eqref{eqn:BinKro} is applied from line three to line four.
\end{proof}

\begin{rem}
As $\Lambda_b^{(k < \ell)}=\swap{\Lambda}{}_b^{(k>\ell)}$,
one may merge the diagrams in Cases (1) and (3) in one.
For instance, 
for $(\lambda_1,\lambda_2) = (1-k, 1-\ell+k) \in \Lambda_b^{(k < \ell)}$,
we have
\begin{equation}\label{eqn:merge}
\begin{tikzcd}[row sep=1cm, column sep=1cm]
&
N(s_\beta s_\alpha\theta_{(k,\ell)})^{((-)^\ell,(-)^k)} 
\arrow[rrd, "\varphi_+^{(k,\ell)}"']
\arrow[r, "\varphi_2^{(\ell)}"]  
& N(s_\alpha\theta_{(k,\ell)})^{(+,(-)^k)}
\arrow[dr, "\varphi_1^{(k)}"]&\\
N(s_\gamma\theta_{(k,\ell)})^{((-)^\ell,(-)^\ell)}
\arrow[ur, "\varphi_1^{(\ell-k)}"]
\arrow[urr, "\varphi_-^{(\ell-k,\ell)}"']
\arrow[rrr, "\varphi_c^{(2k-\ell;\ell)}"]
\arrow[dr, "\varphi_2^{(k)}"']
\arrow[drr, "\varphi_+^{(\ell,k)}"]
& & & N(\theta_{(k,\ell)})^{(+,+)}\\
&N(s_\ga s_\beta\theta_{(k,\ell)})^{((-)^{\ell-k},(-)^\ell)}   
\arrow[r, "\varphi_1^{(\ell)}"']  
\arrow[urr, "\varphi_-^{(\ell,\ell-k)}"]
&N(s_\beta \theta_{(k,\ell)})^{((-)^{\ell-k},+)}  
\arrow[ur, "\varphi_2^{(\ell-k)}"']&
\end{tikzcd}
\end{equation}

\noindent
Further, since $s_\ga\theta_{(k,\ell)} = \swap{\theta}_{(\ell-k,\ell)}$ and 
$s_\beta\theta_{(k,\ell)} = \theta_{(\ell,k)}$, one may put 
the diagrams in Cases (2) and (4) as in \eqref{eqn:merge} as follows.

\begin{equation*}
\begin{tikzcd}[row sep=1cm, column sep=1cm]
&
N(s_\beta s_\alpha\theta_{(k,\ell)})^{((-)^\ell,(-)^k)} 
\arrow[ddr, pos=0.8, "\varphi_c^{(2\ell-k;k)}"] 
\arrow[r, "\varphi_2^{(\ell)}"]  
& N(s_\alpha\theta_{(k,\ell)})^{(+,(-)^k)}
 \arrow[dr, "\varphi_1^{(k)}"]
&\\
 N(s_\gamma\theta_{(k,\ell)})^{((-)^\ell,(-)^\ell)}
\arrow[ur, "\varphi_1^{(\ell-k)}"]
\arrow[urr, "\varphi_-^{(\ell-k,\ell)}"']
\arrow[dr, "\varphi_2^{(k)}"']
\arrow[drr, "\varphi_+^{(\ell,k)}"]
& & & N(\theta_{(k,\ell)})^{(+,+)}
\\
&N(s_\ga s_\beta\theta_{(k,\ell)})^{((-)^{\ell-k},(-)^\ell)}  
\arrow[uur, pos=0.8,"\varphi_c^{(-(k+\ell);\ell-k)}"'] 
\arrow[r, "\varphi_1^{(\ell)}"']  
&N(s_\beta \theta_{(k,\ell)})^{((-)^{\ell-k},+)}  
\arrow[ur, "\varphi_2^{(\ell-k)}"']&
\end{tikzcd}
\end{equation*}

\end{rem}

We end this section by the differential-operator counterpart of 
Theorem \ref{thm:factor1}. 
We write
\begin{equation*}
\w{I}(\lambda_1,\lambda_2)^{(\eps_1,\eps_2)}
:=\Ind_B^G(\C_{(\eps_1,\eps_2)}\boxtimes 
\C_{(\lambda_1+1,\lambda_2+1)})
\end{equation*}
for the normalized parabolically induced representation.
Under the identification $\fa^*=\{(v_1, v_2 , v_3) \in \C^3: v_1+v_2+v_3=0\}$,
the unnormalized induced representation 
$I(\lambda_1, \lambda_2)^{(\eps_1,\eps_2)}$ can be 
given as
\begin{equation*}
I(\lambda_1, \lambda_2)^{(\eps_1,\eps_2)}
=\w{I}(\lambda_1-1,\lambda_2-1)^{(\eps_1,\eps_2)}
\equiv\w{I}(-\mu_\lambda)^{(\eps_1,\eps_2)},
\end{equation*}
where $\mu_\lambda$ is the weight defined in \eqref{eqn:muL}.

\begin{thm}\label{thm:factor2}
Let $(\lambda_1,\lambda_2) \in \Lambda_b \cup \swap{\Lambda}_b$.
Then the following factorization identities hold.

\begin{enumerate}

\item[\emph{(1)}] 
For $(\lambda_1,\lambda_2) = (1-k, 1-\ell+k) 
\in \Lambda_b^{(k < \ell)}(=\swap{\Lambda}{}_b^{(k>\ell)})$,
the differential operators $\D_+^{(k,\ell)}$, $\D_-^{(\ell-k,\ell)}$,
and $\D_c^{(2k-\ell;\ell)}$
can be factored as follows. 
\begin{equation*}
\D_+^{(k,\ell)}=
\D_2^{\ell}\circ \D_1^{k}
\quad
\D_-^{(\ell-k,\ell)}=\D_1^{\ell-k} \circ \D_2^\ell
\end{equation*}
and
\begin{equation*}
\D_c^{(2k-\ell;\ell)}
=\D_1^{\ell-k}\circ \D_+^{(k,\ell)}
=\D_-^{(\ell-k,\ell)}\circ \D_1^k
=\D_1^{\ell-k}\circ \D_2^\ell \circ \D_1^k.
\end{equation*}
Equivalently, the following diagram commutes.
\begin{equation*}
\begin{tikzcd}[row sep=1cm, column sep=1cm]
&
\w{I}(-s_\alpha\theta_{(k,\ell)})^{(+,(-)^k)}
\arrow[r, "\D_2^{\ell}"]  
\arrow[rrd, "\D_-^{(\ell-k,\ell)}"']  
& 
\w{I}(-s_\beta s_\alpha\theta_{(k,\ell)})^{((-)^\ell,(-)^k)} 
\arrow[dr, "\D_1^{\ell-k}"]&\\
\w{I}(-\theta_{(k,\ell)})^{(+,+)} 
\arrow[rru, "\D_+^{(k,\ell)}"']
\arrow[ur, "\D_1^{k}"]
\arrow[rrr, "\D_c^{(2k-\ell;\ell)}"']
& 
\arrow[u,  pos=0.89, phantom, "\circlearrowleft"]
\arrow[ru,  pos=0.65, phantom, "\circlearrowleft"]
& 
\arrow[u,  pos=0.89, phantom, "\circlearrowleft"]
& 
\w{I}(-s_\gamma\theta_{(k,\ell)})^{((-)^\ell,(-)^\ell)}\\
\end{tikzcd}
\end{equation*}

\item[\emph{(2)}]  
For $(\lambda_1,\lambda_2) =(1-k, 1-\ell+k) \in \Lambda_b^{(k>\ell)}$,
the differential operator $\D_+^{(k,\ell)}$ can be factored as 
\begin{equation*}
\D^{k-\ell}_1
\circ \D_c^{(2k-\ell;\ell)} 
= \D_+^{(k,\ell)}
=
\D^{\ell}_2
\circ \D^{k}_1.
\end{equation*}
Equivalently, 
the following diagram commutes.
\begin{equation*}
\begin{tikzcd}[row sep=1cm, column sep=1cm]
&
\w{I}(-s_\gamma \theta_{(k,\ell)})^{((-)^\ell,(-)^\ell)} 
\arrow[dr, "\D_1^{k-\ell}"]
& &\\
& &
\w{I}(-s_\beta s_\ga\theta_{(k,\ell)})^{((-)^\ell,(-)^k)} 
&
 \\
\w{I}(-\theta_{(k,\ell)})^{(+,+)}  
\arrow[urr, "\D_+^{(k,\ell)}"]
\arrow[uur, "\D_c^{(2k-\ell;\ell)}"] 
\arrow[r, "\D_1^{k}"']  
&
\w{I}(-s_\ga\theta_{(k,\ell)})^{(+,(-)^k)}
\ar[ur, "\D_2^{\ell}"']
\arrow[uu,  pos=0.5, phantom, "\circlearrowleft"] 
\arrow[uu,  pos=0.01, phantom, "\circlearrowleft"] 
&
&\\
\end{tikzcd}
\end{equation*}

\item[\emph{(3)}] 
For $(\lambda_1,\lambda_2) =(1-k+\ell, 1-\ell) \in
\swap{\Lambda}{}_b^{(k>\ell)}(=\Lambda_b^{(k < \ell)})$,
the differential operators
$\D_+^{(k,k-\ell)}$,
$\D_-^{(k,\ell)}$, and $\D_c^{(k-2\ell;k)}$ can be 
factored as follows.
\begin{equation*}
\D_+^{(k,k-\ell)}=\D_2^{k-\ell}\circ \D_1^k
\quad
\D_-^{(k,\ell)} = \D^{k}_1 \circ \D^{\ell}_2
\end{equation*}
and
\begin{equation*}
\D_c^{(k-2\ell;k)} 
=\D_+^{(k,k-\ell)}\circ \D_2^\ell
=\D^{k-\ell}_2 \circ \D_-^{(k,\ell)}
=\D_2^{k-\ell}\circ \D_1^k \circ \D_2^\ell.
\end{equation*}
Equivalently, 
the following diagram commutes.
\begin{equation*}
\begin{tikzcd}[row sep=1cm, column sep=1cm]
\w{I}(-\swap{\theta}_{(k,\ell)})^{(+,+)}
\arrow[drr, "\D_-^{(k,\ell)}"]
\arrow[dr, "\D_2^{\ell}"']
\arrow[rrr, "\D_c^{(k-2\ell;k)}"]
& 
\arrow[d,  pos=0.89, phantom, "\circlearrowleft"]
\arrow[rd,  pos=0.65, phantom, "\circlearrowleft"]
& 
\arrow[d,  pos=0.89, phantom, "\circlearrowleft"]
&  
\w{I}(-s_\gamma \swap{\theta}_{(k,\ell)})^{((-)^k,(-)^k)}\\
&
\w{I}(-s_\beta\swap{\theta}_{(k,\ell)})^{((-)^\ell,+)} 
\arrow[r, "\D_1^{k}"'] 
\arrow[rru,"\D_+^{(k,k-\ell)}"]  
&
\w{I}(-s_\ga s_\beta\swap{\theta}_{(k,\ell)})^{((-)^\ell,(-)^k)}   
\arrow[ur, "\D_2^{k-\ell}"']
&\\
\end{tikzcd}
\end{equation*}

\item[\emph{(4)}]
For $(\lambda_1,\lambda_2)=(1-k+\ell, 1-\ell)\in \swap{\Lambda}{}_b^{(k<\ell)}$, 
the differential operator $\D_-^{(k,\ell)}$ 
can be factored as 
\begin{equation*}
\D_2^{\ell-k}\circ
\D_c^{(k-2\ell;k)}
= \D_-^{(k,\ell)}
=\D_1^{k} \circ \D_2^{\ell}.
\end{equation*}
Equivalently, the following diagram commutes.
\begin{equation*}
\begin{tikzcd}[row sep=1cm, column sep=1cm]
\w{I}(-\swap{\theta}_{(k,\ell)})^{(+,+)} 
\arrow[r, "\D_2^{\ell}"]  
\arrow[ddr,"\D_c^{(k-2\ell;k)}"']
\arrow[rrd,"\D_-^{(k,\ell)}"']
&
\w{I}(-s_\beta\swap{\theta}_{(k,\ell)})^{((-)^\ell,+)}
\arrow[dr,"\D_1^{k}"]
\arrow[dd,  pos=0.5, phantom, "\circlearrowleft"] 
\arrow[dd,  pos=0.05, phantom, "\circlearrowleft"] 
&
&\\
& & 
\w{I}(-s_\alpha  s_\beta\swap{\theta}_{(k,\ell)})^{((-)^\ell,(-)^k)} 
& 
\\
& 
\w{I}(-s_\gamma\swap{\theta}_{(k,\ell)})^{(-)^k,(-)^k)} 
\arrow[ur, "\D_2^{\ell-k}"']
& &\\
\end{tikzcd}
\end{equation*}

\item[\emph{(5)}]  
For $(\lambda_1,\lambda_2) =(1-k, 1) \in \Lambda_b^{(k=\ell)}$,
we have
\begin{equation*}
\D_c^{(k;k)} 
= \D_+^{(k,k)}
=\D^{(k)}_2 \circ \D^{(k)}_1.
\end{equation*}
Equivalently, 
the following diagram commutes.
\begin{equation*}
\begin{tikzcd}[row sep=1cm, column sep=1cm]
&
\w{I}(-s_\ga\theta_{(k,k)})^{(+,(-)^k)}
 \arrow[dr,"\D_2^{(k)}"]  
 & \\
\w{I}(-\theta_{(k,k)})^{(+,+)}
\arrow[rr,"\D_c^{(k;k)}=\D_+^{(k,k)}"']
\arrow[ur,"\D_1^{(k)}"]
&
 \arrow[u,  pos=0.5, phantom, "\circlearrowleft"]
 &  
\w{I}(-s_\gamma \theta_{(k,k)})^{((-)^k,(-)^k)} \\
\end{tikzcd}
\end{equation*}

\item[\emph{(6)}]
For $(\lambda_1,\lambda_2)=(1, 1-k)\in \swap{\Lambda}{}_b^{(k=\ell)}$, 
we have
\begin{equation*}
\D_c^{(-k;k)} 
= \D_-^{(k,k)}
=\D_1^{(k)}\circ \varphi_2^{(k)}.
\end{equation*}
Equivalently, the following diagram commutes.
\begin{equation*}
\begin{tikzcd}[row sep=1cm, column sep=1cm]
\w{I}(-\swap{\theta}_{(k,k)})^{(+,+)} 
\arrow[rr, "\D_c^{(-k;k)}=\D_-^{(k,k)}"]
\arrow[dr,"\D_2^{(k)}"']
&
\arrow[d,  pos=0.5, phantom, "\circlearrowleft"]
& 
\w{I}(-s_\gamma\swap{\theta}_{(k,k)})^{(-)^k,(-)^k)}\\
&
\w{I}(-s_\beta\swap{\theta}_{(k,k)})^{((-)^k,+)}
\arrow[ur,"D_1^{(k)}"']  
&\\
\end{tikzcd}
\end{equation*}

\end{enumerate}

\end{thm}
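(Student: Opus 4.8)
The plan is to obtain Theorem~\ref{thm:factor2} from Theorem~\ref{thm:factor1} by transporting the factorizations through the duality theorem, rather than repeating the Fourier-analytic computation. The starting point is \eqref{eqn:duality2} together with \eqref{eqn:HD} (specialized to $G=G'$, $B=B'$, where $\Rest$ is the identity): a $(\fg,B)$-homomorphism corresponding under $\Hom_{\fg,B}(M(-\nu)^{(\delta)},M(-\lambda)^{(\eps)})\simeq(\Cal{U}(\fn_-)\otimes\cdots)^{B}$ to an element $u\in\Cal{U}(\fn_-)$ is sent by $\EuD$ to the differential intertwining operator $dR(u)$ between $I(\lambda)^{(\eps)}=\w{I}(-\mu_\lambda)^{(\eps)}$ and $I(\nu)^{(\delta)}=\w{I}(-\mu_\nu)^{(\delta)}$, with $\mu_\lambda,\mu_\nu$ as in \eqref{eqn:muL}. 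Comparing Theorems~\ref{thm:DIO2} and~\ref{thm:Hom2}, the operators of Theorem~\ref{thm:factor2} are precisely the $\EuD$-images of the homomorphisms of Theorem~\ref{thm:factor1}: $\D_j^k=\EuD(\varphi_j^{(k)})$ for $j=1,2$, $\D_\pm^{(k,\ell)}=\EuD(\varphi_\pm^{(k,\ell)})$, and $\D_c^{(s;k)}=\EuD(\varphi_c^{(s;k)})$, because $dR$ carries each element of $\Cal{U}(\fn_-)$ written as a (symmetrized) polynomial in $N_1^-,N_2^-,N_3^-$ to the same polynomial in $\D_1,\D_2,\D_3$.

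The one structural input needed is that $\EuD$ reverses compositions factored through $\Cal{U}(\fn_-)$. If $\varphi\colon N(\nu)\to N(\mu)$ and $\varphi'\colon N(\mu)\to N(\lambda)$ send $1\otimes\mathbb{1}$ to $u\otimes\mathbb{1}$ and $u'\otimes\mathbb{1}$ respectively, then Lemma~\ref{lem:hom-comp} gives $(\varphi'\circ\varphi)(1\otimes\mathbb{1})=uu'\otimes\mathbb{1}$, and since $dR$ is an algebra homomorphism on $\Cal{U}(\fn_-)$ we obtain
\begin{equation*}
\EuD(\varphi'\circ\varphi)=dR(uu')=dR(u)\circ dR(u')=\EuD(\varphi)\circ\EuD(\varphi').
\end{equation*}
Hence each identity $\varphi_A=\varphi_B\circ\varphi_C$ of Theorem~\ref{thm:factor1} produces, upon applying $\EuD$, the identity $\D_A=\D_C\circ\D_B$, and every commuting triangle of homomorphisms becomes the same triangle of differential operators with all arrows reversed. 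The $M$-characters $(\pm,\pm)$ and the weights decorating the Verma modules carry over verbatim to the induced representations via $M(-\lambda)^{(\eps)}\leftrightarrow\w{I}(-\mu_\lambda)^{(\eps)}$ and the interchange of source and target built into duality, so the six diagrams of Theorem~\ref{thm:factor2} are exactly the $\EuD$-images of those of Theorem~\ref{thm:factor1}.

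It then remains to run through Cases~(1)--(6) mechanically. For instance, in Case~(1) Theorem~\ref{thm:factor1}(1) asserts $\varphi_+^{(k,\ell)}=\varphi_1^{(k)}\circ\varphi_2^{(\ell)}$ and $\varphi_c^{(2k-\ell;\ell)}=\varphi_+^{(k,\ell)}\circ\varphi_1^{(\ell-k)}$; applying $\EuD$ yields $\D_+^{(k,\ell)}=\D_2^{\ell}\circ\D_1^{k}$ and $\D_c^{(2k-\ell;\ell)}=\D_1^{\ell-k}\circ\D_+^{(k,\ell)}$, which is Theorem~\ref{thm:factor2}(1); Cases~(2)--(4) are handled identically. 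In Cases~(5) and~(6) one uses in addition the coincidence $\D_c^{(\pm k;k)}=\D_\pm^{(k,k)}$, which is the $\EuD$-image of $\varphi_c^{(\pm k;k)}=\varphi_\pm^{(k,k)}$ and follows from the polynomial identity $p_c^{(\pm k;k)}(t)=p_\pm^{(k,k)}(t)$ of \eqref{eqn:pcJpm} (equivalently $\eta_c^{(\pm k;k)}=\eta_\pm^{(k,k)}$).

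The main point demanding care is the bookkeeping of contravariance: one must be sure that $\varphi_B\circ\varphi_C$ maps to $\D_C\circ\D_B$ and not $\D_B\circ\D_C$, which is what pins down the direction of every arrow in the six diagrams, and one must confirm that the operators named in the statement are the $\EuD$-images themselves rather than their formal transposes. A fully self-contained alternative would be to mimic the proof of Theorem~\ref{thm:factor1}: rewrite each $\D$ via $\symb_0^{-1}$, push the composition through $F_c$ using \eqref{eqn:Fc-comp} and the formulas \eqref{eqn:dL8}, and invoke the binary Krawtchouk identity \eqref{eqn:BinKro}. This also works but duplicates effort, whereas the duality route is short and reuses Theorem~\ref{thm:factor1} as a black box.
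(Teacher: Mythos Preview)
Your proposal is correct and takes essentially the same approach as the paper: the paper's proof is the single line ``Apply the duality theorem (Theorem \ref{thm:duality}) to Theorem \ref{thm:factor1},'' and you have simply spelled out in detail why $\EuD$ carries each $\varphi$ to the corresponding $\D$ and reverses compositions via Lemma~\ref{lem:hom-comp}.
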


\begin{proof}
Apply the duality theorem (Theorem \ref{thm:duality}) to Theorem \ref{thm:factor1}.
\end{proof}

%%%%%%%%%%%%%%%%%%%%%%%%%%%%%%%%%%%%%%%%%%%%%%%
\textbf{Acknowledgements.}
The authors are indebted to Ryosuke Nakahama, Kazuki Kannaka, and
Tasturo Hikawa for pointing out some gaps in the arguments in an early version 
of the manuscript. They are also  grateful to Taito Tauchi, Hiroyoshi Tamori, 
and Toshiyuki Kobayashi for fruitful discussions on this paper.
The first author was partially supported by 
Grant-in-Aid for Scientific Research(C) (JP22K03362).
The second author is supported by Grant-in-Aid for 
JSPS International Research Fellows (JP24KF0075).

\vskip 0.1in

%%%%%%%%%%%%%%%%%%%%%%%%%%%%%%%%%%%%%%%%%%
%% References

%\bibliographystyle{amsplain}
%\bibliography{bibliography}

%%%%%%%%%%%%%%%%%%%%%%%%%%%%%%%%%%%%%%%%%%

\end{document}